\title[Universal Hitchin moduli spaces] 
{Universal Hitchin moduli spaces}
\author[\'Alvarez-C\'onsul]{Luis \'Alvarez-C\'onsul}
\address{Instituto de Ciencias Matem\'aticas (CSIC-UAM-UC3M-UCM)\\ Nicol\'as Cabrera 13--15, Cantoblanco\\ 28049 Madrid, Spain}
\email{l.alvarez-consul@icmat.es}
\author[Garcia-Fernandez]{Mario Garcia-Fernandez}
\address{Instituto de Ciencias Matem\'aticas (CSIC-UAM-UC3M-UCM)\\ Nicol\'as Cabrera 13--15, Cantoblanco\\ 28049 Madrid, Spain}
\email{mario.garcia@icmat.es}
\author[Garc\'{\i}a-Prada]{Oscar Garc\'{\i}a-Prada}
\address{Instituto de Ciencias Matem\'aticas (CSIC-UAM-UC3M-UCM)\\ Nicol\'as Cabrera 13--15, Cantoblanco\\ 28049 Madrid, Spain}
\email{oscar.garcia-prada@icmat.es}
\author[Trautwein]{Samuel Trautwein}
\address{Formerly at ETH Z\"urich, Switzerland}
\email{sam87trautwein@gmail.com}
\DeclareFontFamily{OT1}{rsfs}{}
\DeclareFontShape{OT1}{rsfs}{n}{it}{<->rsfs10}{}
\DeclareMathAlphabet{\curly}{OT1}{rsfs}{n}{it}
\theoremstyle{plain}  
\newtheorem{theorem}{Theorem}[section]
\newtheorem*{theorem*}{Theorem}
\newtheorem{corollary}[theorem]{Corollary}
\newtheorem{lemma}[theorem]{Lemma}
\newtheorem{proposition}[theorem]{Proposition}
\theoremstyle{definition}
\newtheorem{definition}[theorem]{Definition}
\theoremstyle{remark}
\newtheorem*{acknowledgements}{Acknowledgements}
\newtheorem{remark}[theorem]{Remark}
\newtheorem*{claim*}{Claim}
\numberwithin{equation}{section}
\newcommand{\suchthat}{\;\;:\;\;}
\renewcommand{\leq}{\leqslant}
\renewcommand{\geq}{\geqslant}
\newcommand{\C}{\mathbb{C}}
\newcommand{\GGG}{\curly{G}}
\newcommand{\HHH}{\curly{H}}
\newcommand{\cC}{\mathcal{C}}
\newcommand{\cM}{\mathcal{M}}
\newcommand{\calR}{\mathcal{R}}
\newcommand{\cS}{\mathcal{S}}
\newcommand{\cT}{\mathcal{T}}
\newcommand{\dbar}{\bar{\partial}}
\newcommand{\lra}{\longrightarrow}
\newcommand{\GL}{\mathrm{GL}}
\newcommand{\SL}{\mathrm{SL}}
\DeclareMathOperator{\ad}{ad}
\DeclareMathOperator{\Ad}{Ad}
\DeclareMathOperator{\tr}{tr}
\DeclareMathOperator{\Hom}{Hom}
\DeclareMathOperator{\End}{End}
\DeclareMathOperator{\Id}{Id}
\renewcommand{\phi}{\varphi}
\newcommand{\lieg}{\mathfrak{g}}
\newcommand{\liek}{\mathfrak{k}}
\newcommand{\glie}{\mathfrak{g}}
\newcommand{\RR}{\mathbb{R}}
\newcommand{\AAA}{{\curly A}}
\newcommand{\CCC}{{\curly C}}
\newcommand{\DDD}{{\curly D}}
\newcommand{\JJJ}{\curly{J}}
\newcommand{\KKK}{{\curly K}}
\newcommand{\XXX}{\curly{X}}
\newcommand{\Lie}{\operatorname{Lie}}
\newcommand{\Mg}{\mathcal{M}^{\operatorname{Hit}}}
\newcommand{\Mf}{\mathcal{M}^{\operatorname{Flat}}}
\newcommand{\Mh}{\mathcal{M}^{\operatorname{Harm}}}
\newcommand{\Uf}{\mathcal{U}^{\operatorname{Flat}}}
\newcommand{\Uh}{\mathcal{U}^{\operatorname{Harm}}}
\newcommand{\bmu}{{\boldsymbol{\mu}}}
\newcommand{\Real}{\mathop{{\fam0 Re}}\nolimits}
\newcommand{\Imaginary}{\mathop{{\fam0 Im}}\nolimits}
\begin{document}

\thanks{
The first three authors are partially supported by the Spanish Ministry of Science and Innovation, through the `Severo Ochoa Programme for Centres of Excellence in R\&D' (CEX2023-001347-S).
The first and the third authors are partially supported by MICINN under grant PID2022-141387NB-C21. The second author is partially supported by MICINN under grants PID2022-141387NB-C22 and CNS2022-135784.
}

\begin{abstract}
We study metric aspects of the universal moduli space of solutions to Hitchin's equations as the complex structure $J$ varies over the Teichmüller space $\mathcal{T}$ of a closed surface $\Sigma$. Our approach is gauge theoretical and builds on the theory of K\"ahler fibrations and the moment map interpretation of constant scalar curvature K\"ahler metrics. Our first main result establishes
that, over the moduli space of cscK metrics, the universal moduli space of solutions to Hitchin's equations carries a natural complex structure together with a family of pseudo-K\"ahler metrics forming a K\"ahler fibration with a K\"ahler Ehresmann connection.

We then investigate a second universal moduli space, constructed from the space of flat $G$-connections over $\mathcal{T}$, which admits a nontrivial $J$-dependent K\"ahler fibration structure discovered by Hitchin. Using symplectic reduction, we build universal moduli spaces of solutions to the harmonicity equations depending on a coupling constant $\alpha$, obtaining natural complex and pseudo-K\"ahler structures and an explicit K\"ahler potential. The main novelty here is that this moduli space is defined by a system coupling the scalar curvature with a cubic term in the Higgs field. Finally, we propose a conjectural relationship between the two resulting families of moduli spaces in the weak-coupling limit $\alpha\to 0$, inspired by the twistor geometry of Hitchin's hyperk\"ahler moduli space.
\end{abstract}
\maketitle



\section{Introduction}
\label{intro}

The moduli space $\mathcal{R}$ of representations of the fundamental group of a closed oriented surface $\Sigma$ has an interesting interplay with classical Teichm\"uller theory. For the case of a compact Lie group $K$, a choice of complex structure $J$ on $\Sigma$ determines a homeomorphism of the moduli space $\mathcal{R}(K)$ of $K$-representations with a moduli space of holomorphic principal $G$-bundles over the compact Riemann surface $X_J = (\Sigma,J)$, via the Narasimhan–Seshadri theorem \cite{narasimhan-seshadri:1965} and its extension to principal bundles by Ramanathan \cite{ramanathan:1975,ramanathan:1996}. Here, $G$ denotes the complex reductive Lie group given by the complexification of $K$. This way, the \emph{character variety} $\mathcal{R}(K)$ determines a holomorphic fibration over the Teichm\"uller space $\mathcal{T}$ of $\Sigma$, given by complex structures on $\Sigma$ modulo the action of diffeomorphisms isotopic to the identity.

A different aspect of this interplay arose forty years ago in Nigel Hitchin's study of the self-duality equations on a Riemann surface $X_J$ \cite{hitchin:1987}
\begin{equation}\label{eq:hitchinintro}
\begin{split}
F_A -[\varphi,\tau(\varphi)]&= 0,\\
\dbar_{J,A}\varphi&=0,
\end{split}
\end{equation}
and subsequent work by Hitchin~\cite{hitchin:duke,hitchin:1992} and Simpson~\cite{simpson:1988,simpson:1992,simpson:1994,simpson:1995}. Here $A$ is a connection on a principal $K$-bundle over $\Sigma$ and $\varphi\in \Omega^{1,0}(X_J,E_K(\lieg))$ is the \emph{Higgs field}. The corresponding gauge-theoretical moduli space $\Mg(G)$ has three different incarnations, as a character variety $\mathcal{R}_c(G)$ for the  complex reductive Lie group $G$, as a moduli space $\Mf(G)$ of reductive flat $G$-connections, and as a moduli space of polystable $G$-Higgs bundles $\cM^{Higgs}(G)$. These three viewpoints on the moduli space of Hitchin's equations determine a hyperk\"ahler structure on $\Mg(G)$. Interestingly, only the moduli space $\cM^{Higgs}(G)$ depends on the choice of a complex structure on $\Sigma$. Furthermore, the twistor space $Z \to \mathbb{CP}^1$ for the hyperk\"ahler structure has generic complex structure $\Mf(G)$, while the special points over $\mathbb{CP}^1$ correspond to $\cM^{Higgs}(G)$ and its conjugate.

In this paper we investigate metric aspects of the universal moduli space of solutions of Hitchin's equations varying over the Teichm\"uller space $\mathcal{T}$. Our approach is gauge-theoretical in nature, and builds on the classical theory of K\"ahler fibrations \cite{GLS,mundet:2000} and on the symplectic interpretation of constant scalar curvature K\"ahler metrics \cite{Do2,Fj}. We fix a symplectic structure $\omega$ on $\Sigma$ and consider the coupled system of equations
\begin{equation}\label{eq:cHitchinintintro}
\begin{split}
F_A -[\varphi,\tau(\varphi)] & = 0,\\
\dbar_{J,A}\varphi & = 0,\\
S_g & = \frac{2\pi \chi(\Sigma)}{V},
\end{split}
\end{equation}
where $J$ is a complex structure on $\Sigma$ and $S_g$ is the scalar curvature of the metric $g = \omega(,J)$. Let $\mathcal{U}^{Hit}(G)$ be the moduli space of solutions of \eqref{eq:cHitchinintintro} modulo \emph{unitary gauge} (see Section \ref{section-Uhiggsmetric}), and   $\mathcal{U}^{Higgs}(G)$ be the universal moduli space of $G$-Higgs bundles (see Section \ref{ssec:UHiggs}).
Our first main result can be stated as follows (cf. Theorem \ref{thm:metricI}, and Theorem \ref{thm:existenceI}).

\begin{theorem}\label{thm:existenceIintro}
Let $X = (\Sigma,J)$ be a compact Riemann surface with genus $g(\Sigma) \geqslant 2$. Then, for any fixed total volume $V > 0$ and parameter $\varepsilon \in \{-1,1\}$, there exists $\alpha_0 > 0$ such that for any $0 < \alpha < \alpha_0$ there exists a non-empty open subset
$$
\mathcal{U}^*_{\alpha,\varepsilon} \subset \mathcal{U}^{Hit}(G),
$$
endowed with a complex structure $\mathbb{I}$ and a pre-symplectic structure $\boldsymbol{\omega}^{\mathbb{I}}_{\alpha,\varepsilon}$. Furthermore, the induced maps $\mathcal{U}^*_{\alpha,\varepsilon} \to \mathcal{U}^{Higgs}(G)$ and
\begin{equation}\label{eq:KFibModHiggsintro}
\mathcal{U}^*_{\alpha,\varepsilon} \lra \mu_{\HHH}^{-1}(0)/\HHH 
\end{equation}
are holomorphic, where $\mu_{\HHH}^{-1}(0)/\HHH$ is the moduli space of constant scalar curvature K\"ahler metrics on $\Sigma$ with total volume $V$.

Furthermore, one has

\begin{enumerate}

\item if $\varepsilon = 1$ the tensor $\mathbf{g}^\mathbb{I}_{\alpha,\varepsilon}$ is possibly degenerate,

\item if $\varepsilon = -1$ the tensor $\mathbf{g}^\mathbb{I}_{\alpha,\varepsilon}$ is non-degenerate, and defines a pseudo-K\"ahler structure on the moduli space.

\end{enumerate}
In either case, the restriction of $\mathbf{g}^{\mathbb{I}}_{\alpha,\varepsilon} = \boldsymbol{\omega}^{\mathbb{I}}_{\alpha,\varepsilon}(,\mathbb{I})$ to the fibres of \eqref{eq:KFibModHiggsintro} is $\alpha \mathbf{g}$, where $\mathbf{g}$ denotes the hyperk\"ahler metric on the moduli space of solutions of Hitchin's equations. Consequently, \eqref{eq:KFibModHiggsintro} has a natural structure of K\"ahler fibration with coupling form $\boldsymbol{\omega}^{\mathbb{I}}_{1,0}$ (see \eqref{eq:omegagmoduliexI}) and K\"ahler Ehresmann connection.
\end{theorem}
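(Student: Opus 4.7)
The plan is to realise the coupled system \eqref{eq:cHitchinintintro} as the zero set of a combined moment map on an infinite-dimensional Kähler ambient space, and to deduce the theorem by symplectic reduction in the framework of Kähler fibrations with moving complex structure.

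First, following the Donaldson--Fujiki picture \cite{Do2,Fj}, the space $\eS$ of $\omega$-compatible almost complex structures on $\Sigma$ is a complex Fréchet manifold on which the group $\HHH$ of Hamiltonian symplectomorphisms acts with moment map proportional to $S_g-2\pi\chi(\Sigma)/V$. In parallel, for each fixed $J\in\eS$ the space of Higgs pairs $\eA\times\Omega^{1,0}(E_K(\lieg))$ is Kähler and the unitary gauge group $\eG$ acts with the Hitchin moment map $F_A-[\varphi,\tau(\varphi)]$. Combining these structures, the three components of \eqref{eq:cHitchinintintro} assemble into the three components of a single moment map for an extended gauge group, roughly a semidirect product of $\eG$ with $\HHH$, acting on the universal configuration space of triples $(A,\varphi,J)$.

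Second, to equip the universal space with the one-parameter family of pre-symplectic forms $\bomega^{\mathbb{I}}_{\alpha,\varepsilon}$, I would rescale the symplectic contribution of the Higgs factor by the coupling $\alpha$ and multiply the Donaldson--Fujiki form on $\eS$ by the sign $\varepsilon\in\{\pm 1\}$. The almost complex structure $\mathbb{I}$ is built by choosing an $\eG$-equivariant horizontal distribution on the Fréchet bundle $\eA\times\Omega^{1,0}\to\eS$ and combining the canonical complex structure on Higgs data with the Donaldson--Fujiki complex structure on the base. Holomorphy of the forgetful maps $\mathcal{U}^*_{\alpha,\varepsilon}\to\mathcal{U}^{Higgs}(G)$ and $\mathcal{U}^*_{\alpha,\varepsilon}\to\mu_{\HHH}^{-1}(0)/\HHH$ is built into the construction, since the first map forgets the $J$-factor while the second forgets the $(A,\varphi)$-data, and both projections are complex-linear on each factor.

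Third, I would carry out the symplectic reduction in stages: quotienting first by $\eG$ produces a holomorphic bundle over $\eS$ whose fibres, after imposing the Hitchin moment map and appealing to the non-abelian Hodge correspondence, are Hitchin moduli spaces endowed with the scaled hyperkähler metric $\alpha\mathbf{g}$; restricting the base to the zero locus of the cscK moment map and quotienting by $\HHH$ then yields the target of \eqref{eq:KFibModHiggsintro}. That the total space is pseudo-Kähler and that the horizontal distribution descends to a Kähler Ehresmann connection with coupling form $\bomega^{\mathbb{I}}_{1,0}$ follows from the general theory of Kähler fibrations \cite{GLS,mundet:2000}, once one verifies $\HHH$-equivariance and closedness of the coupling form; the identity $\mathbf{g}^{\mathbb{I}}_{\alpha,\varepsilon}|_{\mathrm{fib}}=\alpha\mathbf{g}$ is then immediate from the construction.

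Fourth, existence of the non-empty open subset $\mathcal{U}^*_{\alpha,\varepsilon}$ for small $\alpha>0$ and the non-degeneracy claim in the case $\varepsilon=-1$ require an analytic perturbation argument around $\alpha=0$, where \eqref{eq:cHitchinintintro} decouples into the Hitchin equations on $X_J$ and the cscK (hyperbolic) equation on $(\Sigma,\omega)$, both solvable since $g(\Sigma)\geqslant 2$. Applying the implicit function theorem to the elliptic system governing small deformations of solutions, whose linearisation at $\alpha=0$ is block-diagonal and invertible modulo gauge, propagates both the existence of solutions and the transversality of the horizontal distribution to sufficiently small $\alpha>0$. The main technical obstacle, and the reason for the $\varepsilon=\pm 1$ dichotomy, is the signature analysis of the horizontal part of $\mathbf{g}^{\mathbb{I}}_{\alpha,\varepsilon}$: the cross-term pairing between Teichmüller-type deformations of $J$ and Higgs-bundle deformations is naturally indefinite, and only the sign choice $\varepsilon=-1$ aligns it with the other contributions so that overall non-degeneracy persists uniformly in $\alpha$, while for $\varepsilon=+1$ one can guarantee only a pre-symplectic structure in general.
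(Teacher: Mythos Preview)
Your outline captures the broad architecture—moment maps, extended gauge group, symplectic reduction, Kähler fibration theory—but it glosses over precisely the point the paper identifies as the crux of the argument, and in one place it misidentifies where the smallness of $\alpha$ is actually used.

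The central difficulty is that the symmetric tensor $\mathbf{g}^{\mathbb{I}}_{\alpha,\varepsilon}=\boldsymbol{\omega}^{\mathbb{I}}_{\alpha,\varepsilon}(\cdot,\mathbb{I})$ on the infinite-dimensional configuration space is \emph{not} positive definite for either sign of $\varepsilon$ (Lemma~\ref{lemma:indefI} and the explicit formula~\eqref{eq:omegagmoduliexI}). Hence the sentence ``that the total space is pseudo-K\"ahler \dots\ follows from the general theory of K\"ahler fibrations'' hides a genuine gap: the standard K\"ahler reduction machinery requires a definite metric to produce the orthogonal splitting $\mathcal{S}^1=\operatorname{Im}\mathbf{P}\oplus(\operatorname{Im}\mathbf{P})^{\perp}$, and with an indefinite pairing this splitting can fail. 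The paper addresses this by introducing the operator $\mathcal{L}_0^{\alpha,\varepsilon}=\mathbf{P}^*_{\alpha,\varepsilon}\circ\mathbf{P}$, where $\mathbf{P}^*_{\alpha,\varepsilon}$ is the formal adjoint with respect to the \emph{indefinite} pairing $\mathbf{g}^{\mathbb{I}}_{\alpha,\varepsilon}$, proving it is Fredholm of index zero (Proposition~\ref{prop:zeroindexI}), and showing that the condition $\ker\mathcal{L}_0^{\alpha,\varepsilon}=\{0\}$ is exactly what is needed for the gauge-fixed harmonic representatives $\mathcal{H}^1(\mathcal{S}^*_0)=\ker\mathbf{L}\cap\ker(\mathbf{L}\circ\mathbb{I})$ to model $H^1(\mathcal{S}^*_0)$ and hence for $\mathbb{I}$ to descend (Proposition~\ref{prop:lineargaugefixedI}). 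Your proposal contains no analogue of this step.

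Second, you locate the implicit-function-theorem argument at the wrong place. In the Hitchin case (unlike the harmonic case of Section~\ref{section-Uhitchin}), the coupling term in the scalar equation vanishes identically once $\bar\partial_{J,A}\varphi=0$ is imposed, so the coupled equations~\eqref{eq:cHitchinintintro} are already decoupled for \emph{every} $\alpha$; existence of solutions is immediate from Hitchin--Simpson plus uniformisation, with no perturbation needed. The threshold $\alpha_0$ enters only to guarantee $\ker\mathcal{L}_0^{\alpha,\varepsilon}=\{0\}$, via upper semicontinuity of the kernel dimension of a family of elliptic operators whose $\alpha=0$ limit has trivial kernel by the absence of Killing fields in genus $\geqslant 2$ and irreducibility of the Higgs bundle (proof of Theorem~\ref{thm:existenceI}).
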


We comment briefly on the proof. The moduli space $\mathcal{U}^{Hit}(G)$  of solutions of \eqref{eq:cHitchinintintro} modulo \emph{gauge} is defined via symplectic reduction, and hence is naturally endowed with a family of (pre)-symplectic structures 
\begin{equation}\label{eq:uomegaIintro}
\boldsymbol{\omega}^{\mathbb{I}}_{\alpha,\varepsilon} = \varepsilon\boldsymbol{\omega}_\JJJ + \alpha \boldsymbol{\sigma}^{\mathbb{I}}.
\end{equation}
Here $\boldsymbol{\sigma}^{\mathbb{I}}$ is a closed $2$-form restricting on the fibres to Hitchin's symplectic structure $\omega_{\mathbf{I}}$ (see Proposition \ref{p:existencesigmaI}) and $\boldsymbol{\omega}_\JJJ$ is the pull-back of the symplectic structure on the moduli space $\mu_{\HHH}^{-1}(0)/\HHH$, induced by the choice of symplectic form $\omega$ on $\Sigma$ (see \eqref{eq:SympJ}). Even though the phase space of parameters $(J,A,\psi)$, with $\psi = -i(\varphi-\tau_h \varphi)$, has a natural complex structure $\mathbb{I}$, the symmetric tensor $\mathbf{g}^{\mathbb{I}}_{\alpha,\varepsilon} = \boldsymbol{\omega}^{\mathbb{I}}_{\alpha,\varepsilon}(,\mathbb{I})$ is not positive definite (see Lemma \ref{lemma:indef}). This follows from an explicit formula (see Corollary \ref{cor:omegaexpI})
\begin{equation}\label{eq:omegagmoduliexI}
\begin{split}
\mathbf{g}^{\mathbb{I}}_{\alpha,\varepsilon} & = \frac{\varepsilon}{2}\int_{\Sigma}\tr(\dot J\dot J) \omega \\
& + \alpha  \int_\Sigma B(a \wedge J a) + \alpha \int_\Sigma B\left(\left(\dot \psi + \tfrac{\alpha}{2}\psi(J \dot J)\right) \wedge J\left(\dot \psi + \tfrac{\alpha}{2}\psi(J \dot J)\right)\right)\\
& - \frac{\alpha}{4}\int_\Sigma B\left(\psi(\dot J) \wedge J \psi(\dot J)\right),
\end{split}
\end{equation}
where $B$ is a positive-definite invariant metric on $\mathfrak{k} = \Lie K$. Hence, it is not obvious  a priori that $\mathcal{U}^{Hit}(G)$ inherits a complex structure compatible with $\boldsymbol{\omega}^{\mathbb{I}}_{\alpha,\varepsilon}$. The key step is to undertake a \emph{gauge fixing} for solutions of the coupled Hitchin equations \eqref{eq:cHitchinintintro}, whereby the complex structure $\mathbb{I}$ and the symmetric tensor $\mathbf{g}^{\mathbb{I}}_{\alpha,\varepsilon}$ descend to the moduli space. Difficulties will arise, due to the fact that $\mathbf{g}^{\mathbb{I}}_{\alpha,\varepsilon}$ is neither a definite pairing nor non-degenerate.

Our second main result is concerned with a universal moduli space of flat $G$-connections over $\cT$. Even though the space of flat $G$-connections is independent of any choice of complex structure $J$ on $\Sigma$, its product with the space of complex structures on $\Sigma$ carries a non-trivial, $J$-dependent structure of K\"ahler fibration $\widehat{\boldsymbol{\omega}}_{\mathbf{J}}$ discovered by Hitchin \cite{hitchin:1987}. Building on this observation, we fix a symplectic form $\omega$ on $\Sigma$, and for any choice of coupling constant $\alpha >0$ and parameter $\varepsilon \in \{-1,1\}$, we consider the \emph{coupled harmonic equations}
\begin{equation}\label{eq:charmonicityintro}
\begin{split}
F_A -\frac{1}{2}[\psi,\psi] & = 0,\\
d_A\psi & = 0,\\
d_A^\ast\psi & = 0,\\
S_g - \alpha * d\left(B\left(\Lambda_\omega F_A,*\psi\right)\right) & = \frac{2\pi \chi(\Sigma)}{V},
\end{split}
\end{equation}
where $g = \omega(,J)$ and $*$ is the corresponding Hodge star operator. We denote by $\Uh(G)_\alpha^\varepsilon$ the moduli space of solutions of \eqref{eq:charmonicityintro} modulo \emph{unitary gauge} (see Section \ref{section-Uhitchin}). We denote by $\Uf(G)$ the universal moduli space of reductive flat $G$-connections over $\cT$ (see Section \ref{ssec:UFlat}). Our second main result can be stated as follows (cf. Theorem \ref{thm:metric} and Theorem \ref{thm:existence}):

\begin{theorem}\label{thm:existence-intro}
Let $X = (\Sigma,J)$ be a compact Riemann surface with genus $g(\Sigma) \geqslant 2$. Then, for any fixed total volume $V > 0$ and parameter $\varepsilon \in \{-1,1\}$, there exists $\alpha_0 > 0$ such that for any $0 < \alpha < \alpha_0$ there exists a non-empty open subset
$$
\mathcal{U}^*_{\alpha,\varepsilon} \subset \Uh(G)_\alpha^\varepsilon,
$$
endowed with a complex structure $\mathbb{J}$ and a pre-symplectic structure $\boldsymbol{\omega}^{\mathbb{J}}_{\alpha,\varepsilon}$. Furthermore, the induced map
\begin{equation}\label{eq:modulimapHFintro}
\mathcal{U}^*_{\alpha,\varepsilon} \lra \Uf(G)
\end{equation}
is holomorphic, and

\begin{enumerate}

\item if $\varepsilon = 1$ the tensor $\mathbf{g}^{\mathbb{J}}_{\alpha,\varepsilon} = \boldsymbol{\omega}^{\mathbb{J}}_{\alpha,\varepsilon}(,\mathbb{J})$ is possibly degenerate,

\item if $\varepsilon = -1$ the tensor $\mathbf{g}^{\mathbb{J}}_{\alpha,\varepsilon}$ is non-degenerate, and defines a pseudo-K\"ahler structure on the moduli space.

\end{enumerate}
In either case, $\boldsymbol{\omega}^{\mathbb{J}}_{\alpha,\varepsilon}$ admits a global K\"ahler potential, that is, $\boldsymbol{\omega}^{\mathbb{J}}_{\alpha,\varepsilon} = dd^c_\mathbb{J} \Phi$, where
$$
\Phi = \varepsilon\nu_\JJJ + \frac{\alpha}{2} \|\psi\|^2_{L^2} 
$$
and $\nu_\JJJ$ is induced by the global K\"ahler potential in the space of complex structures compatible with the orientation $\JJJ$ (see \cite[Section 4]{Fj}).
\end{theorem}

Similarly to the universal moduli space of Hitchin's equations \eqref{eq:cHitchinintintro}, the (possibly degenerate) moduli space K\"ahler form and metric admit explicit formulae upon suitable \emph{gauge fixing}
\begin{equation}\label{eq:omegagmoduliexintro}
\begin{split}
\boldsymbol{\omega}^{\mathbb{J}}_{\alpha,\varepsilon}(v_1,v_2) & = \frac{\varepsilon}{2}\int_{\Sigma}\tr(J\dot J_1\dot J_2) \omega \\
& + \alpha  \int_\Sigma B((a_1 - \psi(\dot J_1)) \wedge J(\dot \psi_2 - (J\psi)(\dot J_2))) \\
& - \alpha \int_\Sigma B((\dot \psi_1 - (J\psi)(\dot J_1)) \wedge J(a_2 - \psi(\dot J_2)))\\
& - \alpha \int_\Sigma B(\psi(\dot J_1)\wedge \psi(\dot J_2)),\\
\mathbf{g}^{\mathbb{J}}_{\alpha,\varepsilon}(v_1,v_2) & = \frac{\varepsilon}{2}\int_{\Sigma}\tr(\dot J_1\dot J_2) \omega \\
& + \alpha  \int_\Sigma B((a_1 - \psi(\dot J_1)) \wedge J(a_2 - \psi(\dot J_2))) \\
& + \alpha \int_\Sigma B((\dot \psi_1 - (J\psi)(\dot J_1)) \wedge J(\dot \psi_2 - (J\psi)(\dot J_2)))\\
& - \alpha \int_\Sigma B(\psi(\dot J_1)\wedge J(\psi(\dot J_2))).
\end{split}
\end{equation}
From the previous expression, we observe that, at least formally, the restriction of $\boldsymbol{\omega}^{\mathbb{J}}_{\alpha,\varepsilon}$ to the fibres over $\cT$ coincides up to scaling with Hitchin's symplectic structure $\omega_{\mathbf{J}}$. A complete proof of this requires a deeper understanding of the particular gauge fixing mechanism which we use to construct the moduli space complex structure $\mathbb{J}$ (see Proposition \ref{prop:lineargaugefixed}), and we leave it as an open question.

Based on the twistor space structure for Hitchin's original hyperk\"ahler moduli space $\Mg(G)$, it is natural to speculate on a relation between the moduli spaces $\Uh(G)_\alpha^\varepsilon$ and $\mathcal{U}^{Higgs}(G)$ in the weak coupling limit 
$$
\alpha \to 0.
$$
In this limit, for instance, the equations \eqref{eq:charmonicityintro} reduce to the coupled Hitchin equations \eqref{eq:cHitchinintintro}, and we expect a suitable adiabatic limit convergence
$$
\mathbf{g}^{\mathbb{J}}_{\alpha,\varepsilon}  \to \mathbf{g}^{\mathbb{I}}_{\tilde \alpha,\varepsilon}.
$$ 
We are far from even grasping the solution to this problem, and we leave it as an open question for future studies.


Motivation for the present work comes from the programme initiated by the first three authors in \cite{AGG1} more than 15 years ago based on the 2009 PhD Thesis of the second author \cite{GF}.
In that paper, we consider a pair $(X,E)$ consisting of a holomorphic principal $G$-bundle $E$ over a compact complex manifold $X$, where $G$ is a complex reductive group,  and study certain  coupled equations for a K\"ahler metric on $X$ and a reduction of structure group of $E$ to a maximal compact subgroup $K\subset G$. These equations, that we named  K\"ahler--Yang--Mills equations, appear naturally as moment map equations for the action of the extended gauge group of the $K$-bundle $E_K$ over the underlaying smooth manifold to $X$ equipped with a symplectic form $\omega$. The extension of the  gauge group of $E_K$ is given in this case by the group of Hamiltonian symplectomorphisms defined by  $\omega$. This theory is in some sense a combination of the usual Yang--Mills theory
and the Donaldson--Fujiki theory on K\"ahler manifolds, interpreting  the constant scalar curvature as the moment map for the action of the group of Hamiltonian symplectomorphisms on the space of complex structures compatible with the symplectic structure $\omega$.

While much work remains to be done in this programme, like for example finding the algebraic  general stability condition solving the K\"ahler--Yang--Mills equations, certain particular cases with symmetry have been understood using dimensional reduction methods \cite{AGG2,AGGP} and symplectic reduction in stages \cite{AGGPY}.  
A generalization of the K\"ahler--Yang--Mills equations
involving Higgs fields of different kinds has been introduced in \cite{AGG3} and remains also to be fully  explored. 

In early joint work with the fourth author, we extended our symplectic point of view to the problem of constructing universal  moduli spaces for Hitchin's equations and Higgs bundles. The first steps of this attempt appeared in Chapter 7 of the fourth author's 2018 ETH PhD Thesis \cite{samuel}, and is the seed of the present work. In view of the current interest in the subject, after a dormant period, we decided to resume this investigation.
We should warn that the use of the term `universal' that we make here does not necessarily
imply any  expected functoriality properties  that such  term some times entails. We have sticked, however, to this term since that is the one that we  originally used in our initial work \cite{samuel}.

After the completion of this work, we received a copy of a manuscript~\cite{Collier-Toulisse-Wentworth}, where related topics are treated.
It should be very interesting to compare our approach to other analytic constructions under study \cite{Collier-Toulisse-Wentworth,nigel}, as well as the more algebraic point of view  taken by Simpson \cite{simpson:1992,simpson:1994} and others \cite{BBN,BDP,D,DF}.

\begin{acknowledgements}
The authors wish to thank Emilio Franco, Nigel Hitchin and Ignasi Mundet for helpful conversations. Special thanks are due to Richard Wentworth for stimulating discussions --- in particular for spotting a mistake in the signature of the metric $\mathbf{g}^{\mathbb{I}}_{\alpha,1}$ in an earlier version of this manuscript --- as well as for his lectures at ICMAT in June 2025, which encouraged us to resume this project.
\end{acknowledgements}

\section{Flat connections and Higgs bundles}\label{flat}

\subsection{Flat $\boldsymbol{G}$-connections}\label{ssec:FlatG}

Through this section we consider a connected semisimple complex Lie group $G$, with Lie algebra $\glie$, and
fix an antiholomorphic involution $\tau$ of $G$ defining  a maximal compact subgroup $K:=G^\tau\subset G$, with Lie algebra
$\liek$. The Killing form will be denoted by $B$. We also fix a smooth oriented compact surface $\Sigma$.

Let $E_G$ be a smooth principal $G$-bundle over $\Sigma$. The space $\DDD$  of connections on $E_G$ is an  affine space modelled 
on the complex vector space $\Omega^1(\Sigma,E_G(\lieg))$, where $E(\glie)$ is the adjoint bundle associated to $E$ via
the adjoint representation of $G$ in $\glie$. Hence, $\DDD$ has a natural integrable complex structure $\mathbf{J}$, defined by
\begin{equation}\label{eq:bfJ}
\mathbf{J}(\dot D) = i \dot D, \quad \mbox{for}\;\;
D \in \DDD\;\; \mbox{and}\;\; \dot D \in T_D\DDD=\Omega^1(\Sigma,E_G(\lieg)),
\end{equation}
induced by the complex structure of $\lieg$ and compatible with the affine structure. 

The space $\DDD$ is furthermore holomorphic symplectic, with holomorphic symplectic structure, defined by 
\begin{equation}\label{eq:OmegaJ}
\Omega_{\mathbf{J}}(\dot D_1,\dot D_2)=\int_\Sigma B(\dot D_1\wedge \dot D_2),
\end{equation}
which is preserved by the action of the gauge group $\GGG$ of $E_G$. 
We have  that $\Lie \GGG=\Omega^0(\Sigma,E_G(\lieg))$ and hence its dual $(\Lie \GGG)^\ast$ can be identified with $\Omega^2(\Sigma,E_G(\lieg))$. 
As in the case of unitary connections, studied by Atiyah--Bott \cite{atiyah-bott:1982}, there exists an equivariant complex moment map for the action of $\GGG$ on $\DDD$ given by
\begin{equation}\label{curvature-cxmoment-map}
\begin{array}{rcl}
\mu_\GGG \colon \DDD   & \lra & \Omega^2(\Sigma,E_G(\lieg)) \\
D &\longmapsto &  F_D,
\end{array}\end{equation}
where $F_D$ is the curvature of $D$. The zero moment map condition, that is, $F_D = 0$, corresponds to the flatness of $D$. In this case one refers to the pair $(E_G,D)$ as a flat $G$-bundle.

In order to construct a Hausdorff moduli space of flat $G$-connections modulo gauge, one needs to impose a natural stability condition arising from Geometric Invariant Theory. If the connection $D$ is flat, the holonomy map defines a
representation $\rho$ of the fundamental group $\pi_1(\Sigma)$ of $\Sigma$ in $G$. The representation $\rho:\pi_1(\Sigma)\to G$ is said to be {\emph reductive} if the Zariski closure of the image of $\rho$ is a reductive group, or equivalently if $\ad_G \circ \rho$ is completely reducible, where  $\ad_G: G\to \GL(\lieg)$ is the adjoint representation. We say that  $(E_G,D)$ is \emph{reductive} if $\rho$ is reductive.

The existence of the moment map $\mu_\GGG$ implies that the flatness condition is invariant under the action of $\GGG$ on $\DDD$, and so is the reductiveness condition, and we can thus consider the moduli space
$$
\Mf(G):=\{\mbox{reductive}\;\; D\in \DDD\;\;  \mbox{with}\;\;
F_D=0\}/\GGG.
$$
The following important result, which we state informally, can be found in the work of Goldman \cite{goldman:1984}.

\begin{theorem}\label{FlatGmoduli}
The moduli space of flat $G$-connections $\Mf(G)$ on $E_G$ admits a natural structure of a (possibly singular) complex manifold, and a holomorphic symplectic structure induced by $\Omega_{\mathbf{J}}$ on its smooth locus.
\end{theorem}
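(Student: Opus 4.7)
The plan is to realize $\Mf(G)$ as an infinite-dimensional holomorphic symplectic reduction of $(\DDD,\mathbf{J},\Omega_{\mathbf{J}})$ by the gauge group $\GGG$, using the curvature moment map $\mu_\GGG$ of \eqref{curvature-cxmoment-map}, and then restrict to the reductive locus to ensure Hausdorffness.

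First I would check the basic compatibility: the $\GGG$-action on $\DDD$ preserves $\mathbf{J}$ (being affine-linear in the connection direction) and $\Omega_{\mathbf{J}}$ (by $\Ad$-invariance of $B$). Hence $\mu_\GGG^{-1}(0)$, the space of flat $G$-connections on $E_G$, is $\GGG$-invariant, and because reductiveness can be read off the holonomy representation it is preserved under gauge transformations.

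Next I would construct local charts on $\Mf(G)$ via a Kuranishi slice. At a flat connection $D$, after passing to suitable Sobolev completions, consider the deformation complex
\begin{equation*}
\Omega^0(\Sigma,E_G(\lieg)) \xrightarrow{d_D} \Omega^1(\Sigma,E_G(\lieg)) \xrightarrow{d_D} \Omega^2(\Sigma,E_G(\lieg)),
\end{equation*}
with cohomology groups $H^i_D$. A Coulomb-type slice $d_D^\ast a = 0$ is $L^2$-orthogonal to the $\GGG$-orbit through $D$, and elliptic theory for the Laplacian $d_Dd_D^\ast + d_D^\ast d_D$ produces a finite-dimensional Kuranishi map $\kappa \colon H^1_D \to H^2_D$ whose zero locus, quotiented by the stabiliser $\mathrm{Stab}(D)$, is a local model for $\Mf(G)$ near $[D]$. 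Because $d_D$ is $\CC$-linear, both $H^1_D$ and $\kappa$ are complex, so $\mathbf{J}$ descends and these charts assemble into a singular complex-analytic structure on the quotient. At an irreducible reductive $D$ one has $H^0_D = 0$, and by the Poincar\'e duality pairing induced by $B$ and the orientation of $\Sigma$ also $H^2_D \cong (H^0_D)^\ast = 0$, so $\kappa$ vanishes identically and the local model is the smooth complex vector space $H^1_D$. On this smooth locus, the Marsden--Weinstein argument shows that the restriction of $\Omega_{\mathbf{J}}$ to the slice descends to a closed holomorphic 2-form, whose non-degeneracy is again the Poincar\'e duality pairing on $H^1_D$; this yields the desired holomorphic symplectic structure.

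The main obstacle is the treatment at reducible reductive connections, where $\mathrm{Stab}(D)$ is positive-dimensional: one needs a Luna-type slice adapted to the infinite-dimensional setting together with a proof that distinct reductive $\GGG$-orbits are separated in the quotient topology, so that the local charts glue into a Hausdorff space. This is precisely where Goldman's argument \cite{goldman:1984} enters and controls the stratification of $\Mf(G)$ along orbit types. The smooth, complex and symplectic parts of the statement on the irreducible locus are comparatively routine once the slice theorem and elliptic estimates above are in place.
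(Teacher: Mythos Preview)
The paper does not actually supply a proof of this theorem: it is stated informally and attributed to Goldman \cite{goldman:1984}, with the subsequent section explaining how the complex-analytic structure is inherited from the $G$-character variety $\calR_c(G)$ via the holonomy correspondence (Proposition~\ref{flat-rep}). So there is nothing in the paper to compare your argument against line by line.

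That said, your sketch is a reasonable and essentially correct gauge-theoretic route, different in spirit from Goldman's original approach, which works directly on the finite-dimensional representation variety $\Hom(\pi_1(\Sigma),G)$ and obtains the symplectic form from the cup product on group cohomology $H^1(\pi_1(\Sigma),\lieg_{\Ad\rho})$. Your Kuranishi-slice construction recovers the same structure on the analytic side: the deformation complex you write is the de~Rham complex with coefficients in the flat bundle $E_G(\lieg)$, so its $H^1_D$ is canonically isomorphic to Goldman's $H^1(\pi_1,\lieg_{\Ad\rho})$, and the pairing you invoke coincides with his. One small technical point worth making explicit: to define $d_D^\ast$ and the Laplacian you must fix an auxiliary Hermitian structure (a reduction to $K$ and a metric on $\Sigma$); the resulting slice depends on these choices but the germ of the quotient does not. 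Your honest acknowledgement that the reducible strata and Hausdorffness require separate input (a Luna-type slice and closed-orbit separation for reductive representations) is exactly right, and is indeed where one ultimately appeals to the GIT description of the character variety rather than pure gauge theory.
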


\subsection{Surface group representations} 

The complex structure on the moduli space $\Mf(G)$ can be understood explicitly using $G$-representations of the fundamental group of the surface $\Sigma$ \cite{atiyah-bott:1982,hitchin:1987} (see Remark \ref{rem:algebraicst}). Let $\Hom(\pi_1(\Sigma),G)$ be the set of representations of $\pi_1(\Sigma)$ in $G$. Since
\begin{displaymath}
  \pi_{1}(\Sigma) = \big\langle a_{1},b_{1}, \dotsc, a_{g},b_{g} \suchthat
  \prod_{i=1}^{g}[a_{i},b_{i}] = 1 \big\rangle,
\end{displaymath}
$\Hom(\pi_1(\Sigma),G)$ can be naturally identified with the subset
of $G^{2g}$ consisting of $2g$-tuples
$(A_{1},B_{1}\dotsc,A_{g},B_{g})$ satisfying the algebraic equation
$\prod_{i=1}^{g}[A_{i},B_{i}] = 1$, and hence has a natural structure as a complex algebraic variety.

The \emph{moduli space of representations} of  $\pi_1(\Sigma)$ 
in $G$, or $G$-\emph{character variety} of $\pi_1(\Sigma)$  is defined as the quotient
$$
\calR(G):=\Hom^{+}(\pi_1(\Sigma),G)/ G,
$$
where
$\Hom^+(\pi_1(\Sigma),G)\subset \Hom(\pi_1(\Sigma),G)$ is the subvariety of  reductive representations, and $G$ acts by conjugation.
This quotient coincides with the GIT quotient and hence
$$
\calR(G)=\Hom(\pi_1(\Sigma),G)\sslash G
$$
is a complex algebraic variety. 

Given a representation
$\rho\colon\pi_{1}(\Sigma) \to G$, consider the  associated flat $G$-bundle on
$\Sigma$, defined as
\begin{math}
  E_{\rho} = \tilde{\Sigma}\times_{\rho}G
\end{math},
where $\tilde{\Sigma} \to \Sigma$ is the universal cover and $\pi_{1}(\Sigma)$ acts
on $G$ via $\rho$.
This gives in fact  an identification between the set of equivalence classes
of representations  $\Hom(\pi_1(\Sigma),G) / G$ and the set of equivalence classes
of flat $G$-bundles, which in turn is parametrized by the  cohomology set
$H^1(\Sigma,G)$. We can then assign  a topological invariant  to a representation
$\rho$ given by the  characteristic class $c(\rho):=c(E_{\rho})\in \pi_1(G)$
corresponding
to $E_{\rho}$. To define this, let $\widetilde G$ be the universal covering group
of $G$. We have an exact  sequence
$$
1 \lra\pi_1(G)\lra \widetilde G \lra G \lra 1
$$
which gives rise to the (pointed sets) cohomology sequence
\begin{equation}\label{characteristic}
H^1(\Sigma, {\widetilde G}) \lra  H^1(\Sigma, {G})\stackrel{c}{\lra}   H^2(\Sigma, \pi_1(G)).
\end{equation}

Since $\pi_1(G)$ is abelian, we have
$$
 H^2(\Sigma, \pi_1(G))\cong  \pi_1(G),
$$
and $c(E_\rho)$ is defined as the image of $E$ under the last map in
(\ref{characteristic}). Thus the class $c(E_\rho)$ measures  the
obstruction to lifting $E_\rho$ to a flat $\widetilde G$-bundle, and hence to
lifting $\rho$ to a representation of $\pi_1(\Sigma)$ in $\widetilde G$.  

For a fixed $c\in \pi_1(G)$, the
\emph{moduli space of reductive representations} $\mathcal{R}_c(G)$
with topological invariant $c$
is defined as the subvariety
\begin{equation}\label{eq:RdG}
  \calR_c(G):=\{\rho \in \mathcal{R}(G)  \; \mid \;
  c(\rho)=c\}.
\end{equation}

\begin{proposition}\label{flat-rep}
  Let $E_G$ be a smooth principal $G$-bundle over $\Sigma$ with topological class $c$. Then there is a complex analytic isomorphism
  $$
\Mf(G)\cong {\calR}_c(G).
  $$
\end{proposition}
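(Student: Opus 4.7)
The strategy is to build the isomorphism set-theoretically via holonomy, upgrade it to a biholomorphism at smooth (simple) points by identifying tangent spaces with group cohomology through the de Rham theorem, and finally extend through matching Luna-slice local models at reductive but non-simple points.

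First, I would define a map $\Phi\colon \Mf(G)\to\calR_c(G)$ as follows. Given a flat connection $D$ on $E_G$, fix a basepoint $x_0\in\Sigma$ and a frame of $(E_G)_{x_0}$; parallel transport along loops based at $x_0$ produces the holonomy representation $\rho_D\colon\pi_1(\Sigma,x_0)\to G$, well-defined up to conjugation by $G$ once the frame is forgotten. Since a gauge transformation acts on the frame in the same way, the class $[\rho_D]$ depends only on the gauge class $[D]\in\Mf(G)$, and reductivity of $D$ matches reductivity of $\rho_D$ as recalled in Section~2.1. The flat bundle $E_{\rho_D}$ is isomorphic as a smooth principal $G$-bundle to $E_G$, so $c(\rho_D)$ equals the topological class $c$, and $\Phi$ lands in $\calR_c(G)$. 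For the inverse $\Psi$, given $\rho\in\Hom^+(\pi_1(\Sigma),G)$, form $E_\rho=\widetilde\Sigma\times_\rho G$ with its canonical flat connection descending from the trivial connection on $\widetilde\Sigma\times G$; since $G$ is connected, smooth principal $G$-bundles over $\Sigma$ are classified by $\pi_1(G)\cong H^2(\Sigma,\pi_1(G))$, so the equality $c(E_\rho)=c$ provides a smooth isomorphism $\sigma\colon E_\rho\xrightarrow{\sim} E_G$, and pushing forward the canonical flat connection yields $D_\rho\in\DDD$ whose gauge class is independent of $\sigma$ and of the conjugate of $\rho$. The identities $\Phi\circ\Psi=\Id$ and $\Psi\circ\Phi=\Id$ follow by unwinding the constructions.

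Next, I would upgrade $\Phi$ to a biholomorphism on the smooth locus, where $D$ is simple. A Kuranishi slice at $D$ identifies a neighbourhood of $[D]$ in $\Mf(G)$ with the first de Rham cohomology $H^1(\Omega^\bullet(\Sigma,E_G(\glie)),d_D)$; the complex structure on this tangent space is induced by the complex structure on $\glie$, i.e.\ by the restriction of $\mathbf{J}$ from \eqref{eq:bfJ}. The tangent space to $\calR_c(G)$ at $[\rho_D]$ is the group cohomology $H^1(\pi_1(\Sigma),\glie_{\rho_D})$, with its natural complex structure from $\glie$. The differential $d\Phi_{[D]}$ is the classical de Rham-to-group-cohomology comparison map, implemented by integrating $d_D$-closed $\glie$-valued $1$-forms over loops; this is manifestly $\CC$-linear in the form, so $d\Phi$ is a $\CC$-linear isomorphism and $\Phi$ is a biholomorphism on the open smooth locus.

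Finally, around a strictly polystable point $[D]$ with stabiliser $H\subset\GGG$, I would compare Luna-type local models on both sides: a neighbourhood of $[D]$ is modelled by $\mathrm{Kur}(D)\sslash\HC$, where $\mathrm{Kur}(D)\subset H^1_{d_D}(\Sigma,E_G(\glie))$ is cut out by the Maurer--Cartan obstruction, and a neighbourhood of $[\rho_D]$ is modelled by a Luna slice of the $G$-action on $Z^1(\pi_1(\Sigma),\glie_{\rho_D})$ with residual action of $Z_G(\rho_D)$. Holonomy induces an isomorphism between these germs that is equivariant under the natural identification $\HC\cong Z_G(\rho_D)$ and $\CC$-linear on tangent spaces by the previous argument. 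The main obstacle is precisely this last step: one must produce matching holomorphic Kuranishi slices on both sides whose quotients realise the respective complex analytic germs, which is standard in the spirit of Goldman--Millson and Simpson but requires careful bookkeeping of the deformation theory with its $\HC$-action.
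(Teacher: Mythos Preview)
The paper does not actually prove this proposition: it is stated as a classical fact, with the surrounding text pointing to \cite{atiyah-bott:1982,hitchin:1987,goldman:1984} and, via Remark~\ref{rem:algebraicst}, to \cite{simpson:1994}. So there is no proof in the paper to compare your proposal against.

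Your outline is the standard route (holonomy for the set-theoretic bijection, the de Rham isomorphism $H^1_{d_D}(\Sigma,E_G(\glie))\cong H^1(\pi_1(\Sigma),\glie_{\rho_D})$ for $\CC$-linearity on tangent spaces at simple points, and Luna/Kuranishi local models at strictly reductive points), and it is correct in spirit. Two small comments. First, before differentiating you should argue that $\Phi$ is holomorphic as a map of complex spaces: parallel transport along a fixed loop is the solution of a linear ODE whose coefficients depend $\CC$-linearly on $D$, hence the unquotiented holonomy map $\{F_D=0\}\to G^{2g}$ is holomorphic and $\GGG$-equivariant, and the induced map on quotients is then analytic by construction of the analytic structures on both sides. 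Second, your honest flag on the last step is well placed: producing compatible holomorphic slices with matching residual actions is exactly the content of the Goldman--Millson deformation theory, and in a self-contained write-up you would either quote that or invoke Simpson's framework rather than redo it.
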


\begin{remark}\label{rem:algebraicst}
We warn the reader that, even though there is a real analytic isomorphism between the moduli space of flat $G$-connections $\Mf(G)$ and the $G$-character variety with fixed topological invariant $c$, these spaces carry very different complex structures (see e.g. \cite{simpson:1994}).
\end{remark}

\subsection{Harmonicity equations and hyperk\"ahler structure} 

Let $E_G$ be a smooth principal $G$-bundle over $\Sigma$. Let $h\in \Omega^0(E_G(G/K))$ be a reduction
of structure group of $E_G$ to $K$, and let $E_K$ be the corresponding principal $K$-bundle. From the decomposition
$\lieg=\liek\oplus i\liek$ one has that any connection $D \in \DDD$ on $E_G$ admits a decomposition
$$
D=A+i\psi
$$
where $A$ is a connection on $E_K$ and $\psi\in \Omega^1(X,E_K(\liek))$. We thus have a one-to-one correspondence
\begin{equation}\label{eq:TADDD}
\begin{array}{rcl}
 \AAA \times \Omega^1(\Sigma,E_K(\liek))  & \lra &  \DDD \\
(A,\psi) &\longmapsto & D:=A+i\psi,
\end{array}\end{equation}
where $\AAA$ denotes the space of connections on $E_K$. As observed by Hitchin \cite{hitchin:1987}, the previous isomorphism naturally identifies the space of $G$-connections $\DDD$ with the cotangent space to $\AAA$, suggesting a relation to hyperk\"ahler geometry.

To recall this relation, we fix a complex structure $J$ on $\Sigma$ compatible with the orientation, and denote by $X = (\Sigma,J)$ the corresponding Riemann surface. Via $J$, the space $\AAA$ inherits an integrable complex structure \cite{atiyah-bott:1982}, and hence so does $\DDD$ via its identification with $T^*\AAA$ above. Explicitly, this is given by
$$
\mathbf{I}(a,\dot \psi) = (Ja,-J\dot \psi)
$$
where $Ja = - a(J)$. Let us denote
$\mathbf{K}:= \mathbf{I}\mathbf{J}$. Then, one can check that $\mathbf{K}$ defines a new complex structure and furthermore that the three complex structures
\begin{equation}\label{eq:IJK}
\begin{split}
\mathbf{I}(a,\dot \psi) & = (Ja,-J\dot \psi),\\
\mathbf{J}(a,\dot \psi) & = (-\dot \psi,a),\\
\mathbf{K}(a,\dot \psi) & = (-J \dot \psi,-Ja),
\end{split}\end{equation}
satisfy the quaternion relations, and define a flat hyperk\"ahler structure on $\DDD \cong T^*\AAA$ with metric tensor
\begin{equation}\label{eq:gHK}
\mathbf{g}((a,\dot \psi),(a,\dot \psi)) = \int_\Sigma B(a \wedge Ja) + \int_\Sigma B(\dot \psi \wedge J\dot \psi).
\end{equation}
The symplectic structures $\omega_{\mathbf{I}} = \mathbf{g}(\mathbf{I},)$ and $\omega_{\mathbf{K}} = \mathbf{g}(\mathbf{K},)$ combine to give the $\mathbf{J}$-holomorphic symplectic structure studied in Section \ref{ssec:FlatG}
\begin{equation}\label{eq:omegaIK}
\Omega_{\mathbf{J}} = \omega_{\mathbf{I}} + i \omega_{\mathbf{K}}.
\end{equation}
A Hamiltonian action of the gauge group $\KKK$ of $E_K$ for the symplectic structure $\omega_{\mathbf{J}} = \mathbf{g}(\mathbf{J},)$, given by
\begin{equation}\label{eq:omehaJ}
\omega_{\mathbf{J}}((a_1,\dot \psi_1),(a_2,\dot \psi_2)) =  \int_\Sigma B(a_1 \wedge J\dot \psi_2) - \int_\Sigma B(\dot \psi_1 \wedge Ja_2),
\end{equation}
was studied by Corlette \cite{corlette:1988}.

\begin{proposition}\label{prop:corlette}
The action of $\KKK$ on $(\DDD,\omega_{\mathbf{J}})$ is Hamiltonian, with equivariant moment map given by
$$
\langle \mu_{\KKK}^{\mathbf{J}}(D), \zeta \rangle = \int_\Sigma B(\zeta,d_A(J\psi)),
$$
where $\zeta \in \Lie \KKK=\Omega^0(\Sigma,E_K(\liek))$.
\end{proposition}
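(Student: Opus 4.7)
The plan is to verify directly the moment map identity $d\langle \mu_{\KKK}^{\mathbf{J}}, \zeta \rangle = \iota_{X_\zeta}\omega_{\mathbf{J}}$ for the proposed moment map, using the coordinates $(A,\psi)$ on $\DDD \cong \AAA \times \Omega^1(\Sigma, E_K(\liek))$ provided by the splitting $D = A + i\psi$. In these coordinates, the infinitesimal gauge action generated by $\zeta \in \Lie\KKK$ is $X_\zeta = (-d_A\zeta, [\zeta, \psi])$, obtained by differentiating the gauge action on connections and the adjoint action on $\liek$-valued $1$-forms.

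The first step is to compute the left-hand side at a variation $(a,\dot\psi)\in T_D\DDD$. Expanding $d_{A+ta}(J(\psi + t\dot\psi))$ to first order in $t$, the differential of $\mu^\zeta(A,\psi) = \int_\Sigma B(\zeta, d_A(J\psi))$ splits as
\[
\int_\Sigma B(\zeta, [a, J\psi]) \;+\; \int_\Sigma B(\zeta, d_A(J\dot\psi)).
\]
In the second integral I would integrate by parts using Stokes (recall $\Sigma$ is closed) and the Leibniz rule $d\,B(\zeta, J\dot\psi) = B(d_A\zeta, J\dot\psi) + B(\zeta, d_A J\dot\psi)$, producing $-\int_\Sigma B(d_A\zeta \wedge J\dot\psi)$. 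This is exactly the first summand of $\iota_{X_\zeta}\omega_{\mathbf{J}}$ arising from inserting $-d_A\zeta$ into the first slot of \eqref{eq:omehaJ}.

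For the remaining term, $\Ad$-invariance of the Killing form, in the form $B(\zeta, [X, Y]) = B([\zeta, X], Y)$, rewrites $\int_\Sigma B(\zeta, [a, J\psi]) = \int_\Sigma B([\zeta, a]\wedge J\psi)$. The key geometric input I will need is that on a two-dimensional surface the bilinear pairing $(\alpha, \beta) \mapsto \alpha \wedge J\beta$ on ordinary $1$-forms is \emph{symmetric} (it equals $g(\alpha, \beta)\,\omega$ for $g = \omega(\cdot, J\cdot)$). Combining this surface-specific symmetry $\alpha \wedge J\beta = \beta \wedge J\alpha$ with the antisymmetry $B([\zeta, X], Y) = -B(X, [\zeta, Y])$ yields
\[
B([\zeta, a] \wedge J\psi) \;=\; -B([\zeta, \psi] \wedge Ja),
\]
which is precisely the second summand of $\iota_{X_\zeta}\omega_{\mathbf{J}}$. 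This closes the moment map identity. I expect this sign-matching to be the only delicate point: the two-dimensional identity $\alpha\wedge J\beta = \beta\wedge J\alpha$ is critical, and a single sign slip there would spoil the calibration of $\omega_{\mathbf{J}}$ in \eqref{eq:omehaJ} against the candidate $\mu_{\KKK}^{\mathbf{J}}$.

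Finally, for equivariance, under a gauge transformation $g\in\KKK$ one has the standard identities $d_{g\cdot A} \circ \Ad(g) = \Ad(g)\circ d_A$ and $\Ad(g)\psi$ for the transformed Higgs-type field, together with $\Ad$-invariance of the Killing form. Substituting gives $\langle \mu_{\KKK}^{\mathbf{J}}(g\cdot(A,\psi)), \Ad(g)\zeta\rangle = \int_\Sigma B(\Ad(g)\zeta, \Ad(g)\,d_A(J\psi)) = \langle \mu_{\KKK}^{\mathbf{J}}(A,\psi), \zeta\rangle$, which is the required $\Ad^*$-equivariance. All other verifications are routine bookkeeping.
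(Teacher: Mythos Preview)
Your verification is correct: the direct computation of $d\langle\mu_{\KKK}^{\mathbf{J}},\zeta\rangle$ against $\iota_{X_\zeta}\omega_{\mathbf{J}}$ goes through exactly as you describe, and the two-dimensional identity $B(\alpha\wedge J\beta)=B(\beta\wedge J\alpha)$ together with $\Ad$-invariance of $B$ is precisely what aligns the $a$-variation term with the second summand of $\omega_{\mathbf{J}}$. The paper itself does not supply a proof of this proposition---it is stated with attribution to Corlette~\cite{corlette:1988}---so there is no in-paper argument to compare against; your direct verification is the standard one and matches the style of the analogous moment-map computations carried out later in the paper (e.g.\ Proposition~\ref{Flat-exreduction}).
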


The moment maps $\mu_\GGG = \mu_{\KKK}^{\mathbf{I}} + i \mu_{\KKK}^{\mathbf{K}}$ and $\mu_{\KKK}^{\mathbf{J}}$ combine to give a hyperk\"ahler moment map for the $\KKK$-action on $\DDD$
$$
{\bmu}_\KKK = (\mu_{\KKK}^{\mathbf{I}},\mu_{\KKK}^{\mathbf{J}},\mu_{\KKK}^{\mathbf{K}})
$$
whose zero locus corresponds to solutions of the \emph{harmonicity equations}
\begin{equation}\label{harmonicity}
\begin{split}
F_A -\frac{1}{2}[\psi,\psi]&= 0,\\
d_A\psi&=0,\\
d_A^\ast\psi&=0.
\end{split}
\end{equation}
Note that the first two are equivalent to the flatness condition for $D$, via the formula
$$
F_D = F_A -\frac{1}{2}[\psi,\psi] + i d_A\psi,
$$
while the last equation is equivalent to the vanishing of the moment map $\mu_{\KKK}^{\mathbf{J}}$, due to the standard identity $\ast \psi = J\psi$.

The harmonicity condition for the reduction $h$ admits an interpretation in terms of the the holonomy representation
$\rho:\pi_1(\Sigma)\to G$ corresponding to $D$. The reduction $h$ is equivalent to a $\pi_1(\Sigma)$-equivariant smooth map
$$
\tilde{h}: \tilde{\Sigma} \lra G/K,
$$
where $\tilde{\Sigma}$ is the universal covering of $\Sigma$. Here $\pi_1(\Sigma)$ acts on $ \tilde{\Sigma}$ by Deck transformations, and via the representation $\rho$ on $G/K$. The symmetric space $G/K$ is equipped with a canonical $G$-invariant Riemannian metric determined by the Killing form $B$, and (\ref{harmonicity}) is
equivalent to the harmonicity of $\tilde{h}$ in the usual sense. 

The equations \eqref{harmonicity} are invariant under the action of $\KKK$ and hence we can consider the moduli space
$$
\Mh(G):=\{(A,\psi)\;\;\mbox{satisfying}\;\;
(\ref{harmonicity})\}/\KKK,
$$
given by the infinite-dimensional hyperk\"ahler reduction ${\bmu}_\KKK^{-1}(0)/\KKK$.

The following theorem, due to  Donaldson \cite{donaldson:1987} for $G=\SL(2,\C)$,  and Corlette \cite{corlette:1988} in general, is one of the main structural results of the theory.

\begin{theorem}\label{corlette}
The map \eqref{eq:TADDD} induces a homeomorphism
$$
\Mh(G)\cong\Mf(G).
$$
Furthermore, $\Mh(G)$ carries a natural hyperk\"ahler structure on its smooth locus, whose generic complex structure is biholomorphic to $\Mf(G)$.
\end{theorem}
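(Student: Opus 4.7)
The plan is to proceed in three stages: construct the bijection, identify it as a homeomorphism, then obtain the hyperkähler structure with the correct biholomorphism class for $\mathbf{J}$.

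\textbf{Bijection.} First I would observe that the correspondence \eqref{eq:TADDD} sends pairs $(A,\psi)$ to $D=A+i\psi$ with curvature
\begin{equation*}
F_D \;=\; F_A - \tfrac{1}{2}[\psi,\psi] + i\, d_A\psi,
\end{equation*}
so the first two equations of \eqref{harmonicity} are exactly the complex moment-map condition $F_D=0$. Hence $(A,\psi)\in \bmu_\KKK^{-1}(0)$ already determines a flat connection $D$, and reductiveness of the associated representation follows from the existence of a harmonic reduction $h$, which realises the splitting of the holonomy. The assignment $(A,\psi)\mapsto D$ descends to a well-defined continuous map $\Mh(G)\to \Mf(G)$ because a $\KKK$-gauge transformation of $(A,\psi)$ corresponds, under \eqref{eq:TADDD}, to a $\GGG$-gauge transformation of $D$ preserving the reduction $h$.

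\textbf{Surjectivity and injectivity.} For surjectivity, given a reductive flat $G$-bundle $(E_G,D)$, I would invoke the harmonic metric theorem of Donaldson \cite{donaldson:1987} for $G=\SL(2,\CC)$ and Corlette \cite{corlette:1988} in general: there exists a reduction $h$ to $K$ such that the associated equivariant map $\tilde h\colon \tilde X\to G/K$ is harmonic, equivalently such that the decomposition $D=A+i\psi$ satisfies $d_A^\ast\psi=0$. Such a triple $(A,\psi,h)$ lies in $\bmu_\KKK^{-1}(0)$ and maps to $[D]$. For injectivity, if $(A_i,\psi_i)$ both produce gauge-equivalent $D_i$, then the gauge transformation $g\in\GGG$ intertwining them carries one reduction $h_1$ to a second harmonic reduction $g\cdot h_1$; by the uniqueness statement in Corlette's theorem (on each irreducible summand of the reductive splitting, unique up to a scalar in the centraliser), $g\cdot h_1 = h_2$, so $g$ is in fact $\KKK$-valued modulo the stabiliser, establishing injectivity. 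Continuity of the inverse follows from the standard slice/elliptic-regularity argument for the harmonicity equations, which can be imported directly from \cite{corlette:1988}; this is where the main analytical difficulty sits, but it is already resolved in the literature.

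\textbf{Hyperkähler structure and the complex structure $\mathbf{J}$.} On the smooth locus, the identification $\Mh(G) = \bmu_\KKK^{-1}(0)/\KKK$ is a hyperkähler quotient of the flat hyperkähler manifold $(\DDD,\mathbf{g},\mathbf{I},\mathbf{J},\mathbf{K})$ by the $\KKK$-action, whose hyperkähler moment map is $\bmu_\KKK=(\mu_\KKK^\mathbf{I},\mu_\KKK^\mathbf{J},\mu_\KKK^\mathbf{K})$; applying the Hitchin--Karlhede--Lindström--Roček construction yields three compatible Kähler structures on the smooth part of $\Mh(G)$. To identify the $\mathbf{J}$-complex structure with $\Mf(G)$, I would use that with respect to $\mathbf{J}$ the holomorphic moment map is $\mu_\KKK^\mathbf{I}+i\mu_\KKK^\mathbf{K}=\mu_\GGG=F_D$ and the remaining real moment map is $\mu_\KKK^\mathbf{J}$. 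Thus the $\mathbf{J}$-Kähler quotient is
\begin{equation*}
\bigl(\mu_\GGG^{-1}(0)\cap (\mu_\KKK^\mathbf{J})^{-1}(0)\bigr)/\KKK,
\end{equation*}
which by the Kempf--Ness principle (in the gauge-theoretic form used by Corlette) coincides with the complex-analytic quotient of the reductive (i.e.\ polystable) locus of $\mu_\GGG^{-1}(0)$ by the complexified gauge group $\GGG$, and this last quotient is exactly $\Mf(G)$. Combined with the previous step this gives the biholomorphism $(\Mh(G),\mathbf{J})\cong \Mf(G)$, using Theorem~\ref{FlatGmoduli} on the right-hand side for the complex-symplectic structure.

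\textbf{Main obstacle.} The single nontrivial input is Corlette's existence-and-uniqueness theorem for harmonic reductions on reductive flat bundles; the rest of the argument is bookkeeping for the hyperkähler quotient and an application of the Kempf--Ness correspondence. I would therefore structure the write-up so that the harmonic-metric theorem is stated as a black box and the compatibility of the quaternionic structures with the $\KKK$-action and with \eqref{eq:omegaIK} is checked directly from \eqref{eq:IJK}--\eqref{eq:omehaJ} and Proposition~\ref{prop:corlette}.
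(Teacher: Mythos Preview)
Your proposal is correct and follows the standard argument. However, you should be aware that the paper does not actually give its own proof of this theorem: it is stated as a background result, attributed to Donaldson \cite{donaldson:1987} for $G=\SL(2,\CC)$ and Corlette \cite{corlette:1988} in general, and is invoked as a black box throughout the rest of the paper. Your outline --- reducing everything to Corlette's existence and uniqueness of harmonic reductions, then reading off the hyperk\"ahler structure from the HKLR quotient and identifying the $\mathbf{J}$-complex structure via Kempf--Ness --- is exactly the standard proof strategy and is what the cited references do; there is nothing to compare against in the paper itself.
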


In particular, the previous theorem states that a solution of the harmonicity equations \eqref{harmonicity} has an associated reductive flat connection $D = A + i \psi$. Conversely, any pair $(A,\psi)$ such that $D = A + i \psi$ is a reductive flat connection, admits an element on its $\GGG$-orbit, unique up to the $\KKK$-action, solving \eqref{harmonicity}. The non-generic complex structures on $\Mh(G)$ correspond to the moduli space of $G$-Higgs bundles over $X$, induced by complex structure $\mathbf{I}$, and its conjugate, as we discuss in the next sections. 

\subsection{$\boldsymbol{G}$-Higgs bundles and Hitchin's equations}
\label{section-hitchin-equations}

Through this section we consider a connected semisimple complex Lie group $G$, with Lie algebra $\glie$, and
fix an antiholomorphic involution $\tau$ of $G$ defining  a maximal compact subgroup $K:=G^\tau\subset G$, with Lie algebra
$\liek$. The Killing form will be denoted by $B$. 

We also consider a compact Riemann surface $X$ with canonical line bundle $K_X$.

A \emph{$G$-Higgs bundle} over $X$ is a pair   $(E,\varphi)$
consisting of a  holomorphic principal $G$-bundle
$E$ over $X$ and an element $\varphi\in H^0(X,E(\glie)\otimes K_X)$, that is a holomorphic section
of $E(\glie)\otimes K_X$, where $E(\glie)$ is the adjoint bundle associated to $E$ via
the adjoint representation of $G$ in $\glie$.

There are appropriate notions of stability, semistability and
polystability and we can consider $\cM^{Higgs}(G)$ to be the \emph{moduli space of isomorphism classes polystable $G$-Higgs bundles over $X$}.

Let $h\in \Omega^0E(X,G/K)$ be a smooth reduction of structure group of $E$ to $K$ and let $E_h$ be the corresponding smooth principal $K$-bundle. 
Let $F_h\in \Omega^2(X,E_h(\liek))$ be the curvature of the unique connection compatible with $h$ and the holomorphic structure on $E$, defined by the Chern--Singer correspondence~\cite{chern,singer}. 
Here $E_h(\liek)$ is the adjoint bundle of $E_h$.

Abusing notation, let
$$
\tau\colon\Omega^{1,0}(X,E(\lieg)) \lra\Omega^{0,1}(X,E(\lieg))
$$
be the conjugation  defined by $h$ combined with the conjugation
on complex $1$-forms on $X$.
The Higgs field $\varphi$ can be
viewed as a $(1,0)$-form $\varphi \in \Omega^{1,0}(X,E(\lieg))$, then
$\tau(\phi)\in \Omega^{0,1}(X,E(\lieg))$, and hence 
  $[\varphi,\tau(\varphi)]\in \Omega^2(X,E_h(\liek))$.

The proof of the following theorem is due to Hitchin \cite{hitchin:1987} for $G=\SL(2,\C)$ and Simpson \cite{simpson:1988} for arbitrary $G$.

\begin{theorem} \label{higgs-hk}
  A reduction $h$ of structure group of $E$ to $K$
  satisfies Hitchin's equation
  $$
  F_h -[\varphi,\tau(\varphi)]= 0 
  $$
  if and only if $(E,\varphi)$ is polystable.
\end{theorem}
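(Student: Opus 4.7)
The plan is to follow the moment-map strategy that generalizes the Narasimhan--Seshadri--Donaldson correspondence to the Higgs case, as originally carried out by Hitchin for $\SL(2,\CC)$ and by Simpson in general. Fix the smooth principal $K$-bundle $E_h$ and work on the infinite-dimensional space
$$
\NNN = \AAA \times \Omega^{1,0}(X,E_h(\glie)),
$$
whose points are pairs $(A,\varphi)$ with $A$ a unitary connection on $E_h$. By the Chern correspondence, pairs satisfying $\dbar_A\varphi=0$ are in one-to-one correspondence with $G$-Higgs bundles $(E,\varphi)$ whose underlying smooth reduction is $h$. The complex gauge group $\GGG$ of $E_h$ acts on $\NNN$ with orbits corresponding to isomorphism classes of $G$-Higgs bundles. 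The compact gauge group $\KKK$ preserves a natural K\"ahler structure on $\NNN$ and has moment map
$$
\mu_\KKK(A,\varphi) = F_A - [\varphi,\tau(\varphi)],
$$
so Hitchin's equation for $h$ is precisely the zero moment map condition. The problem then becomes: given a $\GGG$-orbit representing a polystable Higgs bundle, produce a point in its orbit lying in $\mu_\KKK^{-1}(0)$.

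For the easy direction (solution implies polystability), I would contract the Hitchin equation with any $\varphi$-invariant reduction of $E$ to a parabolic subgroup $P\subset G$ given by an antidominant character, and integrate by parts to obtain a Chern--Weil identity. Positivity of the curvature term $\int_X \|[\varphi,\tau(\varphi)]\|^2$ forces the slope inequalities defining semistability of $(E,\varphi)$; equality forces a holomorphic splitting which, together with $\tau$-invariance of the reduction, yields a decomposition into stable Higgs sub-objects, i.e. polystability.

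For the harder direction (polystable implies the existence of a solution), after reducing to the stable case by solving on each stable factor of a polystable decomposition, the plan is to introduce the Donaldson functional twisted by the Higgs term, regarded as a functional on the symmetric space of reductions of $E$ to $K$ within a fixed $\GGG$-orbit of $(\dbar_E,\varphi)$. One shows that this functional is convex along geodesics in the space of reductions, and that the zero of its gradient is precisely a solution of the Hitchin equation. One then proves that the functional is bounded below and proper when $(E,\varphi)$ is stable, and extracts a minimizer either by running the Yang--Mills--Higgs heat flow (Simpson) or by a continuity method deforming $F_h-[\varphi,\tau(\varphi)] = c\,\Id\cdot\omega$ from $c\neq0$ to $c=0$.

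The main obstacle is the analytic input for properness: along any divergent path of reductions, one must show that the Donaldson--Simpson functional grows, and extract from a suitably rescaled limit a weakly holomorphic sub-bundle that is $\varphi$-invariant, thereby producing a Higgs destabilizing sub-object and contradicting stability. This is the Uhlenbeck--Yau type regularity argument adapted to the Higgs setting by Simpson, where $\varphi$-invariance of the limiting subsheaf is the key new point beyond the pure bundle case. The remainder of the proof is essentially formal moment-map/K\"ahler reduction theory.
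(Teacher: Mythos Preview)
Your outline is essentially the Hitchin--Simpson strategy, but note that the paper does not actually prove this theorem: it simply attributes the result to Hitchin for $\SL(2,\CC)$ and to Simpson in general, and states it without proof. Your sketch of the moment-map interpretation, the Donaldson--Simpson functional, and the Uhlenbeck--Yau regularity step for the destabilizing $\varphi$-invariant subsheaf is the standard route taken in those references, so there is nothing to compare beyond observing that you have supplied a proof outline where the paper gives only a citation.
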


From the point of view of moduli spaces it is convenient to fix a $C^\infty$ principal $K$-bundle $E_K$ and study the moduli space of solutions to \emph{Hitchin's equations} for a pair $(A,\varphi)$ consisting of  a connection $A$ on $E_K$ and $\varphi\in \Omega^{1,0}(X,E_K(\lieg))$:
\begin{equation}\label{hitchin}
\begin{split}
F_A -[\varphi,\tau(\varphi)]&=0,\\
\dbar_A\varphi&=0.
\end{split}
\end{equation}
Here  $\dbar_A$ is the $(0,1)$ part  of the covariant derivative $d_A$ defined by $A$.
This is of course the Dolbeault operator defined by the  holomorphic structure $J_A$ on $E_G$ corresponding  to  $A$ via de Chern--Singer correspondence.
The bundle $E_G$ is the smooth principal $G$-bundle obtained from $E_K$ by extension of structure group.
The gauge group $\KKK$  of automorphisms of  $E_K$ acts on the space of solutions, and the moduli space of solutions is
$$
\Mg(G):= \{ (A,\varphi)\;\;\mbox{satisfying}\;\;
(\ref{hitchin})\}/\KKK.
$$
Now, from Theorem \ref{higgs-hk} one has the following.

\begin{theorem}\label{hk}
There is a homeomorphism
\[
\cM^{Higgs}(G)\cong \Mg(G).
\]
\end{theorem}

To explain this correspondence we interpret the moduli
space of $G$-Higgs bundles in terms of pairs $(J_E, \varphi)$ consisting
of a holomorphic structure $J_E$ on the smooth $G$-bundle
$E_G$ obtained from $E_K$ by the extension of structure group, and
$\varphi\in \Omega^{1,0}(X,E_{G}(\lieg))$
satisfying $\dbar_E\varphi=0$.
Such pairs are in correspondence with  $G$-Higgs bundles $(E,\varphi)$,
where $E$ is the holomorphic $G$-bundle defined by $J_E$ on $E_{G}$, and $\dbar_E\varphi=0$,  that is,
$\varphi\in H^0(X,E(\lieg)\otimes K_X)$. 
The moduli space of polystable $G$-Higgs bundles $\cM^{Higgs}(G)$ can now
be identified with the orbit space
$$
\{(J_E,\varphi)\;\;\mbox{with}\;\; \dbar_E\varphi=0\;\;\mbox{such that $(E,\varphi)$ is  polystable}\}/\GGG,
$$
where $\GGG$ is the gauge group of automorphisms of $E_{G}$, which is in fact
the complexification of $\KKK$.
Since, by the Chern--Singer correspondence, there is a bijection between
connections on $E_K$ and holomorphic structures on $E_{G}$,
the correspondence given in Theorem \ref{hk} can be interpreted
by saying that in the $\GGG$-orbit of a polystable $G$-Higgs
bundle $(J_{E_0},\varphi_0)$ one can find another Higgs bundle
$(J_E,\varphi)$
whose corresponding pair $(A,\varphi)$ satisfies
$F_A -[\varphi,\tau(\varphi)]= 0$, and this is unique up to gauge
transformations in $\KKK$.

\subsection{Hitchin's equations and hyperk\"ahler structure}
\label{hitchin-hyperkahler}

Coming back to the setup of Section \ref{section-hitchin-equations},
let $E_K$ be a smooth principal $K$-bundle  
over $X$, and let $E_G$ the principal $G$-bundle obtained by extension
of structure group.

The space $\AAA$  of connections on $E_K$ is an  affine space modelled 
on  $\Omega^1(X,E_K(\liek))$, which is equipped 
with  a symplectic structure defined by
$$
\omega_\AAA(a,b)=\int_X B(a \wedge b),\;\;\;\mbox{for}\;\;
A\in \AAA\;\; \mbox{and}\;\; a, b \in T_A\AAA=\Omega^1(X,E_K(\liek)).
$$
This is obviously closed since it is independent of $A\in\AAA$.

Now, the set $\CCC$ of holomorphic structures on $E_{G}$
is an affine space modelled on $\Omega^{0,1}(X,E_{G}(\lieg))$, and it has
a complex structure $J_\CCC$, induced  by the complex structure of the 
Riemann surface, which is defined by
$$
J_\CCC(\alpha)=i\alpha,\;\;\;\mbox{for}\;\; J_E\in \CCC\;\; 
\mbox{and} \;\; \alpha\in T_{J_E}\CCC=\Omega^{0,1}(X,E_{G}(\lieg)).
$$

As mentioned above, the Chern--Singer correspondence establishes an isomorphism
\begin{equation}\label{connections-holomorphic}
\begin{array}{rcl} 
  \AAA  &\lra & \CCC  \\ 
  A   &\longmapsto &  J_A.
\end{array}\end{equation}
The corresponding  tangent spaces are in bijection under the map
\begin{equation}\label{connections-holomorphic-tangent}  
\begin{array}{rcl} 
  \Omega^{0,1}(X,E_G(\lieg)) &\lra & \Omega^1(X,E_K(\liek))  \\ 
  \alpha   &\longmapsto &  a:=\alpha -\tau(\alpha).
\end{array}\end{equation}
Under this identification $J_\CCC$ defines  a complex structure $J_\AAA$ on $\AAA$.

The symplectic structure $\omega_\AAA$ and the complex structure $J_\AAA$
define a K\"ahler structure on $\AAA$, which is preserved by the action of 
the gauge group $\KKK$ of $E_K$. 
We have  that $\Lie \KKK=\Omega^0(X,E_K(\liek))$ and hence 
its dual $(\Lie \KKK)^\ast$ can be identified with $\Omega^2(X,E_K(\liek))$.
By Atiyah--Bott  \cite{atiyah-bott:1982} one has 
that the moment map for the action of $\KKK$ on $\AAA$ is given by
\begin{equation}\label{curvature-moment-map}
\begin{array}{rcl}
 \AAA   & \lra & \Omega^2(X,E_K(\liek)) \\
A &\longmapsto &  F_A.
\end{array}\end{equation}

Now, let us denote 
$\Omega=\Omega^{1,0}(X,E_{G}(\lieg))$. The linear  space $\Omega$ has a natural
complex structure $J_\Omega$   defined by multiplication by $i$, and a 
symplectic structure given by
$$
\omega_\Omega (\eta,\nu)=i\int_X B(\eta\wedge \nu^\ast),
\;\;\;\mbox{for}\;\;\varphi\in \Omega\;\;\mbox{and}\;\; \eta,\nu\in T_\varphi\Omega=\Omega.
$$
We can now consider $\AAA\times \Omega$ with the symplectic structure
$\omega_\AAA + \omega_\Omega$ and complex structure
$J_\AAA + J_\Omega$. The action of $\KKK$ on $\AAA\times \Omega$ preserves 
these symplectic and complex structures and there is a moment map
given by (\cite{hitchin:1987})
\begin{equation}\label{moment-higgs}
\begin{array}{rcl}
 \AAA\times \Omega  & \lra & \Omega^2(X,E_K(\liek)) \\
(A,\varphi)& \longmapsto & F_A-[\varphi,\tau(\varphi)].
\end{array}
\end{equation}

Let us denote
$J_1:=J_\AAA + J_\Omega$. Via the identification $\AAA\cong\CCC$, we have for $\alpha\in \Omega^{0,1}(X,E_{G}(\lieg))$ and $\eta\in\Omega^{1,0}(X,E_G(\lieg))$ the following three complex structures on $\AAA\times \Omega$:
\begin{align*}
 J_1(\alpha,\eta)& = (i\alpha,i\eta),\\
 J_2(\alpha,\eta)& = (-i\tau(\eta),i\tau(\alpha)),\\
 J_3 (\alpha,\eta)& = (\tau(\eta),-\tau(\alpha)).
\end{align*}
These complex structures correspond to the complex structures $\mathbf{I},\mathbf{J}$ and $\mathbf{K}$ in~\eqref{eq:IJK} via the Chern--Singer correspondence.

Consequently, $J_i$, $i=1,2,3$, satisfy the quaternion relations, and define
a hyperk\"ahler structure on  $\AAA\times \Omega$, with symplectic structures
$\omega_i$, $i=1,2,3$, where $\omega_1=\omega_\AAA + \omega_\Omega$.
The symplectic structures $\omega_2$ and $\omega_3$ combine to define the $J_1$-holomorphic symplectic structure
$\omega_c:=\omega_2 + i\omega_3$,  given
by
$$
\omega_c((a,\eta), (b,\nu))=\int_X B(\eta\wedge \beta - \nu\wedge \alpha),
$$
for
$(a,\eta), (b,\nu)\in T_{A,\varphi)}(\AAA\times \Omega) \cong \Omega^1(X,E_K(\liek))\times \Omega^{1,0}(X,E_G(\lieg))$,
where $\alpha$ and $\beta$ are the elements in
$\Omega^{0,1}(X,E(\lieg)$ corresponding to $a$ and $b$ respectively under the
identification (\ref{connections-holomorphic-tangent}).

The action of the gauge group $\KKK$ on $\AAA\times \Omega$ preserves the 
hyperk\"ahler structure and there are  moment maps
given by
$$
\mu_1(A,\varphi)=F_A - [\varphi,\tau(\varphi)],\;\;\;
\mu_2(A,\varphi)=\Real(\dbar_E\varphi),\;\;\;
\mu_3(A,\varphi)=\Imaginary(\dbar_E\varphi).
$$
We thus have that ${\bmu}^{-1}({0})/\KKK$
is the moduli space $\Mg(G)$
of solutions to Hitchin's equations (\ref{hitchin}).
In particular, if we consider the set ${\bmu}_*^{-1}({0})$ of  irreducible solutions
(equivalently, smooth) one has that
$$
{\bmu}_*^{-1}({0})/\KKK$$
is a hyperk\"ahler manifold which, by 
Theorem \ref{hk},  is homeomorphic to the subvariety of smooth
points of the moduli space
$\cM^{Higgs}(G)$, consisting  of stable and simple $G$-Higgs bundles on $E_G$.

\subsection{Non-abelian Hodge correspondence}

Let us now denote by $\cM_c(G)$ the moduli space of isomorphism classes of polystable $G$-Higgs bundles where the $G$-bundle has tological class $c\in \pi_1(G)$. We have the following.

\begin{theorem}\label{na-Hodge}
There is a homeomorphism
$\calR_c(G) \cong \cM_c(G)$. 
\end{theorem}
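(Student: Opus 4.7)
The plan is to derive Theorem~\ref{na-Hodge} by assembling the chain of correspondences already established in the previous subsections, together with one additional identification between the harmonic and Hitchin moduli spaces, and to check that the topological class $c \in \pi_1(G)$ is preserved throughout. Concretely, Proposition~\ref{flat-rep} gives a complex analytic isomorphism $\calR_c(G) \cong \Mf(G)$, Theorem~\ref{corlette} (Donaldson--Corlette) yields a homeomorphism $\Mf(G) \cong \Mh(G)$ by assigning to each reductive flat $G$-connection its unique harmonic reduction, and Theorem~\ref{hk} (Hitchin--Simpson) identifies $\Mg(G) \cong \cM(G)$ by solving Hitchin's equations inside the $\GGG$-orbit of each polystable $G$-Higgs bundle. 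It therefore suffices to establish a natural homeomorphism $\Mh(G) \cong \Mg(G)$.

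For this missing link I would use the hyperk\"ahler description developed in Section~\ref{hitchin-hyperkahler}. Under the identification $\DDD \cong \AAA \times \Omega$ induced by \eqref{eq:TADDD} and the Chern--Singer correspondence \eqref{connections-holomorphic}, the real $1$-form $\psi \in \Omega^1(X,E_K(\liek))$ and the $\liek$-valued Higgs field $\varphi \in \Omega^{1,0}(X,E_G(\lieg))$ are related by $\psi = \varphi - \tau(\varphi)$, equivalently $\varphi$ is the $(1,0)$-component of $i\psi$. Under this correspondence, the curvature equation $F_A - \tfrac12 [\psi,\psi] = 0$ becomes $F_A - [\varphi,\tau(\varphi)] = 0$, the condition $d_A^*\psi = 0$ translates into the vanishing of the $(1,1)$-part of $d_A\psi$ via $* = J$, and together with $d_A\psi = 0$ it is equivalent to $\dbar_A \varphi = 0$. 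Thus the harmonicity equations \eqref{harmonicity} and Hitchin's equations \eqref{hitchin} cut out the same zero locus $\bmu_\KKK^{-1}(0)$ of the hyperk\"ahler moment map described in Section~\ref{hitchin-hyperkahler}, and quotienting by the common gauge group $\KKK$ yields $\Mh(G) \cong \Mg(G)$.

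Finally, the topological class $c \in \pi_1(G)$ is preserved at every step: on the representation side it is defined by \eqref{characteristic} as the characteristic class of $E_\rho$; on the flat and harmonic sides it is the topological type of the underlying smooth principal $G$-bundle $E_G$, which coincides with the type of the extension of $E_K$ by $K \hookrightarrow G$ on the Hitchin side; and Theorem~\ref{hk} preserves $c$ because the Chern--Singer correspondence does not alter the underlying $C^\infty$ bundle. Chaining the four homeomorphisms then gives the asserted $\calR_c(G) \cong \cM_c(G)$.

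The main obstacle in this plan is the matching of the harmonicity and Hitchin equations, which, although essentially a dictionary between real and holomorphic descriptions of the same object, requires a careful verification that the three components $\mu_1, \mu_2, \mu_3$ of the hyperk\"ahler moment map on $\AAA \times \Omega$ encode, respectively, the self-duality condition $F_A - [\varphi,\tau(\varphi)]=0$ and the real and imaginary parts of $\dbar_A \varphi = 0$, in a manner compatible with $\psi \leftrightarrow \varphi$ and with the identification \eqref{eq:omegaIK} of $\Omega_{\mathbf{J}} = \omega_{\mathbf{I}} + i \omega_{\mathbf{K}}$. Once this bookkeeping is in place, the theorem follows essentially formally from the results already collected.
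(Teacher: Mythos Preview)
Your proposal is correct and follows essentially the same approach as the paper: the paper's proof of Theorem~\ref{na-Hodge} consists precisely of chaining Proposition~\ref{flat-rep}, Theorem~\ref{corlette}, and Theorem~\ref{hk} together with the ``missing link'' $\Mg(G)\cong\Mh(G)$, which the paper records as Proposition~\ref{prop:circle} via the correspondence $(A,\varphi)\mapsto (A,\psi:=-i(\varphi-\tau(\varphi)))$. Your identification of $\psi$ with $\varphi-\tau(\varphi)$ is off by a factor of $-i$ relative to the paper's normalization (and is not quite consistent with your own ``$(1,0)$-component of $i\psi$'' remark), but this is a harmless convention issue and the argument is otherwise the same.
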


\begin{remark}
On the open subvarieties defined by the smooth 
points of $\mathcal{R}_c(G)$ and $\mathcal{M}_c(G)$, this correspondence is in 
fact an isomorphism of real analytic varieties.
\end{remark}

Theorem \ref{na-Hodge} is proved by combining Proposition \ref{flat-rep} with Theorems \ref{corlette} and \ref{hk}, together with the following proposition.
\begin{proposition}\label{prop:circle}
The correspondence $(A,\varphi)\mapsto (A,\psi:=-i(\varphi-\tau(\varphi)))$
defines  a homeomorphism
$$
\Mg(G)\cong \Mh(G).
$$
\end{proposition}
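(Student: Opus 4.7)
The plan is to construct an explicit $\KKK$-equivariant homeomorphism between the solution sets of Hitchin's equations \eqref{hitchin} and the harmonicity equations \eqref{harmonicity}, and then descend to the $\KKK$-quotients.

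The first step is to observe that the assignment $\varphi \mapsto \psi := -i(\varphi - \tau(\varphi))$ defines a $\KKK$-equivariant real linear isomorphism $\Omega^{1,0}(X,E_K(\lieg)) \to \Omega^1(X,E_K(\liek))$, with continuous inverse $\psi \mapsto \varphi = (i\psi)^{1,0}$ (extract the $(1,0)$-part). Equivariance is immediate because $\KKK$ acts by compact gauge transformations, which commute with $\tau$. Hence $(A,\varphi) \mapsto (A,\psi)$ is a $\KKK$-equivariant homeomorphism of the ambient configuration spaces.

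The second step is to verify that this bijection matches the two systems of equations. For the curvature equation, a local computation with $\varphi = (P+iQ)\,dz$, $P,Q \in E_K(\liek)$, gives $\psi = 2Q\,dx + 2P\,dy$ and
\[
[\varphi,\tau(\varphi)] = -4[P,Q]\,dx\wedge dy = \tfrac{1}{2}[\psi,\psi],
\]
so $F_A - [\varphi,\tau(\varphi)] = 0$ is equivalent to $F_A - \tfrac{1}{2}[\psi,\psi] = 0$. For the holomorphicity equation, we use that on a Riemann surface $\Omega^{2,0} = \Omega^{0,2} = 0$, so $d_A\varphi = \bar\partial_A\varphi$ and $d_A\tau(\varphi) = \partial_A\tau(\varphi) = \tau(\bar\partial_A\varphi)$. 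Combined with $J\varphi = i\varphi$ and $J\tau(\varphi) = -i\tau(\varphi)$ this yields the key identity
\[
J\psi = \varphi + \tau(\varphi),
\]
and therefore, using $\ast\psi = J\psi$ and $d_A^* = -\ast d_A\ast$ on $1$-forms,
\[
d_A\psi = -i\bigl(\bar\partial_A\varphi - \tau(\bar\partial_A\varphi)\bigr), \qquad d_A^*\psi = -\ast\bigl(\bar\partial_A\varphi + \tau(\bar\partial_A\varphi)\bigr).
\]
The $\tau$-antiinvariant and $\tau$-invariant parts of $\bar\partial_A\varphi$ in $\Omega^{1,1}(X,E_K(\lieg))$ are thus detected, independently, by $d_A\psi$ and $d_A^*\psi$, so $\bar\partial_A\varphi = 0$ is equivalent to the conjunction $d_A\psi = 0$ and $d_A^*\psi = 0$.

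Combining these two equivalences, the homeomorphism $(A,\varphi)\leftrightarrow(A,\psi)$ restricts to a $\KKK$-equivariant homeomorphism between the solution sets of \eqref{hitchin} and \eqref{harmonicity}; passing to $\KKK$-quotients yields $\Mg(G)\cong\Mh(G)$. The only delicate point, and the one requiring the most care, is the sign/convention bookkeeping in the step above: one must ensure that the pair $(d_A,d_A^*)$ really separates the real and imaginary (with respect to $\tau$) components of $\bar\partial_A\varphi$, which is exactly what the computation of $J\psi$ in terms of $\varphi + \tau(\varphi)$ provides. Everything else is a $\KKK$-equivariant change of variables.
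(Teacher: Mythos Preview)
Your proof is correct. The paper does not actually provide a proof of this proposition: it is stated as a standard fact, with only the brief remark afterward that the affine map $(A,\varphi)\mapsto A - i\varphi + i\tau(\varphi)$ identifies $\AAA\times\Omega$ with complex structure $J_2$ and $\DDD$ with complex structure $\mathbf{J}$, referring to Hitchin's original paper. Your argument supplies exactly the missing explicit verification --- the $\KKK$-equivariant change of variables and the matching of the three equations --- and is the expected computation.

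One small remark on conventions: the paper defines $Ja = -a(J\cdot)$ on $1$-forms, under which $J$ acts as $-i$ (not $+i$) on $(1,0)$-forms; this flips the sign in your identity to $J\psi = -(\varphi + \tau(\varphi))$. The conclusion is unaffected, since only the pair of equations $d_A\psi = 0$ and $d_A(J\psi)=0$ matters, and these still separate the $\tau$-antiinvariant and $\tau$-invariant parts of $\bar\partial_A\varphi$ exactly as you describe.
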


One can easily see that under the 
affine map 
$$
\begin{array}{rcl}
\AAA\times \Omega & \lra &\DDD,\\
(A,\varphi)&\longmapsto & A - i \varphi + i\tau(\varphi),
\end{array}
$$
$\AAA\times\Omega$ with complex structure $J_2$ corresponds to $\DDD$ with complex
structure $\mathbf{J}$ (see \cite{hitchin:1987}).

Now, Theorems  \ref{hk}  and \ref{corlette} 
can be regarded as existence theorems, establishing the non-emptiness
of the hyperk\"ahler quotient, obtained by focusing on different
complex structures. For Theorem \ref{hk} one gives a special
status to the complex 
structure $J_1$. Combining the symplectic forms determined by  $J_2$
and $J_3$ one has  the $J_1$-holomorphic symplectic form
$\omega_c=\omega_2 +i\omega_3$ on $\AAA\times\Omega$. The
gauge group $\GGG=\KKK^\C$ acts on $\AAA\times \Omega$ preserving $\omega_c$.
The symplectic quotient construction can also be extended to the
holomorphic situation (see e.g. \cite{kobayashi:1987}) to obtain
the holomorphic symplectic quotient 
$\{(J_E,\varphi)\;:\;\dbar_E\varphi=0\}/\GGG $.
What Theorem \ref{hk} says is that for a class 
$[(J_E,\varphi)]$ in this quotient
to have a representative (unique up to $K$-gauge) satisfying
$\mu_1=0$ it is necessary  and sufficient that the pair 
$(J_E, \varphi)$ be polystable. 
This identifies the hyperk\"ahler quotient to 
the set of equivalence classes of polystable $G$-Higgs bundles on $E_G$.
If one now takes $J_2$ on $\AAA\times \Omega$ or equivalently 
$\DDD$ with $\mathbf{J}$ and argues in a similar way, one gets Theorem 
\ref{corlette} identifying the  hyperk\"ahler quotient to 
the set of equivalence classes of reductive  flat
connections on $E_G$.

\section{K\"ahler fibrations and coupled harmonic equations}
\label{extended}

\subsection{K\"ahler fibrations}\label{ssec:Kfibration}

In this section we recall some basic aspects of the theory of K\"ahler fibrations, which we will use later. We follow closely \cite{GLS,mundet:2000}.

A K\"ahler fibration is a holomorphic fibre bundle $\XXX \to \JJJ$, with typical fibre a K\"ahler manifold $(\DDD,\omega_\DDD)$, and such that transition functions between local holomorphic trivializations are contained in the group of K\"ahler isometries of $\DDD$. Equivalently, $\XXX$ admits a smoothly varying K\"ahler structure on the fibres, that we shall denote $\widehat{\boldsymbol{\omega}}$. An Ehresmann connection $\Gamma$ on $\XXX$, given by a distribution of horizontal subspaces
$$
H^\Gamma \subset T \XXX,
$$
is said to be K\"ahler if the associated parallel transport is by K\"ahler isometries of the fibres. By the fibrewise non-degeneracy of $\widehat{\boldsymbol{\omega}}$, Ehresmann connections on $\XXX$ correspond to real $2$-forms $\boldsymbol{\sigma} \in \Omega^{2}(\XXX,\RR)$, which restrict to $\widehat{\boldsymbol{\omega}}$ on the fibres. Given such a $\boldsymbol{\sigma}$, the horizontal subspace of the associated connection $\Gamma^{\boldsymbol{\sigma}}$ is
\begin{equation}\label{eq:Hsigma}
H^{\boldsymbol{\sigma}} = \{v \in T\XXX \; | \; i_v\boldsymbol{\sigma}_{|V\XXX} = 0\},
\end{equation}
where $V\XXX \subset T\XXX$ is the vertical bundle of the fibration $\XXX \to \JJJ$. Conversely, given a connection $\Gamma$, we can define
$$
\boldsymbol{\sigma}_\Gamma = \widehat{\boldsymbol{\omega}}(\Gamma,\Gamma),
$$
where $\Gamma \colon T\XXX \to V\XXX$ is the projection induced by $\Gamma$. 

\begin{definition}
Given a K\"ahler fibration $(\XXX \to \JJJ,\widehat{\boldsymbol{\omega}})$ with K\"ahler Ehresmann connection $\Gamma$, a \emph{coupling form} for the connection $\Gamma$ is a closed 2-form $\boldsymbol{\sigma} \in \Omega^{1,1}(\XXX,\RR)$ on $\XXX$ restricting on the fibres to $\widehat{\boldsymbol{\omega}}$ and such that $H^{\boldsymbol{\sigma}} = H^\Gamma$.
\end{definition}

The following basic result shows that the existence of a closed $(1,1)$-form restricting on the fibres to $\widehat{\boldsymbol{\omega}}$ is indeed a sufficient condition for the existence of a K\"ahler connection. The proof follows similarly to~\cite[Theorem 1.2.4]{GLS} and is omited.

\begin{theorem}\label{th:Kfib}
A K\"ahler fibration $(\XXX \to \JJJ,\widehat{\boldsymbol{\omega}})$ admits a K\"ahler connection provided that there exists $\boldsymbol{\sigma} \in \Omega^{1,1}(\XXX,\RR)$ restricting on the fibres to $\widehat{\boldsymbol{\omega}}$ and such that
$$
d \boldsymbol{\sigma} = 0.
$$
If this is the case, the horizontal subspace of the induced connection is given by $H^{\boldsymbol{\sigma}}$ in \eqref{eq:Hsigma}, and $\boldsymbol{\sigma}$ is a coupling form for $\Gamma^{\boldsymbol{\sigma}}$. 
\end{theorem}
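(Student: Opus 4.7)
The plan is to verify that the distribution $H^{\boldsymbol{\sigma}}$ defined in~\eqref{eq:Hsigma} constitutes a smooth Ehresmann connection on $\XXX \to \JJJ$ whose parallel transport acts by K\"ahler isometries on the fibres; the remaining statements of the theorem then follow directly from the definition of a coupling form.

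First, I would verify that $H^{\boldsymbol{\sigma}}$ is a smooth complement to the vertical bundle $V\XXX$. Given $v \in T_x\XXX$, the existence of a unique decomposition $v = v_h + v_v$ with $v_h \in H^{\boldsymbol{\sigma}}_x$ and $v_v \in V_x\XXX$ reduces to solving
$$
\widehat{\boldsymbol{\omega}}(v_v, w) = \boldsymbol{\sigma}(v, w) \quad \text{for all } w \in V_x\XXX,
$$
which admits a unique, smoothly varying solution by the fibrewise non-degeneracy of $\widehat{\boldsymbol{\omega}}$. This yields a smooth Ehresmann connection $\Gamma^{\boldsymbol{\sigma}}$ with horizontal distribution $H^{\boldsymbol{\sigma}}$.

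To show that parallel transport preserves the fibrewise symplectic form, fix a vector field $Y$ on $\JJJ$ and let $\widetilde{Y}$ be its horizontal lift. Cartan's magic formula together with $d\boldsymbol{\sigma} = 0$ yields $\mathcal{L}_{\widetilde{Y}}\boldsymbol{\sigma} = d(i_{\widetilde{Y}}\boldsymbol{\sigma})$. By construction of $H^{\boldsymbol{\sigma}}$, the $1$-form $i_{\widetilde{Y}}\boldsymbol{\sigma}$ annihilates every vertical vector, so its pullback to each fibre vanishes, and naturality of the exterior derivative then gives $\mathcal{L}_{\widetilde{Y}}\widehat{\boldsymbol{\omega}} = 0$. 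For the complex structure, I would exploit the $(1,1)$-type of $\boldsymbol{\sigma}$ together with the $J$-invariance of $V\XXX$ (which holds since $\XXX \to \JJJ$ is holomorphic): for $v \in H^{\boldsymbol{\sigma}}$ and $w \in V\XXX$,
$$
\boldsymbol{\sigma}(Jv, w) = \boldsymbol{\sigma}(J^2v, Jw) = -\boldsymbol{\sigma}(v, Jw) = 0,
$$
showing that $H^{\boldsymbol{\sigma}}$ is $J$-stable, and hence that horizontal lifting commutes with the almost complex structure of $\XXX$. Combined with holomorphicity of the fibration, this ensures that the flow of $\widetilde{Y}$ is fibrewise biholomorphic, which together with the symplectic preservation gives K\"ahler isometries.

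All remaining requirements for $\boldsymbol{\sigma}$ to be a coupling form for $\Gamma^{\boldsymbol{\sigma}}$ are either hypotheses of the theorem (restriction to fibres, $(1,1)$-type, closedness) or built into the construction (the equality $H^{\boldsymbol{\sigma}} = H^{\Gamma^{\boldsymbol{\sigma}}}$). The most delicate point of this plan is passing from $J$-stability of $H^{\boldsymbol{\sigma}}$ to genuine preservation of the fibrewise integrable complex structure by the finite-time flow of $\widetilde{Y}$; one either invokes a local holomorphic trivialisation of $\XXX \to \JJJ$ to reduce the infinitesimal statement to vertical endomorphisms, or checks directly that $\mathcal{L}_{\widetilde{Y}} J$ vanishes on $V\XXX$, with the computation closely paralleling \cite[Theorem 1.2.4]{GLS}.
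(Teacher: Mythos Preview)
Your proposal is correct and follows precisely the standard argument from \cite[Theorem~1.2.4]{GLS}, which is exactly what the paper invokes: the authors state that ``the proof follows similarly as in \cite[Theorem~1.2.4]{GLS} and is omitted.'' Your write-up therefore supplies the details the paper chose to suppress, and your identification of the one subtle step (passing from $J$-stability of $H^{\boldsymbol{\sigma}}$ to fibrewise holomorphicity of the finite-time flow) is apt and handled appropriately by the reference to GLS.
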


The existence of a coupling form is a non-trivial question, related to the topology of the fibration. In the next result we recall a remarkable identity which relates the curvature of $\Gamma$ with the horizontal part of any coupling $\boldsymbol{\sigma}$ (see \cite[Eq. (1.12)]{GLS}). Recall that the curvature of an Ehresmann connection $\Gamma$ on $\XXX \to \JJJ$ is the basic 2-form $F_\Gamma \in \Omega^2(\XXX,V\XXX)$ defined by
$$
F_\Gamma(v_1,v_2) = - \Gamma[v_1^\Gamma,v_2^\Gamma],
$$
where $v_j \in T\XXX$, and $v_j^\Gamma \in H^\Gamma$ denotes the horizontal projection with respect to $\Gamma$. 

\begin{proposition}\label{p:Kfibide}
Assume the hypothesis of Theorem \ref{th:Kfib}. Then
\begin{equation}\label{eq:magicid}
\widehat{\boldsymbol{\omega}}(F_\Gamma(v_1,v_2),\Gamma \cdot) = - d(\boldsymbol{\sigma}(v_1^\Gamma,v_2^\Gamma))(\Gamma \cdot).
\end{equation}
In particular, if the horizontal part of the coupling form $\boldsymbol{\sigma}$ is non-constant along the fibres, then the connection $\Gamma$ cannot be flat.
\end{proposition}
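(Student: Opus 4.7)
My approach is to extract \eqref{eq:magicid} pointwise from $d\boldsymbol{\sigma}=0$ by applying it to two horizontal directions and one vertical direction. Given $v_1,v_2\in T_x\XXX$, I extend their horizontal projections $v_i^\Gamma$ to horizontal vector fields $X_i$ near $x$ as horizontal lifts of arbitrary local extensions of $d\pi(v_i)$ to $\JJJ$, where $\pi\colon\XXX\to\JJJ$ denotes the projection. For any vertical vector field $W$, Cartan's formula unpacks $d\boldsymbol{\sigma}(X_1,X_2,W)=0$ into six terms, and the strategy is to kill four of them. The two directional-derivative terms $X_i\cdot\boldsymbol{\sigma}(X_j,W)$ vanish identically because $X_j\in H^{\boldsymbol{\sigma}}$ forces $\boldsymbol{\sigma}(X_j,W)\equiv 0$ by the very definition \eqref{eq:Hsigma}.

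The key structural point is that the bracket terms $\boldsymbol{\sigma}([X_i,W],X_j)$ also vanish. The flow of a horizontal lift $X_i$ covers the corresponding flow on $\JJJ$, hence sends fibres to fibres and pushes vertical vectors to vertical vectors; it follows that $[X_i,W]$ is vertical, and its $\boldsymbol{\sigma}$-pairing with the horizontal field $X_j$ is zero. What remains is
$$
W\cdot\boldsymbol{\sigma}(X_1,X_2) \;=\; \boldsymbol{\sigma}([X_1,X_2],W).
$$
Splitting $[X_1,X_2]=[X_1,X_2]^\Gamma+\Gamma[X_1,X_2]$, the horizontal component pairs trivially with $W$ under $\boldsymbol{\sigma}$, whereas $\Gamma[X_1,X_2]=-F_\Gamma(v_1,v_2)$ at $x$ by the definition of curvature (using $X_i^\Gamma=X_i$). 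Since $\boldsymbol{\sigma}$ restricts to $\widehat{\boldsymbol{\omega}}$ on $V\XXX$, the right-hand side becomes $-\widehat{\boldsymbol{\omega}}(F_\Gamma(v_1,v_2),W)$; taking $W$ to be the vertical projection of an arbitrary $v\in T_x\XXX$ yields \eqref{eq:magicid}.

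The second assertion then follows by contraposition: if $F_\Gamma\equiv 0$, the identity forces $W\cdot\boldsymbol{\sigma}(v_1^\Gamma,v_2^\Gamma)=0$ for every vertical $W$, which says precisely that the horizontal part of $\boldsymbol{\sigma}$ is constant along each fibre. The only delicate technical point above is the verticality of $[X_i,W]$, which is exactly why the extensions must be taken as horizontal lifts of basic vector fields rather than as arbitrary horizontal extensions of $v_i^\Gamma$; a minor a posteriori check is that the right-hand side of \eqref{eq:magicid} does not depend on the chosen extension, which is automatic since the left-hand side is manifestly pointwise.
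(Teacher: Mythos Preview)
Your proof is correct and is precisely the standard argument for this identity; the paper itself does not give a proof but simply cites \cite[Eq.~(1.12)]{GLS}, where the same computation appears. Your care in extending $v_i^\Gamma$ as horizontal lifts of basic vector fields (rather than arbitrary horizontal extensions) is exactly the point needed to ensure $[X_i,W]$ is vertical, and the remaining bookkeeping is accurate.
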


When the fibre $\DDD$ is compact and simply connected, the previous result implies the existence of a natural choice of coupling form $\boldsymbol{\sigma}$, given by imposing a `Gysin type condition' (see \cite[Th. 1.4.1]{GLS}). The idea is that, under this hypothesis, the vertical vector field $F_\Gamma(v_1,v_2)$ is Hamiltonian, and hence it determines up to a constant the vertical variation of the function $\boldsymbol{\sigma}(v_1^\Gamma,v_2^\Gamma)$. More explicitly, provided that there exists a coherent choice of a fibrewise Hamiltonian function $\mu(v_1,v_2)$ for $F_\Gamma(v_1,v_2)$, for any pair of horizontal vector fields $v_1,v_2$, that is,
$$
d\mu(v_1,v_2) = \widehat{\boldsymbol{\omega}}(F_\Gamma(v_1,v_2),)
$$
the natural choice of coupling form is
\begin{equation}\label{eq:couplingsigma}
\boldsymbol{\sigma} = \widehat{\boldsymbol{\omega}}(\Gamma,\Gamma) - \mu,
\end{equation}
where $\mu$ is regarded as a basic 2-form. This is the case, for instance, if the connection $\Gamma$ has holonomy contained in a subgroup $K$ of Hamiltonian isometries of the fibre $\DDD$, and there is a $K$-equivariant moment map $\mu \colon \DDD \to \mathfrak{k}^*$. 

A fundamental question in the theory of K\"ahler fibrations is whether there exists a K\"ahler metric $\omega_\XXX$ on $\XXX$ which restricts to $\widehat{\boldsymbol{\omega}}$ on the fibres. This is the case, for instance, if $\JJJ$ is K\"ahler, with K\"ahler form $\boldsymbol{\omega}_\JJJ$, and both $\XXX$ and $\JJJ$ are compact. In this situation, and under the hypothesis of Theorem \ref{th:Kfib}, a natural choice of K\"ahler form on $\XXX$ is the \emph{minimal coupling}
$$
\boldsymbol{\omega}_\alpha = \boldsymbol{\omega}_\JJJ + \alpha \boldsymbol{\sigma}
$$
for a small `coupling constant' $0<\alpha\ll 1$.

In the present paper, we are interested in the study of K\"ahler fibrations satisfying the hypothesis of Theorem \ref{th:Kfib}. We will find situations in which the presymplectic manifold $(\XXX,\boldsymbol{\sigma})$ admits a Hamiltonian action of a real Lie group $\widetilde{\GGG}$, preserving the holomorphic fibration structure. This will have an impact on the structure of the group of symmetries, which will typically appear as a non-trivial extension
\begin{equation}
\label{eq:extcxgaugeabs}
  1\to \GGG \lra{} \widetilde{\GGG} \lra \HHH \to 1,
\end{equation}
of a (real) subgroup $\HHH$ of the holomorphic automorphisms of the base $\JJJ$ by a subgroup of K\"ahler isometries $\GGG$ of the typical fibre $\DDD$. Previous work on this type of reductions, in the context of gauge theory, can be found in \cite{AGG1,AGG3,AGGP,AGGPY}.

\subsection{The K\"ahler connection and the squared norm of the Higgs field}\label{ssec:potential}

We fix a smooth oriented compact surface $\Sigma$. We also consider a connected semisimple complex Lie group $G$, with Lie algebra $\glie$, and fix an antiholomorphic involution $\tau$ of $G$ defining a maximal compact subgroup $K:=G^\tau\subset G$, with Lie algebra $\liek$.

Let $E_G$ be a smooth principal $G$-bundle over $\Sigma$.
Let $\DDD$ be the space of connections on $E_G$, equipped with the constant complex structure \eqref{eq:bfJ}, and $\JJJ$ the space of complex structures on $\Sigma$ that are compatible with the given orientation. 
Consider the space
\begin{equation}\label{eq:configuration.1}
\XXX = \JJJ\times \DDD,
\end{equation}
endowed with the product complex structure
$$
\mathbb{J}(\dot J, \dot D) = (J\dot J,\mathbf{J} \dot D).
$$
Here we identify $T_J\JJJ$ with the space of endomorphisms $\dot J \colon T\Sigma \to T\Sigma$ such that $\dot J J = - J \dot J$.
In particular, the map $\pi_1 \colon \XXX \to \JJJ$ is holomorphic. By construction, a natural structure of K\"ahler fibration $\widehat{\boldsymbol{\omega}}_{\mathbf{J}}$ on $\XXX$ over $\JJJ$ is given by the  $\JJJ$-dependent symplectic structure $\omega_{\mathbf{J}}(J)$ on $\DDD$, defined in \eqref{eq:omehaJ}. We should emphasize that, even though the holomorphic fibration structure on $\XXX$ is trivial, $\widehat{\boldsymbol{\omega}}_{\mathbf{J}}$ has a non-trivial dependence on the base $\JJJ$.

In order to give a more explicit description of $\widehat{\boldsymbol{\omega}}_{\mathbf{J}}$, we consider a reduction $h\in \Omega^0(E_G(G/K))$ of the structure group of $E_G$ to $K$, and let $E_K$ be the corresponding principal $K$-bundle. From the decomposition
$\lieg=\liek\oplus i\liek$ one has the identification $\DDD \cong \AAA \times \Omega^1(\Sigma,E_K(\liek))$ in \eqref{eq:TADDD}, which induces a biholomorphism
\begin{equation}\label{eq:configuration.2}
\XXX \cong \JJJ\times \AAA\times \Omega^1(\Sigma,E_K(\liek)),
\end{equation}
where the right-hand side is endowed with the product complex structure
\begin{equation}\label{eq:uJ}
\mathbb{J}(\dot J, a , \psi) = (J\dot J,-\dot \psi, a).
\end{equation}
Consider the structure of K\"ahler fibration $\widehat{\boldsymbol{\omega}}_{\mathbf{J}}$ on $\XXX \to \JJJ$ defined above, which in the present setup can be described explicitly by
\begin{equation}\label{eq:hatomehaJ}
\widehat{\boldsymbol{\omega}}_{\mathbf{J}|(J,A,\psi)}((0,a_1,\dot \psi_1),(0,a_2,\dot \psi_2)) =  \int_\Sigma B(a_1 \wedge J\dot \psi_2) - \int_\Sigma B(\dot \psi_1 \wedge Ja_2).
\end{equation}
It is clear that $\widehat{\boldsymbol{\omega}}_{\mathbf{J}}$ has a non-trivial dependence on $J \in \JJJ$, and hence defines a structure of a non-trivial K\"ahler fibration on $\XXX \to \JJJ$.

The aim of this section is to prove that $(\XXX \to \JJJ,\widehat{\boldsymbol{\omega}}_{\mathbf{J}})$ admits a K\"ahler Ehresmann connection, following Theorem \ref{th:Kfib}. For this, we follow a suggestion by N. Hitchin (cf. \cite[Section 9]{hitchin:1987}) and define a real $(1,1)$-form $\boldsymbol{\sigma}^{\mathbb{J}} \in \Omega^{1,1}(\XXX,\RR)$ by
\begin{equation}\label{eq:sigmaabs}
\boldsymbol{\sigma}^{\mathbb{J}} = i\partial_{\mathbb{J}}\dbar_{\mathbb{J}}\|\psi\|^2_{L^2},
\end{equation}
where $\|\psi\|^2_{L^2}$ denotes the $J$-dependent $L^2$-norm of the `unitary Higgs field' $\psi$, namely
$$
\|\psi\|^2_{L^2} = \int_\Sigma B(\psi \wedge J \psi).
$$
In the next result, we derive an explicit formula for $\boldsymbol{\sigma}^{\mathbb{J}}$.

\begin{lemma}\label{lem:sigmaex}
\begin{equation}\label{eq:sigmaex}
\begin{split}
\boldsymbol{\sigma}^{\mathbb{J}}_{|(J,A,\psi)}((\dot J_1,a_1,\dot \psi_1),(\dot J_2,a_2,\dot \psi_2)) & =  \int_\Sigma B(a_1 \wedge (J\dot \psi_2 - \psi(\dot J_2)))\\
& - \int_\Sigma B(a_2 \wedge (J\dot \psi_1 - \psi(\dot J_1)))\\
& - \int_\Sigma B((J\dot \psi_1 - \tfrac{1}{2} \psi(\dot J_1))\wedge \psi(\dot J_2))\\
& + \int_\Sigma B((J\dot \psi_2 - \tfrac{1}{2} \psi(\dot J_2))\wedge \psi(\dot J_1))
\end{split}
\end{equation}
\end{lemma}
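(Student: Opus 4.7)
The strategy is a direct computation of $\boldsymbol{\sigma}^{\mathbb{J}} = i\partial_{\mathbb{J}}\bar\partial_{\mathbb{J}} f = \tfrac{1}{2}dd^c f$ for $f = \|\psi\|^2_{L^2}$ and $d^c f := -df \circ \mathbb{J}$. Extending each tangent vector $v_i = (\dot J_i, a_i, \dot\psi_i)$ to a constant vector field $V_i$ in the ambient affine space so that $[V_1,V_2]=0$, this reduces to $\boldsymbol{\sigma}^{\mathbb{J}}(v_1,v_2) = \tfrac{1}{2}\bigl(V_1 d^c f(V_2) - V_2 d^c f(V_1)\bigr)$.

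Using the paper's convention $(L\alpha)(v) = -\alpha(Lv)$, which in particular gives $\dot J \psi = -\psi(\dot J)$, together with the Riemann-surface identity $\int_{\Sigma} B(\alpha \wedge J\beta) = \int_{\Sigma} B(\beta \wedge J\alpha)$ (valid because $J$ acts as the Hodge star on $1$-forms, up to sign), I first compute
\[
df(\dot J,a,\dot\psi) = 2\int_{\Sigma} B(\dot\psi \wedge J\psi) - \int_{\Sigma} B(\psi \wedge \psi(\dot J)),
\]
and then, substituting $\mathbb{J}(\dot J,a,\dot\psi)=(J\dot J,-\dot\psi,a)$ and simplifying $\psi(J\dot J) = J\psi(\dot J)$ via the anticommutation $J\dot J = -\dot J J$ on $T\Sigma$,
\[
d^c f(\dot J,a,\dot\psi) = -2\int_{\Sigma} B(a\wedge J\psi) + \int_{\Sigma} B(\psi \wedge J\psi(\dot J)).
\]
To differentiate $d^c f(V_2)$ along $V_1$ I use $\tfrac{d}{dt}\big|_{t=0} (J_t\psi_t) = J\dot\psi_1 - \psi(\dot J_1)$ and $\tfrac{d}{dt}\big|_{t=0} \bigl(J_t\psi_t(\dot J_2)\bigr) = J\dot\psi_1(\dot J_2) - \psi(\dot J_2\dot J_1)$ (which follows from $\dot J_2 J = -J\dot J_2$). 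After antisymmetrization in $1\leftrightarrow 2$, the $a$-dependent part of $dd^c f$, once halved, reproduces the first two lines of \eqref{eq:sigmaex} directly; the remaining contributions consist of the four cross-terms $\int_{\Sigma} B(\dot\psi_i\wedge J\psi(\dot J_j))$ and $\int_{\Sigma} B(\psi\wedge J\dot\psi_i(\dot J_j))$, together with the commutator term $\int_{\Sigma} B(\psi\wedge \psi([\dot J_1,\dot J_2]))$.

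Two pointwise identities on $\Sigma$ complete the proof. The first is a direct algebraic computation in a $J$-adapted frame $(v,Jv)$ yielding $\tfrac{1}{2}\, B(\psi \wedge \psi([\dot J_1,\dot J_2])) = B(\psi(\dot J_1)\wedge \psi(\dot J_2))$, which produces the coefficient $\tfrac{1}{2}$ in the last two lines of \eqref{eq:sigmaex}. The step I expect to be the main obstacle is the second identity,
\[
\int_{\Sigma} B(\psi\wedge J\dot\psi_i(\dot J_j)) = \int_{\Sigma} B(\dot\psi_i \wedge J\psi(\dot J_j)),
\]
asserting that composition with $\dot J_j$ is self-adjoint on $1$-forms with respect to the $L^2$-pairing; it collapses the four cross-terms into twice the two appearing in \eqref{eq:sigmaex}. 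This identity relies essentially on the two-dimensionality of $\Sigma$: the constraint $\dot J J + J\dot J = 0$ for $\dot J\in T_J\JJJ$, written in any frame $(v, Jv)$, forces $\dot J$ to be a symmetric matrix of the form $\bigl(\begin{smallmatrix}a & b\\ b & -a\end{smallmatrix}\bigr)$, and hence self-adjoint for any $J$-compatible metric. Combining these identities, adding the commutator contribution, and dividing by two produces exactly \eqref{eq:sigmaex}.
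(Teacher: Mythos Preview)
Your proposal is correct and follows essentially the same route as the paper: compute $d\nu$ and $d^c_{\mathbb{J}}\nu$ for $\nu=\|\psi\|^2_{L^2}$, then take $dd^c_{\mathbb{J}}\nu = 2i\partial_{\mathbb{J}}\bar\partial_{\mathbb{J}}\nu$. The paper's proof is a three-line sketch that stops after writing down $d^c_{\mathbb{J}}\nu$ and asserting ``the statement follows''; you have filled in the $dd^c$ step explicitly, and the two pointwise identities you isolate --- $\tfrac12 B(\psi\wedge\psi([\dot J_1,\dot J_2]))=B(\psi(\dot J_1)\wedge\psi(\dot J_2))$ and the self-adjointness of $\alpha\mapsto\alpha(\dot J)$ for the $L^2$-pairing --- are exactly the content of the paper's subsequent Lemma~\ref{l:magicid} (applied with $C=\dot J_j$), together with the symmetry of $\dot J\in T_J\JJJ$ for the metric $g=\omega(\cdot,J\cdot)$, which the paper records just before~\eqref{eq:SympJ}.
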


\begin{proof}
We set $\nu(J,A,\psi):=\|\psi\|^2_{L^2}$ and calculate
$$
d\nu_{|(J,A,\psi)}(\dot J,a,\dot \psi) = 2\int_\Sigma B(\dot \psi \wedge J \psi) - \int_\Sigma B(\psi \wedge \psi(\dot J)).
$$
By definition of $\mathbb{J}$, we also have
\begin{align*}
d^c_{\mathbb{J}}\nu_{|(J,A,\psi)}(\dot J,a,\dot \psi) & = - 2\int_\Sigma B(a \wedge J \psi) + \int_\Sigma B(\psi \wedge \psi(J\dot J))\\
& = - 2\int_\Sigma B(a \wedge J \psi) - \int_\Sigma B(J \psi \wedge \psi(\dot J)).
\end{align*}
The statement follows now from $dd^c_{\mathbb{J}}\nu = 2i\partial_{\mathbb{J}}\dbar_{\mathbb{J}}\nu$.
\end{proof}

We will need the following technical lemma.

\begin{lemma}\label{l:magicid}
For $C \in \End T\Sigma$ and $\psi_1,\psi_2 \in \Omega^1(\Sigma,E_K(\liek))$, the following pointwise identity holds:
$$
B(\psi_1(C) \wedge \psi_2) + B(\psi_1 \wedge \psi_2(C)) = 0.
$$
\end{lemma}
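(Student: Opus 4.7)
The plan is to reduce the statement to a purely pointwise linear-algebra fact on $T_p\Sigma$, which in turn boils down to the elementary trace identity for top-degree forms in dimension two. Since the identity is asserted pointwise, I would fix $p\in\Sigma$, choose any basis $\{e_1,e_2\}$ of $T_p\Sigma$, and write $C e_j=\sum_i c_{ij}e_i$ and $X^k_j:=\psi_k(e_j)\in\liek$. Using $(\alpha\wedge\beta)(e_1,e_2)=\alpha(e_1)\beta(e_2)-\alpha(e_2)\beta(e_1)$ and $\psi(C)(e_j)=\psi(Ce_j)=\sum_i c_{ij}\psi(e_i)$, together with the symmetry of $B$, I would expand both $B(\psi_1(C)\wedge\psi_2)(e_1,e_2)$ and $B(\psi_1\wedge\psi_2(C))(e_1,e_2)$. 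A short bookkeeping check shows the off-diagonal coefficients $c_{12}$ and $c_{21}$ cancel pairwise, while the diagonal terms assemble into
\begin{equation*}
B(\psi_1(C)\wedge\psi_2)+B(\psi_1\wedge\psi_2(C))=\operatorname{tr}(C)\,B(\psi_1\wedge\psi_2).
\end{equation*}

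A more intrinsic way to package this would be to invoke the classical trace identity
\begin{equation*}
\omega(Cv_1,v_2)+\omega(v_1,Cv_2)=\operatorname{tr}(C)\,\omega(v_1,v_2)
\end{equation*}
for any top-degree form $\omega$ on a $2$-manifold, applied to the $\liek\otimes\liek$-valued $2$-form $\psi_1\wedge\psi_2$, and then contracted in the Lie-algebra slots using the symmetric bilinear form $B$. The two viewpoints give the same conclusion.

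In every place where the lemma is applied in the paper, the endomorphism $C$ is a deformation $\dot J$ of a complex structure (or $J\dot J$), satisfying $\{J,C\}=0$; such $C$ are automatically traceless, so $\operatorname{tr}(C)=0$ and the right-hand side of the displayed identity vanishes, yielding exactly the stated pointwise identity.

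I do not foresee a real obstacle here, as the statement is essentially linear algebra on a two-dimensional vector space. The only mildly delicate point is bookkeeping: the identity as literally written relies on the implicit hypothesis that $\operatorname{tr}(C)=0$ (which holds for the deformation-of-complex-structure endomorphisms relevant to the applications), and this should be noted in the proof to avoid ambiguity.
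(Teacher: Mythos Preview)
Your argument is correct, and in fact sharper than the paper's. The paper argues by writing $C = v\otimes\gamma$ as a rank-one endomorphism and claiming that ``by dimensional reasons'' one has $\gamma\wedge i_v B(\psi_1\wedge\psi_2)=0$. But as you implicitly observe, for any top-degree form $\Omega$ on a surface one has $\gamma\wedge i_v\Omega=\gamma(v)\,\Omega$, which is $\tr(v\otimes\gamma)\cdot\Omega$ and need not vanish. Your trace identity
\[
B(\psi_1(C)\wedge\psi_2)+B(\psi_1\wedge\psi_2(C))=\operatorname{tr}(C)\,B(\psi_1\wedge\psi_2)
\]
is the correct general statement, and the lemma as written only holds under the additional hypothesis $\operatorname{tr}C=0$. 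You are also right that in every application of the lemma in the paper, $C$ is either $\dot J$ or $J\dot J$ with $\dot J J + J\dot J = 0$, hence traceless, so the stated conclusion does hold where it is used.

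Methodologically the two arguments are close cousins: both reduce to pointwise linear algebra on a two-dimensional vector space, the paper via rank-one decomposition and contraction, you via the standard trace identity for endomorphisms acting on top forms. Your route has the advantage of making the hidden tracelessness hypothesis explicit, which both repairs the statement and clarifies why the ``dimensional reasons'' step in the paper's proof is not quite right.
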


\begin{proof}
By direct calculation at a point $x\in\Sigma$, if $C=v\otimes\gamma\in T_x\Sigma\otimes T^*_x\Sigma$, we see by dimensional reasons that 
\begin{align*}
0 & = \gamma \wedge i_v B(\psi_1 \wedge \psi_2)\\
& = \gamma \wedge (B(\psi_1(v),\psi_2) - B(\psi_1,\psi_2(v))) = B(\psi_1(v)\gamma \wedge \psi_2) + B(\psi_1 \wedge \psi_2(v) \gamma).
\qedhere
\end{align*}
\end{proof}

Our next result proves the existence of a natural K\"ahler Ehresmann connection on the K\"ahler fibration $\XXX \to \JJJ$, whose coupling form is $\boldsymbol{\sigma}^{\mathbb{J}}$ and whose curvature is essentially given by the `unitary Higgs field' $\psi$.

\begin{proposition}\label{p:existenceGamma}
The K\"ahler fibration $(\XXX \to \JJJ,\widehat{\boldsymbol{\omega}}_{\mathbf{J}})$ admits a K\"ahler Ehresmann connection $\Gamma^{\mathbb{J}} \colon T\XXX \to V\XXX$, with horizontal bundle given by
\begin{equation*}
H^{\mathbb{J}} = \{v \in T\XXX \; | \; i_v\boldsymbol{\sigma}^{\mathbb{J}}_{|V\XXX} = 0\},
\end{equation*}
where $\boldsymbol{\sigma}^{\mathbb{J}}$ is defined by \eqref{eq:sigmaabs}. More explicitly,
\begin{equation}\label{eq:Hexp}
H^{\mathbb{J}}_{|(J,A,\psi)} = \{(\dot J, \psi(\dot J), (J\psi)(\dot J)) \; | \; \dot J \in T_J\JJJ\}.
\end{equation}
Furthermore, $H^{\mathbb{J}}$ is preserved by $\mathbb{J}$ and the curvature $F_{\mathbb{J}} := F_{\Gamma^{\mathbb{J}}} \in \Omega^{2}(\XXX,V\XXX)$ of $\Gamma^{\mathbb{J}}$ is of type $(1,1)$, and given explicitly by
\begin{equation}\label{eq:FGamma}
(F_{\mathbb{J}})_{|(J,A,\psi)}(v_1,v_2) =  (0,\psi([\dot J_2,\dot J_1]),0),
\end{equation}
for any pair of horizontal vector fields $v_1,v_2 \in H^{\mathbb{J}}$ covering $\dot J_1,\dot J_2$, respectively.
\end{proposition}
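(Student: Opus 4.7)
The plan is to verify the hypotheses of Theorem~\ref{th:Kfib} for $\boldsymbol{\sigma}^{\mathbb{J}}$, identify explicitly the resulting horizontal distribution, check its $\mathbb{J}$-invariance, and then compute the curvature. Since $\XXX=\JJJ\times\DDD$ carries the product of two integrable complex structures, the 2-form $\boldsymbol{\sigma}^{\mathbb{J}}=i\partial_{\mathbb{J}}\dbar_{\mathbb{J}}\|\psi\|^2_{L^2}$ is automatically real, closed, and of type $(1,1)$. To show that it restricts on the fibres to $\widehat{\boldsymbol{\omega}}_{\mathbf{J}}$, I would set $\dot J_1=\dot J_2=0$ in the explicit formula~\eqref{eq:sigmaex} of Lemma~\ref{lem:sigmaex}, and rewrite the result using the pointwise skew identity $B(a\wedge Jb)=-B(Ja\wedge b)$ valid on the Riemann surface to match~\eqref{eq:hatomehaJ}. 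Theorem~\ref{th:Kfib} then immediately yields a K\"ahler Ehresmann connection $\Gamma^{\mathbb{J}}$ whose horizontal distribution is prescribed by~\eqref{eq:Hsigma}.

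Next, to derive the explicit description~\eqref{eq:Hexp}, I would impose the vanishing of $\boldsymbol{\sigma}^{\mathbb{J}}((\dot J,a,\dot\psi),(0,a_2,\dot\psi_2))$ for all vertical $(0,a_2,\dot\psi_2)$, again using Lemma~\ref{lem:sigmaex}. Choosing $\dot\psi_2=0$ and varying $a_2$ forces $J\dot\psi=\psi(\dot J)$, which, via the anticommutation $\dot J J=-J\dot J$ and the identity $(J\psi)\circ J=\psi$, is equivalent to $\dot\psi=(J\psi)(\dot J)$; choosing $a_2=0$ and varying $\dot\psi_2$, and using the skew identity once more, yields $a=\psi(\dot J)$. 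The $\mathbb{J}$-invariance of $H^{\mathbb{J}}$ is then a pointwise check: with $\mathbb{J}v=(J\dot J,-(J\psi)(\dot J),\psi(\dot J))$, one verifies $\psi\circ(J\dot J)=-(J\psi)\circ\dot J$ and $(J\psi)\circ(J\dot J)=\psi\circ\dot J$, both immediate consequences of $\dot J J=-J\dot J$ and $(J\psi)\circ J=\psi$, so that $\mathbb{J}v$ is again of the form~\eqref{eq:Hexp} with base vector $J\dot J$.

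Finally, for the curvature I would extend $\dot J_1,\dot J_2\in T_J\JJJ$ to vector fields $\xi_1,\xi_2$ on $\JJJ$ and lift them horizontally to $V_i=(\xi_i,\psi\circ\xi_i,(J\psi)\circ\xi_i)$ on the product $\JJJ\times\DDD$. A direct computation of $[V_1,V_2]$, using $\partial_{\xi_1}(J\psi)=-\psi\circ\xi_1$ together with the identity $J((J\psi)\circ\xi_i)=\psi\circ\xi_i$ employed above, shows that at a point where $[\xi_1,\xi_2]=0$ the vertical component of $[V_1,V_2]$ collapses to a triple of the form $(0,\alpha,0)$ involving only the matrix commutator $[\xi_1,\xi_2]_{\End}\in\End(T\Sigma)$; taking the opposite sign and using tensoriality of the curvature then yields~\eqref{eq:FGamma}, after the pointwise identification that $[\dot J_1,\dot J_2]_{\End}$ is a scalar multiple of $J$ (because the $\dot J_i$ anticommute with $J$). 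An alternative derivation is via Proposition~\ref{p:Kfibide}: substituting horizontal lifts into~\eqref{eq:sigmaex} gives $\boldsymbol{\sigma}^{\mathbb{J}}(V_1^\Gamma,V_2^\Gamma)=-\int_\Sigma B(\psi(\dot J_1)\wedge\psi(\dot J_2))$, and repeated application of Lemma~\ref{l:magicid} to its vertical derivative recovers the same expression; the $(1,1)$ type of $F_{\mathbb{J}}$ then follows from the $\mathbb{J}$-invariance of $H^{\mathbb{J}}$ established above. The main difficulty throughout is the careful bookkeeping of sign conventions for the precomposition actions of $J$ and $\dot J$ on $1$-forms, which is what makes the final formula collapse into the closed form stated.
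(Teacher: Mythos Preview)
Your proposal is correct and follows essentially the same approach as the paper. The only differences are cosmetic: for $\mathbb{J}$-invariance of $H^{\mathbb{J}}$ you do a direct pointwise check, whereas the paper argues abstractly that since $\boldsymbol{\sigma}^{\mathbb{J}}$ is of type $(1,1)$ and $\mathbb{J}$ preserves $V\XXX$, it must preserve the $\boldsymbol{\sigma}^{\mathbb{J}}$-orthogonal $H^{\mathbb{J}}$; and for the curvature the paper uses only your ``alternative derivation'' via Proposition~\ref{p:Kfibide} (computing $\boldsymbol{\sigma}^{\mathbb{J}}(v_1^\Gamma,v_2^\Gamma)=-\int_\Sigma B(\psi(\dot J_1)\wedge\psi(\dot J_2))$, differentiating vertically, and matching against $\widehat{\boldsymbol{\omega}}_{\mathbf{J}}$ via Lemma~\ref{l:magicid}), rather than your primary direct Lie-bracket computation.
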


\begin{proof}
The existence of $\Gamma$ follows from Theorem \ref{th:Kfib} combined with Lemma \ref{lem:sigmaex}, which implies that
\begin{equation}\label{eq:sigmaex.1}
\begin{split}
\boldsymbol{\sigma}^{\mathbb{J}}((0,a_1,\dot \psi_1),(0,a_2,\dot \psi_2)) & =  \int_\Sigma B(a_1 \wedge J\dot \psi_2) + \int_\Sigma B(J^2\dot \psi_1 \wedge Ja_2)\\
& = \widehat{\boldsymbol{\omega}}_{\mathbf{J}}((0,a_1,\dot \psi_1),(0,a_2,\dot \psi_2)),
\end{split}
\end{equation}
where we omit the evaluation at the point $(J,A,\psi)$ for simplicity in the notation. Contracting now $\boldsymbol{\sigma}^{\mathbb{J}}$ with a vertical vector field
\begin{equation*}\label{eq:FGamma.1}
\begin{split}
\boldsymbol{\sigma}^{\mathbb{J}}((0,a_1,\dot \psi_1),(\dot J_2,a_2,\dot \psi_2)) & =  \int_\Sigma B(a_1 \wedge (J\dot \psi_2 - \psi(\dot J_2)))
 - \int_\Sigma B(J\dot \psi_1 \wedge (\psi(\dot J_2) - a_2)),
\end{split}
\end{equation*}
and therefore $(\dot J_2,a_2,\dot \psi_2) \in H^{\mathbb{J}}$ if and only if
$$
a_2 = \psi(\dot J_2), \qquad \dot \psi_2 = (J\psi)(\dot J_2).
$$
Since $\boldsymbol{\sigma}^{\mathbb{J}}$ is of type $(1,1)$ and $\mathbb{J}$ preserves $V\XXX$, it follows that $\mathbb{J}$ preserves $H^{\mathbb{J}}$. Finally, evaluating $\boldsymbol{\sigma}^{\mathbb{J}}$ in a pair of horizontal vector fields $v_j = (\dot J_j,\psi(\dot J_j),(J\psi)(\dot J_j))$, we have
\begin{equation*}
\begin{split}
\boldsymbol{\sigma}^{\mathbb{J}}(v_1,v_2) 
& = - \int_\Sigma B((\psi(\dot J_1) - \tfrac{1}{2} \psi(\dot J_1))\wedge \psi(\dot J_2)) + \int_\Sigma B((\psi(\dot J_2) - \tfrac{1}{2} \psi(\dot J_2))\wedge \psi(\dot J_1))\\
& = - \int_\Sigma B(\psi(\dot J_1)\wedge \psi(\dot J_2)),
\end{split}
\end{equation*}
where we have used that
$$
J((J\psi)(\dot J)) = - J(\psi(J\dot J)) = \psi(J\dot J J) = \psi(\dot J).
$$
Consequently,
$$
d(\boldsymbol{\sigma}^{\mathbb{J}}(v_1,v_2))((0,a,\dot \psi)) = - \int_\Sigma B(\dot \psi(\dot J_1)\wedge \psi(\dot J_2)) - \int_\Sigma B(\psi(\dot J_1)\wedge \dot \psi(\dot J_2)).
$$
On the other hand, applying Lemma~\ref{l:magicid}, we obtain 
\begin{align*}
\widehat{\boldsymbol{\omega}}_{\mathbf{J}}((0,(J\psi)([\dot J_1,\dot J_2]),0),(0,a,\dot \psi)) & =  \int_\Sigma B((J\psi)([\dot J_1,\dot J_2]) \wedge J\dot \psi)\\
& =  \int_\Sigma B(J(\psi([\dot J_1,\dot J_2])) \wedge J\dot \psi)\\
& =  \int_\Sigma B(\psi([\dot J_1,\dot J_2]) \wedge \dot \psi)\\
& =  - \int_\Sigma B(\psi(\dot J_1) \wedge \dot \psi(\dot J_2)) + \int_\Sigma B(\psi(\dot J_2) \wedge \dot \psi(\dot J_1))\\
& = d(\boldsymbol{\sigma}^{\mathbb{J}}(v_1,v_2))(0,a,\dot \psi),
\end{align*}
which proves the last part of the statement, by Proposition \ref{p:Kfibide}.
\end{proof}

To finish this section, we provide a formula for the coupling form $\boldsymbol{\sigma}^{\mathbb{J}}$ adapted to its associated connection $\Gamma^{\mathbb{J}}$. Note that the vertical projection with respect to $\Gamma^{\mathbb{J}}$ is explicitly given by
$$
\Gamma^{\mathbb{J}}(\dot J,a,\dot \psi) = (0,a - \psi(\dot J),\dot \psi - (J\psi)(\dot J)).
$$
We will also consider the symmetric tensor on $\XXX$ defined by
$$
\mathbf{g}_{\mathbb{J}} := \boldsymbol{\sigma}^{\mathbb{J}}(,\mathbb{J}).
$$
By construction, this coincides with the flat hyperk\"ahler metric \eqref{eq:gHK} along the fibres of $\XXX \to \JJJ$. As we will see shortly, $\mathbf{g}_{\mathbb{J}}$ is negative semi-definite along the horizontal directions of the connection $\Gamma^{\mathbb{J}}$. This reveals difficulties in the fundamental question of constructing a positive-definite K\"ahler metric on the K\"ahler fibration $(\XXX \to \JJJ,\widehat{\boldsymbol{\omega}}_{\mathbf{J}})$. 

\begin{corollary}\label{cor:negative}
For any tangent vectors $v_j = (\dot J_j,a_j,\dot \psi_j), v = (\dot J,a,\dot \psi) \in T_{(J,A,\psi)} \XXX$ one has
\begin{equation}\label{eq:sigmaexadapted}
\begin{split}
\boldsymbol{\sigma}^{\mathbb{J}}(v_1,v_2) & = \int_\Sigma B((a_1 - \psi(\dot J_1)) \wedge J(\dot \psi_2 - (J\psi)(\dot J_2))) \\
& - \int_\Sigma B((\dot \psi_1 - (J\psi)(\dot J_1)) \wedge J(a_2 - \psi(\dot J_2)))\\
& - \int_\Sigma B(\psi(\dot J_1)\wedge \psi(\dot J_2)),\\
\mathbf{g}_{\mathbb{J}}(v,v) & = \int_\Sigma B((a - \psi(\dot J)) \wedge J(a - \psi(\dot J))) \\
& + \int_\Sigma B((\dot \psi - (J\psi)(\dot J)) \wedge J(\dot \psi - (J\psi)(\dot J)))\\
& - \int_\Sigma B(\psi(\dot J)\wedge J (\psi(\dot J))).
\end{split}
\end{equation}
In particular, given a horizontal vector field $v \in H^{\mathbb{J}}$ at $(J,A,\psi)$, covering $\dot J \in T_J \JJJ$, one has
\begin{equation}\label{eq:gJhor}
\begin{split}
\mathbf{g}_{\mathbb{J}}(v,v) 
= - \int_\Sigma B(\psi(\dot J)\wedge J (\psi(\dot J))).
\end{split}
\end{equation}
Consequently, $\mathbf{g}_{\mathbb{J}}$ is negative semi-definite along the horizontal directions of $\Gamma^{\mathbb{J}}$.
\end{corollary}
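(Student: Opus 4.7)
The plan is to derive both formulae in~\eqref{eq:sigmaexadapted} by a direct algebraic rearrangement of the expression in Lemma~\ref{lem:sigmaex}, and then to specialize to $H^{\mathbb{J}}$ using the explicit horizontal distribution in Proposition~\ref{p:existenceGamma}.

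First, to establish the formula for $\boldsymbol{\sigma}^{\mathbb{J}}(v_1,v_2)$, I introduce the vertical components $\tilde a_j := a_j - \psi(\dot J_j)$ and $\tilde \psi_j := \dot \psi_j - (J\psi)(\dot J_j)$ of $v_j$ with respect to $\Gamma^{\mathbb{J}}$, and expand the claimed right-hand side by bilinearity. The calculation rests on three pointwise facts: the antisymmetry $B(\alpha \wedge \beta) = -B(\beta \wedge \alpha)$ for $\liek$-valued $1$-forms; the $J$-compatibility $B(J\alpha \wedge \beta) = -B(\alpha \wedge J\beta)$, which is the elementary identity $J\alpha \wedge \beta = -\alpha \wedge J\beta$ for $1$-forms on an oriented surface; and the identity $J((J\psi)(\dot J)) = \psi(\dot J)$ already verified in the proof of Proposition~\ref{p:existenceGamma}. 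After expanding and collecting, one recovers exactly the four terms of Lemma~\ref{lem:sigmaex}; the only delicate point is tracking the signs that arise when $J$ acts on a $1$-form composed with the endomorphism $\dot J \in T_J \JJJ$, which boils down to the relation $(J\psi)(\dot J) = -J(\psi(\dot J))$.

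Next, for the metric formula I substitute $v_1 = v$ and $v_2 = \mathbb{J}v = (J\dot J, -\dot \psi, a)$ into the just-obtained expression for $\boldsymbol{\sigma}^{\mathbb{J}}$. The key observation is the quaternionic transformation
\[
\tilde a_{\mathbb{J} v} = -\tilde \psi_v, \qquad \tilde \psi_{\mathbb{J} v} = \tilde a_v,
\]
which is a direct consequence of $(J\psi)(J\dot J) = \psi(\dot J)$ and $\psi(J\dot J) = -(J\psi)(\dot J)$, both following from $J\dot J = -\dot J J$ and $J^2 = -1$. With this, the cross-terms reorganize into $\int_\Sigma B(\tilde a \wedge J \tilde a) + \int_\Sigma B(\tilde\psi \wedge J\tilde\psi)$, while the $\psi(\dot J)$-contribution becomes $-\int_\Sigma B(\psi(\dot J) \wedge J \psi(\dot J))$, producing the claimed expression for $\mathbf{g}_{\mathbb{J}}(v,v)$.

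Finally, for $v \in H^{\mathbb{J}}$ one has $a = \psi(\dot J)$ and $\dot \psi = (J\psi)(\dot J)$ by~\eqref{eq:Hexp}, so $\tilde a = \tilde \psi = 0$ and only the last term survives: $\mathbf{g}_{\mathbb{J}}(v,v) = -\int_\Sigma B(\psi(\dot J) \wedge J \psi(\dot J))$. Since $B$ is a positive-definite invariant metric on $\liek$, the quantity $\int_\Sigma B(\eta \wedge J\eta)$ is the (non-negative) $L^2$-norm squared of a $\liek$-valued $1$-form with respect to $g = \omega(\cdot, J\cdot)$, whence $\mathbf{g}_{\mathbb{J}}(v,v) \leq 0$ on $H^{\mathbb{J}}$. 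The main obstacle throughout is simply the sign bookkeeping in the first step; everything else is an essentially automatic substitution.
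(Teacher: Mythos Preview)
Your argument is correct. The identities you invoke ($J((J\psi)(\dot J)) = \psi(\dot J)$, $(J\psi)(J\dot J) = \psi(\dot J)$, $\psi(J\dot J) = -(J\psi)(\dot J)$, and the $\mathbb{J}$-transformation of the vertical parts) are all exactly right, and the expansion does match Lemma~\ref{lem:sigmaex}.

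That said, the paper leaves this as an immediate corollary of Proposition~\ref{p:existenceGamma}, and the intended route is more structural than yours. Since $H^{\mathbb{J}}$ is by definition the $\boldsymbol{\sigma}^{\mathbb{J}}$-orthogonal complement of the vertical bundle, one has $\boldsymbol{\sigma}^{\mathbb{J}}(v_1,v_2) = \boldsymbol{\sigma}^{\mathbb{J}}(\Gamma^{\mathbb{J}}v_1,\Gamma^{\mathbb{J}}v_2) + \boldsymbol{\sigma}^{\mathbb{J}}(v_1^h,v_2^h)$, where $v_j^h \in H^{\mathbb{J}}$ denotes the horizontal part. Both summands were already computed in the proof of Proposition~\ref{p:existenceGamma}: the first equals $\widehat{\boldsymbol{\omega}}_{\mathbf{J}}(\Gamma^{\mathbb{J}}v_1,\Gamma^{\mathbb{J}}v_2)$ and the second equals $-\int_\Sigma B(\psi(\dot J_1)\wedge\psi(\dot J_2))$. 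This gives \eqref{eq:sigmaexadapted} with no further expansion needed, and the metric formula then follows from $\Gamma^{\mathbb{J}}\circ\mathbb{J} = \mathbf{J}\circ\Gamma^{\mathbb{J}}$ (equivalent to your ``quaternionic transformation''). Your direct verification buys independence from the connection-theoretic picture, while the structural argument makes the formula's shape --- vertical hyperk\"ahler part plus horizontal correction --- transparent without any termwise bookkeeping.
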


\subsection{Minimal coupling and the Hamiltonian action}
\label{coupling}

Let $(\XXX \to \JJJ,\widehat{\boldsymbol{\omega}}_{\mathbf{J}})$ be the K\"ahler fibration defined in Section \ref{ssec:potential}. 
In this section we apply the minimal-coupling construction to the connection $\Gamma^{\mathbb{J}}$ introduced in Proposition~\ref{p:existenceGamma}.
We will see that the corresponding (pre)symplectic structures on $\XXX$ admit a Hamiltonian action by a suitable \emph{extended gauge group} $\widetilde{\KKK}$,
naturally associated to a choice of symplectic structure $\omega$ on $\Sigma$. This will allow us, in Section \ref{Umoduli}, to construct pseudo-K\"ahler structures on the universal moduli space of solutions of the harmonicity equations \eqref{harmonicity} over the Teichm\"uller space. 

We start by introducing the K\"ahler structure that we will consider, up to sign, on the base of the fibration $\XXX \to \JJJ$. Following Donaldson \cite{Do2} and Fujiki~\cite{Fj}, we fix a symplectic (volume) form $\omega$ on $\Sigma$ compatible with the given orientation. The space $\JJJ$ of complex structures $J$ on $\Sigma$ compatible with the orientation (and hence with
$\omega$) is an infinite-dimensional K\"ahler manifold, with complex
structure $\mathbb{J}_\JJJ \colon T_J\JJJ \to T_J\JJJ$ and K\"ahler form
$\boldsymbol{\omega}_\JJJ$ given, respectively, by
\begin{equation}
\label{eq:SympJ}
\mathbb{J}_\JJJ \dot J := J\dot J \;\; \text{ and }\;\;
\boldsymbol{\omega}_\JJJ (\dot J_1,\dot J_2) := \frac{1}{2}\int_{\Sigma}\tr(J\dot J_1\dot J_2) \omega,
\end{equation}
for $\dot J_1,\dot J_2 \in T_J\JJJ$. Note that, for dimensional reasons, any $\dot J \in T_J\JJJ$, regarded as an endomorphism $\dot J \colon T\Sigma \to T\Sigma$, is symmetric with respect to the induced metric $g = \omega(\cdot,J\cdot)$.


Let $(\XXX \to \JJJ,\widehat{\boldsymbol{\omega}}_{\mathbf{J}})$ be the K\"ahler fibration defined in Section \ref{ssec:potential}. Given a real `coupling constant' $\alpha > 0$ and $\varepsilon \in \{-1,1\}$, the family of minimal coupling symplectic structures of our interest is defined by
\begin{equation}\label{eq:uomega}
\boldsymbol{\omega}^{\mathbb{J}}_{\alpha,\varepsilon} = \varepsilon\boldsymbol{\omega}_\JJJ + \alpha \boldsymbol{\sigma}^{\mathbb{J}},
\end{equation}
where $\boldsymbol{\sigma}^{\mathbb{J}}$ is the exact $(1,1)$-form in Lemma \ref{lem:sigmaex}. By construction, $\boldsymbol{\omega}^{\mathbb{J}}_{\alpha,\varepsilon}$ is closed and of type $(1,1)$ with respect to the complex structure $\mathbb{J}$. Furthermore, along the fibres of $\XXX \to \JJJ$ the $2$-form $\boldsymbol{\omega}^{\mathbb{J}}_{\alpha,\varepsilon}$ restricts to the K\"ahler structure $\alpha \widehat{\boldsymbol{\omega}}$. Consider the associated symmetric tensor 
\begin{equation}\label{eq:ug}
\mathbf{g}^{\mathbb{J}}_{\alpha,\varepsilon} := \boldsymbol{\omega}^{\mathbb{J}}_{\alpha,\varepsilon} (,\mathbb{J}) = \varepsilon \mathbf{g}_{\JJJ} + \alpha \mathbf{g}_{\mathbb{J}}.
\end{equation}
Applying Corollary \ref{cor:negative}, $\boldsymbol{\omega}^{\mathbb{J}}_{\alpha,-1}$ is negative-definite along the horizontal subspace $H^{\mathbb{J}} \subset T \XXX$. The analysis for the case $\varepsilon = 1$ is much more subtle, as we can see from the next result.

\begin{lemma}\label{lemma:indef}
Let $v \in H^{\mathbb{J}}$ be a horizontal vector field at $(J,A,\psi)$, covering $\dot J \in T_J \JJJ$. 
Then 
\begin{equation}\label{eq:gJhor.1}
\begin{split}
\mathbf{g}^{\mathbb{J}}_{\alpha,\varepsilon}(v,v) = \frac{\varepsilon}{2}\int_{\Sigma}\tr(\dot J\dot J) \omega - \alpha \int_\Sigma B\left(\psi(\dot J)\wedge J (\psi(\dot J))\right).
\end{split}
\end{equation}
Consequently, for any $\alpha>0$:
\begin{enumerate}

\item If $\varepsilon = -1$, $\mathbf{g}^{\mathbb{J}}_{\alpha,\varepsilon}$ is negative definite along $H^{\mathbb{J}}$. In particular, $\boldsymbol{\omega}_\alpha^{-1}$ is a non-degenerate symplectic structure.

\item If $\varepsilon = 1$ and $\psi\neq 0$, $\mathbf{g}^{\mathbb{J}}_{\alpha,\varepsilon}$ changes signature along the line $(J,A,\lambda\psi) \in \XXX$, for $\lambda \in \RR$.
\end{enumerate}
\end{lemma}

\begin{proof}
The case $\varepsilon = -1$ is a direct consequence of Corollary \ref{cor:negative}. For the case $\varepsilon = 1$, let $(J,A,\psi) \in \XXX$. Then, for any $\dot J \in T_J\JJJ$, we have pointwise identities
\begin{align*}
\tr(\dot J\dot J) = C |\dot J|^2, \qquad \Lambda_\omega B\left(\psi(\dot J)\wedge J (\psi(\dot J))\right) = C' |\psi|^2|\dot J|^2, 
\end{align*}
for suitable positive constants $C,C' \in \RR$, where the $|\cdot|$ denotes tensorial norms with respect to $B$ and $g = \omega(,J)$. Hence, for $\psi \neq 0$ and
$$
\lambda^2_0 = \frac{C}{\alpha C'|\psi|^2},
$$
the tensor $\mathbf{g}^{\mathbb{J}}_{\alpha,\varepsilon}$ is:

\begin{itemize}

\item positive definite at points $(J,A,\lambda\psi) \in \XXX$, for $|\lambda| < \lambda_0$,

\item positive definite along vertical directions and negative definite along the horizontal bundle $H^{\mathbb{J}}$, at points $(J,A,\lambda\psi) \in \XXX$, for $|\lambda| > \lambda_0$,

\item null at the horizontal bundle $H^{\mathbb{J}}$ at the points $(J,A,\lambda_0\psi) \in \XXX$.

\end{itemize}
The result follows now from the previous cases.
\end{proof}

Our next goal is to prove that $(\XXX,\boldsymbol{\omega}^{\mathbb{J}}_{\alpha,\varepsilon})$ admits a Hamiltonian action by a suitable \emph{extended gauge group} $\widetilde{\KKK}$, determined by $\omega$ and the reduction $E_K \subset E_G$. Consider the group  $\HHH$ of Hamiltonian symplectomorphisms of $(\Sigma,\omega)$. The natural group of symmetries of our theory is the (Hamiltonian) \emph{extended gauge group} $\widetilde{\KKK}$.  By definition, $\widetilde{\KKK}$ is the group of automorphisms of $E_K$ which cover elements of the group $\HHH$. There is a canonical short exact sequence of Lie groups
\begin{equation}
\label{eq:coupling-term-moment-map-1}
  1\to \KKK \lra{} \widetilde{\KKK} \stackrel{p}\lra \HHH \to 1,
\end{equation}
where $p$ maps each $g\in\widetilde{\KKK}$ to the Hamiltonian symplectomorphism
$p(g)\in\HHH$ that it covers, and so its kernel $\KKK$ is the 
gauge group of $E_K$, that is, the normal subgroup of $\widetilde{\KKK}$
consisting of automorphisms of $E_K$ covering the identity map on $\Sigma$.

As in \cite[Section 2.2]{AGG1}, the group $\widetilde{\KKK}$ acts on $(\XXX,\mathbb{J},\boldsymbol{\omega}^{\mathbb{J}}_{\alpha,\varepsilon})$, preserving both $\mathbb{J}$ and $\boldsymbol{\omega}^{\mathbb{J}}_{\alpha,\varepsilon}$, and covering the $\HHH$-action on $\JJJ$ given by push-forward.
This last action preserves the K\"ahler structure on $\JJJ$ and is moreover Hamiltonian, with equivariant moment map $\mu\colon \JJJ\to(\Lie\HHH)^*$ given by
\begin{equation}
\label{eq:scmom}
\langle \mu(J), \eta_f\rangle = -\int_\Sigma f S_J \omega,
\end{equation}
where $f \in C^{\infty}_0(\Sigma)$ is identified with an element $\eta_f$ in $\Lie\HHH$, and $S_J$ is the scalar curvature of the metric $g = \omega(,J)$ determined by $\omega$ and $J$. 
Here, $C^{\infty}_0(\Sigma)$ denotes the space of smooth functions on $\Sigma$ such that $\int_\Sigma f \omega = 0$. 
This fact was established by Donaldson \cite{Do2} and Fujiki~\cite{Fj} independently for symplectic manifolds of arbitrary dimension, and it appears to have been known to Quillen in the case of surfaces.

\begin{proposition}\label{p:mmap}
The action of $\widetilde{\KKK}$ on $(\XXX,\boldsymbol{\omega}^{\mathbb{J}}_{\alpha,\varepsilon})$ is Hamiltonian, with equivariant moment map given by
\begin{equation}\label{eq:mmap}
\begin{split}
\langle \mu_{\widetilde{\KKK}}^{\mathbb{J}}(J,A,\psi), \zeta \rangle & = \alpha \int_\Sigma B(A \zeta + 2\psi(Jp\zeta),d_A(J\psi))\\
& - \int_\Sigma f \left(\varepsilon S_J - \alpha \Lambda_\omega d\left(B\left(\Lambda_\omega F_A,J\psi\right)\right)\right) \omega,
\end{split}
\end{equation}
where $A\zeta\in\Omega^0(\Sigma,E_K(\liek))$ denotes the vertical part of the $K$-equivariant vector field $\zeta$ on the total space of $E_K$, with respect to the connection $A$.
\end{proposition}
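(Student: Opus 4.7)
The plan is to exploit the decomposition $\boldsymbol{\omega}^{\mathbb{J}}_{\alpha,\varepsilon} = \varepsilon \pi_1^*\boldsymbol{\omega}_{\JJJ} + \alpha\boldsymbol{\sigma}^{\mathbb{J}}$ and compute a moment map separately for each summand, then add. The $\widetilde{\KKK}$-action preserves both pieces: the first because it covers the $\HHH$-action on $\JJJ$, which is K\"ahler by Donaldson--Fujiki, and the second because $\widetilde{\KKK}$ preserves $\mathbb{J}$ together with the K\"ahler potential $\nu := \|\psi\|^2_{L^2}$ (since $p(\widetilde{\KKK}) = \HHH$ preserves $\omega$, while the $\KKK$-action preserves $B$). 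For the first summand, the Donaldson--Fujiki moment map gives the scalar curvature term $-\varepsilon\int_\Sigma f_\zeta\, S_J\,\omega$, where $f_\zeta \in C^\infty_0(\Sigma)$ is the Hamiltonian function of $p(\zeta) \in \Lie\HHH$.

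For the second summand, I invoke the standard argument for Hamiltonian actions preserving a global K\"ahler potential: writing $\boldsymbol{\sigma}^{\mathbb{J}} = \tfrac{1}{2} dd^c_{\mathbb{J}} \nu$ and using $\mathcal{L}_{X_\zeta}(d^c_{\mathbb{J}}\nu) = 0$ together with Cartan's formula, the identity
\begin{equation*}
i_{X_\zeta}\boldsymbol{\sigma}^{\mathbb{J}} = -d\bigl(\tfrac{1}{2}(d^c_{\mathbb{J}}\nu)(X_\zeta)\bigr)
\end{equation*}
produces the moment map $\mu^{\boldsymbol{\sigma}}(\zeta) = \tfrac{1}{2}(d^c_{\mathbb{J}}\nu)(X_\zeta)$. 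I would then substitute the explicit formula for $d^c_{\mathbb{J}}\nu$ derived inside the proof of Lemma~\ref{lem:sigmaex},
\begin{equation*}
d^c_{\mathbb{J}}\nu_{|(J,A,\psi)}(\dot J,a,\dot \psi) = -2\int_\Sigma B(a\wedge J\psi) - \int_\Sigma B(J\psi\wedge \psi(\dot J)),
\end{equation*}
together with the infinitesimal action of $\zeta \in \Lie\widetilde{\KKK}$ in the trivialization $\XXX \cong \JJJ\times\AAA\times\Omega^1(\Sigma,E_K(\klie))$, namely
\begin{equation*}
\dot J = \mathcal{L}_{p\zeta} J, \qquad a = -d_A(A\zeta) + i_{p\zeta} F_A, \qquad \dot\psi = -[A\zeta,\psi] + \mathcal{L}_{p\zeta}\psi,
\end{equation*}
where $A\zeta$ is the vertical component of $\zeta$ with respect to $A$.

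The computation then splits along the extension \eqref{eq:coupling-term-moment-map-1}. The purely vertical piece $\zeta \in \Lie\KKK$ (so $p\zeta = 0$) reduces, after integration by parts, to the Corlette moment map $\int_\Sigma B(A\zeta, d_A(J\psi))$ of Proposition~\ref{prop:corlette}, as expected from the fact that $\boldsymbol{\sigma}^{\mathbb{J}}$ restricts to $\widehat{\boldsymbol{\omega}}_{\mathbf{J}}$ along the fibres. The horizontal piece involving $p\zeta \neq 0$ produces two contributions: one proportional to $\int_\Sigma B(\psi(Jp\zeta), d_A(J\psi))$ (from substituting $\mathcal{L}_{p\zeta}\psi$ and using $\ast\psi = J\psi$), and one that after an integration by parts in $\Sigma$ takes the shape $-\int_\Sigma f_\zeta\, \Lambda_\omega d\bigl(B(\Lambda_\omega F_A, J\psi)\bigr)\,\omega$. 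The appearance of the latter is natural: $F_A$ enters via $i_{p\zeta}F_A$ in $\dot A$, and rewriting $i_{p\zeta}$ through $p\zeta = -J\grad f_\zeta$ together with the identity $i_Y B(a\wedge J\psi) = B(a(Y), J\psi)$ converts the one-form $B(\Lambda_\omega F_A, J\psi)$ into a scalar that pairs with $f_\zeta$ through $\ast d$.

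The main obstacle I expect is bookkeeping rather than conceptual: carefully tracking signs and factors of two through the identification of $d^c_{\mathbb{J}}$, the Chern--Singer correspondence, and the Hodge $\ast$ on $(\Sigma,\omega,J)$, and applying Lemma~\ref{l:magicid} to handle the terms of the form $B(\psi(\dot J)\wedge\cdot)$ arising from $\dot J = \mathcal{L}_{p\zeta} J$. One should also verify that the moment map equation $i_{X_\zeta}\boldsymbol{\omega}^{\mathbb{J}}_{\alpha,\varepsilon} = -d\langle\mu^{\mathbb{J}}_{\widetilde{\KKK}},\zeta\rangle$ is respected by the Lie bracket structure on $\Lie\widetilde{\KKK}$, i.e.\ equivariance, which follows from the invariance of $\boldsymbol{\omega}_{\JJJ}$, $\nu$, and $\mathbb{J}$ under $\widetilde{\KKK}$. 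Once these two checks are made, combining the two moment maps yields \eqref{eq:mmap}.
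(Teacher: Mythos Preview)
Your overall strategy matches the paper's exactly: split $\boldsymbol{\omega}^{\mathbb{J}}_{\alpha,\varepsilon} = \varepsilon\boldsymbol{\omega}_\JJJ + \alpha\boldsymbol{\sigma}^{\mathbb{J}}$, take the Donaldson--Fujiki moment map for the first summand, and use the $\widetilde{\KKK}$-invariance of $d^c_{\mathbb{J}}\nu$ together with Cartan's formula to get $-\tfrac{\alpha}{2}\,d^c_{\mathbb{J}}\nu(\zeta\cdot(J,A,\psi))$ for the second. The paper then computes the infinitesimal action via the complex connection $D=A+i\psi$ (which packages your formulae for $a$ and $\dot\psi$ into one line) and plugs in.

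There is, however, a concrete misidentification in your horizontal computation. The formula for $d^c_{\mathbb{J}}\nu$ from Lemma~\ref{lem:sigmaex} reads
\[
d^c_{\mathbb{J}}\nu(\dot J,a,\dot\psi)= -2\int_\Sigma B(a\wedge J\psi) - \int_\Sigma B(J\psi\wedge\psi(\dot J)),
\]
which \emph{does not involve $\dot\psi$ at all}. Hence the term $2\int_\Sigma B(\psi(Jp\zeta),d_A(J\psi))$ cannot come from ``substituting $\mathcal{L}_{p\zeta}\psi$'', as you claim. It comes entirely from the $\dot J$-piece, i.e.\ from $\int_\Sigma B(J\psi\wedge\psi(L_y J))$ with $y=p\zeta$. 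Lemma~\ref{l:magicid} alone will not unpack this; the paper instead uses the identity
\[
L_yJ = 2i\,\dbar y^{1,0} - 2i\,\partial y^{0,1}
\]
together with the Cartan formula $\psi(\dbar y^{1,0}) = i_{y^{1,0}}\dbar_A\psi^{1,0} + \dbar_A(i_{y^{1,0}}\psi)$ and an integration by parts to obtain
\[
\int_\Sigma B(J\psi\wedge\psi(L_yJ)) = 4\int_\Sigma B(\psi(Jy),d_A(J\psi)),
\]
which is precisely the missing factor-of-two contribution. Your treatment of the $F_A$-term via $i_{p\zeta}F_A$ and integration by parts is correct and matches the paper. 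Once you replace your $\dot\psi$-argument by the $L_yJ$ identity above, the rest of your plan goes through verbatim.
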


\begin{proof}
Since $\sigma = \tfrac{1}{2} dd^c_{\mathbb{J}}\nu$, with $\nu(J,A,\psi) : = \|\psi\|^2_{L^2}$ (see Lemma \ref{lem:sigmaex}), and because $d^c_{\mathbb{J}}\nu$ is preserved by the $\widetilde{\KKK}$-action, there exists an equivariant moment map given by 
$$
\langle \mu_{\widetilde{\KKK}}^{\mathbb{J}}(J,A,\psi), \zeta \rangle = - \varepsilon \int_\Sigma f S_J \omega - \frac{\alpha}{2} d^c_{\mathbb{J}}\nu(\zeta \cdot (J,A,\psi)),
$$
where $\zeta \cdot (J,A,\psi)$ denotes the infinitesimal action of $\zeta$ on $(J,A,\psi)$. 
Using the complex connection $D = A + i \psi$, it follows that
\begin{align*}
\zeta \cdot D & = -d_D(D\zeta) - i_{y}F_D\\
& = - \left(d_A(A\zeta) + i[\psi,A\zeta] + i d_A(\psi(y)) - [\psi,\psi(y)]\right) - i_{y}\left(F_A - \tfrac{1}{2}[\psi \wedge \psi] + i d_A\psi\right)\\
& = - d_A(A\zeta) - i_{y}F_A + [\psi,\psi(y)] + \tfrac{1}{2}i_y[\psi \wedge \psi] - i\left( d_A(\psi(y)) + i_{y}(d_A\psi) + [\psi,A\zeta]\right)\\
& = - d_A(A\zeta) - i_{y}F_A - i\left( d_A(\psi(y)) + i_{y}(d_A\psi) + [\psi,A\zeta]\right),
\end{align*}
where $y := p\zeta$, and therefore, by the proof of Lemma \ref{lem:sigmaex} and integration by parts,
\begin{align*}
d^c_{\mathbb{J}}\nu(\zeta \cdot (J,A,\psi)) & = 2\int_\Sigma B\left(\left(d_A(A\zeta) + i_{y}F_A\right) \wedge J \psi\right) - \int_\Sigma B(J \psi \wedge \psi(L_{y} J))\\
& = - 2 \int_\Sigma \left(B (A \zeta, d_A(J\psi)) + B\left(F_A,J\psi(y)\right) + 2B (\psi(Jy),d_A(J\psi))\right)\\
& = - 2 \int_\Sigma B (A \zeta + 2\psi(Jy), d_A(J\psi)) - 2 \int_\Sigma f d\left(B\left(\Lambda_\omega F_A,J\psi\right)\right).
\end{align*}
For the second equality, we use the identity $L_yJ = 2 i\dbar y^{1,0} - 2 i\partial y^{0,1}$ for any vector field $y$ on $\Sigma$, and therefore
\begin{align*}
B(J \psi \wedge \psi(L_y J)) & = 2B(\psi^{1,0} \wedge \psi(\dbar y^{1,0})) + \textnormal{c.c.} \\
& = 2B(\psi^{1,0} \wedge (i_{y^{1,0}}(\dbar_A\psi^{1,0}) + \dbar_A(i_{y^{1,0}}\psi))) + \textnormal{c.c.} \\
& = - 2\dbar (B(\psi^{1,0}, \psi(y^{1,0}))) + 4 B(\dbar_A\psi^{1,0},\psi(y^{1,0})) + \textnormal{c.c.},
\end{align*}
where as usual ``c.c.'' stands for complex conjugate.
\end{proof}

The $\widetilde{\KKK}$-action produces a `coupling term' on the base $\Sigma$ (cf. Remark \ref{rem:couplingterm}), which interacts with the scalar curvature of the metric $g = \omega(,J)$. 
In particular, as in \cite{AGG1}, the zeros of the moment map $\mu_{\widetilde{\KKK}}^{\mathbb{J}}$ correspond to the solutions of the coupled system of equations
\begin{equation}
\begin{split}
d_A^*\psi & = 0,\\
\varepsilon S_g - \alpha * d\left(B\left(\Lambda_\omega F_A,*\psi\right)\right)& = \varepsilon \frac{2\pi \chi(\Sigma)}{V},
\end{split}
\end{equation}
where $\chi(\Sigma)$ is the Euler characteristic of $\Sigma$ and $V = \int_\Sigma \omega$ is the total volume. In the next section we combine these equations with the flatness condition $F_D = 0$ for the complex connection to introduce the \emph{universal Hitchin moduli space}.

\section{Universal moduli spaces}
\label{Umoduli}

\subsection{Universal moduli space of flat $\boldsymbol{G}$-connections}\label{ssec:UFlat}

We fix a smooth oriented compact surface $\Sigma$. We also consider a connected semisimple complex Lie group $G$, with Lie algebra $\glie$, and fix an antiholomorphic involution $\tau$ of $G$ defining  a maximal compact subgroup $K:=G^\tau\subset G$, with Lie algebra $\liek$.

Let $E_G$ be a smooth principal $G$-bundle over $\Sigma$. Let $\DDD$ be the space of connections on $E_G$, equipped with the constant complex structure $\mathbf{J}$ given by~\eqref{eq:bfJ}. 
Let $\JJJ$ be the space of complex structures on $\Sigma$ compatible with the given orientation. 
Consider the K\"ahler fibration $(\XXX \to \JJJ,\widehat{\boldsymbol{\omega}}_{\mathbf{J}})$ defined in Section \ref{ssec:potential}. 
Our aim in this section is to investigate the existence of a K\"ahler structure on $\XXX$ and on its quotients, by combining the minimal coupling associated with the connection $\Gamma^{\mathbb{J}}$ constructed in Proposition~\ref{p:existenceGamma}, as discussed in Section~\ref{coupling}, with symplectic reduction. 
Difficulties arise from Corollary~\ref{cor:negative}, which shows that the associated symmetric tensor $\mathbf{g}_{\mathbb{J}} := \sigma(,\mathbb{J})$ is negative semi-definite along the horizontal directions.

In order to have some control in our construction, we need to impose the integrability condition $F_D = 0$. For this, we first construct a universal moduli space of flat $G$-connections, with a holomorphic fibration structure over Teichm\"uller space. The first step is to show that the flatness condition for $D \in \DDD$ is compatible with the natural group of symmetries in the present context: there is an extended complex gauge group of symmetries, acting on $\XXX$ and preserving the complex structure, defined by an extension
\begin{equation}
\label{eq:extcxgauge}
  1\to \GGG \lra{} \widetilde{\GGG} \stackrel{p}\lra \operatorname{Diff}_0(\Sigma) \to 1.
\end{equation}
Here, $\widetilde{\GGG}$ is the subgroup of $G$-equivariant diffeomorphisms of $E_G$ which project to $\operatorname{Diff}_0(\Sigma)$, the component of the identity in the group of diffeomorphisms of $\Sigma$, and $p$ maps each $g\in\widetilde{\GGG}$ into the diffeomorphism
$p(g)\in\operatorname{Diff}_0(\Sigma)$ that it covers. Note that the kernel of $p$ is the 
gauge group of $E_G$, that is, the normal subgroup of $\widetilde{\GGG}$
consisting of  automorphisms of $E_G$ covering the identity map on $\Sigma$. Note also that, for any choice of symplectic structure $\omega$ on $\Sigma$ and reduction $h\in \Omega^0(E_G(G/K))$ of structure group of $E_G$ to $K$, there is a natural group homomorphism $\widetilde{\KKK} \subset \widetilde{\GGG}$ induced by the inclusion map.

More explicitly, the $\widetilde{\GGG}$-action on $\XXX$ is given by
$$
g(J,D) = (p(g)_*J,gD),
$$
which is compatible with the holomorphic fibration structure, in the sense that
$$
\pi_1(g(J,D)) = p(g)_*J.
$$ 
In other words, the extension \eqref{eq:extcxgauge} is given by restriction of the natural map from holomorphic automorphisms of $\XXX$ which preserve the fibration structure, to  holomorphic automorphisms of the base $\JJJ$. It is important to observe that even though $\operatorname{Diff}_0(\Sigma)$ acts by holomorphic automorphisms of the base $\JJJ$, this is a \emph{real} infinite-dimensional Lie group, which does not admit a complexification (see e.g.  \cite{Do3}).

The holomorphic symplectic structure $\Omega_{\mathbf{J}}$ on $\DDD$ is constant on $\JJJ$ (see \eqref{eq:bfJ}), and hence it makes sense to consider the holomorphic presymplectic structure on $\XXX$, defined by pull-back
$$
\Omega_{\mathbb{J}} = \pi_2^*\Omega_{\mathbf{J}}.
$$
As in the previous section, $\Omega_{\mathbb{J}}$ induces a structure of holomorphic symplectic fibration on $\XXX \to \JJJ$ and a holomorphic Ehresmann connection, which in this case is simply the trivial connection. We will return to this structure when studying the \emph{universal Higgs bundle moduli space} in Section \ref{section-ghiggs}. The following result is an immediate consequence of the proof of \cite[Proposition 1.6]{AGG1}, but we sketch the proof in the present setup for the convenience of the reader.

\begin{proposition}\label{Flat-exreduction}
The action of $\widetilde{\GGG}$ on $(\XXX,\Omega_{\mathbb{J}})$ is Hamiltonian, with equivariant moment map given by
$$
\langle \mu_{\widetilde{\GGG}}(J,D), \zeta \rangle = \int_\Sigma B(F_D, D \zeta),
$$
where $D \zeta \in \Omega^0(\Sigma,E_G(\lieg))$ denotes the vertical part of the $G$-equivariant vector field $\zeta$ on the total space of $E_G$, with respect to the connection $D$.
\end{proposition}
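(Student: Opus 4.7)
The plan is to verify the two defining properties of a moment map for the closed presymplectic form $\Omega_{\mathbb{J}}$: invariance of $\Omega_{\mathbb{J}}$ under the $\widetilde{\GGG}$-action, and the identity $d\mu^\zeta = i_{X_\zeta}\Omega_{\mathbb{J}}$ for every $\zeta \in \Lie \widetilde{\GGG}$, where $\mu^\zeta := \langle \mu_{\widetilde{\GGG}}, \zeta \rangle$. Invariance follows from the definition $\Omega_{\mathbb{J}} = \pi_2^*\Omega_{\mathbf{J}}$ together with the $\Ad$-invariance of $B$ and the diffeomorphism invariance of integration over $\Sigma$: if $g\in\widetilde{\GGG}$ covers $\phi = p(g)$, the induced action on $\DDD$ transforms $\dot D$ by simultaneous conjugation and pullback along $\phi$, which leaves $\int_\Sigma B(\dot D_1 \wedge \dot D_2)$ unchanged.

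For the moment map identity, first record the infinitesimal action of $\zeta$, covering a vector field $y = p\zeta$ on $\Sigma$, at a point $(J,D) \in \XXX$: it is $(L_y J,\,\zeta \cdot D)$ with
\[
\zeta \cdot D \;=\; -\,d_D(D\zeta) - i_y F_D,
\]
exactly as in the proof of Proposition \ref{p:mmap}, where $D\zeta \in \Omega^0(\Sigma, E_G(\lieg))$ denotes the vertical part of $\zeta$ with respect to $D$. Since $D\zeta$ itself depends on $D$, I claim that its first-order variation along $\dot D \in \Omega^1(\Sigma, E_G(\lieg))$ is
\[
\tfrac{d}{dt}\big|_{t=0}\,(D+t\dot D)\zeta \;=\; i_y \dot D,
\]
because $\dot D$ is horizontal in $E_G$, so its evaluation on $\zeta$ agrees with its evaluation on the horizontal lift of $y$, which downstairs is $i_y\dot D$.

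Differentiating $\mu^\zeta(J,D) = \int_\Sigma B(F_D, D\zeta)$, which is independent of $J$, and using $\dot F_D = d_D\dot D$, gives
\[
d\mu^\zeta(\dot J, \dot D) \;=\; \int_\Sigma B(d_D \dot D, D\zeta) + \int_\Sigma B(F_D,\, i_y \dot D).
\]
Stokes' theorem on the closed surface $\Sigma$, applied to the $1$-form $B(\dot D, D\zeta)$ and using $\Ad$-invariance of $B$, rewrites the first term as $-\int_\Sigma B(d_D(D\zeta) \wedge \dot D)$. A pointwise coordinate computation on a surface gives the identity $B(F_D, i_y \dot D) = -B(i_y F_D \wedge \dot D)$, so the second term becomes $-\int_\Sigma B(i_y F_D \wedge \dot D)$. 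Combining,
\[
d\mu^\zeta(\dot J, \dot D) \;=\; \int_\Sigma B\bigl((\zeta\cdot D) \wedge \dot D\bigr) \;=\; \Omega_{\mathbf{J}}(\zeta\cdot D,\, \dot D) \;=\; i_{X_\zeta}\Omega_{\mathbb{J}}(\dot J, \dot D),
\]
the last equality because $\Omega_{\mathbb{J}} = \pi_2^*\Omega_{\mathbf{J}}$ only sees the $\DDD$-component of tangent vectors. Equivariance of $\mu_{\widetilde{\GGG}}$ is then immediate from the natural transformation properties of $F_D$ and $D\zeta$ under $\widetilde{\GGG}$ and $\Ad$-invariance of $B$.

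The main subtlety is precisely the identification $\dot{(D\zeta)} = i_y\dot D$: it records that the decomposition of the $G$-equivariant vector field $\zeta$ on $E_G$ into horizontal and vertical parts depends on $D$, and it is exactly this $D$-dependence that produces the cancellation between the derivative of $D\zeta$ and the curvature term $i_y F_D$ in $\zeta\cdot D$. This mechanism is parallel to the one underlying Proposition \ref{p:mmap} and the extended gauge group framework developed in \cite{AGG}.
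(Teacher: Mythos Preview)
Your proof is correct and follows essentially the same route as the paper's: you differentiate $\mu^\zeta$, use $\dot F_D = d_D\dot D$ and $\dot{(D\zeta)} = i_y\dot D$, integrate by parts, and compare with the infinitesimal action $\zeta\cdot D = -d_D(D\zeta) - i_y F_D$. You are in fact more explicit than the paper in spelling out the variation of $D\zeta$, the pointwise identity $B(F_D, i_y\dot D) = -B(i_y F_D \wedge \dot D)$, and the invariance and equivariance statements.
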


\begin{proof}
Taking variations on the formula for $\mu_{\widetilde{\GGG}}$ and integrating by parts, we have
\begin{align*}
\langle d\mu_{\widetilde{\GGG}}(\dot J,\dot D), \zeta \rangle & = \int_\Sigma B(d_D(\dot D) , D \zeta) + \int_\Sigma B(F_D, i_{p\zeta}\dot D)\\
& = \int_\Sigma B(\dot D\wedge  d_D(D \zeta)) - \int_\Sigma B(i_{p\zeta} F_D \wedge  \dot D)\\
& = - \int_\Sigma B((d_D(D \zeta)+ i_{p\zeta} F_D) \wedge \dot D).
\end{align*}
The result now follows because the infinitesimal action of $\zeta$ on $D$ is given by 
\[
\zeta \cdot D = -d_D(D\zeta) - i_{p\zeta}F_D.
\qedhere
\]
\end{proof}

It is remarkable that, unlike in the moment-map calculation in Proposition \ref{p:mmap}, in the present setup the extension of the complex gauge group by diffeomorphisms does not produce any additional `coupling term' in the base $\Sigma$ (cf. Remark \ref{rem:couplingterm}). One can interpret this fact as a functorial property of flat connections under the action of diffeomorphisms on a surface. This allows us to define the universal moduli space of flat $G$-connections, as follows, and to impose the flatness condition for $D$ as an \emph{integrability condition} in the next sections.

Let us denote $\DDD^* \subset \DDD$ the complex subspace of reductive connections, and set 
$$
\XXX^* = \JJJ \times \DDD^*. 
$$
It is not difficult to see that $\DDD^*$, and hence $\XXX^*$, is preserved by the $\widetilde{\GGG}$-action. Therefore, considering the pull-back of $\Omega_{\mathbb{J}}$ to $\XXX^*$, there is an induced moment map away from the singularities of $\XXX^*$, which, by abuse of notation, we denote simply by
$$
\mu_{\widetilde{\GGG}|\XXX^*} \colon \XXX^* \lra \operatorname{Lie} \widetilde{\GGG}^*.
$$
From the previous discussion, it is natural to define the \emph{universal moduli space of flat $G$-connections} on $E_G$ as the complex symplectic quotient
$$
\Uf(G):=\{(J,D) \; \in \; \XXX^* \;\;  \mbox{with}\;\;
F_D=0\}/\widetilde{\GGG} = \mu_{\widetilde{\GGG}|\XXX^*}^{-1}(0)/\widetilde{\GGG}.
$$
By construction, $\Uf(G)$ fibres over the Teichm\"uller space $\mathcal{T}$ of complex structures modulo diffeomorphisms isotopic to the identity
\begin{equation}\label{eq:FibTeichmuller}
\Uf(G) \lra \mathcal{T} := \JJJ/\operatorname{Diff}_0(\Sigma).
\end{equation}
This fibration is naturally holomorphic, and $\Omega_{\mathbb{J}}$ induces a structure of holomorphic symplectic fibration with flat Ehresmann connection.

\begin{remark}\label{rem:couplingterm}
For a $G$-bundle on a higher dimensional symplectic manifold $(M^{2n},\omega)$, consider the subgroup $\widetilde{\KKK} \subset \widetilde{\GGG}$ given by the preimage by $p$ of the group of Hamiltonian symplectomorphisms on $M$. Then the proof of \cite[Proposition 1.6]{AGG1} shows that the induced $\widetilde{\KKK}$-action on $\DDD$ is Hamiltonian, with equivariant moment map
$$
\langle \boldsymbol{\mu}_{\widetilde{\KKK}}(J,D), \zeta \rangle = \frac{1}{(n-1)!}\int_M B(F_D, D \zeta)\wedge \omega^{n-1} - \frac{1}{2(n-2)!} \int_M f B(F_D \wedge F_D) \wedge \omega^{n-2},
$$
where $f \in C^\infty(M,\RR)$ is the unique smooth function satisfying $i_{y}\omega = df$ and $\int_M f \omega^n = 0$, with $p(\zeta) = y$. In this case, there is a `coupling term' on the base corresponding to the function 
$$
-\frac{(n-1)}{2}\frac{B(F_D \wedge F_D) \wedge \omega^{n-2}}{\omega^n}.
$$
\end{remark}

\subsection{Universal moduli space of the coupled harmonic equations}
\label{section-Uhitchin}

In this section we combine the results of the previous two sections in order to define universal moduli spaces of solutions of the harmonicity equations \eqref{harmonicity}, varying over Teichm\"uller space. As in the previous sections, we consider a fixed principal $K$-bundle $E_K$ over a smooth compact oriented surface $\Sigma$ with fixed symplectic form $\omega$.

\begin{definition}
A \emph{solution of the coupled harmonic equations} with coupling
constant $\alpha >0$ and parameter $\varepsilon \in \{-1,1\}$ is a
triple $(J,A, \psi) \in \XXX \cong \JJJ\times \AAA\times
\Omega^1(\Sigma,E_K(\liek))$ that satisfies the equations
\begin{equation}\label{eq:charmonicity}
\begin{split}
F_A -\frac{1}{2}[\psi,\psi] & = 0,\\
d_A\psi & = 0,\\
d_A^\ast\psi & = 0,\\
S_g - \alpha * d\left(B\left(\Lambda_\omega F_A,*\psi\right)\right) & = \frac{2\pi \chi(\Sigma)}{V},
\end{split}
\end{equation}
where $g = \omega(,J)$ and $*$ is the corresponding Hodge star operator.
\end{definition}

As in Section \ref{ssec:UFlat}, we consider the complex subspace $\DDD^* \subset \DDD$ of reductive flat connections. Then $\XXX^* := \JJJ \times \DDD^*$ is formally a complex submanifold of $\XXX$ preserved by the $\widetilde{\KKK}$-action (see Section \ref{coupling}), and inherits a minimal coupling structure $\boldsymbol{\omega}^{\mathbb{J}}_{\alpha,\varepsilon}$ and moment map $\mu_{\widetilde{\KKK}|\XXX^*}$. We define the \emph{universal Hitchin moduli space} as the symplectic quotient
$$
\Uh(G)_\alpha^\varepsilon := \mu_{\widetilde{\KKK}|\XXX^*}^{-1}(0)/\widetilde{\KKK}.
$$
By construction, $\Uh(G)_\alpha^\varepsilon$ parametrizes solutions of the coupled harmonic equations \eqref{eq:charmonicity} with coupling constant $\alpha >0$ and parameter $\varepsilon \in \{-1,1\}$, modulo the $\widetilde{\KKK}$-action, and inherits a natural presymplectic structure on its smooth locus. This structure is furthermore symplectic in the case $\varepsilon = -1$, by Lemma \ref{lemma:indef}. There are natural maps (cf. \eqref{eq:FibTeichmuller})
$$
\Uh(G)_\alpha^\varepsilon \lra\Uf(G)\lra\mathcal{T}.
$$
Hence, in particular, $\Uh(G)_\alpha^\varepsilon$ can be regarded as a fibration over the Teichm\"uller space $\mathcal{T}$. Observe that, since the symmetric $\mathbf{g}^{\mathbb{J}}_{\alpha,\varepsilon}$ is not positive definite (see Lemma \ref{lemma:indef}), it is not obvious a priori that $\Uh(G)_\alpha^\varepsilon$ inherits a complex structure compatible with the (pre)symplectic structure induced by $\boldsymbol{\omega}^{\mathbb{J}}_{\alpha,\varepsilon}$. The main goal of this section is to study sufficient conditions under which this natural condition for the moduli space holds, furthermore proving that the map $\Uh(G)_\alpha^\varepsilon \to \Uf(G)$ is holomorphic.

The first step is to undertake a \emph{gauge fixing} for solutions of the coupled harmonic equations \eqref{eq:charmonicity}, whereby the complex structure \eqref{eq:uJ} and the symmetric tensor $\mathbf{g}^{\mathbb{J}}_{\alpha,\varepsilon}$ descend to the moduli space. Difficulties will arise, due to the fact that $\mathbf{g}^{\mathbb{J}}_{\alpha,\varepsilon}$ is neither a definite pairing nor non-degenerate.

Consider a solution $(J,A,\psi)$ of the coupled harmonic equations~\eqref{eq:charmonicity} with coupling constant $\alpha >0$ and parameter $\varepsilon \in \{-1,1\}$. We start by characterizing the tangent space to $\Uh(G)_\alpha^\varepsilon$ at $[(J,A,\psi)]$. An infinitesimal variation of the triple $(J,A,\psi)$ is given by 
\begin{equation}\label{eq:cS1}
(\dot J,a, \dot \psi) \in \cS^1 := T_J\JJJ \oplus \Omega^1(\Sigma,E_K(\liek)) \oplus \Omega^1(\Sigma,E_K(\liek)).
\end{equation}
As above, we identify $T_J\JJJ$ with the space of endomorphisms $\dot
J \colon T\Sigma \to T\Sigma$ such that $\dot J J = - J \dot J$. The
proof of the following lemma follows from a straightforward
calculation.

\begin{lemma}\label{lem:linear}
The linearization of the coupled harmonic equations \eqref{eq:charmonicity} at $(J,A,\psi)$ is given by
\begin{equation}\label{eq:linear}
\begin{split}
d_Aa - [\dot \psi,\psi] & = 0,\\
d_A \dot \psi + [a,\psi] & = 0, \\ 
d_A (J \dot \psi) + [a, J \psi] - d_A (\psi( \dot J)) & = 0, \\ 
\varepsilon \delta S(\dot J) - \alpha * d(B(\Lambda_\omega d_A a,*\psi)) - \alpha * d(B(\Lambda_\omega F_A,* \dot \psi)) + \alpha * d(B(\Lambda_\omega F_A,\psi(\dot J))) & = 0,
\end{split}
\end{equation}
where $\delta S \colon T_J\JJJ \to C_0^\infty(\Sigma,\RR)$ is the linearization of the scalar curvature.
\end{lemma}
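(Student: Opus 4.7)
The claim is a direct linearisation-by-equation, so my plan is to differentiate the four coupled harmonic equations \eqref{eq:charmonicity} at $(J,A,\psi)$ along an infinitesimal deformation $(\dot J,a,\dot\psi)\in\cS^1$, and to match the resulting terms against \eqref{eq:linear} line by line.

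The two integrability equations $F_A-\tfrac12[\psi,\psi]=0$ and $d_A\psi=0$ are independent of $J$, so their linearisations are standard: using $\delta F_A=d_Aa$, the graded symmetry $[\dot\psi,\psi]=[\psi,\dot\psi]$ for Lie-algebra valued $1$-forms, and $\delta(d_A\psi)=d_A\dot\psi+[a,\psi]$, one reads off the first two lines of \eqref{eq:linear} at once. For the harmonicity condition, the key observation is that on an oriented surface with $\omega$ fixed, $d_A^*\psi=0$ is equivalent to $d_A(J\psi)=0$ via the identity $*\psi=J\psi$ recalled in Section \ref{hitchin-hyperkahler}, so the only $J$-dependent object to track is $J\psi$ itself. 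With the sign convention $(J\alpha)(v)=-\alpha(Jv)$ from the main text, its variation under $\dot J$ at fixed $\psi$ is $-\psi(\dot J)$, its $\psi$-variation is $J\dot\psi$, and its $A$-variation produces $[a,J\psi]$. Applying $d_A$ to these three pieces yields the third line of \eqref{eq:linear}.

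For the scalar curvature equation, I first exploit that, on a surface with $\omega$ fixed, both $\Lambda_\omega\colon\Omega^2\to\Omega^0$ and the outer Hodge star $*\colon\Omega^2\to\Omega^0$ are $J$-independent, so neither contributes a $\dot J$-term. The scalar curvature piece then gives $\varepsilon\,\delta S(\dot J)$ by definition of $\delta S$; the $F_A$-variation gives $-\alpha *d(B(\Lambda_\omega d_Aa,*\psi))$; the $\psi$-variation through $*\psi$, using $*\dot\psi=J\dot\psi$, gives $-\alpha *d(B(\Lambda_\omega F_A,*\dot\psi))$; and the $J$-variation through $*\psi$ produces $+\alpha *d(B(\Lambda_\omega F_A,\psi(\dot J)))$ by the same computation $\delta(J\psi)=-\psi(\dot J)$ used in the previous step. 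Summing these four contributions gives the fourth line of \eqref{eq:linear}.

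The calculation has no serious obstacle. The only mild point worth double-checking is the sign of $\delta(J\psi)$ under variation of $J$: this sign is what couples both the third and fourth linearised equations to $\psi(\dot J)$ with the correct orientation, and it propagates consistently between them. All the remaining steps reduce to the graded Leibniz rule for $d_A$ together with the $J$-independence of $\omega$-contractions on a surface.
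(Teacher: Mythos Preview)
Your proposal is correct and follows the same approach as the paper, which simply records that the lemma ``follows from a straightforward calculation'' without spelling out any of the steps. Your line-by-line linearisation---tracking $\delta F_A=d_Aa$, $\delta(d_A\psi)=d_A\dot\psi+[a,\psi]$, the $J$-variation $\delta_J(J\psi)=-\psi(\dot J)$ via the convention $J\alpha=-\alpha(J)$, and the $J$-independence of $\Lambda_\omega$ and of $*$ on $2$-forms---is exactly the computation the paper leaves to the reader.
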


We denote by ${\bf L}_\alpha^\varepsilon (\dot J,a, \dot \psi)$ the differential operator defined by the left-hand side of equations \eqref{eq:linear}.
We turn next to the study of the infinitesimal action, in order to define a complex of differential operators.
From the proof of Proposition \ref{p:mmap}, using the connection $A$ we can identify elements $\zeta\in \operatorname{Lie} \widetilde{\KKK}$ with pairs 
$$
\zeta \cong (f,u) \in \mathcal{S}^0 := C_0^\infty(\Sigma,\RR) \oplus \Omega^0(\Sigma,E_K(\liek)),
$$ 
and the infinitesimal action $\mathbf{P}(f,u):= (f,u)\cdot (J,A,\psi)$ at $(J,A,\psi)$ is
\begin{align}\label{eq:infinitesimalactionmod}
\mathbf{P}(f,u) = -(L_{\eta_f}J,d_A u + i_{\eta_f}F_A, d_A(i_{\eta_f}\psi) + [\psi,u]).
\end{align}
Define the vector space
$$
\mathcal{S}^2 := C_0^\infty(\Sigma,\RR) \oplus (\Omega^2(\Sigma,E_K(\liek)))^{\oplus 3}, 
$$
so that ${\bf L}_\alpha^\varepsilon (\dot J,a, \dot \psi)\in \mathcal{S}^2 $ (where we read equations \eqref{eq:linear} from bottom to top), and consider the complex of linear differential operators
\begin{equation}\label{eq:TWMcomplexring}
\begin{tikzcd}
(\mathcal{S}^*)\colon & 0 \ar[r] & \mathcal{S}^0 \ar[r,"{\mathbf{P}}"] & \mathcal{S}^1 \ar[r,"{\bf L}_\alpha^\varepsilon"] & \mathcal{S}^2 \ar[r] & 0.
\end{tikzcd}
\end{equation}
The cohomology $H^1(\mathcal{S}^*) := \frac{\ker{\bf L}_\alpha^\varepsilon}{\operatorname{Im}\, \mathbf{P}}$ can be formally identified with the tangent space $T_{[(J,A,\psi)]} \Uh(G)_\alpha^\varepsilon$. Our next result shows that the moduli space $\Uh(G)_\alpha^\varepsilon$ is finite dimensional. Notice that both $\mathbf{P}$ and ${\bf L}_\alpha^\varepsilon$ are multi-degree differential operators, and hence we shall use the generalized notion of ellipticity provided by Douglis and Nirenberg \cite{DN}. For the general theory of linear multi-degree elliptic differential operators we refer to \cite{LM2}.

\begin{lemma}\label{lem:sesTWM}
The sequence \eqref{eq:TWMcomplexring} is an elliptic complex of multi-degree linear differential operators. Consequently, the cohomology groups $H^j(\cS^*)$, with $j = 0,1,2$, are finite-dimensional.
\end{lemma}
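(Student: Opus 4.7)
The plan is to follow the standard template for moduli problems of coupled gauge-theoretic equations: (i) verify the complex property $\mathbf{L}_\alpha^\varepsilon\circ\mathbf{P}=0$ by a gauge-invariance argument; (ii) introduce Douglis--Nirenberg multi-weights adapted to the mixed orders of the operators, so that the top-order parts define a genuine principal-symbol complex; (iii) check ellipticity of this principal-symbol complex at every nonzero cotangent vector $\xi\in T^*\Sigma$; and (iv) invoke the Hodge theorem for multi-degree elliptic complexes on the compact surface $\Sigma$ to conclude that each $H^j(\cS^*)$ is finite-dimensional. Step (i) is immediate and needs no computation: the coupled harmonic equations \eqref{eq:charmonicity} are manifestly $\widetilde{\KKK}$-invariant, $\mathbf{P}(\zeta)$ represents the infinitesimal $\widetilde{\KKK}$-motion at the solution $(J,A,\psi)$ generated by $\zeta\in\Lie\widetilde{\KKK}$, and the linearization of a gauge-invariant system annihilates any such motion.

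For step (ii), reading the orders off from Lemma \ref{lem:linear} and \eqref{eq:infinitesimalactionmod}, the natural weight assignment is $(m_f,m_u)=(0,1)$ on $\cS^0$, $(m_{\dot J},m_a,m_{\dot\psi})=(2,2,2)$ on $\cS^1$ and $(m_1,\ldots,m_4)=(3,3,3,4)$ on $\cS^2$. With these choices the top-order parts of $\mathbf{P}$ are exactly $f\mapsto L_{X_f}J$, $f\mapsto d_A(i_{X_f}\psi)$ (both second-order in $f$) and $u\mapsto d_A u$ (first-order in $u$), while the subleading coupling terms $i_{X_f}F_A$ and $[\psi,u]$ drop below the principal order. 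Similarly, the top-order parts of $\mathbf{L}_\alpha^\varepsilon$ are the $d_A$'s acting on $a$ and $\dot\psi$ in Rows~1--3, the scalar-curvature linearization $\delta S(\dot J)$ and the second-order cross term $\alpha*d(B(\Lambda_\omega d_A a,*\psi))$ contributing to Row~4, whereas the zeroth- and first-order Higgs-field contractions ($[\dot\psi,\psi]$, $[a,\psi]$, the $\psi(\dot J)$-terms, and the $*d(B(\Lambda_\omega F_A,\cdot))$-couplings) disappear at principal order.

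Step (iii) is the main obstacle. After the above cancellations the principal-symbol complex acquires a block-triangular structure that decouples, up to two residual off-diagonal symbols, into two classical elliptic complexes: (a) the Donaldson--Fujiki base complex $C_0^\infty(\Sigma)\to T_J\JJJ\to C_0^\infty(\Sigma)$ associated with $f\mapsto L_{X_f}J$ and $\delta S$, whose composition is the fourth-order Lichnerowicz operator and hence elliptic, and (b) the deformation complex of the harmonicity equations for $(a,\dot\psi)$ at a reductive flat connection, which is elliptic by the classical Hitchin--Corlette theory recalled in Section \ref{hitchin-hyperkahler}. The residual off-diagonal symbols are $\sigma_2(d_A i_{X_\cdot}\psi)(\xi)$ in $\mathbf{P}$ and the Row~4 symbol $a\mapsto\sigma_2(*d(B(\Lambda_\omega d_A a,*\psi)))(\xi)$ in $\mathbf{L}_\alpha^\varepsilon$; the key observation allowing the splitting is that on the kernel $\{a=\lambda\xi\}$ of the Row~1 symbol $\xi\wedge(\cdot)$ one already has $\sigma_1(d_A)(\xi)a=0$, so the whole Row~4 coupling vanishes and the Row~4 kernel condition reduces to $\sigma_2(\delta S)(\xi)\dot J=0$ alone, while the first off-diagonal symbol maps into the rank-one subspace along $\xi$ cut out by the Row~2 kernel and can therefore be absorbed into the base piece of the parametrix. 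Once exactness of the principal-symbol complex at each stage is established in this way, the Douglis--Nirenberg--Hodge theorem for multi-degree elliptic complexes on the compact surface $\Sigma$ (see e.g. \cite{LM2}) yields Fredholm Laplacians at each $\cS^j$, completing step (iv) and giving the claimed finite-dimensionality of $H^j(\cS^*)$.
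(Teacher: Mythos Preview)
Your overall template---verify $\mathbf{L}_\alpha^\varepsilon\circ\mathbf{P}=0$ by gauge invariance, assign Douglis--Nirenberg weights, check exactness of the symbol sequence, then invoke Hodge theory---is the same as the paper's, and steps (i) and (iv) are fine. The gap is in step (ii), and it breaks step (iii).

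With your weights $(m_{\dot J},m_a,m_{\dot\psi})=(2,2,2)$ on $\cS^1$ and $(3,3,3,4)$ on $\cS^2$, the block from $\dot J$ to Row~3 has maximal allowed order $3-2=1$. But $-d_A(\psi(\dot J))$ is precisely first order in $\dot J$, so it \emph{does} contribute to the principal symbol, contrary to your claim that ``the $\psi(\dot J)$-terms\ldots disappear at principal order''. (No weight assignment avoids this: lowering $m_{\dot J}$ below $2$ would make $L_{X_f}J$ exceed the allowed order in $\mathbf{P}$, while raising $m_{\text{Row 3}}$ above $3$ would push $d_A(J\dot\psi)$ itself below principal order.) The paper, using a different but equivalent weight system, likewise retains this term: its Row~3 symbol is $-v\wedge\psi(\dot J)+v\wedge J\dot\psi$.

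Dropping this contribution actually makes your symbol complex \emph{fail} to be exact at $\cS^1$ whenever $\psi\neq0$. With your truncated Row~3 symbol $v\wedge J\dot\psi$, Rows~2--3 force $\dot\psi=0$ on the kernel; Row~4 (after your observation that the $a$-coupling vanishes there) gives $\dot J$ in the image of the Donaldson--Fujiki symbol, say $\dot J=\sigma(L_{X_\cdot}J)(v)(f_0)$. But your own off-diagonal symbol in $\mathbf{P}$ sends this same $f_0$ to $f_0\,v\otimes\psi(Jg^{-1}v)\neq0$ in the $\dot\psi$-slot, so the kernel element $(\dot J,a,0)$ is \emph{not} in $\operatorname{im}\sigma_{\mathbf{P}}$. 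Your ``absorption into the base parametrix'' remark does not address this obstruction. The fix is to keep $-v\wedge\psi(\dot J)$ in Row~3: it is exactly the term that ties $\dot\psi$ to $f_0$, and the paper's direct symbol computation uses it to conclude $\dot\psi=f_0\,v\otimes\psi(Jg^{-1}v)$, restoring exactness.
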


\begin{proof}
We consider the decompositions 
$$
\mathbf{P} = 
\left(
\begin{array}{cccc}
\mathbf{P}_{11} & \mathbf{P}_{12} \\
\mathbf{P}_{21} & \mathbf{P}_{22} \\
\mathbf{P}_{31} & \mathbf{P}_{32} \\
  \end{array}
\right)
\qquad 
{\bf L}_\alpha^\varepsilon = 
\left(
\begin{array}{cccc}
\mathbf{L}_{11} & \mathbf{L}_{12} & \mathbf{L}_{13} \\
\mathbf{L}_{21} & \mathbf{L}_{22} & \mathbf{L}_{23} \\
\mathbf{L}_{31} & \mathbf{L}_{32} & \mathbf{L}_{33} \\
\mathbf{L}_{41} & \mathbf{L}_{42} & \mathbf{L}_{43}
  \end{array}
\right),
$$
where to clarify the notation, we notice e.g. that
\begin{align*}
\mathbf{P}_{11}(f) &= - L_{\eta_f}J, &  \mathbf{P}_{12} &= 0, & \mathbf{P}_{21}(f) & = - i_{\eta_f}F_A,\\
\mathbf{P}_{22}(u) & = - d_A u, & \mathbf{P}_{31}(f) &= - d_A(i_{\eta_f}\psi), & \mathbf{P}_{32}(u) &= - [\psi,u].
\end{align*}
Then, the tuples $\mathbf{t} = (2,2)$ and $\mathbf{s} = (0,1,0)$ form a system of orders for $\mathbf{P}$, since
\begin{align*}
o(\mathbf{P}_{11}) &= 2 \leq 2-0, &  o(\mathbf{P}_{12}) &= 0 \leq 2-0, & o(\mathbf{P}_{21}) & = 1 \leq 2-1,\\
o(\mathbf{P}_{22}) & = 1 \leq 2-1, & o(\mathbf{P}_{31}) & = 2 \leq 2-0, & o(\mathbf{P}_{32}) & = 0 \leq 2-0.
\end{align*}
and the associated leading symbol is
\begin{equation*}
\sigma_{\mathbf{P}}(v)(f,u)= (\sigma_{\mathbf{P}_1}(v),\sigma_{\mathbf{P}_2}(v),\sigma_{\mathbf{P}_3}(v)),
\end{equation*}
with
\begin{align*}
      \sigma_{\mathbf{P}_1}(v) & = - f Jv \otimes \omega^{-1}v,\\ 
      \sigma_{\mathbf{P}_2}(v) & = -v \otimes u - f i_{\omega^{-1}v}F_A, \\
      \sigma_{\mathbf{P}_3}(v) & =   f v \otimes \psi(Jg^{-1}v).
\end{align*}
Similarly, $\mathbf{t} = (2,2,2)$ and $\mathbf{s} = (0,1,1,1)$ form a system of orders for ${\bf L}_\alpha^\varepsilon$, since
\begin{align*}
o(\mathbf{L}_{11}) &= 2 \leq 2-0, &  o(\mathbf{L}_{12}) &= 2 \leq 2-0, & o(\mathbf{L}_{13}) & = 1 \leq 2-0,\\
o(\mathbf{L}_{21}) & = 1 \leq 2-1, & o(\mathbf{L}_{22}) & = 0 \leq 2-1, & o(\mathbf{L}_{23}) & = 1 \leq 2 - 1,\\
o(\mathbf{L}_{31}) & = 0 \leq 2-1, & o(\mathbf{L}_{32}) & = 0 \leq 2-1, & o(\mathbf{L}_{33}) & = 1 \leq 2 - 1,\\
o(\mathbf{L}_{41}) & = 0 \leq 2-1, & o(\mathbf{L}_{42}) & = 1 \leq 2-1, & o(\mathbf{L}_{43}) & = 0 \leq 2 - 1,
\end{align*}
and the associated leading symbol is
\begin{equation*}
\sigma_{{\bf L}_\alpha^\varepsilon}(v)(\dot J ,a,\dot \psi)= (\sigma_{\mathbf{L}_1}(v),\sigma_{\mathbf{L}_2}(v),\sigma_{\mathbf{L}_3}(v),\sigma_{\mathbf{L}_4}(v)),
\end{equation*}
with
\begin{align*}
      \sigma_{\mathbf{L}_1}(v) & =  \varepsilon * v \wedge v(\dot J) - \alpha * v \wedge B(\Lambda_\omega (v \wedge a),*\psi),\\ 
      \sigma_{\mathbf{L}_2}(v) & = -v \wedge \psi(\dot J) + v \wedge J\dot \psi, \\
      \sigma_{\mathbf{L}_3}(v) & =  v \wedge \dot \psi,\\
      \sigma_{\mathbf{L}_4}(v) & =  v \wedge a.
\end{align*}
We prove next that the associated sequence of symbols is exact. Assuming $\sigma_{\mathbf{P}}(v)(f,u)=0$, it follows from $\sigma_{\mathbf{P}_1}(v) = 0$ that $f = 0$. Hence, $\sigma_{\mathbf{P}_2}(v) = 0$ implies $u = 0$, and $\sigma_{\mathbf{P}}(v)$ is injective. 

To prove that $\sigma_{{\bf L}_\alpha^\varepsilon}(v)$ is surjective, we take $w \in \mathcal{S}^2$, which we can assume to be of the form
$$
w = (\nu,u_2 \omega, u_3\omega,u_4\omega).
$$
By dimensional reasons $v \wedge \omega = 0$, and hence
$$
u_4 \omega = v \wedge a, \qquad  \textrm{ for } a \in \Omega^1(\Sigma,E_K(\liek).
$$
We can choose $\dot J$ in the line spanned by $g^{-1}v \otimes Jv$ such that
$$
\varepsilon * v \wedge v(\dot J) = \nu + \alpha * v \wedge B(\Lambda_\omega (v \wedge a),*\psi).
$$
Finally, with these choices, $\sigma_{{\bf L}_\alpha^\varepsilon}(v)(\dot J ,a,\dot \psi) = w$ is equivalent to the equation
$$
v^{0,1} \wedge \dot \psi^{1,0}  = (u_2 + iu_3)\omega + i v \wedge \psi(\dot J).
$$
Again, by dimensional reasons $v^{0,1} \wedge ((u_2 + iu_3)\omega + i v \wedge \psi(\dot J)) = 0$, and therefore $\sigma_{{\bf L}_\alpha^\varepsilon}(v)$ is surjective.

To finish, assume that $\sigma_{{\bf L}_\alpha^\varepsilon}(v)(\dot J ,a,\dot \psi) = 0$. Then, $\sigma_{\mathbf{L}_4}(v) = 0$ implies that there exists $x \in \Omega^0(\Sigma,E_K(\liek)$ such that
$$
a = v \otimes x. 
$$
From this, we obtain 
$$
B(\Lambda_\omega (v \wedge a),*\psi) = 0 
$$
and therefore $\sigma_{\mathbf{L}_1}(v) = 0$ implies $v \wedge v(\dot J)  = 0$. By \cite{Fj}, there exists a smooth function $f \in C^\infty_0(\Sigma,\RR)$ such that
$$
\dot J =   -f Jv \otimes \omega^{-1}v.
$$
Combined with $\sigma_{\mathbf{L}_2}(v) = 0$, we obtain
$$
v \wedge J\dot \psi = v \wedge \psi(\dot J) = - f \psi(\omega^{-1}v) v \wedge Jv = f \psi(Jg^{-1}v) v \wedge Jv.
$$
Using now that $\dot \psi \wedge v = 0$, it follows that
$$
\dot \psi = f v \otimes \psi(Jg^{-1}v).
$$
Finally, writing $\omega =  \lambda v \wedge Jv$ and
$$
F_A = t v \wedge Jv \textrm{ for } t \in \Omega^0(\Sigma,E_K(\liek),
$$
it follows that
$$
a + f i_{\omega^{-1}v}F_A = v \otimes (x + \lambda^{-1}f t),
$$
and therefore we conclude 
\[
(\dot J,a,\dot \psi) = \sigma_{{\bf P}}(v)(f,- x - \lambda^{-1}f t).
\qedhere
\]
\end{proof}

Our strategy to build a complex structure induced by \eqref{eq:uJ} on the moduli space is to work orthogonally to the image of the infinitesimal action operator $\mathbf{P}$ in \eqref{eq:TWMcomplexring} with respect to the indefinite pairing $\mathbf{g}^{\mathbb{J}}_{\alpha,\varepsilon}$ in \eqref{eq:ug}. The existence of this complex structure will automatically yield a symmetric tensor of type $(1,1)$, since the 2-form $\boldsymbol{\omega}^{\mathbb{J}}_{\alpha,\varepsilon}$ in \eqref{eq:uomega} is well defined on the cohomology $H^1(\mathcal{S}^*)$ by Proposition \ref{p:mmap}. Consider the $L^2$-pairing on $\mathcal{S}^0 := C_0^\infty(\Sigma,\RR) \oplus \Omega^0(\Sigma,E_K(\liek)$, the domain of the operator $\mathbf{P}$, induced by $\omega$ and $B$:
\begin{equation}\label{eq:L2ell}
	\begin{split}
\langle(f,u),(f,u)\rangle  & {} = \int_\Sigma f^2 \omega + \int_\Sigma B(u,u) \omega.
	\end{split}
\end{equation}
Notice that, regarded as a pairing on $\Lie \widetilde{\KKK}$ (see Section \ref{coupling}), this is $A$-dependent, and hence it can be regarded as a family of pairings varying over the configuration space $\XXX$ in Proposition \ref{p:mmap}. Consider the map $\mu \colon \XXX \to \mathcal{S}^0$ defined by (cf. \eqref{eq:mmap})
$$
\mu(J,A,\psi) = (- \varepsilon S_J + \alpha * d(B(\Lambda_\omega F_A,*\psi)),\alpha d_A(J\psi)).
$$
Consider the operator $\widetilde{\mathbf{P}} \colon \mathcal{S}^0 \to \mathcal{S}^1$, defined by
\begin{align}\label{eq:modinfinitesimalaction}
\widetilde{\mathbf{P}}(f,u) = -(L_{\eta_f}J,d_A (u + 2\psi(J\eta_f)) + i_{\eta_f}F_A, d_A(i_{\eta_f}\psi) + [\psi,(u + 2\psi(J\eta_f)]),
\end{align}
which we regard as a \emph{modified infinitesimal action} (see \eqref{eq:modinfidea}). Observe that, by definition, $\operatorname{Im}\widetilde{\mathbf{P}} = \operatorname{Im}\mathbf{P}$.

\begin{lemma}\label{lem:P*}
The following operator provides a formal adjoint of $\widetilde{\mathbf{P}}$ for the pairings \eqref{eq:L2ell} and \eqref{eq:ug}
$$
\widetilde{\mathbf{P}}^* = \delta \mu \circ \mathbb{J} \colon \mathcal{S}^1 \lra \mathcal{S}^0.
$$
More explicitly, setting $\widetilde{\mathbf{P}}^* = \widetilde{\mathbf{P}}^*_1 \oplus \widetilde{\mathbf{P}}^*_2$, we have:
\begin{align*}
\widetilde{\mathbf{P}}^*_1(\dot J,a,\dot \psi) & =  - \varepsilon \delta S(J \dot J) + \alpha * d \left(B(\Lambda_\omega d_A \dot \psi,*\psi) - B(\Lambda_\omega F_A,* a) + B(\Lambda_\omega F_A,\psi(J \dot J))\right),\\
\widetilde{\mathbf{P}}^*_2(\dot J,a,\dot \psi) & = \alpha * \left(d_A (J a) - [\dot \psi, J \psi] - d_A (\psi(J\dot J))\right).
\end{align*}

\end{lemma}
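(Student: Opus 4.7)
The plan has two stages: first, identify $\delta \mu \circ \mathbb{J}$ as the formal adjoint abstractly from the moment map identity; second, verify the explicit formulas for $\widetilde{\mathbf{P}}^*_1$ and $\widetilde{\mathbf{P}}^*_2$ by direct linearisation and substitution.

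For the first stage, the starting point is Proposition \ref{p:mmap}, which gives $\mu^{\mathbb{J}}_{\widetilde{\KKK}}$ as a moment map for the $\widetilde{\KKK}$-action on $(\XXX, \boldsymbol{\omega}^{\mathbb{J}}_{\alpha,\varepsilon})$. Differentiating the moment map identity produces
\[
\langle \delta \mu(v), \zeta \rangle_{\cS^0} \;=\; \boldsymbol{\omega}^{\mathbb{J}}_{\alpha,\varepsilon}\bigl(\zeta \cdot (J,A,\psi),\,v\bigr),
\]
valid at any point of $\XXX$. The subtle point is that the formula of Proposition \ref{p:mmap} pairs $\zeta$ against the \emph{shifted} combination $A\zeta + 2\psi(J p\zeta)$, rather than against $A\zeta$ alone. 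Consequently, when we identify $\mu$ with the $L^2$-representative $(\mu_1,\mu_2) = (-\varepsilon S_J + \alpha {*} d(B(\Lambda_\omega F_A, {*}\psi)),\, \alpha\, d_A(J\psi))$ via \eqref{eq:L2ell}, the compatible parametrisation of $\Lie\widetilde{\KKK}$ by $\cS^0$ must absorb the shift $u \mapsto u + 2\psi(J\eta_f)$; this is exactly the modification defining $\widetilde{\mathbf{P}}$ in \eqref{eq:modinfinitesimalaction}. In particular, $\operatorname{Im} \widetilde{\mathbf{P}} = \operatorname{Im} \mathbf{P}$ so no information is lost, while the identity becomes
\[
\langle \delta \mu(v), (f,u)\rangle_{\cS^0} \;=\; \boldsymbol{\omega}^{\mathbb{J}}_{\alpha,\varepsilon}\bigl(\widetilde{\mathbf{P}}(f,u),\,v\bigr).
\]
Combining with the $\mathbb{J}$-compatibility $\boldsymbol{\omega}^{\mathbb{J}}_{\alpha,\varepsilon}(X,\mathbb{J}Y) = \mathbf{g}^{\mathbb{J}}_{\alpha,\varepsilon}(X,Y)$ from \eqref{eq:ug} and replacing $v$ by $\mathbb{J}v$ yields
\[
\langle \delta \mu(\mathbb{J}v),(f,u)\rangle_{\cS^0} \;=\; \mathbf{g}^{\mathbb{J}}_{\alpha,\varepsilon}\bigl(\widetilde{\mathbf{P}}(f,u),\,v\bigr),
\]
which is precisely the formal adjoint relation.

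For the second stage, I would compute $\delta\mu \circ \mathbb{J}$ component by component and match it to the stated formulas. The linearisation of $\mu_1$ along $(\dot J, a, \dot\psi) \in \cS^1$ uses $\delta F_A = d_A a$, the decomposition $\delta(\ast\psi) = \ast\dot\psi + \delta_J(\ast\psi)$ with the $J$-derivative producing a term of the form $\pm \psi(\dot J)$, together with the linearisation $\delta S(\dot J)$ of the scalar curvature; the linearisation of $\mu_2 = \alpha d_A(J\psi)$ uses the formula $\delta(d_A(J\psi)) = [a, J\psi] + d_A(\dot J \cdot \psi + J \dot\psi)$. Precomposing with $\mathbb{J}(\dot J, a, \dot\psi) = (J\dot J, -\dot\psi, a)$ performs the substitution $a \leftrightarrow -\dot\psi$ and $\dot J \mapsto J\dot J$. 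After this substitution, the three terms in the stated formula for $\widetilde{\mathbf{P}}^*_1$ emerge from, respectively, the variation of $F_A$, the $\psi$-dependence of $\ast\psi$, and the $J$-dependence of $\ast\psi$; analogously, the three terms of $\widetilde{\mathbf{P}}^*_2$ emerge from the $a$-, $\psi$- and $J$-dependence of $d_A(J\psi)$, with the bracket $[\dot\psi, J\psi]$ arising from the $[a, J\psi]$ factor after the swap $a \mapsto -\dot\psi$.

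The principal obstacle is the careful accounting of signs, stemming from the conventions for $\ast$ and $J$ on $1$-forms (where the identity $\delta_J(\ast\psi) = \pm\,\psi(\dot J)$ is responsible for the $B(\Lambda_\omega F_A, \psi(J\dot J))$ term in $\widetilde{\mathbf{P}}^*_1$), together with the consistency check that the shift $u \mapsto u + 2\psi(J\eta_f)$ in \eqref{eq:modinfinitesimalaction} really is the one needed to reconcile the formula of Proposition \ref{p:mmap} with the $L^2$-representation $(\mu_1,\mu_2)$. Once these sign conventions are pinned down the remaining manipulations are routine pointwise algebra on $\Sigma$, since neither integration by parts nor use of the equations \eqref{eq:charmonicity} is required for the formal adjoint computation.
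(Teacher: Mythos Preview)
Your proposal is correct and follows essentially the same route as the paper: the paper also derives the adjoint abstractly from the moment map property of Proposition~\ref{p:mmap}, writing the chain $\langle \widetilde{\mathbf{P}}^*v,(f,u)\rangle = \langle d\mu_{\widetilde{\KKK}}^{\mathbb{J}}(\mathbb{J}v), \zeta \rangle = \boldsymbol{\omega}^{\mathbb{J}}_{\alpha,\varepsilon}(\widetilde\zeta \cdot (J,A,\psi),\mathbb{J}v) = \mathbf{g}^{\mathbb{J}}_{\alpha,\varepsilon}(\widetilde{\mathbf{P}}(f,u),v)$ after introducing the shifted Lie-algebra element $\widetilde\zeta = u + 2\psi(J\eta_f) + A^\perp(\eta_f)$, which is exactly your observation that the shift in $\widetilde{\mathbf{P}}$ is what makes the $L^2$-representation of $\mu$ compatible with the pairing. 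Your second stage, computing the explicit components by linearising $\mu$ and precomposing with $\mathbb{J}$, is not spelled out in the paper's proof but is implicit in the statement, and your identification of the origin of each term is accurate.
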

 	
\begin{proof}
The proof follows from a straightforward calculation using the formal properties of the moment map in Proposition \ref{p:mmap}. Let $(f,u) \in \mathcal{S}^0$ and define
$$
\widetilde \zeta = u + 2\psi(J\eta_f) + A^\perp(\eta_f).
$$
Notice that
\begin{equation}\label{eq:modinfidea}
\widetilde \zeta \cdot (J,A,\psi) = \widetilde{\mathbf{P}}(f,u).
\end{equation}
Then, setting $v = (\dot J,a,\dot \psi)$, we have
\begin{align*}
\langle \widetilde{\mathbf{P}}^*v,(f,u)\rangle &= \langle d\mu_{\widetilde{\KKK}}^{\mathbb{J}}(\mathbb{J}v), \zeta \rangle = \boldsymbol{\omega}^{\mathbb{J}}_{\alpha,\varepsilon}(\zeta \cdot (J,A,\psi),\mathbb{J}v) 
\\&
= \mathbf{g}^{\mathbb{J}}_{\alpha,\varepsilon}(\widetilde{\mathbf{P}}(f,u),v) = \mathbf{g}^{\mathbb{J}}_{\alpha,\varepsilon}(v,\widetilde{\mathbf{P}}(f,u)).
\qedhere
\end{align*}
\end{proof}

Consider now the differential operator
\begin{equation}\label{eq:Loperator}
\begin{array}{cccl}
\mathcal{L}\colon & \mathcal{S}^0  & \lra & \mathcal{S}^0 \\
& (f,u) & \longmapsto & \widetilde{\mathbf{P}}^*\circ \widetilde{\mathbf{P}}(f,u).
\end{array}
\end{equation}
The key condition on the solution $(J,A,\psi)$ of \eqref{eq:charmonicity} which we need to assume in order to construct the complex structure on the moduli space is the vanishing of the kernel of $\mathcal{L}$. Notice that, unlike in the standard cases in gauge theory in which the parameter space metric is positive definite, $\ker \mathcal{L}$ does not relate in general to automorphisms of the triple $(J,\psi,A)$, but rather to null vectors with respect to $\mathbf{g}^{\mathbb{J}}_{\alpha,\varepsilon}$. We build on the following technical result, which shows in particular that the subspace of null vectors in $\operatorname{Im} \mathbf{P}$ is finite-dimensional. Using the $L^2$ norm \eqref{eq:L2ell}, we extend the domain of $\mathcal{L}$ to an appropriate Sobolev completion.

\begin{proposition}\label{prop:zeroindex}
The operator $\mathcal{L}$ is Fredholm with zero index. Furthermore, elements $(f,u) \in \ker \mathcal{L}$ are smooth.
\end{proposition}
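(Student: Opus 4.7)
The plan is to establish Fredholmness and vanishing of the index by showing that $\mathcal{L}$ is a formally self-adjoint Douglis--Nirenberg elliptic operator on $\mathcal{S}^0$ with the positive-definite $L^2$ pairing \eqref{eq:L2ell}. Douglis--Nirenberg ellipticity delivers Fredholmness on appropriate Sobolev completions, formal self-adjointness forces the index to vanish (since $\operatorname{coker}\mathcal{L}\cong\ker\mathcal{L}^\ast=\ker\mathcal{L}$), and interior elliptic regularity gives smoothness of the kernel. The subtlety throughout is that the ``metric'' used to define $\widetilde{\mathbf{P}}^\ast$ is the indefinite pairing $\mathbf{g}^{\mathbb{J}}_{\alpha,\varepsilon}$, so standard positivity arguments are unavailable.

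First, I would record that formal self-adjointness of $\mathcal{L}$ with respect to the positive $L^2$ pairing \eqref{eq:L2ell} is an immediate consequence of the construction of $\widetilde{\mathbf{P}}^\ast$ in Lemma \ref{lem:P*} together with the symmetry of $\mathbf{g}^{\mathbb{J}}_{\alpha,\varepsilon}$: for any pair of test elements $(f_i,u_i)\in \mathcal{S}^0$,
\[
\langle \mathcal{L}(f_1,u_1),(f_2,u_2)\rangle_{L^2}
= \mathbf{g}^{\mathbb{J}}_{\alpha,\varepsilon}\bigl(\widetilde{\mathbf{P}}(f_1,u_1),\widetilde{\mathbf{P}}(f_2,u_2)\bigr)
= \langle (f_1,u_1),\mathcal{L}(f_2,u_2)\rangle_{L^2}.
\]
This identity also shows that the leading symbol of $\mathcal{L}$ is $\sigma_{\widetilde{\mathbf{P}}^\ast}(v)\circ\sigma_{\widetilde{\mathbf{P}}}(v)$, and reduces invertibility of the symbol to a statement about $\sigma_{\widetilde{\mathbf{P}}}(v)$ and the restriction of $\mathbf{g}^{\mathbb{J}}_{\alpha,\varepsilon}$ to its image.

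Second, to analyze $\sigma_{\widetilde{\mathbf{P}}}(v)$ I would use the convenient identity $\widetilde{\mathbf{P}}(f,u)=\mathbf{P}(f,u+2\psi(J\eta_f))$, which exhibits $\widetilde{\mathbf{P}}$ as the operator $\mathbf{P}$ precomposed with a zero-order shift in the Lie-algebra component. With Douglis--Nirenberg weights adapted to the extra term $-2 d_A(\psi(J\eta_f))$ (for instance $\mathbf{t}=(2,1)$ on $\mathcal{S}^0$ and $\mathbf{s}=(0,0,0)$ on $\mathcal{S}^1$), the injectivity of $\sigma_{\widetilde{\mathbf{P}}}(v)$ for $v\ne 0$ follows from the injectivity of $\sigma_{\mathbf{P}}(v)$ already established in Lemma \ref{lem:sesTWM}. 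To upgrade this to invertibility of the full symbol $\sigma_{\mathcal{L}}(v)$, I would show that $\mathbf{g}^{\mathbb{J}}_{\alpha,\varepsilon}$ is non-degenerate on $\operatorname{Im}\sigma_{\widetilde{\mathbf{P}}}(v)$, so that $\sigma_{\widetilde{\mathbf{P}}^\ast}(v)\circ\sigma_{\widetilde{\mathbf{P}}}(v)x=0$ forces $\sigma_{\widetilde{\mathbf{P}}}(v)x=0$ and hence $x=0$. The explicit image can be read off from the symbol formulae in the proof of Lemma \ref{lem:sesTWM}, and $\mathbf{g}^{\mathbb{J}}_{\alpha,\varepsilon}$ on it can be evaluated using Corollary \ref{cor:negative}.

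The main obstacle is precisely this last step: because $\mathbf{g}^{\mathbb{J}}_{\alpha,\varepsilon}$ is genuinely indefinite (negative semi-definite on the horizontal bundle $H^{\mathbb{J}}$, and even sign-changing along certain rays when $\varepsilon=+1$, see Lemma \ref{lemma:indef}), the familiar argument $\|\sigma_{\widetilde{\mathbf{P}}}(v)x\|^2>0$ fails, and one is forced into a direct symbolic computation. Once the symbol is shown to be an isomorphism for every $v\ne 0$, the operator $\mathcal{L}$ is Douglis--Nirenberg elliptic; the Fredholm property on suitable Sobolev completions then follows from the general theory of multi-degree elliptic operators (see \cite{LM2}), the index vanishes by formal self-adjointness, and $\ker\mathcal{L}\subset C^\infty$ by elliptic regularity, completing the proof.
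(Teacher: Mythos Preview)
Your outline is sound up to the point you yourself flag as ``the main obstacle'': you never actually verify that the pointwise version of $\mathbf{g}^{\mathbb{J}}_{\alpha,\varepsilon}$ is non-degenerate on $\operatorname{Im}\sigma_{\widetilde{\mathbf{P}}}(v)$, and without this the invertibility of $\sigma_{\mathcal{L}}(v)$---and hence Douglis--Nirenberg ellipticity of $\mathcal{L}$---remains open. This is not a bookkeeping detail: the indefiniteness of $\mathbf{g}^{\mathbb{J}}_{\alpha,\varepsilon}$ means the usual positivity argument genuinely fails, and a direct check of non-degeneracy on the image is exactly the ``direct symbolic computation'' you defer. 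There is also a secondary loose end: your proposed weights $\mathbf{t}=(2,1)$, $\mathbf{s}=(0,0,0)$ for $\widetilde{\mathbf{P}}$ differ from those used for $\mathbf{P}$ in Lemma~\ref{lem:sesTWM}, so the leading symbols are not the same and the injectivity of $\sigma_{\widetilde{\mathbf{P}}}(v)$ cannot simply be imported from that lemma.

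The paper sidesteps the whole indefinite-metric issue at the symbol level. Writing $\widetilde{\mathbf{P}}=\mathbf{P}\circ\mathbf{T}$ with $\mathbf{T}(f,u)=(f,u+2\psi(J\eta_f))$ invertible, it analyses instead the operator $\widetilde{\mathcal{L}}:=\widetilde{\mathbf{P}}^{\ast}\circ\mathbf{P}$. With Douglis--Nirenberg weights $\mathbf{t}=(4,4)$, $\mathbf{s}=(0,2)$ the leading symbol of $\widetilde{\mathcal{L}}$ is computed explicitly and turns out to be lower-triangular, with diagonal entries $-\varepsilon|v|^4$ and $-\alpha|v|^2$; it is therefore manifestly invertible, with no reference to $\mathbf{g}^{\mathbb{J}}_{\alpha,\varepsilon}$ at all. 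Ellipticity of $\widetilde{\mathcal{L}}$ then gives Fredholmness and smoothness of its kernel; the relation $\ker\mathcal{L}=\mathbf{T}^{-1}\ker\widetilde{\mathcal{L}}$ transfers smoothness to $\ker\mathcal{L}$, and your observation that $\mathcal{L}$ is formally self-adjoint for the $L^2$ pairing \eqref{eq:L2ell} yields index zero.
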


\begin{proof}
We observe that $\widetilde{\mathbf{P}} = \mathbf{P} \circ \mathbf{T}$, where $\mathbf{T} \colon \mathcal{S}^0 \to \mathcal{S}^0$ is the invertible operator
\begin{align}\label{eq:modinfinitesimalaction.1}
\mathbf{T}(f,u) = (f,u + 2\psi(J\eta_f)).
\end{align}
Consider the operator $\widetilde{\mathcal{L}} := \widetilde{\mathbf{P}}^* \circ \mathbf{P} \colon \cS^0 \to \cS^0$, explicitly given by
\begin{align*}
\widetilde{\mathcal{L}}_1(f,u) & =  \varepsilon \delta S(J L_{\eta_f}J) - \alpha * d \left(B\left(\Lambda_\omega([F_A,i_{\eta_f}\psi] + d_A[\psi,u]),*\psi\right)\right)\\
& + \alpha * d\left(B\left(\Lambda_\omega F_A,* (d_A u + i_{\eta_f}F_A)\right) - B\left(\Lambda_\omega F_A,\psi(J L_{\eta_f}J)\right)\right),\\
\widetilde{\mathcal{L}}_2(f,u) & = -\alpha * \left(d_A (J(d_A u + i_{\eta_f}F_A)) - [( d_A(i_{\eta_f}\psi) + [\psi,u]), J \psi] - d_A (\psi(JL_{\eta_f}J))\right).
\end{align*}
Then, the tuples $\mathbf{t} = (4,4)$ and $\mathbf{s} = (0,2)$ form a system of orders for $\widetilde{\mathcal{L}}$, since
\begin{alignat*}{2}
o(\widetilde{\mathcal{L}}_{11}) &= 4 \leq 4-0, \quad&\quad o(\widetilde{\mathcal{L}}_{12}) &= 2 \leq 4-0, \\
o(\widetilde{\mathcal{L}}_{21}) & = 2 \leq 4-2, & o(\widetilde{\mathcal{L}}_{22}) & = 2 \leq 4-2,
\end{alignat*}
and the associated leading symbol is
\begin{align*}
\sigma_{\widetilde{\mathcal{L}}_1}(v)(f,u) & = - \varepsilon * v \wedge v(J \omega^{-1}v)  Jv f & \\
& = - \varepsilon |v|^4 f,\\
\sigma_{\widetilde{\mathcal{L}}_2}(v)(f,u) & = - \alpha * \left(v \wedge J(v \otimes u + f i_{\omega^{-1}}F_A) - f v \wedge i_{\omega^{-1}} \psi + f v \wedge \psi(Jv \otimes \omega^{-1}v) \right)\\
& = - \alpha |v|^2 u - \alpha * \left(v \wedge J i_{\omega^{-1}}F_A + v \wedge i_{\omega^{-1}} \psi - v \wedge \psi(Jv \otimes \omega^{-1}v)\right) f,
\end{align*}
which is clearly invertible. Consequently, by the general theory of linear multi-degree elliptic differential operators (see \cite{LM2}) it follows that $\widetilde{\mathcal{L}}$ is Fredholm and elements in $\ker \widetilde{\mathcal{L}}$ are smooth. The result follows now from $\ker \mathcal{L} = \mathbf{T}^{-1} \ker \widetilde{\mathcal{L}}$ and the fact that $\mathcal{L}$ is self-adjoint.
\end{proof}

Assuming that $\ker \mathcal{L}$ is trivial, in the next result we obtain a natural gauge fixing via a $\mathbf{g}^{\mathbb{J}}_{\alpha,\varepsilon}$-orthogonal decomposition
\begin{equation}\label{eq:perp}
\mathcal{S}^1 = \operatorname{Im} \; \widetilde{\mathbf{P}} \oplus (\operatorname{Im} \; \widetilde{\mathbf{P}})^{\perp_{\mathbf{g}^{\mathbb{J}}_{\alpha,\varepsilon}}}.
\end{equation}

\begin{lemma}\label{lem:gaugefixing}
Assume that $\ker \mathcal{L} = \{0\}$. Then, there exists an orthogonal decomposition \eqref{eq:perp} for the pairing $\mathbf{g}^{\mathbb{J}}_{\alpha,\varepsilon}$. Consequently, for any element $v \in \mathcal{S}^1$ there exists a unique $\Pi v \in \operatorname{Im} \; \widetilde{\mathbf{P}}$ such that $(\dot J,a, \dot \psi) = v - \Pi v$ solves the linear equations
\begin{equation*}\label{eq:gaugefixing}
\begin{split}
\alpha d_A (J a) - \alpha [\dot \psi, J \psi] - \alpha d_A (\psi(J\dot J)) & = 0,\\
\varepsilon \delta S(J \dot J) + \alpha * d \left(B(\Lambda_\omega d_A \dot \psi,*\psi) - B(\Lambda_\omega F_A,* a) + B(\Lambda_\omega F_A,\psi(J \dot J))\right) & = 0.
\end{split}
\end{equation*}
\end{lemma}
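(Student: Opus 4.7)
The plan is to reduce the orthogonal decomposition to the solvability of the elliptic equation $\mathcal{L}(f,u) = \widetilde{\mathbf{P}}^*v$ and then invoke Proposition \ref{prop:zeroindex}. The key observation is that, by Lemma \ref{lem:P*}, for any $v \in \cS^1$ and $(f',u') \in \cS^0$,
\begin{equation*}
\mathbf{g}^{\mathbb{J}}_{\alpha,\varepsilon}\bigl(v - \widetilde{\mathbf{P}}(f,u), \widetilde{\mathbf{P}}(f',u')\bigr) = \bigl\langle \widetilde{\mathbf{P}}^*\bigl(v - \widetilde{\mathbf{P}}(f,u)\bigr), (f',u')\bigr\rangle.
\end{equation*}
Thus $v - \widetilde{\mathbf{P}}(f,u)$ lies in the $\mathbf{g}^{\mathbb{J}}_{\alpha,\varepsilon}$-orthogonal complement of $\operatorname{Im}\widetilde{\mathbf{P}}$ if and only if $\mathcal{L}(f,u) = \widetilde{\mathbf{P}}^*v$.

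Proceeding in order: first, I would pass to suitable Sobolev completions so that the operators act as bounded Fredholm maps; by Proposition \ref{prop:zeroindex}, $\mathcal{L}$ is Fredholm of index zero, so the assumption $\operatorname{ker}\mathcal{L} = \{0\}$ together with self-adjointness yields that $\mathcal{L}$ is a topological isomorphism. Hence there exists a unique $(f,u)$ with $\mathcal{L}(f,u) = \widetilde{\mathbf{P}}^*v$, and elliptic regularity (again from Proposition \ref{prop:zeroindex}) ensures $(f,u)$ is smooth. Define $\Pi v := \widetilde{\mathbf{P}}(f,u)$. Then $v - \Pi v$ lies in $(\operatorname{Im}\widetilde{\mathbf{P}})^{\perp_{\mathbf{g}^{\mathbb{J}}_{\alpha,\varepsilon}}}$ by construction, and unpacking the two components of $\widetilde{\mathbf{P}}^*(v - \Pi v) = 0$ using the explicit formulas for $\widetilde{\mathbf{P}}^*_1$ and $\widetilde{\mathbf{P}}^*_2$ in Lemma \ref{lem:P*} yields the two displayed linear equations.

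The second step is to verify that the sum $\operatorname{Im}\widetilde{\mathbf{P}} + (\operatorname{Im}\widetilde{\mathbf{P}})^{\perp_{\mathbf{g}^{\mathbb{J}}_{\alpha,\varepsilon}}}$ is direct; this is where the indefiniteness of $\mathbf{g}^{\mathbb{J}}_{\alpha,\varepsilon}$ could a priori cause trouble. If $w = \widetilde{\mathbf{P}}(f,u)$ also satisfies $\mathbf{g}^{\mathbb{J}}_{\alpha,\varepsilon}(w,\widetilde{\mathbf{P}}(f',u')) = 0$ for all $(f',u')$, then $\widetilde{\mathbf{P}}^* w = 0$, hence $\mathcal{L}(f,u) = 0$, and the hypothesis $\operatorname{ker}\mathcal{L} = \{0\}$ forces $(f,u) = 0$ and therefore $w = 0$. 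This proves both the direct-sum property and the uniqueness of $\Pi v$.

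The main obstacle is the one I flagged: because $\mathbf{g}^{\mathbb{J}}_{\alpha,\varepsilon}$ is only a (possibly indefinite) pairing, one cannot rely on the standard Hilbert-space orthogonal projection, and the decomposition has to be extracted entirely from the elliptic theory of $\mathcal{L}$ together with the nondegeneracy condition $\operatorname{ker}\mathcal{L} = \{0\}$. Once Proposition \ref{prop:zeroindex} is in hand, the rest of the argument is formal manipulation with the adjoint identity of Lemma \ref{lem:P*}; the nontrivial analytic input has already been absorbed in the Douglis--Nirenberg ellipticity check carried out there.
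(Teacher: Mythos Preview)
Your proposal is correct and follows essentially the same approach as the paper: both arguments use the adjoint identity from Lemma~\ref{lem:P*} to reduce the decomposition to solvability of $\mathcal{L}(f,u) = \widetilde{\mathbf{P}}^*v$, invoke Proposition~\ref{prop:zeroindex} (Fredholm index zero plus self-adjointness plus trivial kernel) to obtain surjectivity and elliptic regularity, and check directness via $\ker\mathcal{L} = \{0\}$. The only difference is cosmetic ordering---the paper verifies directness first and existence second, while you do the reverse---and your remarks on the indefiniteness of $\mathbf{g}^{\mathbb{J}}_{\alpha,\varepsilon}$ make explicit what the paper leaves implicit.
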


\begin{proof}
Notice first that from the non-degeneracy of $B$, the pairing given in \eqref{eq:L2ell} is non-degenerate. Thus
$$
\ker \, {\bf{\widetilde P}}^*= (\operatorname{Im} \,\widetilde{\mathbf{P}})^{\perp_{\mathbf{g}^{\mathbb{J}}_{\alpha,\varepsilon}}}.
$$
If $v\in\operatorname{Im}  \; \widetilde{\mathbf{P}} \cap (\operatorname{Im}  \; \widetilde{\mathbf{P}})^{\perp_{\mathbf{g}^{\mathbb{J}}_{\alpha,\varepsilon}}}$, then $v = \widetilde{\mathbf{P}}(y)$ for $y \in \cS^0$. But then $\widetilde{\mathbf{P}}^*\circ \widetilde{\mathbf{P}}(y)=0 $ and, by $\ker \mathcal{L} = \{0\}$, $v=0$. Thus
\begin{equation}\label{eq:glImpnondeg}
\operatorname{Im}  \; \widetilde{\mathbf{P}} \cap (\operatorname{Im}  \; \widetilde{\mathbf{P}})^{\perp_{\mathbf{g}^{\mathbb{J}}_{\alpha,\varepsilon}}}=\lbrace 0 \rbrace.
\end{equation}
Let $v\in \cS^1$. The condition
$$
v-{\bf \widetilde P}(y)\in (\operatorname{Im}  \;\widetilde{\mathbf{P}})^{\perp_{\mathbf{g}^{\mathbb{J}}_{\alpha,\varepsilon}}}
$$
for some $y\in \cS^0$
is equivalent to 
\begin{equation}
 \label{eq:solvingdirectsum}
{\bf \widetilde P}^*(v)={\bf{\widetilde P}}^*\circ\widetilde{\mathbf{P}} (y).
\end{equation}
But by Proposition \ref{prop:zeroindex} and condition $\ker \mathcal{L} = \{0\}$, $\widetilde{ \mathbf{P}}^*\circ {\bf \widetilde P}$ is surjective. Then, by elliptic regularity, one can solve \eqref{eq:solvingdirectsum} for $y \in \cS^0$. The orthogonal decomposition follows. The last statement of the lemma comes from the expression of $\bf{\widetilde P}^*$ in Lemma \ref{lem:P*}.
\end{proof}

The above lemma suggests to define the space of harmonic representatives of the complex \eqref{eq:TWMcomplexring}, as follows:
$$
\mathcal{H}^1(\cS^*)=\ker \bf{L}^\varepsilon_\alpha \cap \ker {\bf\widetilde P}^*.
$$
Our next result provides our gauge fixing mechanism for the linearization of coupled harmonic equations \eqref{eq:charmonicity}. 

\begin{proposition}\label{prop:lineargaugefixed}
Assume $\ker \mathcal{L} = \{0\}$ and $\alpha \neq 0$. Then, the inclusion $\mathcal{H}^1(\cS^*)\subset \ker \bf{L}^\varepsilon_\alpha$ induces an isomorphism
$$
\mathcal{H}^1(\cS^*)\cong H^1(\cS^*).
$$
More precisely, any class in the cohomology $H^1(\cS^*)$ of the complex \eqref{eq:TWMcomplexring} admits a unique representative $(\dot J,a,\dot \psi)$ solving the linear equations
\begin{equation}\label{eq:lineargaugefixed}
\begin{split}
d_Aa - [\dot \psi,\psi] & = 0,\\
d_A \dot \psi + [a,\psi] & = 0, \\ 
d_A (J \dot \psi) + [a, J \psi] - d_A (\psi( \dot J)) & = 0, \\ 
d_A (J a) - [\dot \psi, J \psi] - d_A (\psi(J\dot J)) & = 0 \\
\varepsilon \delta S(\dot J) - \alpha * d\left(B(\Lambda_\omega d_A a,*\psi) + B(\Lambda_\omega F_A,* \dot \psi) - B(\Lambda_\omega F_A,\psi(\dot J))\right) & = 0,\\
\varepsilon \delta S(J \dot J) - \alpha * d \left(- B(\Lambda_\omega d_A \dot \psi,*\psi) + B(\Lambda_\omega F_A,* a) - B(\Lambda_\omega F_A,\psi(J \dot J))\right) & = 0.
\end{split}
\end{equation}
\end{proposition}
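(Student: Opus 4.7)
The plan is to carry out a Hodge-type decomposition of $\ker \mathbf{L}^\varepsilon_\alpha$, using as the essential tool the orthogonal splitting $\mathcal{S}^1 = \operatorname{Im}\widetilde{\mathbf{P}} \oplus \ker \widetilde{\mathbf{P}}^*$ established in Lemma~\ref{lem:gaugefixing}. The crucial preliminary fact is that $\widetilde{\mathbf{P}}$ inherits the complex condition
$$
\mathbf{L}^\varepsilon_\alpha \circ \widetilde{\mathbf{P}} = 0,
$$
which follows from gauge invariance of the coupled harmonic equations (giving $\mathbf{L}^\varepsilon_\alpha \circ \mathbf{P} = 0$) combined with the identity $\operatorname{Im}\widetilde{\mathbf{P}} = \operatorname{Im}\mathbf{P}$ noted just after \eqref{eq:modinfinitesimalaction} (since $\widetilde{\mathbf{P}} = \mathbf{P}\circ\mathbf{T}$ for the invertible operator $\mathbf{T}$ used in the proof of Proposition~\ref{prop:zeroindex}).

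The main step is then the following. Given $v \in \ker \mathbf{L}^\varepsilon_\alpha$, apply Lemma~\ref{lem:gaugefixing} to write uniquely $v = \widetilde{\mathbf{P}}(y) + v_h$ with $v_h \in \ker\widetilde{\mathbf{P}}^*$, where $y \in \cS^0$ is smooth thanks to the elliptic regularity statement in Proposition~\ref{prop:zeroindex}. Because $\widetilde{\mathbf{P}}(y) \in \ker \mathbf{L}^\varepsilon_\alpha$ by the complex property, we get $v_h = v - \widetilde{\mathbf{P}}(y) \in \ker \mathbf{L}^\varepsilon_\alpha \cap \ker\widetilde{\mathbf{P}}^* = \mathcal{H}^1(\cS^*)$. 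This proves surjectivity of $\mathcal{H}^1(\cS^*) \to H^1(\cS^*)$, since $v$ and $v_h$ differ by an element of $\operatorname{Im}\widetilde{\mathbf{P}} = \operatorname{Im}\mathbf{P}$. For injectivity, if $v_h, v_h' \in \mathcal{H}^1(\cS^*)$ represent the same class, then $v_h - v_h' \in \operatorname{Im}\mathbf{P} = \operatorname{Im}\widetilde{\mathbf{P}}$ while also lying in $\ker\widetilde{\mathbf{P}}^*$, so \eqref{eq:glImpnondeg} forces $v_h = v_h'$.

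It remains to identify the explicit equations \eqref{eq:lineargaugefixed}. The conditions $v \in \ker \mathbf{L}^\varepsilon_\alpha$ unfold as the first three equations and the fifth (scalar curvature) equation, directly from Lemma~\ref{lem:linear}. The conditions $v \in \ker \widetilde{\mathbf{P}}^*$ unfold, via Lemma~\ref{lem:P*}, as $\widetilde{\mathbf{P}}^*_2 = 0$ and $\widetilde{\mathbf{P}}^*_1 = 0$; using the hypothesis $\alpha \neq 0$ to clear the overall factor, these become the fourth and sixth equations of \eqref{eq:lineargaugefixed}. The whole argument is a formal consequence of the earlier lemmas, and the only place where analytic subtlety enters is the smoothness of the gauge-fixing element $y$; this is where the Fredholm/regularity content of Proposition~\ref{prop:zeroindex} is indispensable, and is the step most sensitive to the hypothesis $\ker\mathcal{L}=\{0\}$.
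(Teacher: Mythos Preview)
Your proof is correct and follows essentially the same approach as the paper, which simply cites Lemma~\ref{lem:linear} and Lemma~\ref{lem:gaugefixing} and leaves the details implicit. Your version spells out the Hodge-type argument (surjectivity via the orthogonal splitting, injectivity via \eqref{eq:glImpnondeg}) and the identification of the six equations with $\ker\mathbf{L}^\varepsilon_\alpha \cap \ker\widetilde{\mathbf{P}}^*$, which is exactly what the paper intends.
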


\begin{proof}
The correspondence between $H^1(\cS^*)$ and the space of solutions of \eqref{eq:lineargaugefixed} follows from Lemma \ref{lem:linear} and Lemma \ref{lem:gaugefixing}.
\end{proof}

We are ready to prove our main result, which shows that the gauge fixing in Proposition \ref{prop:lineargaugefixed} enables us to descend the complex structure $\mathbb{J}$ in $\XXX$ and the symmetric tensor $\mathbf{g}^{\mathbb{J}}_{\alpha,\varepsilon}$, to an open subset of the moduli space $\Uh(G)_\alpha^\varepsilon$, via the symplectic reduction in Proposition \ref{p:mmap}. Define
$$
\mathcal{U}^*_{\alpha,\varepsilon} = \{[(J,A,\psi)] \; | \; \ker \mathcal{L}= \{0\} \} \subset \Uh(G)_\alpha^\varepsilon.
$$

\begin{theorem}\label{thm:metric}
The set $\; \mathcal{U}^*_{\alpha,\varepsilon}$ is open in $\; \Uh(G)_\alpha^\varepsilon$, and for any smooth point $[(J,A,\psi)] \in \mathcal{U}^*_{\alpha,\varepsilon}$ the tangent space to $\Uh(G)_\alpha^\varepsilon$ at $[(J,A,\psi)]$, identified with the space of solutions of the gauge fixed linear equations \eqref{eq:lineargaugefixed}, inherits a complex structure $\mathbb{J}$ and a symmetric tensor $\mathbf{g}^{\mathbb{J}}_{\alpha,\varepsilon}$ such that $\boldsymbol{\omega}^{\mathbb{J}}_{\alpha,\varepsilon} = \mathbf{g}^{\mathbb{J}}_{\alpha,\varepsilon}(\mathbb{J},)$, given respectively by \eqref{eq:uJ} and \eqref{eq:ug}, and where $\boldsymbol{\omega}^{\mathbb{J}}_{\alpha,\varepsilon}$ stands for the restriction of \eqref{eq:uomega}. Furthermore,

\begin{enumerate}

\item if $\varepsilon = 1$ the tensor $\mathbf{g}^{\mathbb{J}}_{\alpha,\varepsilon}$ is possibly degenerate,

\item if $\varepsilon = -1$ the tensor $\mathbf{g}^{\mathbb{J}}_{\alpha,\varepsilon}$ is non-degenerate, and defines a pseudo-K\"ahler structure on the moduli space.

\end{enumerate}
In either case, $\boldsymbol{\omega}^{\mathbb{J}}_{\alpha,\varepsilon}$ admits a global K\"ahler potential, that is, $\boldsymbol{\omega}^{\mathbb{J}}_{\alpha,\varepsilon} = dd^c_\mathbb{J} \Phi$, where
$$
\Phi = \varepsilon\nu_\JJJ + \frac{\alpha}{2} \|\psi\|^2_{L^2}, 
$$
and $\nu_\JJJ$ is induced by the global K\"ahler potential in the space of complex structures compatible with the orientation $\JJJ$ (see \cite[Section 4]{Fj}).

\end{theorem}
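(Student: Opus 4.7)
The plan is to combine the gauge fixing of Proposition~\ref{prop:lineargaugefixed} with symplectic reduction, carefully tracking the indefiniteness of the ambient pairing exhibited in Lemma~\ref{lemma:indef}.

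First I would establish openness of $\mathcal{U}^*_{\alpha,\varepsilon}$. By Proposition~\ref{prop:zeroindex}, $\mathcal{L}$ is Fredholm of index zero, and it depends smoothly on $(J,A,\psi)\in\XXX^*$ together with the other data. Since bijectivity is an open condition in the space of Fredholm operators of fixed index, and $\widetilde{\KKK}$-equivariance of the entire construction makes the condition $\ker\mathcal{L}=\{0\}$ well-defined on orbits, the subset $\mathcal{U}^*_{\alpha,\varepsilon}\subset\Uh(G)_\alpha^\varepsilon$ is open.

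Next I would show that the almost complex structure $\mathbb{J}$ of \eqref{eq:uJ} descends to $\mathcal{U}^*_{\alpha,\varepsilon}$. By Proposition~\ref{prop:lineargaugefixed} the tangent space to $\Uh(G)_\alpha^\varepsilon$ at any smooth point of $\mathcal{U}^*_{\alpha,\varepsilon}$ is identified with the harmonic space $\mathcal{H}^1(\cS^*)=\ker\mathbf{L}^\varepsilon_\alpha\cap\ker\widetilde{\mathbf{P}}^*$. The central observation is that the six equations of \eqref{eq:lineargaugefixed} are organised in $\mathbb{J}$-conjugate pairs: direct substitution of $\mathbb{J}(\dot J,a,\dot\psi)=(J\dot J,-\dot\psi,a)$ swaps the two flatness equations, swaps the harmonicity linearization with its gauge-fixing partner, and swaps the scalar-curvature linearization with its gauge-fixing partner. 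Abstractly this follows from the adjoint identity $\widetilde{\mathbf{P}}^*=\delta\mu\circ\mathbb{J}$ of Lemma~\ref{lem:P*}: for $v\in\ker\delta\mu\cap\ker\widetilde{\mathbf{P}}^*$ one has $\delta\mu(\mathbb{J} v)=\widetilde{\mathbf{P}}^* v=0$ and $\widetilde{\mathbf{P}}^*(\mathbb{J} v)=-\delta\mu(v)=0$, so $\mathcal{H}^1(\cS^*)$ is $\mathbb{J}$-invariant, and integrability on the quotient is inherited from $\XXX$.

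Once $\mathbb{J}$ descends, the closed $(1,1)$-form $\boldsymbol{\omega}^{\mathbb{J}}_{\alpha,\varepsilon}$ of \eqref{eq:uomega} descends to the symplectic quotient by Hamiltonian reduction applied to the moment map of Proposition~\ref{p:mmap}, and the identity $\mathbf{g}^{\mathbb{J}}_{\alpha,\varepsilon}=\boldsymbol{\omega}^{\mathbb{J}}_{\alpha,\varepsilon}(\cdot,\mathbb{J}\cdot)$ restricts automatically. For $\varepsilon=-1$, Lemma~\ref{lemma:indef} yields non-degeneracy of $\boldsymbol{\omega}^{\mathbb{J}}_{\alpha,-1}$ on the ambient $\XXX$, whence symplectic reduction produces a non-degenerate reduced symplectic form and, combined with $\mathbb{J}$, the pseudo-K\"ahler structure. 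For $\varepsilon=1$, null directions of $\mathbf{g}^{\mathbb{J}}_{\alpha,1}$ may survive the reduction and only a pre-symplectic structure is guaranteed. Finally, the global K\"ahler potential is read off from \eqref{eq:sigmaabs}: since $\alpha\boldsymbol{\sigma}^{\mathbb{J}}=\tfrac{\alpha}{2}\,dd^c_{\mathbb{J}}\|\psi\|^2_{L^2}$ and $\boldsymbol{\omega}_\JJJ=dd^c_{\mathbb{J}}\nu_\JJJ$ by Fujiki's result \cite{Fj}, one obtains $\boldsymbol{\omega}^{\mathbb{J}}_{\alpha,\varepsilon}=dd^c_{\mathbb{J}}\Phi$ with $\Phi=\varepsilon\nu_\JJJ+\tfrac{\alpha}{2}\|\psi\|^2_{L^2}$, and this potential is manifestly $\widetilde{\KKK}$-invariant, hence it descends.

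The main obstacle is that the ambient pairing is indefinite, so the classical argument for K\"ahler reduction (where positivity automatically yields a closed-orbit $\mathbb{J}$-invariant horizontal complement) is unavailable. Here the descent of $\mathbb{J}$ must rest on the algebraic adjoint relation $\widetilde{\mathbf{P}}^*=\delta\mu\circ\mathbb{J}$ rather than on metric considerations, and this is precisely what is made possible by the gauge-fixing mechanism of Section~\ref{section-Uhitchin} under the hypothesis $\ker\mathcal{L}=\{0\}$ that defines $\mathcal{U}^*_{\alpha,\varepsilon}$.
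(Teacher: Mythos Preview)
Your proposal is correct and follows essentially the same approach as the paper: openness via semicontinuity of $\dim\ker\mathcal{L}$, descent of $\mathbb{J}$ by checking that it preserves the gauge-fixed system \eqref{eq:lineargaugefixed}, and the signature and potential statements via Lemma~\ref{lemma:indef} and \eqref{eq:sigmaabs}. One minor imprecision worth flagging: your ``abstract'' argument via $\widetilde{\mathbf{P}}^*=\delta\mu\circ\mathbb{J}$ only shows $\mathbb{J}$-invariance of $\ker\delta\mu\cap\ker\widetilde{\mathbf{P}}^*$, which accounts for four of the six equations in \eqref{eq:lineargaugefixed}; the remaining two linearized flatness equations are not governed by $\delta\mu$ and are $\mathbb{J}$-invariant for the separate reason that they are the real and imaginary parts of the $\mathbb{C}$-linear equation $d_D\dot D=0$ --- your direct substitution handles this correctly, so the logic stands.
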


\begin{proof}
The fact that $\mathcal{U}^*_{\alpha,\varepsilon}$ is open follows from upper semicontinuity of the dimension of the kernel for elliptic operators. Given now a smooth point $[(J,A,\psi)] \in \mathcal{U}^*_{\alpha,\varepsilon}$, the tangent space is identified with $H^1(\cS^*)$ and inherits a complex structure by Proposition \ref{prop:lineargaugefixed}, using that $\mathbb{J}$ in \eqref{eq:uJ} preserves \eqref{eq:lineargaugefixed}. The existence of the symmetric tensor $\mathbf{g}^{\mathbb{J}}_{\alpha,\varepsilon}$ is a direct consequence of Lemma \ref{lem:sigmaex} and Proposition \ref{p:mmap}, while the listed signature properties follow from Lemma \ref{lemma:indef}. The formula for the K\"ahler potential is a direct consequence of \eqref{eq:sigmaabs}, while an explicit formula for $\boldsymbol{\omega}^{\mathbb{J}}_{\alpha,\varepsilon}$ follows from Lemma \ref{lem:sigmaex}.
\end{proof}

To finish this section, we provide an explicit formula for the (pre)symplectic structure $\boldsymbol{\omega}^{\mathbb{J}}_{\alpha,\varepsilon}$. The proof is straightforward from the previous discussion and Lemma \ref{lem:sigmaex}. 

\begin{corollary}\label{cor:omegaexp}
Let $[(J,A,\psi)] \in \mathcal{U}^*_{\alpha,\varepsilon}$ be a smooth point and take $v_1,v_2$ tangent vectors of $\Uh(G)_\alpha^\varepsilon$ at $[(J,A,\psi)]$, identified with solutions $(\dot J_j,a_j,\dot\psi_j)$ of the gauge fixed linear equations \eqref{eq:lineargaugefixed}. Then, one has
\begin{equation}\label{eq:omegagmoduliex}
\begin{split}
\boldsymbol{\omega}^{\mathbb{J}}_{\alpha,\varepsilon}(v_1,v_2) & = \frac{\varepsilon}{2}\int_{\Sigma}\tr(J\dot J_1\dot J_2) \omega \\
& + \alpha  \int_\Sigma B((a_1 - \psi(\dot J_1)) \wedge J(\dot \psi_2 - (J\psi)(\dot J_2))) \\
& - \alpha \int_\Sigma B((\dot \psi_1 - (J\psi)(\dot J_1)) \wedge J(a_2 - \psi(\dot J_2)))\\
& - \alpha \int_\Sigma B(\psi(\dot J_1)\wedge \psi(\dot J_2)),\\
\mathbf{g}^{\mathbb{J}}_{\alpha,\varepsilon}(v_1,v_2) & = \frac{\varepsilon}{2}\int_{\Sigma}\tr(\dot J_1\dot J_2) \omega \\
& + \alpha  \int_\Sigma B((a_1 - \psi(\dot J_1)) \wedge J(a_2 - \psi(\dot J_2))) \\
& + \alpha \int_\Sigma B((\dot \psi_1 - (J\psi)(\dot J_1)) \wedge J(\dot \psi_2 - (J\psi)(\dot J_2)))\\
& - \alpha \int_\Sigma B(\psi(\dot J_1)\wedge J(\psi(\dot J_2))).
\end{split}
\end{equation}
\end{corollary}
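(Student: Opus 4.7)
The plan is essentially assembly of results already established, requiring only substitution. First, by Theorem \ref{thm:metric} the moduli-space tensors $\boldsymbol{\omega}^{\mathbb{J}}_{\alpha,\varepsilon}$ and $\mathbf{g}^{\mathbb{J}}_{\alpha,\varepsilon}$ at a smooth point $[(J,A,\psi)]\in \mathcal{U}^*_{\alpha,\varepsilon}$ are nothing but the restrictions of the ambient expressions \eqref{eq:uomega} and \eqref{eq:ug} to the gauge-fixed harmonic subspace $\mathcal{H}^1(\cS^*)\cong H^1(\cS^*)$ provided by Proposition \ref{prop:lineargaugefixed}. Since each tangent vector at $[(J,A,\psi)]$ is represented uniquely by a triple $(\dot J,a,\dot\psi)$ solving \eqref{eq:lineargaugefixed}, it suffices to evaluate those ambient forms on such representatives, with no further care about $\widetilde{\KKK}$-equivalence.

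Next I split according to the two summands of each tensor, writing $\boldsymbol{\omega}^{\mathbb{J}}_{\alpha,\varepsilon} = \varepsilon\boldsymbol{\omega}_\JJJ + \alpha\boldsymbol{\sigma}^{\mathbb{J}}$ and $\mathbf{g}^{\mathbb{J}}_{\alpha,\varepsilon} = \varepsilon\mathbf{g}_\JJJ + \alpha\mathbf{g}_{\mathbb{J}}$. The base contribution is immediate from \eqref{eq:SympJ}: one has $\boldsymbol{\omega}_\JJJ(\dot J_1,\dot J_2) = \tfrac{1}{2}\int_\Sigma \tr(J\dot J_1\dot J_2)\omega$, and the associated metric is
\[
\mathbf{g}_\JJJ(\dot J_1,\dot J_2) = \boldsymbol{\omega}_\JJJ(\dot J_1,\mathbb{J}_\JJJ\dot J_2) = \tfrac{1}{2}\int_\Sigma \tr(J\dot J_1\,J\dot J_2)\,\omega = \tfrac{1}{2}\int_\Sigma \tr(\dot J_1\dot J_2)\,\omega,
\]
where the last equality uses $\dot J_2 J = -J\dot J_2$ together with $J^2 = -\Id$ under the trace. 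These match the $\varepsilon$-terms in both formulas of the corollary.

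The fibre contribution is exactly what was packaged in Corollary \ref{cor:negative}: the bilinear form $\boldsymbol{\sigma}^{\mathbb{J}}(v_1,v_2)$ displayed there is already written in the adapted shape with respect to the Ehresmann decomposition from Proposition \ref{p:existenceGamma}, and it matches the $\alpha$-terms of $\boldsymbol{\omega}^{\mathbb{J}}_{\alpha,\varepsilon}$ in the claim verbatim. The bilinear $\mathbf{g}_{\mathbb{J}}(v_1,v_2)$ is recovered by polarizing the diagonal $\mathbf{g}_{\mathbb{J}}(v,v)$ of that same corollary, i.e.\ replacing the three squared terms $(a-\psi(\dot J))\wedge J(a-\psi(\dot J))$, $(\dot\psi - (J\psi)(\dot J))\wedge J(\dot\psi - (J\psi)(\dot J))$ and $\psi(\dot J)\wedge J\psi(\dot J)$ by their symmetric mixed products in the indices $1,2$. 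Summing the base and fibre pieces with their coefficients $\varepsilon$ and $\alpha$ then yields the two displayed formulas.

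There is no real obstacle in this final step; the genuine work was done earlier, in Proposition \ref{p:existenceGamma} (which produces the adapted form of $\boldsymbol{\sigma}^{\mathbb{J}}$ used in Corollary \ref{cor:negative}) and in Proposition \ref{prop:lineargaugefixed} (which guarantees that the ambient formulas descend well-definedly to $H^1(\cS^*)$). The only subtlety worth flagging is that this descent rests on the hypothesis $\operatorname{ker}\mathcal{L}=\{0\}$ built into the definition of $\mathcal{U}^*_{\alpha,\varepsilon}$; this is exactly what ensures the orthogonal decomposition of Lemma \ref{lem:gaugefixing} and hence the uniqueness of the harmonic representative on which the formulas above are evaluated.
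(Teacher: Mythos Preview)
Your proposal is correct and follows essentially the same approach as the paper, which states that the proof is ``straightforward from the previous discussion and Lemma \ref{lem:sigmaex}.'' You have simply made that sentence explicit: the ambient forms \eqref{eq:uomega} and \eqref{eq:ug} restrict to the harmonic representatives via Theorem \ref{thm:metric} and Proposition \ref{prop:lineargaugefixed}, the base contribution comes from \eqref{eq:SympJ}, and the fibre contribution from the adapted expression in Corollary \ref{cor:negative} (itself a repackaging of Lemma \ref{lem:sigmaex}).
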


\subsection{Comparison with $\boldsymbol{\Uf(G)}$ and existence}
\label{section-existence}

The aim of this section is twofold. Firstly, we establish a comparison between the moduli spaces $\Uh(G)_\alpha^\varepsilon$ and the universal moduli space of flat $G$-connections $\Uf(G)$ constructed in Section \ref{ssec:UFlat}. As we will see, for any $\alpha > 0$ and $\varepsilon \neq 0$, Theorem \ref{thm:metric} induces a natural holomorphic map 
$$
\Uh(G)_\alpha^\varepsilon \supset \mathcal{U}^*_{\alpha,\varepsilon} \lra \Uf(G),
$$
and hence a holomorphic map into Teichm\"uller space $\cT$. 
Secondly, we will prove that for genus of the surface $\Sigma$ bigger than zero, the open $\mathcal{U}^*_{\alpha,\varepsilon} \subset\Uh(G)_\alpha^\varepsilon$ is non-empty for sufficiently small values of $\alpha$.

Consider a point $[(J,D)] \in \Uf(G)$, regarded as the $\widetilde{\GGG}$-orbit of $(J,A,\psi) \in \XXX^*$ solving the equations
\begin{equation}\label{eq:GFlatAPsi}
\begin{split}
F_A -\frac{1}{2}[\psi,\psi] & = 0,\\
d_A\psi & = 0.
\end{split}
\end{equation}
The tangent space $T_{\left[(J,A,\psi)\right]} \Uf(G)$ can be formally identified with the cohomology of the complex of linear differential operators
\begin{equation}\label{eq:TJDU}
\begin{tikzcd}
(\mathcal{C}^*)\colon & 0 \ar[r] & \mathcal{C}^0 \ar[r,"{\mathbf{P}^c}"] & \mathcal{C}^1 \ar[r,"{\bf L^c}"] & \mathcal{C}^2 \ar[r] & 0,
\end{tikzcd}
\end{equation}
where we have
$$
\mathcal{C}^0 = \Lie \widetilde{\GGG} \cong \Omega^0(T\Sigma) \oplus \Omega^0(\Sigma,E_G(\lieg)), \qquad \mathcal{C}^1 = \mathcal{S}^1, \qquad \mathcal{C}^2 = \Omega^2(\Sigma,E_G(\lieg)),
$$
and
\begin{align*}
\mathbf{P}^c(y,u_0+ i u_1) & = - \left(L_yJ,d_Au_0 + i_{y}F_A - [\psi,u_1], d_A u_1 + d_A(\psi(y)) + [\psi,u_0]\right),\\
\mathbf{L}^c(\dot J,a,\dot \psi) & = d_Aa - [\dot \psi,\psi] + i
\left(d_A \dot \psi + [a,\psi]\right).
\end{align*}

\begin{lemma}\label{lem:sesTWM.1}
The sequence \eqref{eq:TJDU} is an elliptic complex of degree-one linear differential operators. Consequently, the cohomology groups $H^j(\cC^*)$, with $j = 0,1,2$, are finite-dimensional.
\end{lemma}

\begin{proof}
The leading symbol of $\sigma_{\mathbf{P}^c}$ is
$$
\sigma_{\mathbf{P}^c}(v)(y,u)= (- Jv \otimes y, -v \otimes u_0, - v \otimes u_1 -v \otimes \psi(y)).
$$
Similarly, the associated leading symbol of ${\bf L}^c$ is 
$$
\sigma_{{\bf L}^c}(v)(\dot J ,a,\dot \psi) = v \wedge (a + i \dot \psi).
$$
The symbol $\sigma_{\mathbf{P}^c}(v)$ is obviously injective, while $\sigma_{{\bf L}^c}(v)$ is surjective for dimensional reasons. Assume now that $\sigma_{{\bf L}^c}(v)(\dot J ,a,\dot \psi) = 0$. Then, $a +i\dot \psi = v \otimes (u_0 + i u_1')$. Again, by dimensional reasons $\dot J = - Jv \otimes y$ for some vector $y$, and hence
\[
(\dot J ,a,\dot \psi) = \sigma_{\mathbf{P}^c}(v)(y,u_0 + i(u_1' - \psi(y))).
\qedhere
\]
\end{proof}

Applying Theorem \ref{corlette}, a solution $(J,A,\psi)$ of the coupled harmonic equations \eqref{eq:charmonicity} induces a reductive flat $G$-connection $D = A + i \psi \in \DDD^*$. This fact, jointly with the natural inclusion $\widetilde{\KKK} \subset \widetilde{\GGG}$, leads to a continuous map
\begin{equation}\label{eq:mapmoduli}
\begin{array}{rcl}
\Uh(G)_\alpha^\varepsilon & \lra & \Uf(G) 
\\
\left[(J,A,\psi)\right] & \longmapsto & \left[(J,A + i \psi)\right].
\end{array}
\end{equation}
Building on Theorem \ref{thm:metric}, our next goal is to prove that this induces a holomorphic map $\Uh(G)_\alpha^\varepsilon \supset \mathcal{U}^*_{\alpha,\varepsilon} \to \Uf(G)$.

\begin{lemma}\label{lem:tangentmap}
Let $[(J,A,\psi)] \in \mathcal{U}^*_{\alpha,\varepsilon}$. Then, \eqref{eq:mapmoduli} induces a complex linear map
\begin{equation}\label{eq:TUseq}
\begin{tikzcd}
H^1(\mathcal{S}^*) \ar[r] & H^1(\mathcal{C}^*),
\end{tikzcd}
\end{equation}
where the complex structure on $H^1(\mathcal{S}^*)$ is the one induced by Proposition \ref{prop:lineargaugefixed}.
\end{lemma}

\begin{proof}
Using that $[(J,A,\psi)] \in \mathcal{U}^*_{\alpha,\varepsilon}$ we can identify 
$$
H^1(\cS^*) \cong \mathcal{H}^1(\cS^*),
$$
where the right-hand side is given by solutions of the gauge-fixed
system of linear equations \eqref{eq:lineargaugefixed}. Then, the map
\[
\begin{array}{ccl}
\mathcal{H}^1(\cS^*)&\lra&H^1(\mathcal{C}^*)
\\
\left[(\dot J, a,\dot \psi)\right] &\longmapsto& \left[(\dot J, a + i \dot \psi)\right]
\end{array}
\]
is complex $\mathbb{C}$-linear, since both complex structures are induced by $\mathbb{I}$ in \eqref{eq:uJ}. 
\end{proof}

To finish this section, we address the question of non-emptyness of the moduli space $\Uh(G)_\alpha^\varepsilon$, in genus $g(\Sigma) \geqslant 2$. We change our perspective on the equations \eqref{eq:charmonicity}: we fix a compact Riemann surface $X = (\Sigma,J)$ and consider a flat $G$-bundle $(E_G,D)$. In this setup, consider the coupled equations 
\begin{equation}\label{eq:charmonicg}
\begin{split}
d_{A_h}^*\psi_h & = 0,\\
\varepsilon S_g - \alpha * d\left(B\left(\Lambda_\omega F_A,*\psi_h\right)\right)& = \varepsilon \frac{2\pi \chi(\Sigma)}{V},
\end{split}
\end{equation}
for pairs $(g,h)$, where $g$ is a K\"ahler metric on $X$ with total volume $V$ and  $h\in \Omega^0(E_G(G/K))$ is a reduction
of structure group of $E_G$ to $K$. Here, we denote
$$
D = A_h + i \psi_h
$$
the natural decomposition of $D$ with respect to $h$. A solution $(g,h)$ of the equations \eqref{eq:charmonicg} determines then a solution of \eqref{eq:charmonicity}, for the same value of parameter $\varepsilon \in \{-1,1\}$ and coupling constant $\alpha$, given by the triple $(J,A_h,\psi_h)$.

\begin{theorem}\label{thm:existence}
Let $(E_G,D)$ be an irreducible flat $G$-bundle over a compact Riemann surface $X=(\Sigma,J)$ with genus $g(\Sigma) \geqslant 2$. Then, for any fixed total volume $V > 0$ and parameter $\varepsilon \in \{-1,1\}$, there exists $\alpha_0 > 0$ such that for any $0 < \alpha < \alpha_0$ there exists a solution $(g_\alpha,h_\alpha)$ of the equations \eqref{eq:charmonicg} with
\[
[(J,A_{h_\alpha},\psi_{h_\alpha})] \in \mathcal{U}^*_{\alpha,\varepsilon} \subset \Uh(G)_\alpha^\varepsilon.
\]
Furthermore, the induced map $\mathcal{U}^*_{\alpha,\varepsilon} \to \Uf(G)$ is holomorphic.
\end{theorem}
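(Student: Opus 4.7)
The plan is to apply the implicit function theorem, perturbing from a solution at $\alpha=0$ where the system \eqref{eq:charmonicg} decouples into the Corlette harmonicity equation $d^*_{A_h}\psi_h=0$ and the constant scalar curvature equation $\varepsilon S_g=\varepsilon\cdot 2\pi\chi(\Sigma)/V$. A solution $(g_0,h_0)$ at $\alpha=0$ is produced by two classical results: Theorem~\ref{corlette} applied to the reductive (irreducible) flat bundle $(E_G,D)$ yields a harmonic reduction $h_0$, unique modulo $\KKK$; and uniformization, using $g(\Sigma)\geqslant 2$, yields a unique K\"ahler metric $g_0$ on $X=(\Sigma,J)$ with total volume $V$ and constant scalar curvature $2\pi\chi(\Sigma)/V$.

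The next step is to analyse the linearization $D\mathcal{F}_0|_{(g_0,h_0)}$ of \eqref{eq:charmonicg} in suitable Sobolev completions, restricted to a slice transverse to the $\widetilde{\KKK}$-action. I expect it to be upper anti-diagonal: in real dimension two the codifferential $d^*$ on $1$-forms scales by the conformal factor, so the cross-term $\delta_g(d^*_{A_{h_0}}\psi_{h_0})|_{g_0}$ vanishes at the harmonic reduction, while the scalar-curvature equation depends only on $g$ at $\alpha=0$. The two diagonal blocks are the Jacobi operator $\delta_h(d^*_{A_h}\psi_h)|_{h_0}$ of Corlette's equation, invertible modulo $\KKK$ by irreducibility of $D$, and the Lichnerowicz operator $\varepsilon\,\delta S|_{g_0}$ at the cscK metric $g_0$, whose kernel modulo $\HHH$ is the space of Hamiltonian Killing fields of $g_0$, trivial for $g(\Sigma)\geqslant 2$. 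The implicit function theorem will then produce a smooth family $(g_\alpha,h_\alpha)$ solving \eqref{eq:charmonicg} for $0<\alpha<\alpha_0$.

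The holomorphicity of the induced map $\mathcal{U}^*_{\alpha,\varepsilon}\to\Uf(G)$ follows from Lemma~\ref{lem:tangentmap}, which shows that its differential is complex linear with respect to $\mathbb{J}$; combined with smoothness this yields holomorphicity. The membership $[(J,A_{h_\alpha},\psi_{h_\alpha})]\in\mathcal{U}^*_{\alpha,\varepsilon}$ reduces to $\ker\mathcal{L}_\alpha=\{0\}$, an open condition by the Fredholm property in Proposition~\ref{prop:zeroindex}. The hard part will be the behaviour of $\mathcal{L}_\alpha$ as $\alpha\to 0$: the pairing $\mathbf{g}^{\mathbb{J}}_{\alpha,\varepsilon}$ degenerates along the $(a,\dot\psi)$-directions in this limit, so $\mathcal{L}_\alpha$ does not approach a uniformly invertible operator. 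I would handle this by a block decomposition of $\mathcal{L}_\alpha$ in the variables $(f,u)\in\cS^0$ combined with the rescaling $u\mapsto\alpha^{-1/2}u$, after which $\mathcal{L}_\alpha$ becomes asymptotically block-diagonal with $(f,f)$-block given by the Lichnerowicz operator and $(u,u)$-block given by a Laplacian-type operator from the Corlette deformation complex, both invertible at $(g_0,h_0)$ under the irreducibility and genus hypotheses. This yields $\ker\mathcal{L}_\alpha=\{0\}$ for all sufficiently small $\alpha>0$, completing the proof.
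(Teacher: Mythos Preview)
Your proposal is correct and follows essentially the same strategy as the paper's proof: an implicit function theorem argument perturbing from the decoupled solution $(g_0,h_0)$ at $\alpha=0$, invertibility of the linearization via the genus and irreducibility hypotheses, and a rescaling trick to control $\ker\mathcal{L}_\alpha$ as $\alpha\to 0$. The only cosmetic difference is that the paper rescales the \emph{output} of $\mathcal{L}$ by $\operatorname{diag}(\varepsilon^{-1},\alpha^{-1})$ to obtain an operator $\mathcal{N}^\alpha$ with the same kernel and a block-triangular limit $\mathcal{N}^0$, whereas your symmetric $\alpha^{-1/2}$ rescaling of $u$ produces a block-diagonal limit; both limits have trivial kernel by the same Lichnerowicz/Corlette invertibility, so the conclusion is identical.
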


\begin{proof}
The first part of the proof follows by a perturbation argument. Let $g$ be the unique constant scalar curvature K\"ahler metric on $X$ with total volume $V$, and $h$ the unique harmonic reduction on $(E_G,D)$ (which exists by Theorem \ref{corlette}). Consider the non-linear operator 
$$
\mathbf{Q}_\alpha \colon C_0^\infty(\Sigma,\RR) \oplus \Omega^0(\Sigma,E_K(\liek)) \lra C_0^\infty(\Sigma,\RR) \oplus \Omega^2(\Sigma,E_K(\liek))
$$
defined by
$$
\mathbf{Q}_\alpha(f,u) = (S_{g_f} - \varepsilon^{-1}\alpha * d\left(B\left(\Lambda_{\omega_f} F_{A_{h_u}},*\psi_{h_u}\right)\right) - 2\pi \chi(\Sigma)V^{-1}, \Ad(e^{-iu})d_{A_{h_u}}J\psi_{h_u}),
$$
where
$$
\omega_f = \omega + 2i\partial \dbar f, \qquad h_u = e^{iu}h.
$$
Notice that $\mathbf{Q}_\alpha$ is a multidegree elliptic differential operator, $\mathbf{Q}_0(0,0) = 0$, and that the linearization of $\mathbf{Q}_0$ at $(0,0)$ is 
$$
\delta_0\mathbf{Q}_0(f,u) = - (\delta S(J L_{\eta_f}J),d_A (J d_{A_h}u) - [[\psi_h,u], J \psi]).
$$
The operator $f \mapsto \delta S(J L_{\eta_f}J)$ is the (real) Lichnerowicz operator acting on functions: this is an elliptic self-adjoint semipositive differential operator of order 4, whose kernel is given by the Hamiltonian functions of Killing Hamiltonian vector fields. In particular, by our assumption $g(\Sigma) \geq 2$, it is invertible. On the other hand, the operator 
$$
u \mapsto *\left(d_A (J d_{A_h}u) - [[\psi_h,u], J \psi]\right)
$$
is self-adjoint and has kernel given by the infinitesimal unitary gauge transformations preserving $D$. Hence, since $D$ is irreducible by hypothesis, this operator is also invertible. By an standard implicit function theorem argument,  taking Sobolev completions in the domain and target of $\mathbf{Q}_\alpha$, it follows that there exists $\alpha_0 > 0$ such that for any $0 < \alpha < \alpha_0$ there exists a smooth solution $(g_\alpha,h_\alpha)$ of the equations \eqref{eq:charmonicg}, that is, with
$$
[(J,A_{h_\alpha},\psi_{h_\alpha})] \in \Uh(G)_\alpha^\varepsilon.
$$
To finish the first part of the proof, we need to show that $[(J,A_{h_\alpha},\psi_{h_\alpha})] \in \mathcal{U}^*_{\alpha,\varepsilon}$ for sufficiently small $\alpha$. Following the notation in Lemma \ref{lem:P*}, for $(g_\alpha,A_{h_\alpha},\psi_{h_\alpha})$ define the one-parameter family of multidegree differential operators
$$
\mathcal{N}^\alpha = (\varepsilon^{-1}\widetilde{\mathbf{P}}_1^*,\alpha^{-1}\widetilde{\mathbf{P}}_2^*)\circ \widetilde{\mathbf{P}} \colon \mathcal{S}^0 \lra \mathcal{S}^0.
$$
Notice that $\mathcal{N}^0$ is well-defined, and furthermore
$$
\ker \mathcal{N}^\alpha = \ker \mathcal{L}_{(g_\alpha,A_{h_\alpha},\psi_{h_\alpha})}
$$
for any $\alpha \neq 0$, where $\mathcal{L}_{(g_\alpha,A_{h_\alpha},\psi_{h_\alpha})} =  \widetilde{\mathbf{P}}^*\circ \widetilde{\mathbf{P}}$ is the operator associated to the solution $(J,A_{h_\alpha},\psi_{h_\alpha})$ of \eqref{eq:charmonicity} with respect to the symplectic structure $\omega_\alpha = g_\alpha(J,)$. Decomposing
$$
\mathcal{N}^0 = \mathcal{N}^0_0 \oplus \mathcal{N}^0_1,
$$
it follows that
$$
\mathcal{N}^0_0(f,u) = \delta S(J L_{\eta_f}J).
$$
Hence, arguing as before, $(f,u) \in \ker \mathcal{N}^0$ implies that $f = 0$, and hence it follows that 
$$
\mathcal{N}^0(f,u) = - (0,*d_{A_h}(J d_{A_h}u) - * [[\psi_h,u],J\psi_h]) = 0.
$$
Again, by irreducibility of the $G$-connection $D$, this implies $u = 0$, and hence we conclude that $\ker \mathcal{N}^0 =\{0\}$. Arguing as in the proof of Proposition \ref{prop:zeroindex} and Theorem \ref{thm:metric}, $\ker \mathcal{N}^\alpha$ is upper semicontinuous, and hence for sufficiently small $\alpha$ we have
$$
\ker \mathcal{N}^\alpha =\{0\}.
$$
The last part of the statement follows from that fact that \eqref{eq:mapmoduli} is induced by a $C^\infty$-Frechet map, combined with Lemma \ref{lem:tangentmap}.
\end{proof}

\begin{remark}
We observe that, even though the proof of Theorem \ref{thm:metric} works in the case $g(\Sigma) = 1$, the hypothesis of existence of an irreducible flat $G$-connection is never satisfied in this case \cite{Franco}. We thank Emilio Franco for this observation.
\end{remark}

\section{Universal $G$-Higgs bundle moduli}
\label{section-ghiggs}

\subsection{The universal Higgs field}
\label{ssec:UHiggs}

We fix a smooth oriented compact surface $\Sigma$. We also consider a connected semisimple complex Lie group $G$, with Lie algebra $\glie$, and
fix an antiholomorphic involution $\tau$ of $G$ defining  a maximal compact subgroup $K:=G^\tau\subset G$, with Lie algebra
$\liek$.

Let $E_G$ be a smooth principal $G$-bundle over $\Sigma$, $h\in\Omega^0(E_G(G/K))$ a reduction of the structure group of $E_G$ to $K$, and $E_K$ the corresponding principal $K$-bundle. Let $\JJJ$ be the space of complex structures on $\Sigma$ compatible with the given orientation. Consider the space
\begin{equation*}\label{configurationagain}
\XXX = \JJJ\times \AAA\times \Omega^1(\Sigma,E_K(\liek)).
\end{equation*}
In this section we are interested in the geometry of the submanifold of $\XXX$ given by \emph{universal Higgs fields},
$$
\XXX^{Higgs} = \{(J,A,\psi) \in \XXX \; | \; \dbar_{J,A}\varphi = 0 \},
$$ 
where the \emph{Higgs field}
$$
\varphi(J,A,\psi):= \psi^{1,0_J}
$$
is regarded as a $\Omega^1(\Sigma,E_G(\lieg))$-valued function on the parameter space $\XXX$. Our first goal is to prove that $\XXX$ admits a complex structure which is compatible with the integrability condition $\dbar_{J,A}\varphi = 0$, making $\XXX^{Higgs} \subset \XXX$ a complex submanifold. For this, consider the complex structure
\begin{equation}\label{eq:uI}
\mathbb{I}(\dot J, a , \dot \psi) = (J\dot J,J a,- J \dot \psi + \psi(\dot J )),
\end{equation}
for $(\dot J, a , \dot \psi) \in T_{(J, A , \psi)}\XXX$. Notice that this complex structure corresponds to the natural fibrewise cotangent complex structure in the holomorphic fibration $\JJJ \times \AAA \to \JJJ$. In particular, the map $\pi_1 \colon \XXX \to \JJJ$ is holomorphic, and the complex structure along the fibres coincides with the $J$-dependent complex structure $\mathbf I$ in \eqref{eq:IJK}. Using the Chern--Singer correspondence, it is not difficult to give holomorphic coordinates on the space $\JJJ \times \AAA \cong \JJJ \times \CCC$, and hence on $(\XXX,\mathbb{I})$. In the next result we give an independent proof of the vanishing of the Nijenhuis tensor $N_{\mathbb{I}}$ of $\mathbb{I}$. 

\begin{lemma}\label{lem:Iint}
The complex structure $\mathbb{I}$ is formally integrable, that is, its Nijenhuis tensor vanishes:
$$
N_{\mathbb{I}} = 0.
$$
\end{lemma}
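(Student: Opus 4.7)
The plan is a direct computation of the Nijenhuis tensor $N_{\mathbb{I}}$. First, I would realize $\XXX$ as an open subset of the ambient affine space $\End(T\Sigma) \times \Omega^1(\Sigma, E_K(\liek))^{\oplus 2}$, with $\JJJ \subset \End(T\Sigma)$ a submanifold characterised by $\dot J J + J \dot J = 0$ for its tangent vectors. The complex structure $\mathbb{I}$ in \eqref{eq:uI} preserves $T\XXX$ (since $(J\dot J)J + J(J\dot J) = 0$ whenever $\dot J J + J\dot J = 0$) and satisfies $\mathbb{I}^2 = -\mathrm{Id}$ there. For ambient-constant vector fields $u = (\dot J_1, a_1, \dot\psi_1)$ and $v = (\dot J_2, a_2, \dot\psi_2)$ the Lie bracket vanishes, and the Nijenhuis tensor at a point $p = (J, A, \psi) \in \XXX$ reduces to
\[
N_{\mathbb{I}}(u, v) = (D_{\mathbb{I} u}\mathbb{I}) v - (D_{\mathbb{I} v}\mathbb{I}) u + \mathbb{I}(D_v \mathbb{I}) u - \mathbb{I}(D_u \mathbb{I}) v.
\]
Tangentiality of $\mathbb{I}$ ensures that this ambient computation agrees with the intrinsic Nijenhuis tensor on $T\XXX$.

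A direct derivative computation, using that $\mathbb{I}$ depends on the base point only through $(J, \psi)$, yields
\[
(D_u \mathbb{I})(\dot J', a', \dot\psi') = (\dot J_1 \dot J',\, \dot J_1 a',\, -\dot J_1 \dot\psi' + \dot\psi_1(\dot J')),
\]
where the action of the endomorphism $\dot J_1$ on 1-forms follows the paper's convention $\dot J_1 \alpha = -\alpha \circ \dot J_1$. I would then evaluate $N_{\mathbb{I}}(u, v)$ component by component. The $\JJJ$- and $\AAA$-components vanish immediately from
\[
J\dot J_1 \dot J_2 - J\dot J_2 \dot J_1 + J\dot J_2 \dot J_1 - J\dot J_1 \dot J_2 = 0
\]
and its $\AAA$-analogue; neither uses the $\psi(\dot J')$ correction term in $\mathbb{I}$.

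The main obstacle is the $\Omega^1$-component, which generates several terms. Using the anticommutation $\dot J_j J + J\dot J_j = 0$ and the resulting identity $(J\alpha)(\dot J_j) = -J(\alpha(\dot J_j))$ for any 1-form $\alpha$, the contributions linear in $\dot\psi_1, \dot\psi_2$ cancel pairwise. What remains are contributions in $\psi$ alone: the difference $(D_{\mathbb{I} u}\mathbb{I})v - (D_{\mathbb{I} v}\mathbb{I})u$ produces $\psi(\dot J_1 \dot J_2) - \psi(\dot J_2 \dot J_1) = \psi([\dot J_1, \dot J_2])$, while $\mathbb{I}(D_v \mathbb{I}) u - \mathbb{I}(D_u \mathbb{I}) v$ produces $-\psi([\dot J_1, \dot J_2])$, so the two sum to zero. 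The role of the correction term $\psi(\dot J')$ in the definition of $\mathbb{I}$ is precisely to introduce the paired contributions coming from both $\partial_\psi \mathbb{I}$ and the modified image $(\mathbb{I} u)_{\Omega^1} = -J\dot\psi_1 + \psi(\dot J_1)$; without it, an obstruction proportional to $\psi([\dot J_1, \dot J_2])$ would survive, and $\mathbb{I}$ would fail to be integrable whenever $J$ varies and $\psi \neq 0$.
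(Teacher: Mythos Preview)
Your proof is correct. Both your argument and the paper's are direct Nijenhuis computations, organised differently: the paper invokes integrability of the fibre and base complex structures to reduce to the mixed component $N_{\mathbb{I}}\bigl((\dot J,0,0),(0,a,\dot\psi)\bigr)$, which it then computes by hand; you instead compute $N_{\mathbb{I}}(u,v)$ for arbitrary ambient-constant $u,v$ and check all three components at once. Your route is slightly more systematic and makes the cancellation mechanism transparent --- the correction term $\psi(\dot J')$ in the definition of $\mathbb{I}$ is exactly what kills the residual obstruction $\psi([\dot J_1,\dot J_2])$, a point the paper's shortcut leaves implicit (and which, strictly speaking, also enters the horizontal--horizontal block, since $\mathbb{I}(\dot J,0,0)=(J\dot J,0,\psi(\dot J))$ is not purely horizontal). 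Your remark that tangentiality of $\mathbb{I}$ to $T\XXX$ allows the ambient calculation to compute the intrinsic Nijenhuis tensor is the right justification for working with ambient-constant vector fields.
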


\begin{proof}
The complex structures on the fibres and base of the fibration $(\XXX,\mathbb{I}) \to (\JJJ,\mathbb{J}_\JJJ)$ are well-known to be integrable. Hence, it suffices to calculate the mixed component of the Nijenhuis tensor
\begin{align*}
N_{\mathbb{I}}(v_1,v_2) & = [\mathbb{I}v_1,\mathbb{I}v_2] - \mathbb{I}[\mathbb{I}v_1,v_2] - \mathbb{I}[v_1,\mathbb{I}v_2] - [v_1,v_2],
\end{align*}
that is, for $v_1 = (\dot J,0,0)$, and $v_2 =(0,a,\dot \psi)$. With this choice, the last term vanishes identically, and we have
\begin{align*}
N_{\mathbb{I}}(v_1,v_2) & = [(J\dot J,0,0),(0,Ja,- J \dot \psi)] - \mathbb{I}[(J\dot J,0,0),(0,a,\dot \psi)]\\
& - \mathbb{I}[(\dot J,0,0),(0,Ja,- J \dot \psi)],\\
& = (0,-a(J\dot J),\dot \psi(J\dot J)) - \mathbb{I}(0,-a(\dot J),\dot \psi (\dot J))\\
& = (0,-a(J\dot J + \dot J J),\dot \psi(J\dot J + \dot J J)) = 0.
\qedhere
\end{align*}
\end{proof}

In our next result we show that $\XXX^{Higgs} \subset (\XXX,\mathbb{I})$ is formally a complex submanifold.

\begin{proposition}\label{p:dbarAphi}
The tangent space of $\XXX^{Higgs}$ at $(J,A,\psi)$, is given by triples $(\dot J,a, \dot \psi)\in T_J\JJJ\oplus\Omega^1(\Sigma,E_K(\liek))\oplus\Omega^1(\Sigma,E_K(\liek))$ satisfying
\begin{equation}\label{eq:TUHiggs}
d_A \dot \psi^{1,0_J} + [a,\varphi] - \frac{i}{2} d_A(\psi (\dot J)) = 0.
\end{equation}
Consequently, $\XXX^{Higgs}$ is formally a complex submanifold of $(\XXX,\mathbb{I})$.
\end{proposition}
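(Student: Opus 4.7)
My plan is to compute the linearization of $\dbar_{J,A}\varphi = 0$ by direct variation, and then show that the resulting first-order operator is $\CC$-linear with respect to $\mathbb{I}$ on the source (and multiplication by $i$ on the target). Since on a surface $\dbar_{J,A}\varphi = d_A\varphi$ and $\varphi = \psi^{1,0_J} = \tfrac12(\psi + iJ\psi)$, linearizing along $(\dot J, a, \dot\psi)$ splits into three contributions: the variation of $d_A$ produces $[a,\varphi]$; the variation of $\psi$ produces $d_A(\dot\psi)^{1,0_J}$; and the variation of $J$ acts only on the type projection, where $\delta(J\psi) = -\psi(\dot J)$ gives $\delta\psi^{1,0_J} = -\tfrac{i}{2}\psi(\dot J)$ and hence the last term of \eqref{eq:TUHiggs}.

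For the second statement, let $E\colon \cS^1 \to \Omega^{1,1}(X, E_G(\glie))$ denote the left-hand side of \eqref{eq:TUHiggs}, whose target is a complex vector space via multiplication by $i$ in $\glie$. It suffices to verify the intertwining identity $E\circ \mathbb{I} = i\,E$, since then $\ker E$ is automatically $\mathbb{I}$-invariant. Three elementary facts reduce this to a bookkeeping exercise. First, $(-J\dot\psi)^{1,0_J} = i\,\dot\psi^{1,0_J}$, because $J$ acts as $-i$ on $(1,0)$-forms. Second, $[Ja,\varphi] = i\,[a,\varphi]$, because $[a^{1,0},\varphi] \in \Omega^{2,0} = 0$ on a surface while $(Ja)^{0,1} = i\,a^{0,1}$. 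Third, and most importantly, the type-swap identity $(\alpha(\dot J))^{1,0_J} = \alpha^{0,1_J}(\dot J)$ for any 1-form $\alpha$; this holds because $\dot J$ anticommutes with $J$ and therefore exchanges the type decomposition of $T^*\Sigma$. Combining this with $\psi^{0,1_J} = \tfrac12(\psi - iJ\psi)$ gives $\psi^{0,1_J}(\dot J) = \tfrac12 \psi(\dot J) + \tfrac{i}{2}\psi(J\dot J)$. Substituting into $E(\mathbb{I}v) = E(J\dot J, Ja, -J\dot\psi + \psi(\dot J))$, the $\psi(J\dot J)$ contributions cancel and the remainder collapses to $i\,E(v)$.

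The main subtle point, and the only non-mechanical input, is the type-swap identity of the third step; it also explains why the definition of $\mathbb{I}$ in \eqref{eq:uI} carries the corrective $+\psi(\dot J)$ term, namely to compensate for the type swap induced by $\dot J$ on the Higgs slot. Once this identity is checked pointwise by unwinding $J\alpha = -\alpha(J)$ and $\dot J J + J\dot J = 0$, the entire argument reduces to a sign-chase requiring no analytic input beyond the smoothness of $\varphi$, and the proposition follows.
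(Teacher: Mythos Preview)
Your proof is correct and follows essentially the same approach as the paper's: both linearize $\dbar_{J,A}\varphi = d_A\psi^{1,0_J}$ directly to obtain \eqref{eq:TUHiggs}, and then verify $\mathbb{I}$-invariance of the kernel by the same direct computation, using the type-swap identity $(\psi(\dot J))^{1,0_J} = \psi^{0,1_J}(\dot J)$, the action of $J$ as $\mp i$ on $(1,0)$/$(0,1)$-forms, and the vanishing of $\Omega^{2,0}$ on a surface. Your presentation is slightly more structured in that you isolate these three identities and package the conclusion as the intertwining relation $E\circ\mathbb{I} = iE$, whereas the paper simply expands $E(\mathbb{I}v)$ in one line; but the content is the same.
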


\begin{proof}
By dimensional reasons, the integrability condition $\dbar_{J,A} \psi^{1,0_J} = 0$ is equivalent to
$$
\frac{1}{2}d_A \left(\psi(\Id - i J)\right) = 0.
$$
Taking variations on the parameters, we obtain \eqref{eq:TUHiggs}. Now, given $(\dot J,a, \dot \psi)$ solving \eqref{eq:TUHiggs}, we have
\begin{align*}
d_A (-J\dot \psi + \psi(\dot J))^{1,0_J} + [Ja,\varphi] - \frac{i}{2} d_A\left(\psi (J\dot J)\right) & = i d_A \dot \psi^{1,0_J} + i[a,\varphi] + d_A\left(\psi^{0,1_J}(\dot J)\right)\\
& - \frac{1}{2}d_A\left(\psi^{0,1_J}(\dot J)\right) + \frac{1}{2}d_A\left(\psi^{1,0_J}(\dot J)\right)\\
& = i \left(d_A \dot \psi^{1,0_J} + [a,\varphi] - \frac{i}{2} d_A\left(\psi(\dot J)\right)\right) = 0,
\end{align*}
and hence the tangent space to $\XXX^{Higgs}$ is preserved by $\mathbb{I}$.
\end{proof}

Our next goal is to define a universal moduli space of Higgs bundles, with a holomorphic fibration structure over Teichm\"uller space. The first step is to show that the integrability condition for $\varphi$ is compatible with the group $\widetilde{\GGG}$ defined in \eqref{eq:extcxgauge}. We observe first that there is a holomorphic action of $\widetilde{\GGG}$ on $\JJJ \times \AAA$, given by
$$
g(J,A) = (p(g)_*J,A_g),
$$
where $A_g$ is the defined via the Chern--Singer correspondence: 
$$
A_g = A_{h,g_*\dbar_{J,A}}.
$$
This action extends to the fibrewise cotangent bundle, defining a holomorphic $\widetilde{\GGG}$-action on $(\XXX,\mathbb{I})$ and preserving $\XXX^{Higgs}$. Let us denote $\XXX^{ps} \subset \XXX^{Higgs}$ the subset of triples $(J,A,\psi)$ such that $(E,\varphi)$ is a polystable $G$-Higgs bundle over the compact Riemann surface $X = (\Sigma,J)$, where $E$ denotes the holomorphic principal $G$-bundle over $X$ corresponding to the connection $A$. The $\widetilde{\GGG}$-action on $\XXX$ preserves the subsets
$$
\XXX^{ps} \subset \XXX^{Higgs} \subset \XXX,
$$
and we define the \emph{universal moduli space of $G$-Higgs bundles} on $E_G$ as the quotient
$$
\mathcal{U}^{Higgs}(G) := \XXX^{ps}/\widetilde{\GGG}.
$$
By construction, $\mathcal{U}^{Higgs}(G)$ fibres over the Teichm\"uller space $\mathcal{T}$ of complex structures modulo diffeomorphisms isotopic to the identity
\begin{equation}\label{eq:FibTeichmuller}
\mathcal{U}^{Higgs}(G) \lra \mathcal{T} := \JJJ/\operatorname{Diff}_0(\Sigma).
\end{equation}
This fibration is naturally holomorphic with respect to the complex structure $\mathbb{I}$.

\begin{remark}
Despite our efforts, we have not been able to find a complex symplectic interpretation of the universal moduli space of $G$-Higgs bundles. Further insight on this is provided in ongoing work by N. Hitchin \cite{nigel}.
\end{remark}

\subsection{K\"ahler fibration for the universal Higgs field}
\label{couplingHiggs}

Consider the structure of K\"ahler fibration $\widehat{\boldsymbol{\omega}}_{\mathbf{I}}$ on $(\XXX,\mathbb{I}) \to \JJJ$ defined fibrewise by the symplectic structure $\omega_{\mathbf{I}} = \operatorname{Re}\Omega_{\mathbf{J}}$ on $\AAA\times \Omega^1(\Sigma,E_K(\liek))$. More explicitly,
\begin{equation}\label{eq:hatomehaI}
\widehat{\boldsymbol{\omega}}_{\mathbf{I}}((0,a_1,\dot \psi_1),(0,a_2,\dot \psi_2)) =  \int_\Sigma B(a_1 \wedge a_2) - \int_\Sigma B(\dot \psi_1 \wedge \dot \psi_2).
\end{equation}
The aim of this section is to show that such a structure admits a K\"ahler Ehresmann connection compatible with Hitchin's equations \eqref{hitchin}. The situation here is opposite to that in Section \ref{ssec:potential}: the holomorphic fibration defined by $\mathbb{I}$ is non-trivial, whereas $\widehat{\boldsymbol{\omega}}_{\mathbf{I}}$ defines a trivial symplectic fibration, constant along $\XXX$. Nonetheless, the combination of these two structures defines a non-trivial structure of K\"ahler fibration on $\XXX \to \JJJ$, different from the one considered in Section \ref{ssec:potential}.

In order to achieve our goal in this section, we will apply Theorem \ref{th:Kfib}. As we have seen in Proposition \ref{p:dbarAphi}, the integrability condition 
$$
\dbar_{J,A}\psi^{1,0_J} = 0
$$ 
cuts a holomorphic submanifold $\XXX^{Higgs}$ of $(\XXX,\mathbb{I})$. Given that $\widehat{\boldsymbol{\omega}}_{\mathbf{I}}$ is constant on $\XXX$, it provides a candidate for a coupling 2-form $\boldsymbol{\sigma}$ on $(\XXX,\widehat{\boldsymbol{\omega}}_{\mathbf{I}})$, inducing the trivial Ehresmann connection on $\XXX \to \JJJ$, regarded as the product $\XXX = \JJJ\times \AAA\times \Omega^1(\Sigma,E_K(\liek))$. However, one can readily check that the horizontal subspace of this connection is not tangent to the submanifold $\XXX^{Higgs}$, and hence is not appropriate for our goals. Motivated by this, we consider the Ehresmann connection $\Gamma^{\mathbb{I}}$ with horizontal subspace
$$
H^{\mathbb{I}} = \Big\lbrace\left(\dot J,0,-\tfrac{1}{2}\psi(J \dot J)\right) \; \Big\vert \; \dot J \in T_J \JJJ\Big\rbrace.
$$

\begin{lemma}\label{l:existenceGammaI}
The Ehresmann connection $\Gamma^{\mathbb{I}}$ is preserved by $\mathbb{I}$, and
$$
\Gamma^{\mathbb{I}}\circ \mathbb{I} = \mathbf{I}\circ \Gamma^{\mathbb{I}},
$$
where $\mathbf{I}$ denotes the complex structure along the fibres. Furthermore, its curvature $F_ {\mathbb{I}} \in \Omega^{2}(\XXX,V\XXX)$ is of type $(1,1)$, and given explicitly by
\begin{equation}\label{eq:FGammaI}
\begin{split}
(F_ {\mathbb{I}})_{|(J,A,\psi)}(v_1,v_2) & = \frac{1}{4}\left(0,0,\psi([\dot J_1,\dot J_2])\right),
\end{split}
\end{equation}
for any pair of horizontal vector fields $v_1,v_2 \in H^\sigma$ covering $\dot J_1,\dot J_2$, respectively.
\end{lemma}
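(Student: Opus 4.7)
The proof reduces to three direct computations, all resting on the anticommutation $\dot J J + J\dot J = 0$ defining $T_J\JJJ$, which implies pointwise that $J\dot J J = \dot J$ and hence
\[
J\bigl(\psi(J\dot J)\bigr)=-\psi(\dot J),\qquad J\dot J_1\,J\dot J_2 = \dot J_1\,\dot J_2.
\]

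For the invariance $\mathbb{I}(H^{\mathbb{I}})\subset H^{\mathbb{I}}$, I apply formula~\eqref{eq:uI} to a horizontal vector $(\dot J,0,-\tfrac12\psi(J\dot J))$: the middle component remains zero, and the third becomes $\tfrac12 J(\psi(J\dot J))+\psi(\dot J)=\tfrac12\psi(\dot J)$ by the first identity above, which is precisely the horizontal value over $J\dot J$. The intertwining $\Gamma^{\mathbb{I}}\circ\mathbb{I}=\mathbf{I}\circ\Gamma^{\mathbb{I}}$ then follows by computing the vertical projection explicitly as $\Gamma^{\mathbb{I}}(\dot J,a,\dot\psi)=(0,a,\dot\psi+\tfrac12\psi(J\dot J))$ and checking that both sides reduce to $(0,Ja,-J\dot\psi+\tfrac12\psi(\dot J))$, again using the same identity.

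For the curvature, I extend $v_1,v_2\in T_J\JJJ$ to vector fields $\dot J_1,\dot J_2$ on $\JJJ$ with vanishing Lie bracket at $J$, and take horizontal lifts $\widetilde X_j=(\dot J_j,0,-\tfrac12\psi(J\dot J_j))$ on $\XXX$. The commutator $[\widetilde X_1,\widetilde X_2]$ at $(J,A,\psi)$ then has zero first and second components, so it is already vertical. Computing the derivative of the third component of $\widetilde X_2$ along $\widetilde X_1$ collects a contribution from the variation $\delta\psi=-\tfrac12\psi(J\dot J_1)$ and one from $\delta J=\dot J_1$ (the contribution from $\delta\dot J_2$ vanishes at $J$ by construction); using the second identity above these combine to $-\tfrac14\psi(\dot J_1\dot J_2)$. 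Antisymmetrizing and recalling $F_{\mathbb{I}}=-\Gamma^{\mathbb{I}}[\cdot,\cdot]$ yields the formula \eqref{eq:FGammaI} with endomorphism commutator $[\dot J_1,\dot J_2]=\dot J_1\dot J_2-\dot J_2\dot J_1$. The $(1,1)$-type is then equivalent to $F_{\mathbb{I}}(\mathbb{I}v_1,\mathbb{I}v_2)=F_{\mathbb{I}}(v_1,v_2)$, which follows from $[J\dot J_1,J\dot J_2]=[\dot J_1,\dot J_2]$, itself an immediate consequence of $J\dot J_iJ\dot J_j=\dot J_i\dot J_j$.

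The main technical delicacy lies in the extension step: since the tangent space $T_J\JJJ$ varies with $J$ through the constraint $\dot J J+J\dot J=0$, a strictly constant extension of $\dot J_1,\dot J_2$ as vector fields on $\JJJ$ is not available. This is handled by working in a local chart and invoking the tensoriality of curvature, so that only first-order data at $(J,A,\psi)$ matters and the extensions can be chosen to kill any derivative terms not intrinsic to the problem.
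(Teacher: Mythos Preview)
Your proof is correct and follows essentially the same approach as the paper: both verify the intertwining $\Gamma^{\mathbb{I}}\circ\mathbb{I}=\mathbf{I}\circ\Gamma^{\mathbb{I}}$ by direct computation using the identity $J\dot J J=\dot J$, and both compute the curvature by choosing extensions of $\dot J_1,\dot J_2$ with vanishing bracket at $J$, differentiating the third component of the horizontal lift, and simplifying via $J\dot J_1 J\dot J_2=\dot J_1\dot J_2$. The only cosmetic difference is that the paper deduces $\mathbb{I}$-invariance of $H^{\mathbb{I}}$ from the intertwining identity, whereas you verify it first; and your phrase ``the contribution from $\delta\dot J_2$ vanishes at $J$ by construction'' is slightly imprecise (vanishing bracket alone only makes these terms cancel upon antisymmetrization), but your final paragraph on tensoriality and local charts is exactly the paper's ``by taking coordinates in $\JJJ$'' and resolves this.
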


\begin{proof}
We take $v = (\dot J, a , \dot \psi) \in T_{(J, A , \psi)}\XXX$ and notice that
$$
\Gamma^{\mathbb{I}}(\dot J, a , \dot \psi) = \left(0,a, \dot \psi + \tfrac{1}{2}\psi(J\dot J)\right),
$$
which implies that
\begin{align*}
\Gamma^{\mathbb{I}}(\mathbb{I}v) & = \left(0,Ja, -J\dot \psi + \psi(\dot J) - \tfrac{1}{2}\psi(\dot J)\right) = \mathbf{I}\left(0,a,\dot \psi - \tfrac{1}{2}\psi(\dot J J)\right) = \mathbf{I} \Gamma^{\mathbb{I}}(v).
\end{align*}
In particular, this implies that $\Gamma^{\mathbb{I}}$ is preserved by $\mathbb{I}$. To calculate the formula for the curvature, we choose horizontal vectors 
$$
v_j = \left(\dot J, 0, -\tfrac{1}{2}\psi(J\dot J_j)\right) \in H^{\mathbb{I}}_{(J,A,\psi)},
$$
with $j=1,2$, and vector fields $Y_j$ such that $Y_j(J,a,\psi) = v_j$. By taking coordinates in $\JJJ$, we can choose $Y_j$ such that the $T\JJJ$ component of the Lie bracket $[Y_1,Y_2]$ vanishes at $(J,A,\psi)$. Then, we calculate
\begin{align*}
F_{\mathbb{I}}(v_1,v_2) & = - [Y_1,Y_2]_{|(J,A,\psi)}\\
& = -\frac{1}{2}\frac{d}{dt}_{|t =0}\left(0,0,- \psi_{t,1}(J_{t,1}Y_2) + \psi_{t,2}(J_{t,2}Y_1))\right)\\
& = -\frac{1}{2}\left(0,0,\frac{1}{2}\psi(J\dot J_1J\dot J_2) - \psi(\dot J_1 \dot J_2) - \frac{1}{2}\psi(J\dot J_2J\dot J_1) + \psi(\dot J_2 \dot J_1) \right)\\
& = \frac{1}{4}\left(0,0,\psi([\dot J_1,\dot J_2])\right).
\end{align*}
The fact that $F_{\mathbb{I}}$ is of type $(1,1)$ follows from this formula and the definition of $\mathbb{I}$.
\end{proof}

As a direct consequence of the equality $\Gamma^{\mathbb{I}}\circ \mathbb{I} = \mathbf{I}\circ \Gamma^{\mathbb{I}}$, it follows that $\widehat{\boldsymbol{\omega}}_{\mathbf{I}}(\Gamma^{\mathbb{I}},\Gamma^{\mathbb{I}})$ defines a $(1,1)$-form on $(\XXX,\mathbb{I})$ such that the associated Ehresmann connection equals $\Gamma^{\mathbb{I}}$. In the next result we calculate an explicit formula for this $(1,1)$-form, the proof being straightforward and hence omitted.

\begin{lemma}\label{lem:hatomegaI}
Given $(J,A,\psi) \in \XXX$ and any pair of tangent vectors $v_j = (\dot J_j, a_j , \dot \psi_j) \in T_{(J, A , \psi)}\XXX$, one has
\begin{equation}\label{eq:hatomegaI}
\begin{split}
\widehat{\boldsymbol{\omega}}_{\mathbf{I}}(\Gamma^{\mathbb{I}}v_1,\Gamma^{\mathbb{I}}v_2) 
& =  \int_\Sigma B(a_1 \wedge a_2) -  \int_\Sigma B(\dot\psi_1 \wedge \dot \psi_2)\\
& - \frac{1}{2}\int_\Sigma B\left(\psi(J\dot J_1) \wedge \dot \psi_2 \right) - \frac{1}{2}\int_\Sigma B\left(\dot \psi_1 \wedge \psi(J\dot J_2)\right)\\
& - \frac{1}{4} \int_\Sigma B\left(\psi(\dot J_1) \wedge \psi(\dot J_2) \right).
\end{split}
\end{equation}
\end{lemma}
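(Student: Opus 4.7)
The plan is to do a direct bilinear expansion: substitute the explicit formula for the vertical projection
$$
\Gamma^{\mathbb{I}}(\dot J, a, \dot \psi) = \bigl(0,\; a,\; \dot \psi + \tfrac{1}{2}\psi(J\dot J)\bigr)
$$
from the proof of Lemma \ref{l:existenceGammaI} into the formula \eqref{eq:hatomehaI} for $\widehat{\boldsymbol{\omega}}_{\mathbf{I}}$ on vertical tangent vectors, namely
$$
\widehat{\boldsymbol{\omega}}_{\mathbf{I}}\bigl((0,a_1,\dot \psi_1),(0,a_2,\dot \psi_2)\bigr) = \int_\Sigma B(a_1 \wedge a_2) - \int_\Sigma B(\dot \psi_1 \wedge \dot \psi_2).
$$
Expanding the second integrand bilinearly produces four summands, three of which are exactly the first three blocks of the claimed formula \eqref{eq:hatomegaI}; the remaining term appears in the form $-\tfrac{1}{4}\int_\Sigma B\bigl(\psi(J\dot J_1) \wedge \psi(J\dot J_2)\bigr)$, and the only non-trivial step is to identify this with $-\tfrac{1}{4}\int_\Sigma B\bigl(\psi(\dot J_1) \wedge \psi(\dot J_2)\bigr)$.

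For this identification I would invoke two elementary pointwise facts. First, as already observed in the proof of Proposition \ref{p:existenceGamma}, the anti-commutation $\dot J J = -J \dot J$ combined with the convention $(J\alpha)(v) = -\alpha(Jv)$ for a 1-form $\alpha$ gives
$$
\psi(J \dot J) = J\,\psi(\dot J).
$$
Second, since $\Sigma$ is a real surface, for any pair of $E_K(\klie)$-valued 1-forms $\alpha,\beta$ one has the pointwise identity $J\alpha \wedge J\beta = \alpha \wedge \beta$ (a direct local check using $Jdx = dy$, $Jdy = -dx$). Combining these,
$$
B\bigl(\psi(J\dot J_1) \wedge \psi(J\dot J_2)\bigr) = B\bigl(J\psi(\dot J_1) \wedge J\psi(\dot J_2)\bigr) = B\bigl(\psi(\dot J_1) \wedge \psi(\dot J_2)\bigr),
$$
which produces precisely the last line of \eqref{eq:hatomegaI}.

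I do not expect any genuine obstacle: the entire proof is a one-line expansion of a bilinear form together with an application of the two standard identities above, both of which are already used implicitly elsewhere in Section \ref{ssec:potential}. This is consistent with the authors' own remark that the argument is routine; the only reason to spell it out is to verify the precise coefficients $-\tfrac{1}{2}$ and $-\tfrac{1}{4}$ on the mixed and pure $\dot J$-terms.
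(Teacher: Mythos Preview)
Your proposal is correct and is precisely the direct bilinear expansion the paper has in mind when it declares the proof ``straightforward and hence omitted''; in particular, your identification of the last term via $\psi(J\dot J)=J\psi(\dot J)$ and the surface identity $J\alpha\wedge J\beta=\alpha\wedge\beta$ is exactly what is needed to recover the final line of \eqref{eq:hatomegaI}.
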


Our next result proves that the Ehresmann connection $\Gamma^{\mathbb{I}}$ is K\"ahler, by calculation of an explicit coupling form $\boldsymbol{\sigma}^{\mathbb{I}}$ via equation \eqref{eq:couplingsigma}. Define a basic 2-form $\mu\in\Omega^2(\XXX)$ by
$$
\mu\left((\dot J_1,a_1,\dot \psi_1),(\dot J_2,a_2,\dot \psi_2)\right) = \frac{1}{4}\int_\Sigma B\left(\psi(\dot J_1) \wedge \psi(\dot J_2)\right).
$$

\begin{proposition}\label{p:existencesigmaI}
The Ehresmann connection $\Gamma^{\mathbb{I}}$ is K\"ahler. Furthermore, the following formula gives a coupling form for $\Gamma^{\mathbb{I}}$:
$$
\boldsymbol{\sigma}^{\mathbb{I}} := \widehat{\boldsymbol{\omega}}_{\mathbf{I}}(\Gamma^{\mathbb{I}},\Gamma^{\mathbb{I}}) - \mu.
$$
More explicitly,
\begin{equation}\label{eq:sigmaI}
\begin{split}
\boldsymbol{\sigma}^{\mathbb{I}}_{|(J,A,\psi)}(v_1,v_2) & =  \int_\Sigma B(a_1 \wedge a_2) - \int_\Sigma B\left(\left(\dot \psi_1 + \tfrac{1}{2}\psi(J \dot J_1)\right) \wedge \left(\dot \psi_2 + \tfrac{1}{2}\psi(J \dot J_2)\right)\right)\\
& - \frac{1}{4}\int_\Sigma B\left(\psi(\dot J_1) \wedge \psi(\dot J_2)\right)\\
& =  \int_\Sigma B(a_1 \wedge a_2) -  \int_\Sigma B(\dot\psi_1 \wedge \dot \psi_2)\\
& - \frac{1}{2}\int_\Sigma B\left(\psi(J\dot J_1) \wedge \dot \psi_2 \right) - \frac{1}{2}\int_\Sigma B\left(\dot \psi_1 \wedge \psi(J\dot J_2)\right)\\
& - \frac{1}{2}\int_\Sigma B\left(\psi(\dot J_1) \wedge \psi(\dot J_2)\right),
\end{split}
\end{equation}
for $v_j = (\dot J_j, a_j, \dot \psi_j) \in T_{(J, A , \psi)}\XXX$, $j = 1,2$.
\end{proposition}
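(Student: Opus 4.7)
The plan is to apply Theorem \ref{th:Kfib} with the candidate two-form $\boldsymbol{\sigma}^{\mathbb{I}} := \widehat{\boldsymbol{\omega}}_{\mathbf{I}}(\Gamma^{\mathbb{I}},\Gamma^{\mathbb{I}}) - \mu$, where $\Gamma^{\mathbb{I}}$ is the Ehresmann connection from Lemma \ref{l:existenceGammaI} and $\mu$ is the basic 2-form defined just before the statement. Three properties need to be verified: (i) $\boldsymbol{\sigma}^{\mathbb{I}}$ restricts to $\widehat{\boldsymbol{\omega}}_{\mathbf{I}}$ along the fibres; (ii) $\boldsymbol{\sigma}^{\mathbb{I}}$ is of type $(1,1)$ with respect to $\mathbb{I}$; and (iii) $\boldsymbol{\sigma}^{\mathbb{I}}$ is closed. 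Once these hold, Theorem \ref{th:Kfib} delivers a K\"ahler Ehresmann connection with horizontal distribution $H^{\boldsymbol{\sigma}^{\mathbb{I}}}$, and contracting \eqref{eq:sigmaI} against an arbitrary vertical vector $(0,b,\dot\chi)$ forces $a=0$ and $\dot\psi=-\tfrac12\psi(J\dot J)$, matching the defining condition of $H^{\mathbb{I}}$ and so identifying $\boldsymbol{\sigma}^{\mathbb{I}}$ as the coupling form for $\Gamma^{\mathbb{I}}$. The two explicit expressions in \eqref{eq:sigmaI} then follow by expanding Lemma \ref{lem:hatomegaI} and subtracting $\mu$.

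Properties (i) and (ii) are comparatively soft. For (i), when $v_1,v_2$ are vertical, $\mu(v_1,v_2)=0$ since $\mu$ depends only on the $\dot J_j$ components, and $\Gamma^{\mathbb{I}}$ is the identity on $V\XXX$, so $\boldsymbol{\sigma}^{\mathbb{I}}_{|V\XXX}=\widehat{\boldsymbol{\omega}}_{\mathbf{I}}$. For (ii), the term $\widehat{\boldsymbol{\omega}}_{\mathbf{I}}(\Gamma^{\mathbb{I}},\Gamma^{\mathbb{I}})$ is of type $(1,1)$ by combining the equivariance $\Gamma^{\mathbb{I}}\circ\mathbb{I}=\mathbf{I}\circ\Gamma^{\mathbb{I}}$ from Lemma \ref{l:existenceGammaI} with the $\mathbf{I}$-invariance of the fibrewise K\"ahler form $\widehat{\boldsymbol{\omega}}_{\mathbf{I}}$. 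The $\mathbb{I}$-invariance of $\mu$ reduces to the pointwise identity $B(\psi(J\dot J_1)\wedge\psi(J\dot J_2))=B(\psi(\dot J_1)\wedge\psi(\dot J_2))$ on $\Sigma$: writing $\dot J_j$ as a $g$-symmetric trace-free endomorphism with $\dot J_j e_1=a_j e_1+b_j e_2$ and $\dot J_j e_2=b_j e_1-a_j e_2$ in a local orthonormal frame $\{e_1,e_2=Je_1\}$, both sides evaluate to $(a_1 b_2-b_1 a_2)(B(\xi_1,\xi_1)+B(\xi_2,\xi_2))$ with $\xi_i=\psi(e_i)$.

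The main obstacle is (iii). I will rely on the coupling criterion recorded around \eqref{eq:couplingsigma}: closedness of $\boldsymbol{\sigma}^{\mathbb{I}}$ follows once $\mu$ is shown to be a fibrewise Hamiltonian function for the curvature $F_{\mathbb{I}}(v_1,v_2)$, i.e.\ that
\begin{equation*}
d(\mu(v_1,v_2))(v) = \widehat{\boldsymbol{\omega}}_{\mathbf{I}}(F_{\mathbb{I}}(v_1,v_2),v)
\end{equation*}
for every vertical $v=(0,a,\dot\psi)$. Using \eqref{eq:FGammaI} and \eqref{eq:hatomehaI}, the right-hand side equals $-\tfrac14\int_\Sigma B(\psi([\dot J_1,\dot J_2])\wedge\dot\psi)$. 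Expanding $[\dot J_1,\dot J_2]$ and applying Lemma \ref{l:magicid} twice with $\psi_1=\psi(\dot J_i)$, $\psi_2=\dot\psi$, $C=\dot J_j$ yields
\begin{equation*}
B(\psi(\dot J_i\dot J_j)\wedge\dot\psi) = -B(\psi(\dot J_i)\wedge\dot\psi(\dot J_j)),
\end{equation*}
so after using the antisymmetry of the wedge of $E_K(\liek)$-valued $1$-forms, the right-hand side simplifies to $\tfrac14\int_\Sigma B(\psi(\dot J_1)\wedge\dot\psi(\dot J_2))+\tfrac14\int_\Sigma B(\dot\psi(\dot J_1)\wedge\psi(\dot J_2))$. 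The left-hand side, obtained by varying $\psi$ linearly in $\dot\psi$ in the definition of $\mu$, produces exactly the same expression. The Hamiltonian condition is thereby verified, and (iii) follows from the general coupling construction in \eqref{eq:couplingsigma}, completing the proof via the strategy laid out in the first paragraph.
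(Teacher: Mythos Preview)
Your proof is correct and follows essentially the same approach as the paper's: both reduce the closedness of $\boldsymbol{\sigma}^{\mathbb{I}}$ via the coupling construction \eqref{eq:couplingsigma} to the fibrewise Hamiltonian identity $d\mu(v_1,v_2)=\widehat{\boldsymbol{\omega}}_{\mathbf{I}}(F_{\mathbb{I}}(v_1,v_2),\cdot)$ on vertical vectors, and verify it by direct calculation using Lemma~\ref{l:magicid}. You add more detail than the paper in two places --- an explicit local-frame check that $\mu$ is $\mathbb{I}$-invariant, and the explicit identification $H^{\boldsymbol{\sigma}^{\mathbb{I}}}=H^{\mathbb{I}}$ --- but these are elaborations of steps the paper passes over in a sentence.
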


\begin{proof}
Notice that $\mu$ is of type $(1,1)$ for $\mathbb{I}$, and hence so is $\boldsymbol{\sigma}^{\mathbb{I}}$. By Theorem \ref{th:Kfib}, it suffices to prove that $\boldsymbol{\sigma}^{\mathbb{I}}$ is closed. Using the abstract formula for the coupling form in \eqref{eq:couplingsigma}, this last fact reduces to the identity
$$
d\mu(v_1,v_2) = \widehat{\boldsymbol{\omega}}_{\mathbf{I}}(F_{\mathbb{I}}(v_1,v_2),)
$$
evaluated on vertical vector fields. Now, by direct calculation
\begin{align*}
d\mu(v_1,v_2)(0,a,\dot \psi) & = \frac{1}{4}\int_\Sigma B\left(\dot \psi(\dot J_1) \wedge \psi(\dot J_2)\right) + \frac{1}{4}\int_\Sigma B\left(\psi(\dot J_1) \wedge \dot \psi(\dot J_2)\right)\\
& = \frac{1}{4}\int_\Sigma B\left(\dot \psi \wedge \psi([\dot J_1,\dot J_2])\right)\\
& = - \widehat{\boldsymbol{\omega}}_{\mathbf{I}}((0,a,\dot \psi),F_{\mathbb{I}}(v_1,v_2)).
\qedhere
\end{align*}
\end{proof}

To finish this section, we provide a formula for the symmetric tensor on $\XXX$ associated to the coupling form $\boldsymbol{\sigma}^{\mathbb{I}}$, explicitly given by
$$
\mathbf{g}_{\mathbb{I}} := \boldsymbol{\sigma}^{\mathbb{I}}(,\mathbb{I}).
$$
By construction, this coincides with the flat hyperk\"ahler metric \eqref{eq:gHK} along the fibres of $\XXX \to \JJJ$. As in Corollary \ref{cor:negative}, $\mathbf{g}_{\mathbb{I}}$ is negative semi-definite along the horizontal directions of the connection $\Gamma^{\mathbb{I}}$.

\begin{corollary}\label{cor:negativeI}
For any tangent vector $v = (\dot J, a,\dot \psi)\in T_{(J,A,\psi)}\XXX$ one has
\begin{equation}\label{eq:gI}
\begin{split}
\mathbf{g}_{\mathbb{I}}(v,v) & =  \int_\Sigma B(a \wedge J a) + \int_\Sigma B\left(\left(\dot \psi + \tfrac{1}{2}\psi(J \dot J)\right) \wedge J\left(\dot \psi + \tfrac{1}{2}\psi(J \dot J)\right)\right)\\
& - \frac{1}{4}\int_\Sigma B\left(\psi(\dot J) \wedge J \psi(\dot J)\right).
\end{split}
\end{equation}
In particular, given a horizontal vector field $v \in H^{\mathbb{I}}$ at $(J,A,\psi)$, covering $\dot J \in T_J \JJJ$, one has
\begin{equation}\label{eq:gIhor}
\begin{split}
\mathbf{g}_{\mathbb{I}}(v,v) 
= - \frac{1}{4}\int_\Sigma B\left(\psi(\dot J)\wedge J (\psi(\dot J))\right).
\end{split}
\end{equation}
Consequently, $\mathbf{g}_{\mathbb{I}}$ is negative semi-definite along the horizontal directions of $\Gamma^{\mathbb{I}}$.
\end{corollary}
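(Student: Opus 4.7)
The plan is to unwind the definition $\mathbf{g}_{\mathbb{I}} = \boldsymbol{\sigma}^{\mathbb{I}}(\cdot, \mathbb{I}\cdot)$ by substituting the explicit formula for $\mathbb{I}$ from \eqref{eq:uI} into the expression for $\boldsymbol{\sigma}^{\mathbb{I}}$ obtained in Proposition \ref{p:existencesigmaI}. Concretely, for $v = (\dot J, a, \dot\psi)$ one has $\mathbb{I} v = (J\dot J,\, Ja,\, -J\dot\psi + \psi(\dot J))$, and the task is to evaluate $\boldsymbol{\sigma}^{\mathbb{I}}(v, \mathbb{I} v)$ using the second (expanded) form in \eqref{eq:sigmaI}. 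The algebraic ingredients are the pointwise identities $J^{2} = -\operatorname{Id}$, the anticommutation $\dot J J = -J\dot J$ on $T\Sigma$, their consequence $J\,\psi(J\dot J) = -\psi(\dot J)$ (which follows from $(J\,\psi(J\dot J))(X) = -\psi(J\dot J\,JX) = -\psi(\dot J X)$), and the graded antisymmetry $\int_\Sigma B(\alpha \wedge \beta) = -\int_\Sigma B(\beta \wedge \alpha)$ for $\liek$-valued $1$-forms.

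The main bookkeeping step will be the two cross terms involving $\psi(J\dot J) \wedge \psi(\dot J)$ and $\psi(\dot J) \wedge \psi(J\dot J)$ coming from the last three lines of \eqref{eq:sigmaI}. Using antisymmetry they partially cancel, and after grouping the remaining contributions one should recognize the square $B\bigl((\dot\psi + \tfrac{1}{2}\psi(J\dot J)) \wedge J(\dot\psi + \tfrac{1}{2}\psi(J\dot J))\bigr)$ (via the same identity $J\psi(J\dot J) = -\psi(\dot J)$), plus $B(a \wedge Ja)$ and a leftover $-\tfrac{1}{4}B(\psi(\dot J) \wedge J\psi(\dot J))$, reproducing \eqref{eq:gI}. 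This parallels the derivation of \eqref{eq:sigmaexadapted} for the other K\"ahler fibration structure.

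The second assertion is then immediate: for $v \in H^{\mathbb{I}}$, Lemma \ref{l:existenceGammaI} gives $a = 0$ and $\dot\psi + \tfrac{1}{2}\psi(J\dot J) = 0$, so the first two summands of \eqref{eq:gI} vanish identically, leaving only the $-\tfrac{1}{4}$ term, which is \eqref{eq:gIhor}. Negative semi-definiteness along $H^{\mathbb{I}}$ then follows from the standard pointwise positivity: for any $\liek$-valued $1$-form $\alpha$ on the Riemannian surface $(\Sigma, g = \omega(\cdot, J\cdot))$ one has $B(\alpha \wedge J\alpha) \geq 0$ as a top form, with equality iff $\alpha = 0$. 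Applied to $\alpha = \psi(\dot J)$, this yields $\mathbf{g}_{\mathbb{I}}(v, v) \leq 0$. The only real obstacle in the whole argument is keeping track of signs and cancellations in the mixed terms of the first step; no genuinely new input beyond that used for Corollary \ref{cor:negative} is required.
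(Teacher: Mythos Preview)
Your proposal is correct and reaches the result by the same underlying computation, but packaged differently from the paper. You substitute $\mathbb{I}v$ into the expanded form \eqref{eq:sigmaI} and regroup the cross terms by hand, whereas the paper works with the compact form $\boldsymbol{\sigma}^{\mathbb{I}} = \widehat{\boldsymbol{\omega}}_{\mathbf{I}}(\Gamma^{\mathbb{I}}\cdot,\Gamma^{\mathbb{I}}\cdot) - \mu$ and invokes the commutation $\Gamma^{\mathbb{I}}\circ \mathbb{I} = \mathbf{I}\circ \Gamma^{\mathbb{I}}$ from Lemma~\ref{l:existenceGammaI}. That identity immediately gives $\boldsymbol{\sigma}^{\mathbb{I}}(v,\mathbb{I}v) = \widehat{\boldsymbol{\omega}}_{\mathbf{I}}(\Gamma^{\mathbb{I}}v,\mathbf{I}\,\Gamma^{\mathbb{I}}v) - \mu(v,\mathbb{I}v) = \mathbf{g}(\Gamma^{\mathbb{I}}v,\Gamma^{\mathbb{I}}v) - \mu(v,\mathbb{I}v)$, and the three summands of \eqref{eq:gI} fall out without any bookkeeping of mixed terms (the last using your observation $\psi(J\dot J)=J\psi(\dot J)$). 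Your route is perfectly valid and self-contained; the paper's buys a cleaner one-line derivation at the cost of having already recorded the commutation relation.
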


\begin{proof}
Formula \eqref{eq:gI} follows from \eqref{eq:sigmaI}, using that $\Gamma^{\mathbb{I}}\circ \mathbb{I} = \mathbf{I}\circ \Gamma^{\mathbb{I}}$ (see Lemma \ref{l:existenceGammaI}).
\end{proof}

\subsection{Hamiltonian action and the coupled Hitchin equations}\label{ssec:UHiggsHam}

We follow the notation of the previous section. 
Our aim now is to investigate a natural Hamiltonian action for the minimal couplings defined by the coupling form $\boldsymbol{\sigma}^{\mathbb{I}}$ associated to the connection  $\Gamma^{\mathbb{I}}$, on the K\"ahler fibration $(\XXX,\mathbb{I},\widehat{\boldsymbol{\omega}}_{\mathbf{I}})$ over the space of complex structures $\JJJ$.

We fix a symplectic form $\omega$ on $\Sigma$, compatible with the orientation. Given $\alpha > 0$ a real `coupling constant' and $\varepsilon \in \{-1,1\}$, the family of minimal coupling symplectic structures of our interest is defined by
\begin{equation}\label{eq:uomegaI}
\boldsymbol{\omega}^{\mathbb{I}}_{\alpha,\varepsilon} = \varepsilon\boldsymbol{\omega}_\JJJ + \alpha \boldsymbol{\sigma}^{\mathbb{I}},
\end{equation}
where $\boldsymbol{\sigma}^{\mathbb{I}}$ is the closed $(1,1)$-form in Proposition  \ref{p:existencesigmaI} and $\boldsymbol{\omega}_\JJJ$ is as in \eqref{eq:SympJ}. Notice that $\boldsymbol{\omega}_\JJJ$ depends on the choice of $\omega$. By construction, $\boldsymbol{\omega}^{\mathbb{I}}_{\alpha,\varepsilon}$ is closed and of type $(1,1)$ with respect to the complex structure $\mathbb{I}$. Furthermore, along the fibres of $\XXX \to \JJJ$ the $2$-form $\boldsymbol{\omega}^{\mathbb{I}}_{\alpha,\varepsilon}$ restricts to the K\"ahler structure $\alpha \widehat{\boldsymbol{\omega}}_{\mathbf{I}}$. Consider the associated symmetric tensor 
\begin{equation}\label{eq:ugI}
\mathbf{g}^{\mathbb{I}}_{\alpha,\varepsilon} := \boldsymbol{\omega}^{\mathbb{I}}_{\alpha,\varepsilon} (,\mathbb{I}) = \varepsilon \mathbf{g}_{\JJJ} + \alpha \mathbf{g}_{\mathbb{I}}.
\end{equation}
The following result is the analogue of Lemma~\ref{lemma:indef} in the present setup. Its proof is analogous, by application of Corollary~\ref{cor:negativeI} in this case, and is omitted.

\begin{lemma}\label{lemma:indefI}
Let $v \in H^{\mathbb{I}}$ be a horizontal vector field at $(J,A,\psi)$, covering $\dot J \in T_J \JJJ$. 
Then
\begin{equation}\label{eq:gJhor.2}
\begin{split}
\mathbf{g}^{\mathbb{I}}_{\alpha,\varepsilon}(v,v) = \frac{\varepsilon}{2}\int_{\Sigma}\tr(\dot J\dot J) \omega - \frac{\alpha}{4} \int_\Sigma B\left(\psi(\dot J)\wedge J (\psi(\dot J))\right).
\end{split}
\end{equation}
Consequently, for any $\alpha >0$:
\begin{enumerate}

\item If $\varepsilon = -1$, $\mathbf{g}^{\mathbb{I}}_{\alpha,\varepsilon}$ is negative definite along $H^{\mathbb{I}}$. In particular, $\boldsymbol{\omega}^{\mathbb{I}}_{\alpha,-1}$ is a non-degenerate symplectic structure.

\item If $\varepsilon = 1$ and $\psi \neq 0$, $\mathbf{g}^{\mathbb{I}}_{\alpha,\varepsilon}$ changes signature along the line $(J,A,\lambda\psi) \in \XXX$, for $\lambda \in \RR$.

\end{enumerate}

\end{lemma}

Our next goal is to prove that $(\XXX,\boldsymbol{\omega}^{\mathbb{I}}_{\alpha,\varepsilon})$ admits a Hamiltonian action by the extended gauge group $\widetilde{\KKK}$, determined by $\omega$ and the reduction $E_K \subset E_G$. For this, we will need some refined information about the coupling form $\boldsymbol{\sigma}^{\mathbb{I}}$. Consider the closed 2-form $\pi^*_2\omega_{\mathbf{I}}$ on $\XXX$, given by the pull-back of $\omega_{\mathbf{I}} = \operatorname{Re}\Omega_{\mathbf{J}}$ via the canonical projection $\pi_2 \colon \XXX \to \AAA\times \Omega^1(\Sigma,E_K(\liek))$. Explicitly, for $v_j = (\dot J_j, a_j, \dot \psi_j) \in T_{(J, A , \psi)}\XXX$, $j = 1,2$, one has
$$
\pi^*_2\omega_{\mathbf{I}}(v_1,v_2) = \int_\Sigma B(a_1 \wedge a_2) -  \int_\Sigma B(\dot\psi_1 \wedge \dot \psi_2).
$$
The 2-form $\pi^*_2\omega_{\mathbf{I}}$ restricts on the fibres to $\widehat{\boldsymbol{\omega}}_{\mathbf{I}}$, and hence, by the general theory of symplectic fibrations (see \cite[Section 1.6]{GLS}), one expects that there exists a 1-form $\lambda \in \Omega^1(\XXX)$ such that
\begin{equation}\label{eq:dlambda}
\boldsymbol{\sigma}^{\mathbb{I}} = \pi^*_2\omega_{\mathbf{I}} + d\lambda.
\end{equation}
In the next result we prove that this is the case in the present situation, providing a refined version of the previous formula.

\begin{lemma}\label{l:sigmaIddc}
Consider the 1-form $\lambda \in \Omega^1(\XXX)$ defined by
$$
\lambda(v) = \frac{1}{4}\int_\Sigma B\left(\psi(J\dot J) \wedge \psi \right)
$$
for $v = (\dot J,a, \dot \psi) \in T_{(J, A , \psi)}\XXX$. Then formula \eqref{eq:dlambda} holds. Furthermore, considering the $L^2$-norm of the `unitary Higgs field',
$$
\nu(J,A,\psi) = \|\psi\|^2_{L^2} := \int_\Sigma B(\psi \wedge J \psi),
$$
one has
\begin{equation}\label{eq:sigmaIabs}
\boldsymbol{\sigma}^{\mathbb{I}} = \pi^*_2\omega_{\AAA} + \tfrac{i}{2}\partial_{\mathbb{I}}\dbar_{\mathbb{I}}\nu,
\end{equation}
where 
$$
\omega_{\AAA}((a_1,\dot \psi_1),(a_2,\dot \psi_2)) = \int_\Sigma B(a_1\wedge a_2).
$$
\end{lemma}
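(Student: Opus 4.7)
The plan is to prove both identities by direct computation, starting from the explicit formula \eqref{eq:sigmaI} for $\boldsymbol{\sigma}^{\mathbb{I}}$ established in Proposition \ref{p:existencesigmaI}. The main technical device is to extend each tangent vector $v_i = (\dot J_i, a_i, \dot\psi_i)$ at $(J,A,\psi)$ to a vector field $V_i$ on $\XXX$ that is constant in the ambient affine space $\operatorname{End}(T\Sigma) \oplus \Omega^1(\Sigma, E_K(\liek))^{\oplus 2}$. Such extensions have vanishing Lie bracket, so for any 1-form $\eta$ on $\XXX$ the Koszul formula reduces to $d\eta(v_1,v_2) = V_1(\eta(V_2)) - V_2(\eta(V_1))$ after evaluation at the base point.

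For the first identity, I subtract $\pi^*_2\omega_{\mathbf{I}}$ from \eqref{eq:sigmaI} to isolate the three coupling terms, and then compute $d\lambda$. Differentiating $\lambda(V_2) = \tfrac{1}{4}\int_\Sigma B(\psi(J\dot J_2) \wedge \psi)$ in the direction $V_1$ produces three contributions, from varying $\psi$ in the left slot, varying $\psi$ in the right slot, and varying $J$ inside $J\dot J_2$. After antisymmetrizing in the indices and using Lemma \ref{l:magicid} (with $C = J\dot J_j$, which is trace-free) to rewrite each $B(\dot\psi_i(J\dot J_j) \wedge \psi)$ as $-B(\dot\psi_i \wedge \psi(J\dot J_j))$, the $\psi$-variation terms collapse to the two cross terms $-\tfrac{1}{2}\int B(\psi(J\dot J_i)\wedge \dot\psi_j)$ and $-\tfrac{1}{2}\int B(\dot\psi_i \wedge \psi(J\dot J_j))$ appearing in \eqref{eq:sigmaI}. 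The $J$-variation contribution accumulates into $\tfrac{1}{4}\int B(\psi([\dot J_1,\dot J_2]) \wedge \psi)$, so it remains to establish the pointwise identity
\begin{equation*}
\tfrac{1}{4}\, B(\psi([\dot J_1,\dot J_2]) \wedge \psi) = -\tfrac{1}{2}\, B(\psi(\dot J_1) \wedge \psi(\dot J_2)).
\end{equation*}
Since $\dot J_i J + J\dot J_i = 0$ and $\operatorname{tr} \dot J_i = 0$, the commutator $[\dot J_1,\dot J_2]$ commutes with $J$ and is trace-free, hence $[\dot J_1,\dot J_2] = c\, J$ for a unique scalar $c$; writing $\dot J_i\partial_z = B_i \partial_{\bar{z}}$ yields $c = -2\operatorname{Im}(B_1 \bar{B}_2)$, and both sides above reduce to $-i \operatorname{Im}(B_1\bar{B}_2)\, B(\xi,\bar{\xi})\, dz \wedge d\bar{z}$ for $\psi = \xi\, dz + \bar{\xi}\, d\bar{z}$ in local complex coordinates.

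For the second identity, I use $\tfrac{i}{2}\partial_{\mathbb{I}}\dbar_{\mathbb{I}} = \tfrac{1}{4}\,dd^c_{\mathbb{I}}$ and compute directly
\begin{equation*}
d\nu(v) = 2\int_\Sigma B(\dot\psi \wedge J\psi) - \int_\Sigma B(\psi \wedge \psi(\dot J)),
\end{equation*}
and then $d^c_{\mathbb{I}}\nu(v) = -d\nu(\mathbb{I}v)$. A second application of the constant-extension trick expands $dd^c_{\mathbb{I}}\nu(v_1,v_2)$ as a sum of roughly a dozen terms. Repeated use of Lemma \ref{l:magicid} matches $B(\dot\psi_i(\dot J_j) \wedge J\psi)$ and $B(\psi(\dot J_j) \wedge J\dot\psi_i)$ to $B(\dot\psi_i \wedge \psi(J\dot J_j))$, collapsing the mixed $\dot\psi$-$\dot J$ contributions; applying the commutator identity from the previous paragraph to the residual $[\dot J_1,\dot J_2]$ term and simplifying yields $\tfrac{1}{4}\, dd^c_{\mathbb{I}}\nu = \boldsymbol{\sigma}^{\mathbb{I}} - \pi^*_2\omega_{\AAA}$.

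The main obstacle is bookkeeping: the sign analysis of $dd^c_{\mathbb{I}}\nu$ must carefully track the action of $\mathbb{I}$, which by \eqref{eq:uI} sends the $\dot\psi$-slot to $-J\dot\psi + \psi(\dot J)$ and therefore generates additional $\psi(\dot J)$-type terms under each application of $d^c_\mathbb{I}$. The conceptual heart of both parts is the commutator identity above, which algebraically encodes the curvature of $\Gamma^{\mathbb{I}}$ computed in Lemma \ref{l:existenceGammaI} as the symmetric quadratic form $B(\psi(\dot J_1) \wedge \psi(\dot J_2))$; that this is the sole surviving $\dot J\dot J$-contribution is a direct consequence of the trace-free anti-commutation condition defining $T_J\JJJ$.
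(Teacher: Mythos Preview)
Your proposal is correct and follows essentially the same strategy as the paper: direct computation of $d\lambda$ and $dd^c_{\mathbb{I}}\nu$ using constant vector-field extensions and repeated applications of Lemma~\ref{l:magicid}. Two minor efficiencies in the paper are worth noting. First, your commutator identity $\tfrac{1}{4}B(\psi([\dot J_1,\dot J_2])\wedge\psi)=-\tfrac{1}{2}B(\psi(\dot J_1)\wedge\psi(\dot J_2))$ follows immediately from Lemma~\ref{l:magicid} applied with $C=\dot J_2$ (and then $C=\dot J_1$), since $\psi(\dot J_1\dot J_2)=(\psi(\dot J_1))(\dot J_2)$; the local-coordinate computation via $[\dot J_1,\dot J_2]=cJ$ is unnecessary. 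Second, rather than expanding $dd^c_{\mathbb{I}}\nu$ from scratch, the paper first observes that $d^c_{\mathbb{I}}\nu = 2\int_\Sigma B(\dot\psi\wedge\psi) + 4\lambda$, so that $dd^c_{\mathbb{I}}\nu = -4\int_\Sigma B(\dot\psi_1\wedge\dot\psi_2) + 4\,d\lambda$ and the second identity follows at once from the first; this bypasses the dozen-term expansion entirely.
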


\begin{proof}
By the proof of Lemma \ref{lem:sigmaex} and \eqref{eq:uI}, we have
\begin{align*}
d^c_{\mathbb{I}}\nu_{|(J,A,\psi)}(\dot J,a,\dot \psi) & = 2\int_\Sigma B((J \dot \psi - \psi(\dot J)) \wedge J \psi) + \int_\Sigma B(\psi \wedge \psi(J\dot J))\\
& = 2\int_\Sigma B(\dot \psi \wedge \psi) - 2\int_\Sigma B(\psi(\dot J J ) \wedge \psi) + \int_\Sigma B(\psi \wedge \psi(J\dot J))\\
& = 2\int_\Sigma B(\dot \psi \wedge \psi) + \int_\Sigma B(\psi(J\dot J) \wedge \psi)\\
& = 2\int_\Sigma B(\dot \psi \wedge \psi) + 4 \lambda.
\end{align*}
We now calculate
\begin{align*}
4 d\lambda((\dot J_1,a_1,\dot \psi_1),(\dot J_2,a_2,\dot \psi_2)) & = \int_\Sigma B(\dot \psi_1(J\dot J_2) \wedge \psi) - \int_\Sigma B(\dot \psi_2(J\dot J_1) \wedge \psi)\\
& + \int_\Sigma B(\psi(J\dot J_2) \wedge \dot \psi_1) - \int_\Sigma B(\psi(J\dot J_1) \wedge \dot \psi_2)\\
& + \int_\Sigma B(\psi(\dot J_1\dot J_2) \wedge \psi) - \int_\Sigma B(\psi(\dot J_2\dot J_1) \wedge \psi)\\
& = - 2\int_\Sigma B(\dot \psi_1 \wedge \psi(J\dot J_2)) - 2 \int_\Sigma B(\psi(J\dot J_1) \wedge \dot \psi_2)\\
& - 2\int_\Sigma B(\psi(\dot J_1) \wedge \psi(\dot J_2))\\
& = 4 \boldsymbol{\sigma}^{\mathbb{I}} - 4 \pi^*_2\omega_{\mathbf{I}},
\end{align*}
which proves the first of the statement. As for the final part, from the previous formulae we conclude
\begin{align*}
dd^c_{\mathbb{I}}\nu & = 2i\partial_{\mathbb{I}}\dbar_{\mathbb{I}}\nu \\
& = - 4 \int_\Sigma B(\dot \psi_1 \wedge \dot \psi_2) + 4 d\lambda \\
& = - 4 \int_\Sigma B(\dot \psi_1 \wedge \dot \psi_2) + 4 \boldsymbol{\sigma}^{\mathbb{I}} - 4 \pi^*_2\omega_{\mathbf{I}}\\
& = -4 \pi^*_2\omega_{\AAA} + 4 \boldsymbol{\sigma}^{\mathbb{I}}.
\qedhere
\end{align*}
\end{proof}

Consider the extended gauge group $\widetilde{\KKK}$ of $h$ and $\omega$. As in \cite[Section 2.2]{AGG1}, the group $\widetilde{\KKK}$ acts on $(\XXX,\mathbb{I},\boldsymbol{\omega}^{\mathbb{I}}_{\alpha,\varepsilon})$ preserving $\mathbb{I}$ and $\boldsymbol{\omega}^{\mathbb{I}}_{\alpha,\varepsilon}$, and covering the $\HHH$-action on $\JJJ$ by push-forward. We are ready to prove the main result of this section. We use the same notation as in Proposition \ref{p:mmap}.

\begin{proposition}\label{p:mmapI}
The action of $\widetilde{\KKK}$ on $(\XXX,\boldsymbol{\omega}^{\mathbb{I}}_{\alpha,\varepsilon})$ is Hamiltonian, with equivariant moment map given by
\begin{align*}
\langle \mu_{\widetilde{\KKK}}^{\mathbb{I}}(J,A,\psi), \zeta \rangle & = \alpha \int_\Sigma B\left(F_A - \tfrac{1}{2}[\psi\wedge \psi],A\zeta\right) \\
& - \int_\Sigma f \left(\varepsilon S_J \omega + \alpha d\left(B\left(\Lambda_\omega(d_A \psi)\right),\psi\right) - \alpha d^c\left(B \left(\psi,\Lambda_\omega(d_A(J\psi))\right)\right)\right).
\end{align*}
\end{proposition}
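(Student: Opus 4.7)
The approach is to decompose $\boldsymbol{\omega}^{\mathbb{I}}_{\alpha,\varepsilon}$ via Lemma~\ref{l:sigmaIddc} and produce an equivariant moment map for each summand separately, closely modeling the argument of Proposition~\ref{p:mmap}. Explicitly, one has
$$
\boldsymbol{\omega}^{\mathbb{I}}_{\alpha,\varepsilon} = \varepsilon\,\boldsymbol{\omega}_\JJJ + \alpha\,\pi_2^*\omega_\AAA + \tfrac{i\alpha}{2}\partial_{\mathbb{I}}\dbar_{\mathbb{I}}\nu,
$$
with $\nu(J,A,\psi) = \|\psi\|^2_{L^2}$. The first summand is pulled back from $\JJJ$, so the Donaldson--Fujiki formula~\eqref{eq:scmom} immediately contributes $-\varepsilon\int_\Sigma f S_J\,\omega$ to the moment. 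The second summand depends only on the connection components of tangent vectors, so its moment reduces to the surface case of the Atiyah--Bott extended calculation (the $n=1$ limit of Remark~\ref{rem:couplingterm}, or the proof of Proposition~\ref{Flat-exreduction} applied to $(\AAA,\omega_\AAA)$) and yields $\alpha\int_\Sigma B(F_A, A\zeta)$ with no base-coupling term.

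For the third summand, $\widetilde\KKK$-invariance of $\nu$ combined with the K\"ahler-potential formula gives
$$
\bigl\langle\mu^{(i/2)\partial\dbar\nu}_{\widetilde\KKK},\zeta\bigr\rangle = -\tfrac{1}{4}\,d^c_{\mathbb{I}}\nu\bigl(\zeta\cdot(J,A,\psi)\bigr).
$$
The plan is to evaluate this using the explicit formula for $d^c_{\mathbb{I}}\nu$ derived in the proof of Lemma~\ref{l:sigmaIddc} and the infinitesimal action
$$
\zeta\cdot(J,A,\psi) = -\bigl(L_{\eta_f}J,\,d_A u + i_{\eta_f}F_A,\,d_A(i_{\eta_f}\psi) + i_{\eta_f}d_A\psi + [\psi,u]\bigr),
$$
with $u = A\zeta$, obtained by decomposing $\zeta\cdot D = -d_D(D\zeta) - i_{\eta_f}F_D$ into real and imaginary parts as in the proof of Proposition~\ref{p:mmap}. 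Integration by parts on $i_{\eta_f}\psi$ against $d_A\psi$, the dimensional identity $d_A\psi\wedge\psi = 0$ on $\Sigma$, and the ad-invariance identity $B([\psi,u]\wedge\psi) = -B(u,[\psi\wedge\psi])$ together isolate a bulk $u$-contribution $-\tfrac{\alpha}{2}\int_\Sigma B(u,[\psi\wedge\psi])$. Adding the Atiyah--Bott piece then yields the Hitchin curvature combination $\alpha\int_\Sigma B(F_A - \tfrac{1}{2}[\psi\wedge\psi],\,A\zeta)$ appearing in the statement.

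The remaining terms depend only on $\eta_f$ and must be repackaged into the base coupling $-\int_\Sigma f\bigl(\alpha\,d(B(\Lambda_\omega d_A\psi,\psi)) - \alpha\,d^c(B(\psi,\Lambda_\omega d_A(J\psi)))\bigr)$. Using $i_{\eta_f}\omega = df$ and the pointwise identity $d_A\psi = (\Lambda_\omega d_A\psi)\,\omega$, the terms of the form $\int_\Sigma B(i_{\eta_f}\psi\wedge d_A\psi)$ integrate by parts into the $d$-primitive $\int_\Sigma f\,d\bigl(B(\Lambda_\omega d_A\psi,\psi)\bigr)$, while the $\psi(J L_{\eta_f}J)$ contribution is transformed via the identity $L_yJ = 2i\dbar y^{1,0} - 2i\partial y^{0,1}$ already exploited in the proof of Proposition~\ref{p:mmap}, combined with the Hodge duality $\ast\psi = J\psi$ and further integration by parts, into the $d^c$-primitive $\int_\Sigma f\,d^c\bigl(B(\psi,\Lambda_\omega d_A(J\psi))\bigr)$. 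Organizing these boundary terms with correct signs, keeping track of the factor $\tfrac{1}{4}$ from the K\"ahler-potential formula, and handling the $J$-dependence of $d^c$ is where the main technical difficulty lies. Equivariance of the final moment map is then automatic, as a sum of equivariant moment maps.
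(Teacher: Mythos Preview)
Your approach is correct and runs parallel to the paper's, with one small but genuine difference in bookkeeping. Both arguments rest on Lemma~\ref{l:sigmaIddc}, but the paper uses the decomposition $\boldsymbol{\sigma}^{\mathbb{I}} = \pi_2^*\omega_{\mathbf{I}} + d\lambda$, while you use the alternative $\boldsymbol{\sigma}^{\mathbb{I}} = \pi_2^*\omega_{\AAA} + \tfrac{1}{4}dd^c_{\mathbb{I}}\nu$. Consequently, the paper obtains the Hitchin curvature combination $F_A - \tfrac{1}{2}[\psi\wedge\psi]$ and the $d$-primitive in one stroke from $\operatorname{Re}\int_\Sigma B(F_D, D\zeta)$ via Proposition~\ref{Flat-exreduction}, and then reads off the $d^c$-primitive directly from $\lambda(\zeta\cdot(J,A,\psi))$ using the Lie-derivative identity for $J$. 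You instead split the curvature combination across the Atiyah--Bott piece (giving $F_A$) and the $[\psi,u]$ term in $d^c_{\mathbb{I}}\nu$ (giving $-\tfrac{1}{2}[\psi\wedge\psi]$), and must then extract both base primitives from the remaining $\eta_f$-dependent terms in $d^c_{\mathbb{I}}\nu(\zeta\cdot)$. The paper's route is a little cleaner because $\lambda$ already isolates the $d^c$-term and the complex-connection formalism packages the rest; your route requires one more integration by parts but avoids passing through $D = A + i\psi$. Either way the ingredients and the final answer are the same.
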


\begin{proof}
By Lemma \ref{l:sigmaIddc} we have that $\boldsymbol{\sigma}^{\mathbb{I}} = \pi^*_2\omega_{\mathbf{I}} + d \lambda$, and since $\lambda$ is preserved by the $\widetilde{\KKK}$-action, there exists an equivariant moment map for $\boldsymbol{\omega}^{\mathbb{I}}_{\alpha,\varepsilon}$ given by 
$$
\langle \mu_{\widetilde{\KKK}}^{\mathbb{I}}(J,A,\psi), \zeta \rangle = - \varepsilon \int_\Sigma f S_J \omega + \alpha \operatorname{Re} \int_\Sigma B(F_D, D \zeta) - \alpha\lambda(\zeta \cdot (J,A,\psi)),
$$
where $D = A + i \psi$ and we have used that $\omega_{\mathbf{I}} = \operatorname{Re}\Omega_{\mathbf{J}}$ and Proposition \ref{Flat-exreduction}. The proof follows from the formulae
\begin{align*}
\operatorname{Re} \int_\Sigma B(F_D, D \zeta) & = \int_\Sigma B\left(F_A - \tfrac{1}{2}[\psi\wedge \psi],A\zeta\right) - B\left(d_A \psi,\psi(y)\right)\\
& = \int_\Sigma B\left(F_A - \tfrac{1}{2}[\psi\wedge \psi],A\zeta\right) - B\left(\Lambda_\omega(d_A \psi),\psi\right)\wedge i_y\omega\\
& = \int_\Sigma B\left(F_A - \tfrac{1}{2}[\psi\wedge \psi],A\zeta\right) + \int_\Sigma df \wedge \left(B\left(\Lambda_\omega(d_A \psi)\right),\psi\right),\\
& = \int_\Sigma B\left(F_A - \tfrac{1}{2}[\psi\wedge \psi],A\zeta\right) - \int_\Sigma f d\left(B\left(\Lambda_\omega(d_A \psi)\right),\psi\right),
\end{align*}
where $\zeta$ covers the Hamiltonian vector field with Hamiltonian $f \in C^\infty_0(\Sigma)$, and (see the proof of Proposition \ref{p:mmap}):
\begin{align*}
\lambda(\zeta \cdot (J,A,\psi)) & = - \frac{1}{4}\int_\Sigma B(\psi(JL_yJ)\wedge \psi)\\
& = - \int_\Sigma B (\psi(Jy),d_A(J\psi))\\
& = - \int_\Sigma B (\psi,\Lambda_\omega(d_A(J\psi))) \wedge i_{Jy}\omega\\
& = \int_\Sigma i_{Jy}\omega \wedge B (\psi,\Lambda_\omega(d_A(J\psi))) \\
& = \int_\Sigma d^cf \wedge B \left(\psi,\Lambda_\omega(d_A(J\psi)))\right)\\
& = - \int_\Sigma f d^c\left(B \left(\psi,\Lambda_\omega(d_A(J\psi))\right)\right).
\qedhere
\end{align*}

\end{proof}

As in Proposition \ref{p:mmap}, the $\widetilde{\KKK}$-action produces a `coupling term' on the base $\Sigma$ (cf. Remark~\ref{rem:couplingterm}), which interacts with the scalar curvature of the metric $g = \omega(,J)$. 
In particular, as in \cite{AGG1}, zeros of the moment map $\mu_{\widetilde{\KKK}}^\mathbb{I}$ are given by solutions to the coupled system of equations
\begin{equation}\label{eq:cHitchin}
\begin{split}
F_A - \tfrac{1}{2}[\psi\wedge \psi] & = 0,\\
\varepsilon S_J + \alpha \Lambda_\omega \left(d\left(B\left(\Lambda_\omega(d_A \psi)\right),\psi\right) - d^c\left(B \left(\psi,\Lambda_\omega(d_A(J\psi))\right)\right)\right) & = \varepsilon \frac{2\pi \chi(\Sigma)}{V}.
\end{split}
\end{equation}
In the next section, we combine these equations with the integrability condition $\dbar_A \varphi  = 0$ for the Higgs field, in order to introduce the \emph{universal moduli space of Hitchin's equations}.

\subsection{Metric structure of the universal moduli of Hitchin's equations}
\label{section-Uhiggsmetric}

In this section we combine the results of the previous two sections to define a universal moduli space of solutions of Hitchin's equations~\eqref{hitchin}, varying over (a cover of) the Teichm\"uller space. 
As in the previous sections, we consider a fixed principal $K$-bundle $E_K$ over a smooth compact oriented surface $\Sigma$ with fixed symplectic form $\omega$.

\begin{definition}
A triple $(J,A, \psi) \in \XXX \cong \JJJ\times \AAA\times \Omega^1(\Sigma,E_K(\liek))$ is a solution of the \emph{coupled Hitchin equations}, if the following conditions are satisfied
\begin{equation}\label{eq:cHitchinint}
\begin{split}
F_A -[\varphi,\tau(\varphi)] & = 0,\\
\dbar_{J,A}\varphi & = 0,\\
S_g & = \frac{2\pi \chi(\Sigma)}{V},
\end{split}
\end{equation}
where $g = \omega(,J)$ and $\varphi = \psi^{1,0_J}$.
\end{definition}

As in Section \ref{ssec:UHiggs}, consider $\XXX^{Higgs} \subset \XXX$ the subset defined by the condition $\dbar_{J,A}\varphi = 0$. By Proposition \ref{p:dbarAphi}, $\XXX^{Higgs}$ is formally a complex submanifold of $(\XXX,\mathbb{I})$ preserved by the $\widetilde{\KKK}$-action. For any choice of coupling constant $\alpha >0$ and parameter $\varepsilon \in \{-1,1\}$, $\XXX^{Higgs}$ inherits a minimal coupling structure $\boldsymbol{\omega}^{\mathbb{I}}_{\alpha,\varepsilon}$ and moment map $\mu_{\widetilde{\KKK}^{\mathbb{I}}|\XXX^{Higgs}}$, as in Proposition \ref{p:mmapI}. We define the \emph{universal moduli space of solutions of Hitchin's equations} as the symplectic quotient
$$
\mathcal{U}^{Hit}(G)_\alpha^\varepsilon := (\mu_{\widetilde{\KKK}^{\mathbb{I}}|\XXX^{Higgs}})^{-1}(0)/\widetilde{\KKK}.
$$
By the explicit formula for the moment map in Proposition~\ref{p:mmapI}, the underlying space of $\mathcal{U}^{Hit}(G)_\alpha^\varepsilon$ is independent of the choice of parameters, since the `coupling term' in the scalar equation in~\eqref{eq:cHitchin} vanishes identically when the integrability condition $\dbar_{J,A}\varphi = 0$ is imposed. 
We shall denote this moduli space simply by $\mathcal{U}^{Hit}(G)$. 
Hence, for any $\alpha >0$ and $\varepsilon \in \{-1,1\}$, the moduli space $\mathcal{U}^{Hit}(G)$ parametrizes solutions of the equations \eqref{eq:cHitchinint} modulo the $\widetilde{\KKK}$-action, and the only difference is the induced presymplectic structure on the moduli space. 
This structure is furthermore symplectic in the case $\varepsilon = -1$, by Lemma \ref{lemma:indefI}. 
By construction, there are natural maps (cf. \eqref{eq:FibTeichmuller})
$$
\mathcal{U}^{Hit}(G) \lra \mathcal{U}^{Higgs}(G) \lra \mathcal{T}.
$$
Hence, in particular, $\mathcal{U}^{Hit}(G)$ can be regarded as a fibration over the Teichm\"uller space $\mathcal{T}$. 
As in Section~\ref{section-Uhitchin}, since the symmetric tensor $\mathbf{g}^{\mathbb{I}}_{\alpha,\varepsilon}$ is not positive definite (see Lemma~\ref{lemma:indefI}), it is not obvious \emph{a priori} whether $\mathcal{U}^{Hit}(G)$ inherits a complex structure compatible with the (pre)symplectic structure induced by $\boldsymbol{\omega}^{\mathbb{I}}_{\alpha,\varepsilon}$. 
The main goal of this section is to study sufficient conditions under which this natural condition holds for the moduli space, and to prove that the map $\mathcal{U}^{Hit}(G) \to \mathcal{U}^{Higgs}(G)$ is holomorphic.

The analysis is analogous to that in Sections~\ref{section-Uhitchin} and~\ref{coupling}, so we only outline the argument. 
The first step is to undertake a \emph{gauge fixing} for solutions of the coupled Hitchin equations \eqref{eq:cHitchinint}, whereby the complex structure \eqref{eq:uI} and the symmetric tensor $\mathbf{g}^{\mathbb{I}}_{\alpha,\varepsilon}$ descend to the moduli space. We observe that this will produce a priori different complex structures on $\mathcal{U}^{Hit}(G)$.

We fix a solution $(J,A,\psi)$ of the coupled Hitchin equations \eqref{eq:cHitchinint}.

\begin{lemma}\label{lem:linearHit}
The linearization of the coupled Hitchin equations \eqref{eq:cHitchinint} at $(J,A,\psi)$ is given by
\begin{equation}\label{eq:linearHit}
\begin{split}
d_Aa - [\dot \psi,\psi] & = 0,\\
d_A \dot \psi + [a,\psi] & = 0, \\ 
d_A (J \dot \psi) + [a, J \psi] - d_A (\psi( \dot J)) & = 0, \\ 
\varepsilon \delta S(\dot J) & = 0,
\end{split}
\end{equation}
where $\delta S \colon T_J\JJJ \to C_0^\infty(\Sigma,\RR)$ is the linearization of the scalar curvature.
\end{lemma}

We denote by ${\bf L} (\dot J,a, \dot \psi)$ the differential operator defined by the left-hand side of equations \eqref{eq:linearHit}. Using the same notation as in \eqref{eq:TWMcomplexring}, we define a complex of linear differential operators
\begin{equation}\label{eq:TWMHitchin}
\begin{tikzcd}
(\mathcal{S}^*_0)\colon & 0 \ar[r] & \mathcal{S}^0 \ar[r,"{\mathbf{P}}"] & \mathcal{S}^1 \ar[r,"{\bf L}"] & \mathcal{S}^2 \ar[r] & 0.
\end{tikzcd}
\end{equation}
The cohomology $H^1(\mathcal{S}^*_0) := \frac{\ker {\bf L}}{\operatorname{Im}\, \mathbf{P}}$ can be formally identified with the tangent space $T_{[(J,A,\psi)]} \mathcal{U}^{Hit}(G)$. Our next result shows that the moduli space $ \mathcal{U}^{Hit}(G)$ is finite dimensional.

\begin{lemma}\label{lem:sesTWMHit}
The sequence \eqref{eq:TWMHitchin} is an elliptic complex of multi-degree linear differential operators. Consequently, the cohomology groups $H^j(\cS^*_0)$, with $j = 0,1,2$, are finite-dimensional.
\end{lemma}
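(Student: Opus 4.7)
The plan is to mimic the argument of Lemma \ref{lem:sesTWM} essentially verbatim, observing that \eqref{eq:linear} differs from its harmonicity analogue only in the scalar equation, where the three $\alpha$-dependent coupling terms are absent because the integrability condition $\dbar_{J,A}\varphi=0$ is being enforced by the ambient manifold $\XXX^{Higgs}$ rather than by a moment-map constraint. As in that lemma, I will introduce Douglis--Nirenberg multi-degrees in which the scalar component of $\cS^2$ has weight $0$ and each $\Omega^2$-component has weight $1$, so that $\delta S(\dot J)$ enters as a genuine second-order contribution at the principal-symbol level.

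First I will compute the principal symbols. For $\mathbf{P}$ these are identical to the formulas of Lemma \ref{lem:sesTWM}, as the operator itself is unchanged, and injectivity of $\sigma_{\mathbf{P}}(v)$ for $v\neq 0$ follows from the same two-step argument (forcing $f=0$ from the $\dot J$-component, then $u=0$ from the $a$-component). For $\mathbf{L}$ the symbols $\sigma_{\mathbf{L}_2},\sigma_{\mathbf{L}_3},\sigma_{\mathbf{L}_4}$ coincide with those in the cited lemma, while the scalar symbol simplifies to
\begin{equation*}
\sigma_{\mathbf{L}_1}(v)(\dot J,a,\dot\psi)=\varepsilon\,{*}(v\wedge v(\dot J)),
\end{equation*}
with no $a$- or $\dot\psi$-dependence.

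Next I will verify exactness of the symbol sequence. Surjectivity of $\sigma_{\mathbf{L}}(v)$ is genuinely easier here than in the harmonicity case: given $w=(\nu,u_2\omega,u_3\omega,u_4\omega)\in\cS^2$, one picks $a$ from $v\wedge a=u_4\omega$, then chooses $\dot J$ in the line spanned by $g^{-1}v\otimes Jv$ so that $\varepsilon\,{*}(v\wedge v(\dot J))=\nu$ (now with no $a$-coupling on the right-hand side), and finally solves the single $(0,1)\wedge(1,0)$ equation arising from $\sigma_{\mathbf{L}_2}$ and $\sigma_{\mathbf{L}_3}$ for $\dot\psi$, exactly as before. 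For middle exactness, if $\sigma_{\mathbf{L}}(v)(\dot J,a,\dot\psi)=0$ then $\sigma_{\mathbf{L}_4}(v)=0$ gives $a=v\otimes x$, and the simplified $\sigma_{\mathbf{L}_1}(v)=0$ immediately yields $v\wedge v(\dot J)=0$, which by \cite{Fj} forces $\dot J=-fJv\otimes\omega^{-1}v$; the remaining algebraic manipulation using $\sigma_{\mathbf{L}_2}(v)=\sigma_{\mathbf{L}_3}(v)=0$ together with the pointwise expansions $\omega=\lambda v\wedge Jv$ and $F_A=tv\wedge Jv$ reproduces $(\dot J,a,\dot\psi)=\sigma_{\mathbf{P}}(v)(f,-x-\lambda^{-1}ft)$ verbatim from the earlier proof.

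Once the symbol sequence is exact the complex is Douglis--Nirenberg elliptic, and finite-dimensionality of $H^j(\cS_0^*)$ for $j=0,1,2$ on the compact surface $\Sigma$ follows from the standard theory \cite{LM2}. The only point I expect to require any care is the book-keeping of weights, since the scalar equation now contains no mixed-order terms and the order table for $\mathbf{L}$ must be re-tabulated accordingly; but this is purely cosmetic and presents no genuine obstacle, as the weights $\mathbf{t}=(2,2,2)$, $\mathbf{s}=(0,1,1,1)$ used for Lemma \ref{lem:sesTWM} transfer without modification.
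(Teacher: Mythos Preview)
Your proposal is correct and follows exactly the approach the paper intends: the paper omits the proof of this lemma, having noted that ``the analysis is analogue to that in Sections \ref{section-Uhitchin} and \ref{coupling}, and hence we just outline the argument,'' and your adaptation of the proof of Lemma \ref{lem:sesTWM} with the simplified scalar symbol $\sigma_{\mathbf{L}_1}(v)=\varepsilon\,{*}(v\wedge v(\dot J))$ is precisely what is required. The only cosmetic remark is that in the middle-exactness step the intermediate observation $B(\Lambda_\omega(v\wedge a),*\psi)=0$ from the earlier proof is now unnecessary, as you correctly note, since $\sigma_{\mathbf{L}_1}$ no longer involves $a$.
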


In order to construct a complex structure induced by \eqref{eq:uI} on the moduli space, we work orthogonal to the image of the infinitesimal action operator $\mathbf{P}$ in \eqref{eq:TWMHitchin} with respect to the indefinite pairing $\mathbf{g}^{\mathbb{I}}_{\alpha,\varepsilon}$. The existence of this complex structure will automatically yield a symmetric tensor of type $(1,1)$, since the 2-form $\boldsymbol{\omega}^{\mathbb{I}}_{\alpha,\varepsilon}$ is well defined on the cohomology $H^1(\mathcal{S}^*_0)$ by Proposition \ref{p:mmapI}. Consider the $L^2$-pairing on $\mathcal{S}^0$ define by \eqref{eq:L2ell}. Consider the map $\mu \colon \XXX \to \mathcal{S}^0$ defined by (cf. Proposition \ref{p:mmapI})
$$
\mu^\mathbb{I}(J,A,\psi) = \left(\mu^\mathbb{I}_0,\mu^\mathbb{I}_1\right),
$$
where 
\begin{align*}
\mu^\mathbb{I}_0 & = -\left(\varepsilon S_J \omega + \alpha d\left(B\left(\Lambda_\omega(d_A \psi)\right),\psi\right) - \alpha d^c\left(B \left(\psi,\Lambda_\omega(d_A(J\psi))\right)\right)\right),\\
\mu^\mathbb{I}_0 & = \alpha\left(F_A - \tfrac{1}{2}[\psi \wedge \psi]\right).
\end{align*}

\begin{lemma}\label{lem:P*}
The following operator provides a formal adjoint of the infinitesimal action $\mathbf{P}$ for the pairings \eqref{eq:L2ell} and \eqref{eq:ugI}:
$$
\mathbf{P}^*_{\alpha,\varepsilon} = \delta \mu^{\mathbb{I}} \circ \mathbb{I} \colon \mathcal{S}^1 \lra \mathcal{S}^0.
$$
\end{lemma}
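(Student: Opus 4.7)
The plan is to obtain the formula $\mathbf{P}^*_{\alpha,\varepsilon} = \delta \mu^{\mathbb{I}} \circ \mathbb{I}$ as a direct formal consequence of the moment map identity established in Proposition \ref{p:mmapI}, combined with the $(1,1)$-type property of $\boldsymbol{\omega}^{\mathbb{I}}_{\alpha,\varepsilon}$. The argument is essentially the same formal manipulation used to identify an adjoint in Lemma \ref{lem:P*} of Section \ref{section-Uhitchin}, and so I would follow that model.

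First, I would restate the defining property of the moment map. Using the identification $\operatorname{Lie} \widetilde{\KKK} \cong \cS^0$ via the connection $A$ (under which the pairing between $\mu^{\mathbb{I}}$ and $\zeta \cong (f,u)$ becomes exactly \eqref{eq:L2ell}), Proposition \ref{p:mmapI} reads
\begin{equation*}
\langle d\mu^{\mathbb{I}}(v),(f,u)\rangle_{L^2} = \boldsymbol{\omega}^{\mathbb{I}}_{\alpha,\varepsilon}\bigl(\mathbf{P}(f,u),v\bigr)
\end{equation*}
for all $v\in \cS^1$ and $(f,u)\in \cS^0$, at a solution $(J,A,\psi)$ of the coupled Hitchin equations \eqref{eq:cHitchinint}. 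A small check here is that, although the identification $\zeta\mapsto(f,A\zeta)$ is $A$-dependent, the term produced by varying $A$ pairs with the component $\mu^{\mathbb{I}}_1 = \alpha(F_A - \tfrac{1}{2}[\psi\wedge\psi])$, which vanishes on-shell by the first equation of \eqref{eq:cHitchinint}, so this subtlety does not contribute.

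Next, I would record the basic Kähler algebra: since $\boldsymbol{\omega}^{\mathbb{I}}_{\alpha,\varepsilon} = \varepsilon \boldsymbol{\omega}_\JJJ + \alpha \boldsymbol{\sigma}^{\mathbb{I}}$ is closed and of type $(1,1)$ with respect to $\mathbb{I}$ (this is built in to both summands, cf.\ \eqref{eq:SympJ} and Proposition \ref{p:existencesigmaI}), the associated tensor $\mathbf{g}^{\mathbb{I}}_{\alpha,\varepsilon} = \boldsymbol{\omega}^{\mathbb{I}}_{\alpha,\varepsilon}(\cdot,\mathbb{I}\cdot)$ is symmetric and $\mathbb{I}$-invariant. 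Consequently,
\begin{equation*}
\boldsymbol{\omega}^{\mathbb{I}}_{\alpha,\varepsilon}(a,\mathbb{I} b) = \mathbf{g}^{\mathbb{I}}_{\alpha,\varepsilon}(a,b) = \mathbf{g}^{\mathbb{I}}_{\alpha,\varepsilon}(b,a),
\end{equation*}
as a pointwise identity of bilinear forms on $\cS^1$.

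Finally, substituting $v \mapsto \mathbb{I} v$ in the moment map identity and applying the relation above gives
\begin{equation*}
\langle d\mu^{\mathbb{I}}(\mathbb{I} v),(f,u)\rangle_{L^2} = \boldsymbol{\omega}^{\mathbb{I}}_{\alpha,\varepsilon}\bigl(\mathbf{P}(f,u),\mathbb{I} v\bigr) = \mathbf{g}^{\mathbb{I}}_{\alpha,\varepsilon}\bigl(\mathbf{P}(f,u),v\bigr) = \mathbf{g}^{\mathbb{I}}_{\alpha,\varepsilon}\bigl(v,\mathbf{P}(f,u)\bigr),
\end{equation*}
which is precisely the statement that $\delta \mu^{\mathbb{I}} \circ \mathbb{I}$ is a formal adjoint of $\mathbf{P}$ for the pairings \eqref{eq:L2ell} and \eqref{eq:ugI}. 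The only genuine point to verify is the on-shell vanishing mentioned above; no integration by parts beyond that already contained in the derivation of Proposition \ref{p:mmapI} is required, and no explicit computation of $\mathbf{P}^*_{\alpha,\varepsilon}$ is needed for this lemma.
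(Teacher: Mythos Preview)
Your proposal is correct and follows essentially the same formal argument as the paper's (implicit) proof, which in turn mirrors the proof of the analogous Lemma \ref{lem:P*} in Section \ref{section-Uhitchin}: the moment map identity from Proposition \ref{p:mmapI} combined with the $(1,1)$-type of $\boldsymbol{\omega}^{\mathbb{I}}_{\alpha,\varepsilon}$ yields the adjoint relation directly. Your observation about the on-shell vanishing of the extra term arising from the $A$-dependence of the identification $\zeta\cong(f,A\zeta)$ is the one genuine check needed here, and you handle it correctly; note also that in this $\mathbb{I}$-case no modified operator $\widetilde{\mathbf{P}}$ is required (unlike Section \ref{section-Uhitchin}), precisely because the moment map in Proposition \ref{p:mmapI} involves only $A\zeta$ rather than $A\zeta + 2\psi(Jp\zeta)$.
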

 	

Consider now the differential operator 
\begin{equation}
   \label{eq:Loperator}
 \begin{array}{cccl}
  \mathcal{L}_0^{\alpha,\varepsilon}\colon & \mathcal{S}^0  & \longrightarrow & \mathcal{S}^0 \\
         & (f,u) & \longmapsto & \mathbf{P}^*_{\alpha,\varepsilon}\circ \mathbf{P}(f,u).\\
         \end{array}
 \end{equation}
The key condition on the solution $(J,A,\psi)$ of \eqref{eq:cHitchinint} which we need to assume in order to construct the complex structure on the moduli space is the vanishing of the kernel of $\mathcal{L}_0^{\alpha,\varepsilon}$. Notice that, unlike in the standard cases in gauge theory in which the parameter space metric is positive definite, $\ker \mathcal{L}_0^{\alpha,\varepsilon}$ does not relate in general to automorphisms of the triple $(J,\psi,A)$, but rather to null vectors with respect to $\mathbf{g}^{\mathbb{I}}_{\alpha,\varepsilon}$.

\begin{proposition}\label{prop:zeroindexI}
The operator $\mathcal{L}_0^{\alpha,\varepsilon}$ is Fredholm with zero index. Furthermore, elements $(f,u) \in \ker \mathcal{L}_0^{\alpha,\varepsilon}$ are smooth.
\end{proposition}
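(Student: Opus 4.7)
The plan is to adapt the strategy of Proposition~\ref{prop:zeroindex} to the Hitchin setting. The operator $\mathcal{L}_0^{\alpha,\varepsilon} = \mathbf{P}^*_{\alpha,\varepsilon}\circ \mathbf{P}$ is a composition of two multi-degree linear differential operators between the spaces $\mathcal{S}^0, \mathcal{S}^1$ and back to $\mathcal{S}^0$. The infinitesimal action $\mathbf{P}$ is given by the same formula as in \eqref{eq:infinitesimalactionmod} but without the modification by $2\psi(J\eta_f)$ (namely, the unmodified infinitesimal action for the $\widetilde{\KKK}$-action in Proposition~\ref{p:mmapI}), while $\mathbf{P}^*_{\alpha,\varepsilon}=\delta\mu^{\mathbb{I}}\circ \mathbb{I}$ is computed by varying the moment map components $\mu^{\mathbb{I}}_0$ and $\mu^{\mathbb{I}}_1$ from Proposition~\ref{p:mmapI} and post-composing with $\mathbb{I}$ from \eqref{eq:uI}. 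In particular, $\mathbf{P}^*_{\alpha,\varepsilon}$ splits as a sum of a fourth-order scalar part coming from $\varepsilon S_J$ and second-order parts coming from $\alpha(F_A - \tfrac{1}{2}[\psi\wedge\psi])$ and the coupling terms in $\mu^{\mathbb{I}}_0$.

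First, I would equip $\mathcal{L}_0^{\alpha,\varepsilon}$ with the same Douglis--Nirenberg orders as in the proof of Proposition~\ref{prop:zeroindex}, namely $\mathbf{t}=(4,4)$ on the target $\mathcal{S}^0$ and $\mathbf{s}=(0,2)$ on the source $\mathcal{S}^0$. With these orders, the leading symbol of $\mathcal{L}_0^{\alpha,\varepsilon}$ is block-diagonal modulo lower-order terms, precisely as in the earlier proof: the $(1,1)$-entry is $-\varepsilon$ times the symbol of the (fourth-order) Lichnerowicz operator $f\mapsto \delta S(J L_{\eta_f}J)$, while the $(2,2)$-entry is $-\alpha$ times the symbol of the second-order operator $u\mapsto *\!d_A(Jd_A u)$. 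Both symbols are isomorphisms for $v\neq 0$, with leading behavior $-\varepsilon |v|^4 f$ and $-\alpha|v|^2 u$ respectively, exactly mirroring the computation of $\sigma_{\widetilde{\mathcal{L}}_1}$ and $\sigma_{\widetilde{\mathcal{L}}_2}$. Crucially, because we evaluate at a solution of the coupled Hitchin's equations \eqref{eq:cHitchinint}, the coupling contributions in $\mu^{\mathbb{I}}_0$ involving $d_A\psi$ and $d_A(J\psi)$ produce only lower-order symbols (the third derivatives of $\psi$ in $d^c\bigl(B(\psi,\Lambda_\omega d_A(J\psi))\bigr)$ hit $\psi$ rather than $(f,u)$, so they enter $\mathcal{L}_0^{\alpha,\varepsilon}$ as coefficients). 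Ellipticity in the Douglis--Nirenberg sense then follows, and the general theory of multi-degree elliptic operators \cite{LM2} yields that $\mathcal{L}_0^{\alpha,\varepsilon}$ extends to a Fredholm operator on appropriate Sobolev completions with smooth kernel.

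To obtain the vanishing index, I would invoke the formal self-adjointness of $\mathcal{L}_0^{\alpha,\varepsilon}$ with respect to the positive-definite $L^2$-pairing \eqref{eq:L2ell}, exactly as in the last line of the proof of Proposition~\ref{prop:zeroindex}. Indeed, by the very definition $\mathbf{P}^*_{\alpha,\varepsilon}$ is the formal adjoint of $\mathbf{P}$ for the pairings \eqref{eq:L2ell} and $\mathbf{g}^{\mathbb{I}}_{\alpha,\varepsilon}$ (Lemma~\ref{lem:P*} in the Hitchin setting), so $\mathcal{L}_0^{\alpha,\varepsilon}$ is self-adjoint for the non-degenerate pairing~\eqref{eq:L2ell}, and a Fredholm self-adjoint elliptic operator between Sobolev completions of the same space has index zero.

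The main technical obstacle will be the explicit verification that the coupling terms in the function component $\mu^{\mathbb{I}}_0$ of Proposition~\ref{p:mmapI} contribute only at subleading order to the symbol of $\mathcal{L}_0^{\alpha,\varepsilon}$. Since $\mu^{\mathbb{I}}_0$ contains expressions like $d^c\bigl(B(\psi,\Lambda_\omega d_A(J\psi))\bigr)$, one must check that their linearizations in $(f,u)$ factor through derivatives of $\eta_f$ and $u$ of order strictly less than what is allowed by the orders $\mathbf{t},\mathbf{s}$ above, so that the principal symbol is not perturbed. A useful simplification is that, at solutions of \eqref{eq:cHitchinint}, one has $d_A\varphi=0$ and hence $\Lambda_\omega d_A\psi = 0$ and $\Lambda_\omega d_A(J\psi) = 2\Lambda_\omega[\varphi,\tau(\varphi)] = 2\Lambda_\omega F_A$, which further simplifies the highest-order contribution of the coupling terms and allows the desired block-diagonal principal symbol to emerge.
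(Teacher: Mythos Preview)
Your proposal is correct and follows essentially the same approach as the paper, which omits the proof of this proposition and refers the reader back to Proposition~\ref{prop:zeroindex}; you carry out precisely the adaptation the paper intends, noting correctly that in the Hitchin setting no auxiliary operator $\mathbf{T}$ is needed since the moment map in Proposition~\ref{p:mmapI} involves $A\zeta$ directly rather than $A\zeta + 2\psi(J\eta_f)$. One small imprecision: the leading symbol is not block-diagonal but lower-triangular (the $(2,1)$-entry picks up order-two contributions in $f$, exactly as $\sigma_{\widetilde{\mathcal{L}}_2}$ did in the earlier proof), though this does not affect invertibility and your reference to ``exactly mirroring the computation of $\sigma_{\widetilde{\mathcal{L}}_1}$ and $\sigma_{\widetilde{\mathcal{L}}_2}$'' shows you are aware of this.
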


Assuming that $\ker \mathcal{L}_0^{\alpha,\varepsilon}$ is trivial, in the next result we obtain a natural gauge fixing via a $\mathbf{g}^{\mathbb{I}}_{\alpha,\varepsilon}$-orthogonal decomposition
\begin{equation}\label{eq:perp}
\mathcal{S}^1 = \operatorname{Im} \; \mathbf{P} \oplus (\operatorname{Im} \; \mathbf{P})^{\perp_{\mathbf{g}^{\mathbb{I}}_{\alpha,\varepsilon}}}.
\end{equation}

\begin{lemma}\label{lem:gaugefixingI}
Assume that $\ker \mathcal{L}_0^{\alpha,\varepsilon} = \{0\}$. Then, there exists an orthogonal decomposition \eqref{eq:perp} for the pairing $\mathbf{g}^{\mathbb{I}}_{\alpha,\varepsilon}$. Consequently, for any element $v \in \mathcal{S}^1$ there exists a unique $\Pi v \in \operatorname{Im} \; \mathbf{P}$ such that $(\dot J,a, \dot \psi) = v - \Pi v$ solves the linear equation
\begin{equation*}\label{eq:gaugefixingI}
\begin{split}
\mathbf{P}^*_{\alpha,\varepsilon}(\dot J,a, \dot \psi) = \delta \mu^{\mathbb{I}} \circ \mathbb{I}(\dot J,a, \dot \psi) = 0.
\end{split}
\end{equation*}
\end{lemma}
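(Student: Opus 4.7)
The plan is to mirror the argument of Lemma \ref{lem:gaugefixing}, adapting it to the present setting where the operator playing the role of ``modified infinitesimal action'' is simply $\mathbf{P}$ (since in the Hitchin setup the complex gauge group $\widetilde{\GGG}$ already acts holomorphically on $\XXX^{Higgs}$, and no twisting of the infinitesimal action is needed). The key structural input is Lemma \ref{lem:P*}, which provides a formal adjoint $\mathbf{P}^*_{\alpha,\varepsilon}$ relating the (non-degenerate) $L^2$-pairing \eqref{eq:L2ell} on $\mathcal{S}^0$ and the (possibly degenerate, indefinite) pairing $\mathbf{g}^{\mathbb{I}}_{\alpha,\varepsilon}$ on $\mathcal{S}^1$. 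The crucial point is that, despite the degeneracy of $\mathbf{g}^{\mathbb{I}}_{\alpha,\varepsilon}$, the formal adjoint relation
\[
\langle \mathbf{P}^*_{\alpha,\varepsilon} v,(f,u)\rangle = \mathbf{g}^{\mathbb{I}}_{\alpha,\varepsilon}(v,\mathbf{P}(f,u))
\]
combined with the non-degeneracy of the $L^2$-pairing on $\mathcal{S}^0$ forces the identification
\[
\ker \mathbf{P}^*_{\alpha,\varepsilon} = (\operatorname{Im}\mathbf{P})^{\perp_{\mathbf{g}^{\mathbb{I}}_{\alpha,\varepsilon}}}.
\]

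The argument then proceeds in two steps. First, I show that the intersection $\operatorname{Im}\mathbf{P}\cap(\operatorname{Im}\mathbf{P})^{\perp_{\mathbf{g}^{\mathbb{I}}_{\alpha,\varepsilon}}}$ is trivial: if $v=\mathbf{P}(y)$ lies in both, then $\mathbf{P}^*_{\alpha,\varepsilon} v = \mathcal{L}_0^{\alpha,\varepsilon}(y)=0$, so the hypothesis $\ker \mathcal{L}_0^{\alpha,\varepsilon}=\{0\}$ forces $y=0$, hence $v=0$. Second, I establish the splitting: given $v\in\mathcal{S}^1$, I must find $y\in\mathcal{S}^0$ such that $v-\mathbf{P}(y)\in \ker\mathbf{P}^*_{\alpha,\varepsilon}$, i.e.\ such that $\mathbf{P}^*_{\alpha,\varepsilon}(v) = \mathcal{L}_0^{\alpha,\varepsilon}(y)$. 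By Proposition \ref{prop:zeroindexI}, $\mathcal{L}_0^{\alpha,\varepsilon}$ is Fredholm of index zero and, under the standing hypothesis, has trivial kernel; the self-adjointness of $\mathcal{L}_0^{\alpha,\varepsilon}$ (for the $L^2$-pairing) then yields surjectivity, so elliptic regularity produces a smooth solution $y\in\mathcal{S}^0$. Setting $\Pi v:=\mathbf{P}(y)$ gives the required decomposition, and $\Pi v$ is unique by the first step.

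Finally, the characterization of the representative $v-\Pi v$ as the unique solution of $\mathbf{P}^*_{\alpha,\varepsilon}(\dot J,a,\dot\psi)=0$ is immediate from the identity $(\operatorname{Im}\mathbf{P})^{\perp_{\mathbf{g}^{\mathbb{I}}_{\alpha,\varepsilon}}} = \ker\mathbf{P}^*_{\alpha,\varepsilon}$ and the explicit expression $\mathbf{P}^*_{\alpha,\varepsilon}=\delta\mu^{\mathbb{I}}\circ\mathbb{I}$ from Lemma \ref{lem:P*}.

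The only subtlety, and what I would flag as the main conceptual obstacle, is precisely the non-positivity of $\mathbf{g}^{\mathbb{I}}_{\alpha,\varepsilon}$: in a positive-definite setting, $\ker \mathcal{L}_0^{\alpha,\varepsilon}=\{0\}$ follows automatically from injectivity of $\mathbf{P}$ (which reflects the simplicity/stability of the triple $(J,A,\psi)$), but here $\ker \mathcal{L}_0^{\alpha,\varepsilon}$ can a priori contain nonzero elements $y$ for which $\mathbf{P}(y)$ is a nonzero null vector of $\mathbf{g}^{\mathbb{I}}_{\alpha,\varepsilon}$. This is exactly why the hypothesis $\ker \mathcal{L}_0^{\alpha,\varepsilon}=\{0\}$ must be imposed as an open condition cutting out $\mathcal{U}^*_{\alpha,\varepsilon}\subset\mathcal{U}^{Hit}(G)$, rather than being automatic; this mirrors the role of the analogous condition in Lemma \ref{lem:gaugefixing}, and no new analytic ingredient beyond Proposition \ref{prop:zeroindexI} is required.
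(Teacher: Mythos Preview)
Your proposal is correct and follows essentially the same approach as the paper: since the paper explicitly states that the analysis in Section \ref{section-Uhiggsmetric} is analogous to that in Section \ref{section-Uhitchin}, the intended proof of Lemma \ref{lem:gaugefixingI} is exactly the argument of Lemma \ref{lem:gaugefixing} with $\mathbf{P}$, $\mathbf{P}^*_{\alpha,\varepsilon}$, $\mathbf{g}^{\mathbb{I}}_{\alpha,\varepsilon}$, $\mathcal{L}_0^{\alpha,\varepsilon}$, and Proposition \ref{prop:zeroindexI} substituted for $\widetilde{\mathbf{P}}$, $\widetilde{\mathbf{P}}^*$, $\mathbf{g}^{\mathbb{J}}_{\alpha,\varepsilon}$, $\mathcal{L}$, and Proposition \ref{prop:zeroindex}, which is precisely what you have written. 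Your additional remark on why the hypothesis $\ker \mathcal{L}_0^{\alpha,\varepsilon}=\{0\}$ cannot be removed (null vectors of the indefinite pairing) is also in line with the paper's own commentary.
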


The above lemma suggests to define the space of harmonic representatives of the complex \eqref{eq:TWMcomplexring}, as follows:
$$
\mathcal{H}^1(\cS^*_0)=\ker \bf{L} \cap \ker {\bf P}^*_{\alpha,\varepsilon}.
$$
It is important to observe that, on the subspace $\ker \bf{L}$, one has an equality
$$
{\bf P}^*_{\alpha,\varepsilon}(v) = {\bf L} \circ \mathbb{I}(v), \qquad \textrm{ for any } v \in \ker \bf{L}.
$$
Consequently, we have
$$
\mathcal{H}^1(\cS^*_0)=\ker \bf{L} \cap \ker ({\bf L} \circ \mathbb{I}).
$$
Our next result provides our gauge fixing mechanism for the linearization of the coupled Hitchin equations \eqref{eq:cHitchinint}. 

\begin{proposition}\label{prop:lineargaugefixedI}
Assume $\ker \mathcal{L}_0^{\alpha,\varepsilon} = \{0\}$. Then, the inclusion $\mathcal{H}^1(\cS^*_0)\subset \ker \bf{L}$ induces an isomorphism
$$
\mathcal{H}^1(\cS^*_0)\cong H^1(\cS^*_0).
$$
More precisely, any class in the cohomology $H^1(\cS^*_0)$ of the complex \eqref{eq:TWMcomplexring} admits a unique representative $(\dot J,a,\dot \psi)$ solving the linear equations
\begin{equation}\label{eq:lineargaugefixedI}
{\bf L}(\dot J,a,\dot \psi) = 0, \qquad {\bf L} \circ \mathbb{I} (\dot J,a,\dot \psi) = 0.
\end{equation}
\end{proposition}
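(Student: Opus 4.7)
My plan is to mirror the argument of Proposition~\ref{prop:lineargaugefixed} almost verbatim, the main new ingredient being the identity
\[
{\bf P}^*_{\alpha,\varepsilon}(v) = {\bf L}\circ\mathbb{I}(v) \qquad \text{for all } v \in \ker {\bf L},
\]
which is recorded immediately before the statement. Under the hypothesis $\ker \mathcal{L}_0^{\alpha,\varepsilon} = \{0\}$, Lemma~\ref{lem:gaugefixingI} supplies the $\mathbf{g}^{\mathbb{I}}_{\alpha,\varepsilon}$-orthogonal splitting
\[
\mathcal{S}^1 = \operatorname{Im}\mathbf{P} \;\oplus\; \ker \mathbf{P}^*_{\alpha,\varepsilon},
\]
and, arguing exactly as in \eqref{eq:glImpnondeg}, the direct sum property forces $\operatorname{Im}\mathbf{P} \cap \ker \mathbf{P}^*_{\alpha,\varepsilon} = \{0\}$: if $v = \mathbf{P}(y)$ lies in $\ker \mathbf{P}^*_{\alpha,\varepsilon}$, then $\mathcal{L}_0^{\alpha,\varepsilon}(y) = \mathbf{P}^*_{\alpha,\varepsilon}\mathbf{P}(y) = 0$, so $y = 0$ by hypothesis.

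For existence, fix a class in $H^1(\mathcal{S}^*_0)$ and pick any representative $v \in \ker {\bf L}$. Decompose $v = \mathbf{P}(f,u) + w$ with $w \in \ker \mathbf{P}^*_{\alpha,\varepsilon}$. Since $(\mathcal{S}^*_0)$ is a complex, $\mathbf{L}\circ \mathbf{P} = 0$, so $w = v - \mathbf{P}(f,u) \in \ker {\bf L}$ and $w$ represents the original class. Applying the key identity to $w \in \ker {\bf L}$ yields ${\bf L}(\mathbb{I}(w)) = \mathbf{P}^*_{\alpha,\varepsilon}(w) = 0$, so $w$ solves both equations of \eqref{eq:lineargaugefixedI}; that is, $w \in \mathcal{H}^1(\mathcal{S}^*_0)$.

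For uniqueness, if $w_1, w_2 \in \mathcal{H}^1(\mathcal{S}^*_0)$ represent the same cohomology class, then $w_1 - w_2 \in \operatorname{Im}\mathbf{P}$; but both $w_i$ lie in $\ker {\bf L}$, and there the identity ${\bf P}^*_{\alpha,\varepsilon} = {\bf L}\circ \mathbb{I}$ together with ${\bf L}\circ\mathbb{I}(w_i) = 0$ shows $w_i \in \ker \mathbf{P}^*_{\alpha,\varepsilon}$, so $w_1 - w_2$ lies in $\operatorname{Im}\mathbf{P}\cap\ker\mathbf{P}^*_{\alpha,\varepsilon} = \{0\}$. This proves $\mathcal{H}^1(\mathcal{S}^*_0)\cong H^1(\mathcal{S}^*_0)$.

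The only step I have not used as a black box is the identity ${\bf P}^*_{\alpha,\varepsilon} = {\bf L}\circ\mathbb{I}$ on $\ker {\bf L}$. I expect this to be the main technical point, but not a genuine obstacle: by Lemma~\ref{lem:P*} one has ${\bf P}^*_{\alpha,\varepsilon} = \delta\mu^{\mathbb{I}}\circ\mathbb{I}$, and a direct comparison with the linearisation in Lemma~\ref{lem:linearHit} shows that the components of $\delta\mu^{\mathbb{I}}$ along $\mathbb{I}(v)$ coincide with the components of ${\bf L}$ up to terms that vanish on $\ker {\bf L}$ (precisely the closedness of $\mu^{\mathbb{I}}$-constraints: the Hitchin equations and the $\dbar_{J,A}\varphi=0$ condition). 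Once this identity is verified, the rest of the argument is purely formal, exactly as in the proof of Proposition~\ref{prop:lineargaugefixed}.
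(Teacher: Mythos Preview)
Your proof is correct and follows essentially the same approach as the paper, which omits an explicit proof here and defers to the analogous Proposition~\ref{prop:lineargaugefixed} together with Lemma~\ref{lem:linearHit} and Lemma~\ref{lem:gaugefixingI}. One minor remark: you need not worry about establishing the identity $\mathbf{P}^*_{\alpha,\varepsilon}(v) = {\bf L}\circ\mathbb{I}(v)$ on $\ker{\bf L}$, since the paper records this fact immediately before the statement of the proposition and you may invoke it as given.
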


We are ready to prove our main result, which shows that the gauge fixing in Proposition \ref{prop:lineargaugefixedI} enables us to descend the complex structure $\mathbb{I}$ in $\XXX$ and the symmetric tensor $\mathbf{g}^{\mathbb{I}}_{\alpha,\varepsilon}$, to an open subset of the moduli space $\mathcal{U}^{Hit}(G)$, via the symplectic reduction in Proposition \ref{p:mmapI}. Define
$$
\mathcal{U}^*_{\alpha,\varepsilon} = \{[(J,A,\psi)] \; | \; \ker \mathcal{L}_0^{\alpha,\varepsilon}= \{0\} \} \subset \mathcal{U}^{Hit}(G).
$$

\begin{theorem}\label{thm:metricI}
For any coupling constant $\alpha > 0$ and parameter $\varepsilon \in \{-1,1\}$, the set $\; \mathcal{U}^*_{\alpha,\varepsilon}$ is open in $\; \mathcal{U}^{Hit}(G)$. For any smooth point $[(J,A,\psi)] \in \mathcal{U}^*_{\alpha,\varepsilon}$ the tangent space to $\mathcal{U}^{Hit}(G)$ at $[(J,A,\psi)]$, identified with the space of solutions of the gauge fixed linear equations \eqref{eq:lineargaugefixedI}, inherits a complex structure $\mathbb{I}$, independent of $\alpha$ and $\varepsilon$, and a symmetric tensor $\mathbf{g}^\mathbb{I}_{\alpha,\varepsilon}$ such that $\boldsymbol{\omega}^\mathbb{I}_{\alpha,\varepsilon} = \mathbf{g}^\mathbb{I}_{\alpha,\varepsilon}(\mathbb{I},)$, given respectively by \eqref{eq:uI} and \eqref{eq:gI}, and where $\boldsymbol{\omega}^\mathbb{I}_{\alpha,\varepsilon}$ stands for the restriction of \eqref{eq:uomegaI}. Furthermore,

\begin{enumerate}

\item if $\varepsilon = 1$ the tensor $\mathbf{g}^\mathbb{I}_{\alpha,\varepsilon}$ is possibly degenerate,

\item if $\varepsilon = -1$ the tensor $\mathbf{g}^\mathbb{I}_{\alpha,\varepsilon}$ is non-degenerate, and defines a pseudo-K\"ahler structure on the moduli space.

\end{enumerate}

\end{theorem}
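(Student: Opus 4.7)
The plan is to mirror closely the proof of Theorem \ref{thm:metric}, now in the parallel setting of the coupled Hitchin equations, using the infrastructure built in Sections \ref{ssec:UHiggs}--\ref{section-Uhiggsmetric}. First, I would establish openness of $\mathcal{U}^*_{\alpha,\varepsilon}$: by Proposition \ref{prop:zeroindexI} the operator $\mathcal{L}_0^{\alpha,\varepsilon}$ is Fredholm of zero index and depends continuously on $(J,A,\psi)$, so upper semicontinuity of $\dim \ker$ for elliptic operators makes the condition $\ker \mathcal{L}_0^{\alpha,\varepsilon} = \{0\}$ an open one. This condition is manifestly $\widetilde{\KKK}$-invariant, hence descends to an open subset of $\mathcal{U}^{Hit}(G)$.

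Next, at a smooth point $[(J,A,\psi)] \in \mathcal{U}^*_{\alpha,\varepsilon}$, Proposition \ref{prop:lineargaugefixedI} identifies the tangent space $T_{[(J,A,\psi)]}\mathcal{U}^{Hit}(G)\cong H^1(\mathcal{S}^*_0)$ with the space of harmonic representatives
\[
\mathcal{H}^1(\mathcal{S}^*_0) = \ker \mathbf{L} \cap \ker(\mathbf{L}\circ \mathbb{I}).
\]
This description is manifestly stable under $\mathbb{I}$: if $v$ satisfies $\mathbf{L}(v)=0$ and $\mathbf{L}(\mathbb{I}v)=0$, then $\mathbb{I}v$ does so too, using $\mathbb{I}^2=-\Id$. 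Hence $\mathbb{I}$ from \eqref{eq:uI} descends to a complex structure on the tangent space. A key point, not visible in the analogous Theorem \ref{thm:metric}, is that the gauge-fixed equations \eqref{eq:lineargaugefixedI} themselves do not involve $\alpha$ or $\varepsilon$ (thanks to the identity ${\bf P}^*_{\alpha,\varepsilon}(v) = \mathbf{L}(\mathbb{I}v)$ on $\ker\mathbf{L}$ noted in the excerpt), even though the subset $\mathcal{U}^*_{\alpha,\varepsilon}$ on which the gauge fixing is well-posed does depend on them. This gives the claimed independence of $\mathbb{I}$ from the parameters.

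Third, I would construct the symmetric tensor $\mathbf{g}^{\mathbb{I}}_{\alpha,\varepsilon}$ via the symplectic reduction of Proposition \ref{p:mmapI}. The two-form $\boldsymbol{\omega}^{\mathbb{I}}_{\alpha,\varepsilon}$ from \eqref{eq:uomegaI} is closed, $\widetilde{\KKK}$-invariant, and of type $(1,1)$ with respect to $\mathbb{I}$ (Proposition \ref{p:existencesigmaI} combined with \eqref{eq:SympJ}), so its restriction to the moment-map zero locus descends to a closed $(1,1)$-form on $\mathcal{U}^{Hit}(G)$. The pairing $\mathbf{g}^{\mathbb{I}}_{\alpha,\varepsilon} := \boldsymbol{\omega}^{\mathbb{I}}_{\alpha,\varepsilon}(\cdot,\mathbb{I}\cdot)$ is then automatically symmetric and compatible with $\mathbb{I}$. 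The signature statements follow from Lemma \ref{lemma:indefI}: for $\varepsilon=-1$, $\boldsymbol{\omega}^{\mathbb{I}}_{\alpha,-1}$ is non-degenerate on $\XXX$ and its restriction to the gauge-fixed slice inherits non-degeneracy, so $\mathbf{g}^{\mathbb{I}}_{\alpha,-1}$ defines a pseudo-K\"ahler structure; for $\varepsilon=1$ the signature of $\mathbf{g}^{\mathbb{I}}_{\alpha,1}$ changes along rays $(J,A,\lambda\psi)$, so only a possibly degenerate $(1,1)$-tensor is produced.

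The main obstacle is the interplay between the gauge fixing and $\mathbb{I}$-invariance. Concretely, one must verify that the $\mathbf{g}^{\mathbb{I}}_{\alpha,\varepsilon}$-orthogonal complement $(\operatorname{Im}\mathbf{P})^{\perp_{\mathbf{g}^{\mathbb{I}}_{\alpha,\varepsilon}}}$ used to pick representatives is stable under $\mathbb{I}$ on the harmonic space, and that the decomposition \eqref{eq:perp} is a topological (not merely algebraic) direct sum. The first is the reason for the remarkable equality ${\bf P}^*_{\alpha,\varepsilon} = \mathbf{L}\circ \mathbb{I}$ on $\ker\mathbf{L}$, which the excerpt records and which makes $\mathcal{H}^1(\mathcal{S}^*_0)$ visibly $\mathbb{I}$-invariant; the second is a standard elliptic-regularity argument parallel to Lemma \ref{lem:gaugefixing}, using Proposition \ref{prop:zeroindexI} together with the hypothesis $\ker \mathcal{L}_0^{\alpha,\varepsilon}=\{0\}$.
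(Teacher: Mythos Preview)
Your proposal is correct and follows essentially the same approach as the paper, which treats Theorem \ref{thm:metricI} as the direct analogue of Theorem \ref{thm:metric} using the parallel ingredients (Proposition \ref{prop:zeroindexI}, Lemma \ref{lem:gaugefixingI}, Proposition \ref{prop:lineargaugefixedI}, Lemma \ref{lemma:indefI}). You correctly single out the one genuinely new feature here: the identity $\mathbf{P}^*_{\alpha,\varepsilon}=\mathbf{L}\circ\mathbb{I}$ on $\ker\mathbf{L}$, which makes the harmonic space $\mathcal{H}^1(\mathcal{S}^*_0)=\ker\mathbf{L}\cap\ker(\mathbf{L}\circ\mathbb{I})$ manifestly $\mathbb{I}$-invariant and independent of $(\alpha,\varepsilon)$, exactly as the paper records.
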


To finish this section, we provide an explicit formula for the (pre)symplectic structure $\boldsymbol{\omega}^\mathbb{I}_{\alpha,\varepsilon}$. The proof is straightforward from the previous discussion and Proposition \ref{p:existencesigmaI}.

\begin{corollary}\label{cor:omegaexpI}
Let $[(J,A,\psi)] \in \mathcal{U}^*_{\alpha,\varepsilon}$ be a smooth point and take $v_1,v_2$ tangent vectors of $\mathcal{U}^{Hit}(G)$ at $[(J,A,\psi)]$, identified with solutions $(\dot J_j,a_j,\dot\psi_j)$ of the gauge fixed linear equations \eqref{eq:lineargaugefixedI}. Then, one has
\begin{equation}\label{eq:omegagmoduliexI}
\begin{split}
\boldsymbol{\omega}^{\mathbb{I}}_{\alpha,\varepsilon}(v_1,v_2) & = \frac{\varepsilon}{2}\int_{\Sigma}\tr(J\dot J_1\dot J_2) \omega \\
& + \alpha \int_\Sigma B(a_1 \wedge a_2) -  \alpha \int_\Sigma B(\dot\psi_1 \wedge \dot \psi_2)\\
& - \frac{\alpha}{2}\int_\Sigma B\left(\psi(J\dot J_1) \wedge \dot \psi_2 \right) - \frac{\alpha}{2}\int_\Sigma B\left(\dot \psi_1 \wedge \psi(J\dot J_2)\right)\\
& - \frac{\alpha}{2}\int_\Sigma B\left(\psi(\dot J_1) \wedge \psi(\dot J_2)\right),\\
\mathbf{g}^{\mathbb{I}}_{\alpha,\varepsilon}(v,v) & = \frac{\varepsilon}{2}\int_{\Sigma}\tr(\dot J_1\dot J_2) \omega \\
& + \alpha  \int_\Sigma B(a \wedge J a) + \alpha \int_\Sigma B\left(\left(\dot \psi + \tfrac{\alpha}{2}\psi(J \dot J)\right) \wedge J\left(\dot \psi_2 + \tfrac{\alpha}{2}\psi(J \dot J_2)\right)\right)\\
& - \frac{\alpha}{4}\int_\Sigma B\left(\psi(\dot J) \wedge J \psi(\dot J)\right).
\end{split}
\end{equation}
\end{corollary}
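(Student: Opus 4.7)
The plan is essentially bookkeeping: Theorem \ref{thm:metricI} tells us that at smooth points $[(J,A,\psi)] \in \mathcal{U}^*_{\alpha,\varepsilon}$ the tangent space is canonically identified with the space $\mathcal{H}^1(\mathcal{S}^*_0)$ of triples $(\dot J_j, a_j, \dot \psi_j)$ solving \eqref{eq:lineargaugefixedI}, and that the two-form $\boldsymbol{\omega}^{\mathbb{I}}_{\alpha,\varepsilon}$ together with the symmetric tensor $\mathbf{g}^{\mathbb{I}}_{\alpha,\varepsilon} = \boldsymbol{\omega}^{\mathbb{I}}_{\alpha,\varepsilon}(\cdot,\mathbb{I}\cdot)$ descend to the moduli space (the descent is guaranteed by moment map theory from Proposition \ref{p:mmapI}, since we restrict to $\mu_{\widetilde{\KKK}}^{\mathbb{I},-1}(0)$ and the forms are $\widetilde{\KKK}$-invariant). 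It therefore suffices to evaluate the ambient expressions \eqref{eq:uomegaI} and \eqref{eq:ugI} on gauge-fixed representatives.

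For the first equation in \eqref{eq:omegagmoduliexI}, I start from the definition
\[
\boldsymbol{\omega}^{\mathbb{I}}_{\alpha,\varepsilon} = \varepsilon\boldsymbol{\omega}_\JJJ + \alpha \boldsymbol{\sigma}^{\mathbb{I}}.
\]
The term $\boldsymbol{\omega}_\JJJ(\dot J_1,\dot J_2) = \tfrac12\int_\Sigma \tr(J\dot J_1 \dot J_2)\omega$ is \eqref{eq:SympJ} evaluated on the $\JJJ$-components of $v_1,v_2$. For $\boldsymbol{\sigma}^{\mathbb{I}}$ I substitute directly the explicit formula from Proposition \ref{p:existencesigmaI} (the second form of \eqref{eq:sigmaI}), which is the polarisation of the expression
\[
\int_\Sigma B(a_1\wedge a_2)-\int_\Sigma B(\dot\psi_1\wedge\dot\psi_2) -\tfrac12\int_\Sigma B(\psi(J\dot J_1)\wedge\dot\psi_2) -\tfrac12\int_\Sigma B(\dot\psi_1\wedge\psi(J\dot J_2)) -\tfrac12\int_\Sigma B(\psi(\dot J_1)\wedge\psi(\dot J_2)).
\]
Adding $\varepsilon\boldsymbol{\omega}_\JJJ$ with coefficient one and $\boldsymbol{\sigma}^{\mathbb{I}}$ with coefficient $\alpha$ gives the first formula in \eqref{eq:omegagmoduliexI}.

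For the metric tensor I apply the definition $\mathbf{g}^{\mathbb{I}}_{\alpha,\varepsilon}(v_1,v_2) = \boldsymbol{\omega}^{\mathbb{I}}_{\alpha,\varepsilon}(v_1,\mathbb{I}v_2)$, with $\mathbb{I}(\dot J_2,a_2,\dot\psi_2) = (J\dot J_2, Ja_2, -J\dot\psi_2 + \psi(\dot J_2))$ from \eqref{eq:uI}. The $\boldsymbol{\omega}_\JJJ$-contribution becomes
\[
\tfrac12\int_\Sigma \tr(J\dot J_1 (J\dot J_2))\omega = \tfrac12\int_\Sigma \tr(\dot J_1 \dot J_2)\omega,
\]
using $J\dot J_1 = -\dot J_1 J$ and $J^2=-1$. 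For the $\boldsymbol{\sigma}^{\mathbb{I}}$-contribution I insert $(J\dot J_2, Ja_2, -J\dot\psi_2 + \psi(\dot J_2))$ into \eqref{eq:sigmaI}, using the relations $\psi(J^2\dot J_2) = -\psi(\dot J_2)$ and $J\psi(\dot J_2) + \psi(\dot J_2 J)\cdot J = -\psi(J\dot J_2)$ to recombine terms. Grouping produces the combination $(\dot\psi_j + \tfrac12\psi(J\dot J_j)) \wedge J(\dot\psi_k + \tfrac12\psi(J\dot J_k))$ together with the residual $B(a \wedge Ja)$ and $-\tfrac14 B(\psi(\dot J)\wedge J\psi(\dot J))$ pieces, matching the second formula in \eqref{eq:omegagmoduliexI} after multiplying by $\alpha$ and adding the $\varepsilon$-term.

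The only real obstacle is algebraic bookkeeping: tracking the interaction of $J$ acting on $T\Sigma$ (where $\dot J$ anticommutes with $J$) with $J$ acting on $E_K(\liek)$-valued one-forms, and carefully using Lemma \ref{l:magicid} to swap $\psi(\dot J)\wedge \bullet$ type contributions when needed. Everything else is direct substitution, and no further analytic input beyond Theorem \ref{thm:metricI} is required.
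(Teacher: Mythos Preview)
Your proposal is correct and follows essentially the same approach as the paper, which states only that the result is ``straightforward from the previous discussion and Proposition \ref{p:existencesigmaI}.'' Indeed, the formula for $\mathbf{g}_{\mathbb{I}}$ is already recorded in Corollary \ref{cor:negativeI} (equation \eqref{eq:gI}), so the metric formula follows even more directly by adding $\varepsilon\mathbf{g}_\JJJ$ without redoing the $\mathbb{I}$-substitution; your appeal to Lemma \ref{l:magicid} is not needed, though harmless.
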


\subsection{Comparison with $\boldsymbol{\mathcal{U}^{Higgs}(G)}$ and existence}
\label{section-existenceHiggs}

In this section we establish a comparison between the moduli space $\mathcal{U}^{Hit}(G)$ and the universal moduli space of $G$-Higgs bundles $\mathcal{U}^{Higgs}(G)$ constructed in Section \ref{ssec:UHiggs}. As we will see, Theorem \ref{thm:metricI} induces a natural holomorphic map 
$$
\mathcal{U}^{Hit}(G) \supset \mathcal{U}^*_{\alpha,\varepsilon} \lra\mathcal{U}^{Higgs}(G),
$$
and hence a holomorphic map into Teichm\"uller space $\cT$. 
We will also prove that for genus of the surface $\Sigma$ bigger than zero, the open $\mathcal{U}^* \subset \mathcal{U}^{Hit}(G)$ is non-empty for sufficiently small values of $\alpha$. Our analysis is very similar to that in Section \ref{section-existence}, and hence we omit the details.

Consider a point $[(J,A,\psi)] \in \mathcal{U}^{Higgs}(G)$, regarded as the $\widetilde{\GGG}$-orbit of $(J,A,\psi) \in \XXX$ solving the equations
\begin{equation}\label{eq:GGHiggsApsi}
\begin{split}
\dbar_{J,A}\psi^{1,0_J} = 0.
\end{split}
\end{equation}
The tangent space $T_{[(J,A,\psi)]} \mathcal{U}^{Higgs}(G)$ can be formally identified with the cohomology of the complex of linear differential operators
\begin{equation}\label{eq:TJDU}
\begin{tikzcd}
(\mathcal{C}_0^*)\colon & 0 \ar[r] & \mathcal{C}^0 \ar[r,"{\mathbf{P}^c_0}"] & \mathcal{C}^1 \ar[r,"{\bf L^c_0}"] & \mathcal{C}^2 \ar[r] & 0,
\end{tikzcd}
\end{equation}
where we have
$$
\mathcal{C}^0 = \Lie \widetilde{\GGG} \cong \Omega^0(T\Sigma) \oplus \Omega^0(\Sigma,E_G(\lieg)), \qquad \mathcal{C}^1 = \mathcal{S}^1, \qquad \mathcal{C}^2 = \Omega^2(\Sigma,E_G(\lieg))
$$
and
\begin{align*}
\mathbf{P}^c_0(y,u_0+ i u_1) & = - \left(L_yJ,d_Au_0 + Jd_A u_1 + i_{y}F_A,[u_0,\psi] + [u_1,J\psi] + i_y d_A \psi\right),\\
\mathbf{L}^c_0(\dot J,a,\dot \psi) & = d_A \dot \psi^{1,0_J} + [a,\psi^{1.0_J}] - \frac{i}{2} d_A(\psi (\dot J)).
\end{align*}

\begin{lemma}\label{lem:sesTWM.2}
The sequence \eqref{eq:TJDU} is an elliptic complex of degree-one linear differential operators. Consequently, the cohomology groups $H^j(\cC^*)$, with $j = 0,1,2$, are finite-dimensional.
\end{lemma}

Applying Theorem \ref{hk}, a solution $(J,A,\psi)$ of the coupled Hitchin equations \eqref{eq:cHitchinint} induces a polystable $G$-Higgs bundle $(E,\varphi) \cong (\dbar_{J,A},\psi^{1,0_J})$. This fact, jointly with the natural inclusion $\widetilde{\KKK} \subset \widetilde{\GGG}$, leads to a continuous map
\begin{equation}\label{eq:mapmoduli.1}
\begin{array}{rcl}
\mathcal{U}^{Hit}(G)&\lra&\mathcal{U}^{Higgs}(G)
\\
\left[(J,A,\psi)\right]&\longmapsto&\left[(J,A, \psi)\right].
\end{array}\end{equation}
By definition of $\mathcal{U}^{Hit}(G)$, there is a continuous diagram of moduli spaces
\begin{equation}\label{eq:modulidiag}
\begin{tikzcd}
\mathcal{U}^{Hit}(G) \ar[d]\ar[r] & \mathcal{U}^{Higgs}(G) \ar[d]\\
\mu_{\HHH}^{-1}(0)/\HHH \ar[r] & \mathcal{T}
\end{tikzcd}
\end{equation}
where $\mu_{\HHH}^{-1}(0)/\HHH$ is the moduli space of constant scalar curvature K\"ahler metrics with total volume $V$, modulo $\omega$-Hamiltonian diffeomorphisms. The fibre of $\mu_{\HHH}^{-1}(0)/\HHH \to \mathcal{T}$ can be identified with $H^1(\Sigma,\RR)$ \cite{Fj}. Building on Theorem \ref{thm:metricI}, our next goal is to prove that this induces a holomorphic map $\mathcal{U}^{Hit}(G) \supset \mathcal{U}^* \to \mathcal{U}^{Higgs}(G)$.

\begin{lemma}\label{lem:tangentmap}
Let $[(J,A,\psi)] \in \mathcal{U}^*_0$. Then, \eqref{eq:mapmoduli.1} induces a complex linear map
\begin{equation}\label{eq:TUseq}
\begin{tikzcd}
H^1(\mathcal{S}^*_0) \ar[r] & H^1(\mathcal{C}^*_0),
\end{tikzcd}
\end{equation}
where the complex structure on $H^1(\mathcal{S}^*_0)$ is the one induced by Proposition \ref{prop:lineargaugefixedI}.
\end{lemma}

To finish this section, we address the question of non-emptyness of the moduli space $\mathcal{U}^{Hit}(G)$, in genus $g(\Sigma) \geqslant 2$. For this, we fix a compact Riemann surface $X = (\Sigma,J)$ and consider a $G$-Higgs bundle $(E,\varphi)$.
In this setup, we consider the coupled equations 
\begin{equation}\label{eq:charmonicgI}
\begin{split}
F_h - [\varphi \wedge \tau_h \varphi] & = 0,\\
\varepsilon S_g & = \varepsilon \frac{2\pi \chi(\Sigma)}{V},
\end{split}
\end{equation}
for pairs $(g,h)$ consisting of a K\"ahler metric $g$ on $X$ with total volume $V$ and a reduction $h\in \Omega^0(E_G(G/K))$ of the structure group of $E$ to $K$. A solution $(g,h)$ of the equations \eqref{eq:charmonicgI} determines then a solution of \eqref{eq:cHitchinint}, given by the triple $(J,A_h,-i(\varphi-\tau_h \varphi))$. We are ready to present our last main result.

\begin{theorem}\label{thm:existenceI}
Let $(E,\varphi)$ be stable $G$-Higgs bundle over a compact Riemann surface $X = (\Sigma,J)$ with genus $g(\Sigma) \geqslant 2$. Then, for any fixed total volume $V > 0$ and parameter $\varepsilon \in \{-1,1\}$, there exists $\alpha_0 > 0$ such that for any $0 < \alpha < \alpha_0$ there exists a solution $(g_\alpha,h_\alpha)$ of the equations \eqref{eq:cHitchinint} with
$$
[(J,A_{h_\alpha},\psi_{h_\alpha})] \in \mathcal{U}^*_{\alpha,\varepsilon} \subset \mathcal{U}^{Hit}(G).
$$
Furthermore, the induced maps $\mathcal{U}^*_{\alpha,\varepsilon} \to \mathcal{U}^{Higgs}(G)$ and
\begin{equation}\label{eq:KFibModHiggs}
\mathcal{U}^*_{\alpha,\varepsilon}\lra\mu_{\HHH}^{-1}(0)/\HHH 
\end{equation}
are holomorphic, where $\mu_{\HHH}^{-1}(0)/\HHH$ is the moduli space of constant scalar curvature K\"ahler metrics with total volume $V$. 

The restriction of the symmetric tensor $\mathbf{g}^{\mathbb{I}}_{\alpha,\varepsilon}$ in Corollary \ref{cor:omegaexpI} to the fibres of \eqref{eq:KFibModHiggs} is $\alpha \mathbf{g}$, where $\mathbf{g}$ denotes the hyperk\"ahler metric on the moduli space of solutions of Hitchin's equations. Consequently, \eqref{eq:KFibModHiggs} has a natural structure of K\"ahler fibration with coupling form $\boldsymbol{\omega}_{1,0}$ (see \eqref{eq:omegagmoduliexI}) and K\"ahler Ehresmann connection.
\end{theorem}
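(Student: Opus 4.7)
The plan follows the strategy developed for Theorem \ref{thm:existence}, adapted to the Higgs bundle setting. First, observe that, as emphasized just before the definition of $\mathcal{U}^{Hit}(G)$, the coupled Hitchin's equations \eqref{eq:cHitchinint} are themselves independent of $\alpha$ and $\varepsilon$ once the integrability condition $\dbar_{J,A}\varphi = 0$ is imposed. Producing a solution therefore reduces to finding a single pair $(g,h)$ where $g$ is a cscK metric on $X$ of total volume $V$ and $h$ is a Hermitian--Einstein reduction of $(E,\varphi)$ to $K$. The first exists uniquely (after rescaling to volume $V$) because $g(\Sigma) \geq 2$ forces $X$ to admit a unique Poincar\'e metric; the second is supplied by Theorem \ref{higgs-hk} applied to the stable Higgs pair. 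Setting $\psi_h = -i(\varphi - \tau_h(\varphi))$ and letting $A_h$ be the Chern connection, the triple $(J, A_h, \psi_h)$ determines a class $[(J, A_h, \psi_h)] \in \mathcal{U}^{Hit}(G)$ independent of $\alpha$ and $\varepsilon$.

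The next step is to verify that this class lies in the open locus $\mathcal{U}^*_{\alpha,\varepsilon}$ for all sufficiently small $\alpha > 0$, i.e.\ that $\ker \mathcal{L}_0^{\alpha,\varepsilon} = \{0\}$. Imitating the rescaling argument used in Theorem \ref{thm:existence}, I would introduce the one-parameter family of multi-degree elliptic operators
\begin{equation*}
\mathcal{N}^\alpha := \bigl(\varepsilon^{-1} (\mathbf{P}^*_{\alpha,\varepsilon})_1,\; \alpha^{-1} (\mathbf{P}^*_{\alpha,\varepsilon})_2\bigr) \circ \mathbf{P},
\end{equation*}
whose kernel equals $\ker \mathcal{L}_0^{\alpha,\varepsilon}$ for $\alpha \neq 0$ but which extends smoothly through $\alpha = 0$. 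At the limit, the first component degenerates to the (real) Lichnerowicz operator $f \mapsto \delta S(J L_{\eta_f}J)$ acting only on $f$; this is invertible on $C^\infty_0(\Sigma,\RR)$ because $g(\Sigma) \geq 2$ precludes non-trivial Killing Hamiltonian vector fields. Hence $(f,u) \in \ker \mathcal{N}^0$ forces $f = 0$, and the remaining equation reduces to a self-adjoint Hitchin-type Laplacian on $u \in \Omega^0(\Sigma, E_K(\liek))$, whose kernel consists of infinitesimal unitary automorphisms of $(E,\varphi)$ and is therefore trivial by stability and simplicity. Upper semi-continuity of $\dim \ker$ for Fredholm families then delivers the desired $\alpha_0 > 0$.

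Holomorphicity of the two moduli maps is essentially formal from the preceding results. The assignment $(J, A, \psi) \mapsto (J, A, \psi) \in \XXX^{ps}$ is induced by a smooth Fr\'echet map, and Lemma \ref{lem:tangentmap} of Section \ref{section-existenceHiggs}, combined with the complex structure $\mathbb{I}$ on $\mathcal{U}^*_{\alpha,\varepsilon}$ produced by Theorem \ref{thm:metricI}, proves $\mathbb{C}$-linearity of its differential. The projection to $\mu_\HHH^{-1}(0)/\HHH$ factors through $(J, A, \psi) \mapsto J$, which by the very definition \eqref{eq:uI} intertwines $\mathbb{I}$ with the Fujiki--Donaldson complex structure $\mathbb{J}_\JJJ$; together with the diagram \eqref{eq:modulidiag}, this yields holomorphicity over the cscK moduli.

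Finally, for the fibre metric and the K\"ahler fibration structure, I would specialize the formula for $\mathbf{g}^{\mathbb{I}}_{\alpha,\varepsilon}$ in Corollary \ref{cor:omegaexpI} to vectors with $\dot J_1 = \dot J_2 = 0$; every term involving $\psi(\dot J)$ then vanishes, leaving
\begin{equation*}
\mathbf{g}^{\mathbb{I}}_{\alpha,\varepsilon}(v,v)\bigl|_{\dot J = 0} = \alpha \int_\Sigma B(a \wedge J a) + \alpha \int_\Sigma B(\dot \psi \wedge J \dot \psi) = \alpha\, \mathbf{g}(v, v),
\end{equation*}
precisely $\alpha$ times the flat hyperk\"ahler metric \eqref{eq:gHK}, restricted to the tangent space of the Hitchin moduli via the gauge fixing of Proposition \ref{prop:lineargaugefixedI}. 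Positive-definiteness of $\mathbf{g}$ along the fibres together with Proposition \ref{p:existencesigmaI} (which identifies $\boldsymbol{\omega}^{\mathbb{I}}_{1,0} = \boldsymbol{\sigma}^{\mathbb{I}}$ as a closed $(1,1)$-form restricting fibrewise to the K\"ahler form of the Hitchin moduli) and Theorem \ref{th:Kfib} of Section \ref{ssec:Kfibration} then yields the K\"ahler fibration structure with coupling form $\boldsymbol{\omega}^{\mathbb{I}}_{1,0}$ and K\"ahler Ehresmann connection descended from $\Gamma^{\mathbb{I}}$. The main technical obstacle is expected to be the decoupling argument for $\mathcal{N}^0$: the indefiniteness of $\mathbf{g}^{\mathbb{I}}_{\alpha,\varepsilon}$ precludes a clean self-adjoint analysis on all of $\mathcal{S}^0$, and one must trace $\mathbf{P}^*_{\alpha,\varepsilon}$ carefully through the limit $\alpha \to 0$ to ensure that the $f$- and $u$-blocks genuinely separate.
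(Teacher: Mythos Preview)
Your approach matches the paper's, which explicitly defers to Theorem \ref{thm:existence} and only adds the fibre-metric argument. You correctly observe that the coupled Hitchin equations \eqref{eq:cHitchinint} decouple from $\alpha,\varepsilon$ once integrability is imposed, so a single cscK metric together with the Hitchin--Simpson reduction suffices; this is slightly sharper than the paper's blanket ``analogous'' remark. The rescaled family $\mathcal{N}^\alpha$ and the Lichnerowicz/stability decoupling at $\alpha=0$ are exactly what the paper intends.

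There is one step you gloss over. When computing the restriction of $\mathbf{g}^{\mathbb{I}}_{\alpha,\varepsilon}$ to the fibres of \eqref{eq:KFibModHiggs}, you set $\dot J = 0$ without justification. A priori a gauge-fixed tangent vector to the fibre over $\mathcal{N} = \mu_\HHH^{-1}(0)/\HHH$ only satisfies that $\dot J$ projects to zero in $T_{[J]}\mathcal{N}$, which means $\dot J$ lies in the tangent to the (complexified) $\HHH$-orbit, i.e.\ $\dot J = L_{J\eta_f}J$ for some $f$. The paper closes this by noting that the gauge-fixed equations \eqref{eq:lineargaugefixedI} force $\delta S(\dot J)=0$ and $\delta S(J\dot J)=0$, so $f$ lies in the kernel of the real Lichnerowicz operator; since $g(\Sigma)\ge 2$ rules out nontrivial Hamiltonian Killing fields, $f$ is constant and $\dot J = 0$. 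You should insert this short argument before specializing Corollary \ref{cor:omegaexpI}.
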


\begin{proof}
The proof is analogous to that of Theorem~\ref{thm:existence} and is therefore omitted. The only difference is that we can control the non-degeneracy of $\mathbf{g}^{\mathbb{I}}_{\alpha,\varepsilon}$ along the fibres over the moduli space of constant scalar curvature K\"ahler metrics $\mathcal{N} = \mu_{\HHH}^{-1}(0)/\HHH$.
To see this, we observe that if $(\dot J,a,\dot \psi)$ is in the tangent to the fibre over $\mathcal{N}$, then due to the gauge fixing in Proposition \ref{prop:lineargaugefixedI}, we must have $\dot J = 0$, and the statement follows from Corollary \ref{cor:omegaexpI}. 
Hence, assuming that $\dot J = L_{J \eta_f}J$, and since the kernel of the Lichnerowicz operator is given by the Hamiltonian functions of Killing holomorphic vector field, $f$ must be constant by our assumption $g(\Sigma) \geqslant 2$.
\end{proof}

\begin{remark}
Even though the proof of Theorem \ref{thm:metricI} works in the case $g(\Sigma) = 1$, the hypothesis of existence of a stable $G$-Higgs bundle is never satisfied in this case \cite{Franco}. We thank Emilio Franco for this observation.
\end{remark}

\providecommand{\bysame}{\leavevmode\hbox to3em{\hrulefill}\thinspace}

\end{document}